\documentclass[a4paper,12pt]{amsart}
\pdfoutput=1
\usepackage{setspace}
\setstretch{1.1}
\usepackage[left=18mm,head=30mm,bottom=30mm,foot=20mm,right=18mm]{geometry}
\usepackage{amsmath,amsthm,amsfonts,amssymb,mathtools,mathrsfs,url,bm,enumitem,stackengine}
\usepackage[varvw]{newtxmath}
\usepackage{newtxtext,newtxmath}
\usepackage{hyperref}
\hypersetup{
  colorlinks=true,
  linkcolor=blue,
  citecolor=red,
}
\mathtoolsset{showonlyrefs=true}
\theoremstyle{definition}
  
  \newtheorem{rem}{Remark}[section]
\theoremstyle{plain}
  \newtheorem{thm}{Theorem}[section]
  \newtheorem{cor}{Corollary}[section]
  \newtheorem{prop}{Proposition}[section]
  \newtheorem{lem}{Lemma}[section]
  
\renewcommand{\Re}{\operatorname{Re}}

\newcommand\restr[2]{{
  \left.\kern-\nulldelimiterspace 
  #1 
  \vphantom{\big|} 
  \right|_{#2} 
}}
\DeclareMathOperator{\sgn}{sgn}
\begin{document}

\title[Refined pointwise estimates for 1D viscous compressible flows]{Refined pointwise estimates for solutions to the 1D barotropic compressible Navier--Stokes equations: An application to the long-time behavior of a point mass}

\author{Kai Koike}
\address{Graduate School of Engineering, Kyoto University, Kyoto 615-8540, Japan}
\email{koike.kai.42r@st.kyoto-u.ac.jp}
\date{\today}
\maketitle

\begin{abstract}
  We study the long-time behavior of a point mass moving in a one-dimensional viscous compressible fluid. Previously, we showed that the velocity of the point mass $V(t)$ satisfies a decay estimate $V(t)=O(t^{-3/2})$~[K. Koike, J. Differential Equations \textbf{271} (2021) 356--413]. This result was obtained as a corollary to pointwise estimates of solutions to a free boundary problem of barotropic compressible Navier--Stokes equations. In this paper, we give a simple necessary and sufficient condition on the initial data for the decay estimate $V(t)=O(t^{-3/2})$ to be optimal. This is achieved by refining the pointwise estimates previously obtained: we make use of \textit{inter-diffusion waves} that, together with the classical \textit{diffusion waves}, give an improved approximation of the fluid behavior around the point mass; this then leads to a sharper understanding of the long-time behavior of the point mass.
\end{abstract}

\setcounter{tocdepth}{2}
\tableofcontents

\section{Introduction}
The study of phenomena arising from interaction of moving or deforming solids with fluid flows are called \textit{the fluid--structure interaction problems}. Fluid--structure interaction is a source of interesting phenomena that motivates development of new mathematical ideas.

In this paper, we study the interaction of a point mass immersed in a one-dimensional barotropic viscous compressible fluid. Here, we are especially interested in the long-time behavior of the point mass. Previously, in~\cite{Koike21}, we showed that the point mass velocity $V(t)$ satisfies a decay estimate $V(t)=O(t^{-3/2})$. However, whether this decay estimate $V(t)=O(t^{-3/2})$ is optimal or not wasn't answered with sufficient generality.\footnote{\label{ft:optimal}In this paper, we say that the decay estimate $V(t)=O(t^{-3/2})$ is optimal if there exist $C>1$ and $T>0$ such that $C^{-1}t^{-3/2}\leq |V(t)|\leq Ct^{-3/2}$ for $t\geq T$.}~To answer this problem is the purpose of this paper. This problem requires a very detailed understanding of the fluid behavior around the point mass and leads us to revisit a classical result on pointwise estimates for solutions to quasilinear hyperbolic-parabolic equations~\cite{LZ97}.

It is known that \textit{diffusion waves} --- self-similar solutions to generalized viscous Burgers' equations --- give nice approximations of solutions to quasilinear hyperbolic-parabolic equations~\cite[Theorem~2.6]{LZ97}. In fact, we extended their result for the Cauchy problem to our free-boundary (fluid--structure interaction) problem and obtained as a corollary the decay estimate $V(t)=O(t^{-3/2})$. However, diffusion waves can not give an accurate approximation of flow behavior around the point mass, and this is the principal difficulty in answering the problem of the optimality of the decay estimate $V(t)=O(t^{-3/2})$; using only diffusion waves, we cannot precisely capture the long-time behavior of $V(t)$. To overcome this problem, we make use of another type of waves which we call \textit{inter-diffusion waves} that are capable of describing fluid behavior around the point mass precisely. These inter-diffusion waves have partially and implicitly appeared in the previous works, e.g.~\cite{Koike21,LZ97}, but here we define them completely and explicitly then make full use of them to refine the previously obtained pointwise estimates. As a corollary, we show that the decay estimate $V(t)=O(t^{-3/2})$ is optimal if and only if the initial perturbations of the total density and the momentum are both non-zero.

In the rest of this introduction, we present the equations we consider and explain the question we address in more detail. The main theorem and its consequences are presented in Section~\ref{sec:main_thm}. We then prove them in Section~\ref{sec:proof}.

\subsection{Model equations}
Consider a system of a one-dimensional viscous compressible fluid and a point mass. The point mass is immersed in the fluid, and we denote its position and velocity at time $t$ by $X=h(t)$ and $V(t)=h'(t)$. Here, $X$ is a Cartesian coordinate on the real line $\mathbb{R}$. For the fluid, we denote the density and the velocity by $\rho=\rho(X,t)$ and $U=U(X,t)$. We impose a simplifying assumption that the flow is barotropic, that is, the pressure $P$ is a function only of the density $\rho$.

Under the notations and assumptions above, the flow is described by the following one-dimensional barotropic compressible Navier--Stokes equations:
\begin{equation}
  \label{eq:NS_Euler}
  \begin{dcases}
    \rho_t+(\rho U)_X=0,                                          & X\in \mathbb{R}\backslash \{ h(t) \},\, t>0, \\
    (\rho U)_t+(\rho U^2)_X+P(\rho)_X=\nu U_{XX},                 & X\in \mathbb{R}\backslash \{ h(t) \},\, t>0.
  \end{dcases}
\end{equation}
Here, the viscosity coefficient $\nu$ is assumed to be a positive constant. As boundary conditions, we require that the fluid does not penetrate through the point mass:
\begin{equation}
  \label{eq:BC_Euler}
  U(h(t)_{\pm},t)=V(t), \quad t>0.
\end{equation}
Here, $f(X_+,t)\coloneqq \lim_{Y\searrow X}f(Y,t)$ and $f(X_-,t)\coloneqq \lim_{Y\nearrow X}f(Y,t)$ are limits from the right and the left, respectively, and $f(X_{\pm},t)=g(t)$ means $f(X_+,t)=f(X_-,t)=g(t)$.

Let us next consider the equations of motion for the point mass. Denote the fluid force acting on the point mass by $F=F(t)$. Then Newton's second law for the point mass is $mV'(t)=F(t)$, where $m>0$ is the mass of the point particle. Requiring the conservation of the total momentum
\begin{equation}
  \int_{-\infty}^{\infty}(\rho U)(X,t)\, dX+mV(t)=\int_{-\infty}^{h(t)}(\rho U)(X,t)\, dX+\int_{h(t)}^{\infty}(\rho U)(X,t)\, dX+mV(t) 
\end{equation}
and using~\eqref{eq:NS_Euler} and~\eqref{eq:BC_Euler}, we find that $F(t)=\llbracket -P(\rho)+\nu U_X \rrbracket(h(t),t)$, where the double brackets denote the jump: $\llbracket f \rrbracket(X,t)\coloneqq f(X_+,t)-f(X_-,t)$. Therefore, Newton's second law for the point mass reads
\begin{equation}
  \label{eq:Newton_Euler}
  mV'(t)=\llbracket -P(\rho)+\nu U_X \rrbracket(h(t),t).
\end{equation}

In sum, equations~\eqref{eq:NS_Euler}--\eqref{eq:Newton_Euler} together with the initial conditions 
\begin{equation}
  \label{eq:IC_Euler}
  h(0)=h_0,\, V(0)=V_0;\, \rho(X,0)=\rho_0(X),\, U(X,0)=U_0(X), \quad X\in \mathbb{R}\backslash \{ h_0 \}
\end{equation}
describe the time evolution of the system. However, they are posed in a time-dependent domain $\mathbb{R}\backslash \{ h(t) \}$; for ease of mathematical analysis, we transform the domain into a time-independent one. To do this, we introduce \textit{the Lagrangian mass coordinate}.

Fix $x\in \mathbb{R}_* \coloneqq \mathbb{R}\backslash \{ 0 \}$ and $t\geq 0$. Then, let $X=X(x,t)$ be the solution to
\begin{equation}
  x=\int_{h(t)}^{X(x,t)}\rho(X',t)\, dX'.
\end{equation}
We assume that $\rho(X,t)\geq \rho_0$ for some $\rho_0>0$. Then the equation above is uniquely solvable and determines a one-to-one map
\begin{equation}
  \mathbb{R}_* \ni x \mapsto X(x,t)\in \mathbb{R}\backslash \{ h(t) \}.
\end{equation}
This new variable $x$ is the Lagrangian mass coordinate. We also change the dependent variables as follows:
\begin{equation}
  v(x,t)=\frac{1}{\rho(X(x,t),t)}, \quad u(x,t)=U(X(x,t),t), \quad p(v)=P\left( \frac{1}{v} \right).
\end{equation}
The variable $v$ is called \textit{the specific volume}. Using the first equation in~\eqref{eq:NS_Euler}, we get
\begin{equation}
  \label{eq:change_of_variables}
  \frac{\partial X(x,t)}{\partial x}=v, \quad \frac{\partial X(x,t)}{\partial t}=u.
\end{equation}

With the new variables introduced above,~\eqref{eq:NS_Euler}--\eqref{eq:IC_Euler} are transformed into
\begin{equation}
  \label{eq:FundamentalEquations}
  \begin{dcases}
    v_t-u_x=0,                                      & x\in \mathbb{R}_*,\, t>0, \\
    u_t+p(v)_x=\nu \left( \frac{u_x}{v} \right)_x,  & x\in \mathbb{R}_*,\, t>0, \\
    u(0_{\pm},t)=V(t),                              & t>0, \\
    V'(t)=\llbracket -p(v)+\nu u_x/v \rrbracket(t), & t>0, \\
    V(0)=V_0;\, v(x,0)=v_0(x),\, u(x,0)=u_0(x),     & x\in \mathbb{R}_*.
  \end{dcases}
\end{equation}
Here and in what follows, we set $m=1$ for simplicity; the double brackets denote the jump at $x=0$: $\llbracket f \rrbracket(t)\coloneqq \llbracket f \rrbracket(0,t)=f(0_+,t)-f(0_-,t)$; and
\begin{equation}
  v_0(x)=\frac{1}{\rho_0(X(x,0))}, \quad u_0(x)=U_0(X(x,0)).
\end{equation}
These are the equations we analyze in this paper. Note that~\eqref{eq:FundamentalEquations} does not contain $h(t)$, but we can recover it by $h(t)=h_0+\int_{0}^{t}V(s)\, ds$.

The first two equations in~\eqref{eq:FundamentalEquations} are called the $p$-system in the literature. We give a remark on its perturbative form. Since we consider solutions that are close to the steady state $(v,u)=(1,0)$, it is natural to write the $p$-system as follows:
\begin{equation}
  \label{eq:vector_form}
  \bm{u}_t+A\bm{u}_x=B\bm{u}_{xx}+
  \begin{pmatrix}
    0 \\
    N_x
  \end{pmatrix},
\end{equation}
where
\begin{equation}
  \label{eq:vector_form_variables}
  \bm{u}=
  \begin{pmatrix}
    v-1 \\
    u
  \end{pmatrix},
  \quad A=
  \begin{pmatrix}
    0 & -1 \\
    -c^2 & 0
  \end{pmatrix},
  \quad B=
  \begin{pmatrix}
    0 & 0 \\
    0 & \nu
  \end{pmatrix},
  \quad N=-p(v)+p(1)-c^2(v-1)-\nu \frac{v-1}{v}u_x,
\end{equation}
and $c=\sqrt{-p'(1)}>0$ is the speed of sound for the state $(v,u)=(1,0)$; for $c$ to be well-defined, we assume that $p'(1)<0$.

\subsection{Long-time behavior of a point mass}
The question we address in this paper is the long-time behavior of the point mass velocity $V(t)$. Physically, it is natural to expect that $V(t)$ decays over time. In fact, in our previous paper~\cite{Koike21}, we showed a power-law type decay estimate $V(t)=O(t^{-3/2})$. It is, however, harder to understand whether this decay estimate is optimal or not (cf.~Footnote~\ref{ft:optimal}). Let us explain where the difficulty lies and how we overcome it.

To prove the decay estimate $V(t)=O(t^{-3/2})$, we employed the approach using pointwise estimates of Green's function developed in~\cite{LZ97,Zeng94}. Although this method originally targeted initial value problems, the Fourier--Laplace transform technique developed in~\cite{LY11,LY12} paved the way to analyze initial-boundary value problems (see for example~\cite{Deng16,DWY15,DW18}). We showed that these tools are also useful for the analysis of our free-boundary value problem. As a result, we obtained pointwise error bounds of a diffusion wave approximation of the solution $(u,v,V)$ to~\eqref{eq:FundamentalEquations}; more precisely, we constructed an approximation $(\bar{v},\bar{u})$ of $(u,v)$ using \textit{diffusion waves} (defined later in Section~\ref{sec:diffusion_bidiffusion}) and obtained pointwise error bounds of the form $|(v-\bar{v},u-\bar{u})(x,t)|\leq C\phi(x,t)$. Here, the function $\phi=\phi(x,t)$ satisfies $\phi(0_{\pm},t)=O(t^{-3/2})$ and $\bar{u}(0_{\pm},t)$ decays exponentially fast. From this, we were able to conclude that $V(t)=u(0_{\pm},t)=O(t^{-3/2})$.

The diffusion wave approximation $(\bar{v},\bar{u})$ provides an accurate approximation --- the leading order long-time asymptotics --- of the solution $(v,u)$ around $x=\pm ct$ ($c$ is the speed of sound). However, as we mentioned above, the diffusion wave approximation $\bar{u}$ decays exponentially fast around $x=0$; on the other hand, the velocity of the fluid $u$ is expected to decay algebraically there --- as $t^{-3/2}$ in most cases. Therefore, the diffusion wave approximation is not a valid approximation of the fluid behavior around the point mass. This is the main reason why it is difficult to answer whether the decay estimate $V(t)=O(t^{-3/2})$ is optimal or not.

In this paper, we answer the problem of the optimality of the decay estimate $V(t)=O(t^{-3/2})$ by refining the diffusion wave approximation. This is done by the help of what we call \textit{inter-diffusion waves} (defined in Section~\ref{sec:diffusion_bidiffusion}). The principal role of inter-diffusion waves is to extract the leading order long-time asymptotics of the fluid behavior around the point mass, thus complementing the approximation provided by diffusion waves. They also help describe the second order asymptotics of the solution at $x=\pm ct$ where the diffusion wave approximation gives the leading order asymptotics. In this way, we construct a new approximation $(\tilde{v},\tilde{u})$ satisfying pointwise error bounds $|(v-\tilde{v},u-\tilde{u})(x,t)|\leq C\psi(x,t)$ with $\psi(0_{\pm},t)=O(t^{-7/4})$ in particular. Hence, we can study the optimality of the decay estimate $V(t)=O(t^{-3/2})$ by studying the long-time behavior of $\tilde{u}$ in detail. This is the approach taken in this paper.

\section{Main theorem}
\label{sec:main_thm}
In this section, we state the main theorem and its corollaries. To do so, we first need to define diffusion waves and inter-diffusion waves mentioned in the introduction.

\subsection{Diffusion waves and inter-diffusion waves}
\label{sec:diffusion_bidiffusion}
We first note that the matrix $A$ in~\eqref{eq:vector_form_variables} has two eigenvalues $\lambda_1=c$ and $\lambda_2=-c$; as right and left eigenvectors corresponding to the eigenvalue $\lambda_i$, we may take $r_i$ and $l_i$ given by
\begin{equation}
  \label{right_ev}
  r_1=\frac{2c}{p''(1)}
  \begin{pmatrix}
    -1 \\
    c
  \end{pmatrix},
  \quad r_2=\frac{2c}{p''(1)}
  \begin{pmatrix}
    1 \\
    c
  \end{pmatrix}
\end{equation}
and
\begin{equation}
  \label{def:li}
  l_1=\frac{p''(1)}{4c}
  \begin{pmatrix}
    -1 & 1/c
  \end{pmatrix},
  \quad l_2=\frac{p''(1)}{4c}
  \begin{pmatrix}
    1 & 1/c
  \end{pmatrix}.
\end{equation}
Here, we assume that $p''(1)\neq 0$.

Now, using the left eigenvector $l_i$ and the initial data $(v_0,u_0,V_0)$, we define $M_i \in \mathbb{R}$ by
\begin{equation}
  \label{eq:mass}
  M_i \coloneqq \int_{-\infty}^{\infty}l_i
  \begin{pmatrix}
    v_0-1 \\
    u_0
  \end{pmatrix}
  (x)\, dx+l_i
  \begin{pmatrix}
    0 \\
    V_0
  \end{pmatrix}.
\end{equation}
Then, the $i$-th diffusion wave with mass $M_i$ is defined as the solution $\theta_i$ to the generalized viscous Burgers equation
\begin{equation}
  \label{def:thetai}
  \partial_t \theta_i+\lambda_i \partial_x \theta_i+\partial_x \left( \frac{\theta_{i}^{2}}{2} \right)=\frac{\nu}{2}\partial_{x}^{2}\theta_i, \quad x\in \mathbb{R},\, t>0
\end{equation}
with the initial condition
\begin{equation}
  \label{def:thetai_init}
  \lim_{t\searrow -1}\theta_i(x,t)=M_i \delta(x), \quad x\in \mathbb{R},
\end{equation}
where $\delta(x)$ is the Dirac delta function. Note that the initial condition is imposed at $t=-1$ because we don't want $\theta_i$ to be singular at $t=0$. By the Cole--Hopf transformation, we can obtain an explicit formula for $\theta_i$:
\begin{equation}
  \label{eq:theta_explicit}
  \theta_i(x,t)=\frac{\sqrt{\nu}}{\sqrt{2(t+1)}}\left( e^{\frac{M_i}{\nu}}-1 \right) e^{-\frac{(x-\lambda_i(t+1))^2}{2\nu(t+1)}}\left[ \sqrt{\pi}+\left( e^{\frac{M_i}{\nu}}-1 \right) \int_{\frac{x-\lambda_i(t+1)}{\sqrt{2\nu(t+1)}}}^{\infty}e^{-y^2}\, dy \right]^{-1}.
\end{equation}
We can see from this formula that $\theta_i$ diffuses around $x=\lambda_i t$ with width of the order of $t^{1/2}$.

As we mentioned in the introduction, diffusion waves are not enough to describe the long-time behavior of the solution around $x=0$. In order to overcome this problem, we introduce new waves: let $\xi_i$ be the solution to the variable coefficient inhomogeneous convective heat equation
\begin{equation}
  \label{def:xii}
  \partial_t \xi_i +\lambda_i \partial_x \xi_i +\partial_x (\theta_i \xi_i)+\partial_x \left( \frac{\theta_{i'}^{2}}{2} \right)=\frac{\nu}{2}\partial_{x}^{2}\xi_i, \quad x\in \mathbb{R},t>0
\end{equation}
with the initial condition
\begin{equation}
  \label{def:xii_init}
  \xi_i(x,0)=0, \quad x\in \mathbb{R},
\end{equation}
where $i'=3-i$, that is, $1'=2$ and $2'=1$. We call $\xi_i$ the $i$-th inter-diffusion wave with mass pair $(M_1,M_2)$.

Let $\zeta_i$ be the solution to~\eqref{def:xii} and~\eqref{def:xii_init} without the variable coefficient term $\partial_x (\theta_i \xi_i)$. The importance of analyzing $\zeta_i$ was already noticed in~\cite{LZ97}; in fact,~\cite[Lemma~3.4]{LZ97} is used to obtain bounds of $\zeta_i$. Our contribution lies in the recognition that the addition of the variable coefficient term leads to a refinement of the previously known pointwise estimates of solutions relying only on diffusion waves and the execution of complicated analysis required for this purpose; see Remark~\ref{rem:previous_pwe}.\footnote{A heuristic argument explaining the need for the variable coefficient term is given in one of our conference report~\cite{Koike21RIMS}.}

We now comment on how the inter-diffusion wave $\xi_i$ looks like. By Lemma~\ref{lem:xi}, we see that $\xi_i$ possesses the decay properties listed in Table~\ref{table1}. In particular, $\xi_i$ decays as $t^{-3/2}$ around $x=0$; note that the diffusion wave $\theta_i$ decays exponentially fast there. For visual understanding, two snapshots of $\xi_1$ for the mass pair $(M_1,M_2)=(1,1)$ with $c=\nu=1$ are shown in Figure~\ref{fig:0}.\footnote{These plots are created using a pseudospectral method (cf.~\cite{Koike21RIMS}).}~In contrast to the diffusion wave $\theta_i$, the region where the inter-diffusion wave $\xi_i$ decays non-exponentially (algebraically) extends not only around $x=\lambda_i t$ but includes the intermediate region between $x=\lambda_i t$ and $x=\lambda_{i'}t$; this is one of the origin of its name: ``inter-'' for intermediate. Alternatively, we can interpret ``inter-'' as meaning interaction of waves of speed $\lambda_i$ and $\lambda_{i'}$, which points to the presence of the terms $\lambda_i \partial_x \xi_i$ and $\partial_x(\theta_{i'}^{2}/2)$ in~\eqref{def:xii}; see also~\eqref{eq:zetai_IE}.

\begin{figure}[htpb]
  \centering
  \includegraphics[width=0.5\linewidth]{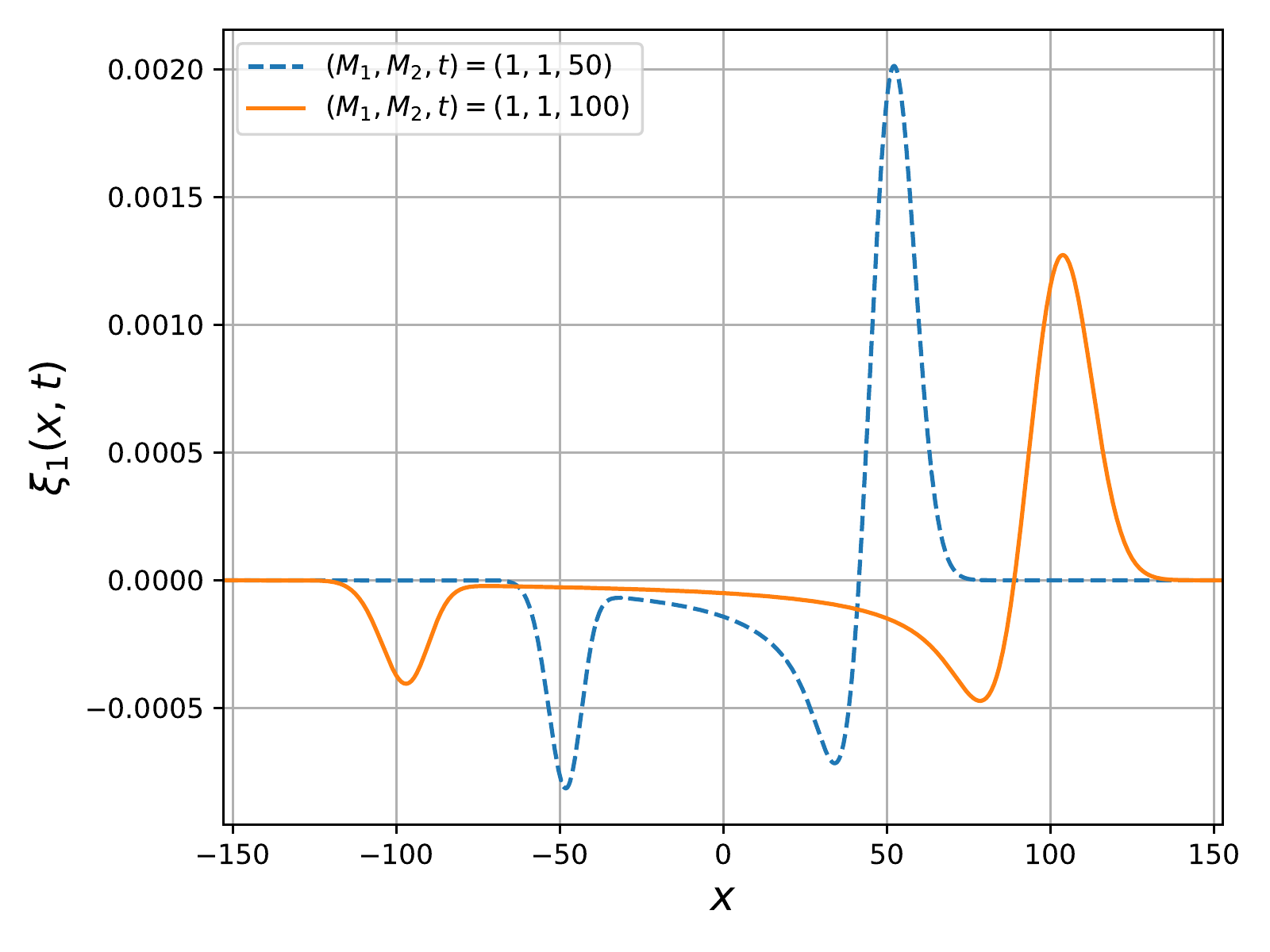}
  \caption{\small{The inter-diffusion wave $\xi_1$ with mass pair $(M_1,M_2)=(1,1)$. Here, we set $c=\nu=1$. The dotted and solid line represent $\xi_1$ at time $t=50$ and $t=100$, respectively.}}
  \label{fig:0}
\end{figure}

Next, to relate the diffusion wave $\theta_i$ and the inter-diffusion wave $\xi_i$ to the solution $(v,u,V)$ to~\eqref{eq:FundamentalEquations}, we decompose $\bm{u}$ --- recall~\eqref{eq:vector_form_variables} --- with respect to the eigenbasis $(r_1,r_2)$:
\begin{equation}
  \label{eq:decomposition}
  \bm{u}=u_1 r_1+u_2 r_2.
\end{equation}
Using the relation
\begin{equation}
  \label{eq:lr_identity}
  \begin{pmatrix}
    l_1 \\
    l_2
  \end{pmatrix}
  \begin{pmatrix}
    r_1 & r_2 \\
  \end{pmatrix}
  =
  \begin{pmatrix}
    1 & 0 \\
    0 & 1
  \end{pmatrix},
\end{equation}
we can calculate the component $u_i$ by
\begin{equation}
  \label{def:ui}
  u_i=l_i\bm{u}.
\end{equation}
Now, if we multiply $l_i$ to~\eqref{eq:vector_form}, we get
\begin{equation}
  \label{eq:pde_ui}
  \partial_t u_i+\lambda_i \partial_x u_i=l_iB\partial_{x}^{2}
  \begin{pmatrix}
    r_1 & r_2
  \end{pmatrix}
  \begin{pmatrix}
    u_1 \\
    u_2
  \end{pmatrix}
  +\partial_x N_i=\frac{\nu}{2}\partial_{x}^{2}(u_1+u_2)+\partial_x N_i,
\end{equation}
where
\begin{equation}
  \label{def:Ni}
  N_i=l_i
  \begin{pmatrix}
    0 \\
    N
  \end{pmatrix}
  =\frac{p''(1)}{4c^2}N.
\end{equation}
This $N_i$ in fact does not depend on $i$ (we add the subscript $i$ just to distinguish $N_i$ from $N$). We can now observe some resemblance between~\eqref{def:thetai},~\eqref{def:xii}, and~\eqref{eq:pde_ui}; note that $N_i$ is a quadratic nonlinear term (recall~\eqref{eq:vector_form_variables} again).

\subsection{Refined pointwise estimates of solutions}
We just need few more notations to state our main theorem on refined pointwise estimates of solutions. First, let
\begin{equation}
  \label{eq:psi74}
  \psi_{7/4}(x,t;\lambda_i)\coloneqq [(x-\lambda_i(t+1))^2+(t+1)]^{-7/8},
\end{equation}
\begin{equation}
  \label{eq:psibar}
  \bar{\psi}(x,t;\lambda_i)\coloneqq [|x-\lambda_i(t+1)|^7+(t+1)^5]^{-1/4},
\end{equation}
and
\begin{equation}
  \label{def:Psi}
  \Psi_i(x,t)\coloneqq \psi_{7/4}(x,t;\lambda_i)+\bar{\psi}(x,t;\lambda_{i'});
\end{equation}
here, we recall that $\lambda_1=c$, $\lambda_2=-c$, and $i'=3-i$. Next, to state the compatibility conditions, we introduce
\begin{align}
  \label{def:C1C2}
  \begin{aligned}
    \mathcal{C}_1(v,u) & \coloneqq -p(v)+\nu \frac{u_x}{v}, \\
    \mathcal{C}_2(v,u) & \coloneqq -p'(v)u_x+\frac{\nu}{v}\mathcal{C}_1(v,u)_{xx}-\nu \frac{u_{x}^{2}}{v^2}.
  \end{aligned}
\end{align}
Here, note that if $(v,u,V)$ is the solution to~\eqref{eq:FundamentalEquations}, we have $\mathcal{C}_1(v,u)_x=u_t$, $\mathcal{C}_2(v,u)=\mathcal{C}_1(v,u)_t$, and $\llbracket \mathcal{C}_1(v,u) \rrbracket(t)=V'(t)$. Next, to state the conditions on the spatial decay of initial data, we introduce
\begin{equation}
  \label{def:u0ipm}
  u_{0i}^{-}(x)\coloneqq \int_{-\infty}^{x}u_{0i}(y)\, dy,\quad u_{0i}^{+}(x)\coloneqq \int_{x}^{\infty}u_{0i}(y)\, dy,
\end{equation}
where
\begin{equation}
  \label{def:u0i}
  u_{0i}\coloneqq l_i
  \begin{pmatrix}
    v_0-1 \\
    u_0
  \end{pmatrix}.
\end{equation}
Finally, we introduce the notation $\llbracket f \rrbracket \coloneqq f(0_+)-f(0_-)$, write $f(0_{\pm})=g$ to mean $f(0_+)=f(0_-)=g$, and denote by $||\cdot ||_{k}$ ($k\in \mathbb{N}$) the Sobolev $H^k(\mathbb{R}_*)$-norm.

The main theorem of this paper is then stated as follows.

\begin{thm}
  \label{thm:main}
  Let $v_0-1,u_0 \in H^6(\mathbb{R}_*)$ and $V_0 \in \mathbb{R}$. Assume that they satisfy the following compatibility conditions:
  \begin{equation}
    \label{eq:compatibility}
    u_0(0_{\pm})=V_0, \quad \mathcal{C}_1(v_0,u_0)_x(0_{\pm})=\llbracket \mathcal{C}_1(v_0,u_0) \rrbracket, \quad \mathcal{C}_2(v_0,u_0)_x(0_{\pm})=\llbracket \mathcal{C}_2(v_0,u_0) \rrbracket.
  \end{equation}
  Under these assumptions, there exist $\delta_0,C>0$ such that if
  \begin{equation}
    \label{ass:smallness}
    \delta \coloneqq \sum_{i=1}^{2}\left\{ ||u_{0i}||_6+\sup_{x\in \mathbb{R}_*}\left[ (|x|+1)^{7/4}|u_{0i}(x)| \right]+\sup_{x>0}\left[ (|x|+1)^{5/4}(|u_{0i}^{-}(-x)|+|u_{0i}^{+}(x)|) \right] \right\} \leq \delta_0,
  \end{equation}
  then the unique global-in-time solution $(v,u,V)$ to~\eqref{eq:FundamentalEquations} --- which exists by Theorem~\ref{thm:global_existence} below --- satisfies the pointwise estimates
  \begin{equation}
    \label{thm:main:ineq}
    |(u_i-\theta_i-\xi_i-\gamma_{i'}\partial_x \theta_{i'})(x,t)|\leq C\delta \Psi_i(x,t) \quad (x\in \mathbb{R}_*,t\geq 0;i=1,2),
  \end{equation}
  where $i'=3-i$ and $\gamma_i=(-1)^i \nu/(4c)$. Here, $u_i$ is defined by~\eqref{def:ui} with~\eqref{eq:vector_form_variables} and~\eqref{def:li}; the definitions of $\theta_i$ and $\xi_i$ are given in Section~\ref{sec:diffusion_bidiffusion}; and $\Psi_i$ is defined by~\eqref{def:Psi}.
\end{thm}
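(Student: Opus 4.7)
The plan is to analyze the error
\[
w_i \coloneqq u_i - \theta_i - \xi_i - \gamma_{i'}\partial_x\theta_{i'}
\]
via a Duhamel representation against the Green's function for the coupled fluid--point-mass system. First, I would subtract \eqref{def:thetai} and \eqref{def:xii} from \eqref{eq:pde_ui} and use the identity $(\partial_t+\lambda_i\partial_x-\tfrac{\nu}{2}\partial_x^2)(\gamma_{i'}\partial_x\theta_{i'}) = \tfrac{\nu}{2}\partial_x^2\theta_{i'} + O(\theta_{i'}\cdot\partial_x\theta_{i'})$ that follows from \eqref{def:thetai}; this identity is what dictates the choice $\gamma_{i'}=\nu/[2(\lambda_i-\lambda_{i'})]=(-1)^{i'}\nu/(4c)$, since its role is to absorb the ``wrong-direction'' dissipation. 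The outcome is a scalar convection--diffusion equation along the $i$-th characteristic,
\[
(\partial_t + \lambda_i\partial_x - \tfrac{\nu}{2}\partial_x^2)w_i = \tfrac{\nu}{2}\partial_x^2\bigl(w_{i'} + \xi_{i'} + \gamma_i\partial_x\theta_i\bigr) + \partial_x\mathcal{R}_i,
\]
coupled weakly to $w_{i'}$, where $\mathcal{R}_i$ collects the remainders left after $\theta_i^2/2$, $\theta_i\xi_i$, and $\theta_{i'}^2/2$ have been subtracted from the quadratic nonlinearity $N_i$ in \eqref{def:Ni}. Interface forcing at $x=0$ is induced by the jump equations in \eqref{eq:FundamentalEquations}.

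Next, I would represent $w_i$ using the Green's function $\mathbf{G}$ for the linearized fluid--particle IBVP on $\mathbb{R}_*$, constructed in \cite{Koike21} through the Fourier--Laplace framework of \cite{LY11,LY12,Deng16,DWY15,DW18}. At long time, $\mathbf{G}$ scatters into two nearly decoupled components travelling at $\lambda_1$ and $\lambda_2$ as in \cite{LZ97}; I would project against the $i$-th scattered piece. The bulk of the work is a catalogue of convolution estimates: for each building block of $\mathcal{R}_i$, show that its convolution with $\mathbf{G}$ is pointwise dominated by $\Psi_i$. These fall into three classes: (i) $\theta\cdot\theta$ and $\partial_x\theta\cdot\theta$ interactions, controlled in the spirit of \cite[Section~3]{LZ97}; (ii) $\theta\cdot\xi$ and $\xi\cdot\xi$ interactions, which are new --- here I would insert the Duhamel formula for $\xi_i$ coming from \eqref{def:xii} to reduce them to triple convolutions of diffusion profiles, evaluated Gaussian-wise on the three regions $|x-\lambda_i t|\lesssim t^{1/2}$, $|x-\lambda_{i'}t|\lesssim t^{1/2}$, and the intermediate/exterior region; (iii) the initial data contribution, dominated by the weighted seminorms in \eqref{ass:smallness}, and the interface contribution, for which the three conditions in \eqref{eq:compatibility} supply enough regularity to take jumps of $\mathcal{C}_1$ and $\mathcal{C}_2$.

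I would then close by a standard continuity argument: posit the a priori bound $|w_i(x,t)|\leq M\delta\Psi_i(x,t)$ on a maximal interval $[0,T^*)$, feed it into the integral representation and the above convolution estimates, and use the smallness of $\delta$ to recover $|w_i|\leq\tfrac{1}{2}M\delta\Psi_i$, which together with the global existence furnished by Theorem~\ref{thm:global_existence} forces $T^*=\infty$. The weak coupling to $w_{i'}$ is absorbed into the same ansatz, since $\Psi_1$ and $\Psi_2$ are comparable after a $\partial_x^2$ loss that is paid for by the extra $t^{-1}$ gain from the diffusive Green's function.

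The principal obstacle is the interface decay. The target $\Psi_i(0,t)\sim t^{-7/4}$ is a full $t^{-1/4}$ sharper than the $t^{-3/2}$ rate obtained in \cite{Koike21} using only diffusion waves, and the boundary forcing for $w_i$ at $x=0$ involves $V'(t)=\llbracket-p(v)+\nu u_x/v\rrbracket$, which itself is known a priori only at the weaker rate. The inter-diffusion waves $\xi_i$ were designed precisely so that the combination $\theta_i+\xi_i+\gamma_{i'}\partial_x\theta_{i'}$ absorbs the $t^{-3/2}$ quadratic contribution at $x=0$; the technical heart of the proof is to verify that the residual $\mathcal{R}_i$ therefore decays at the improved $t^{-7/4}$ rate along the interface and that this improvement persists after convolution with $\mathbf{G}$.
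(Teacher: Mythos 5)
Your plan coincides with the paper's own: write the integral equation for $v_i = u_i - \theta_i - \xi_i - \gamma_{i'}\partial_x\theta_{i'}$ via the fluid--point-mass Green's functions $G, G_T, G_R$ (Proposition~\ref{SectionIII:Proposition:IntegralEquation} leading to \eqref{eq:vi_IE}), run a catalogue of convolution estimates against the refined Green's function bound~\eqref{eq:PWE_gi_refined} and the weighted initial-data seminorms (Lemmas~\ref{SectionIII:Lemma:Bound_of_Ii}, \ref{SectionIII:Lemma:Bound_of_Ni} plus the appendix), and close by a smallness/continuity argument with $P(t)\leq C\delta+C(\delta+P(t))^2$. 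Your identity for $(\partial_t+\lambda_i\partial_x-\tfrac{\nu}{2}\partial_x^2)(\gamma_{i'}\partial_x\theta_{i'})$ is a tidy way to motivate the choice of $\gamma_{i'}$ that the paper instead reads off from the Liu--Zeng expansion~\eqref{eq:PWE_gi_refined}; the one ingredient your sketch under-budgets is the derivative pointwise bound for $\partial_x(u_i-\theta_i)$ (Theorem~\ref{thm:PWE_derivative}), without which the terms such as $L_2(\xi_2 v_2)$ or $\partial_x(N_{j,a}+\theta_2^2/2)$ in Lemma~\ref{lem:thetaj_block} and Lemma~\ref{SectionIII:Lemma:Bound_of_Ni} do not close, and which is precisely why $H^6$ rather than $H^4$ initial regularity is imposed.
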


From this theorem, we obtain two corollaries on the long-time behavior of the point mass velocity $V(t)$; these give a simple necessary and sufficient condition for the optimality of the decay estimate $V(t)=O(t^{-3/2})$. Before stating these results, we note that by~\eqref{def:li} and~\eqref{eq:mass}, we have
\begin{equation}
  \label{eq:mass_sum_difference}
  M_1+M_2=\frac{p''(1)}{2c^2}\left( \int_{-\infty}^{\infty}u_0(x)\, dx+V_0 \right), \quad M_1-M_2=-\frac{p''(1)}{2c}\int_{-\infty}^{\infty}(v_0-1)(x)\, dx.
\end{equation}

\begin{cor}
  \label{cor:V_lower}
  Define $M_i$ by~\eqref{eq:mass} and assume that $M_{1}^{2}-M_{2}^{2}\neq 0$, that is,
  \begin{equation}
    \label{ass:nonzero_perturbation}
    \left( \int_{-\infty}^{\infty}(v_0-1)(x)\, dx \right) \cdot \left( \int_{-\infty}^{\infty}u_0(x)\, dx+V_0 \right) \neq 0.
  \end{equation}
  Then under the assumptions of Theorem~\ref{thm:main}, there exist $\delta_0>0$, $C>1$, and $T(\delta)>0$ such that if~\eqref{ass:smallness} holds, then the solution $(v,u,V)$ to~\eqref{eq:FundamentalEquations} satisfies
  \begin{equation}
    \label{cor:eq:V_lower}
    C^{-1}|M_{1}^{2}-M_{2}^{2}|(t+1)^{-3/2}\leq (\sgn(M_{1}^{2}-M_{2}^{2}))V(t)\leq C\delta (t+1)^{-3/2} \quad (t\geq T(\delta)),
  \end{equation}
  where $\sgn$ is the sign function. In particular, we have
  \begin{equation}
    C^{-1}|M_{1}^{2}-M_{2}^{2}|(t+1)^{-3/2}\leq |V(t)|\leq C\delta (t+1)^{-3/2} \quad (t\geq T(\delta)).
  \end{equation}
\end{cor}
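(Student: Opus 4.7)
The strategy is to combine Theorem~\ref{thm:main} with an explicit asymptotic expansion of $(\xi_1+\xi_2)(0,t)$ as $t\to\infty$. First, from \eqref{eq:decomposition} and \eqref{right_ev} the second component of $\bm{u}=u_1 r_1+u_2 r_2$ is $u=\tfrac{2c^2}{p''(1)}(u_1+u_2)$, hence $V(t)=u(0_\pm,t)=\tfrac{2c^2}{p''(1)}(u_1+u_2)(0_\pm,t)$. Evaluating \eqref{thm:main:ineq} at $x=0_\pm$, and noting that $\Psi_i(0_\pm,t)=O((t+1)^{-7/4})$ while both $\theta_i(0,t)$ and $(\partial_x\theta_{i'})(0,t)$ decay exponentially (since by \eqref{eq:theta_explicit} they are concentrated near $x=\pm c(t+1)$, far from $0$), one obtains
\[
V(t)=\tfrac{2c^2}{p''(1)}(\xi_1+\xi_2)(0,t)+O\bigl(\delta(t+1)^{-7/4}\bigr).
\]

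Next, to extract the asymptotics of $(\xi_1+\xi_2)(0,t)$, write $\xi_i=\zeta_i+R_i$, where $\zeta_i$ solves \eqref{def:xii}--\eqref{def:xii_init} without the variable-coefficient term $\partial_x(\theta_i\xi_i)$. Duhamel's principle with the drift--diffusion kernel $G_i(x,\tau)=(2\pi\nu\tau)^{-1/2}\exp\{-(x-\lambda_i\tau)^2/(2\nu\tau)\}$ followed by one integration by parts gives
\[
\zeta_i(0,t)=\int_0^t\!\int_{-\infty}^{\infty}(\partial_x G_i)(-y,t-s)\,\tfrac{\theta_{i'}^{2}(y,s)}{2}\,dy\,ds.
\]
Because $\theta_{i'}^{2}(\cdot,s)$ concentrates near $y=-\lambda_i(s+1)$ with $\int\theta_{i'}^{2}\,dy\sim M_{i'}^{2}(s+1)^{-1/2}$, and $(\partial_x G_i)(-y,t-s)$ evaluated there carries a Gaussian in $\lambda_i(2s-t)$, the integrand concentrates around $s\approx t/2$; a Laplace-type expansion around this stationary point yields
\[
\zeta_i(0,t)=\alpha_i M_{i'}^{2}(t+1)^{-3/2}+o\bigl((t+1)^{-3/2}\bigr),
\]
with $\alpha_1=-\alpha_2\neq 0$, the sign flip tracing back to $\lambda_1=-\lambda_2$ in the odd factor of $\partial_x G_i$. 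Summing $i=1,2$ produces the announced $M_1^{2}-M_2^{2}$ structure. The remainder $R_i$, solving the $\zeta_i$-equation with source $-\partial_x(\theta_i\xi_i)$ localized near $x=\lambda_i t$, is $o((t+1)^{-3/2})$ at $x=0$ by the pointwise bounds of Lemma~\ref{lem:xi} and \eqref{eq:theta_explicit}.

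Combining the two steps,
\[
V(t)=\tfrac{2c^2\alpha_1}{p''(1)}(M_2^{2}-M_1^{2})(t+1)^{-3/2}+o\bigl((t+1)^{-3/2}\bigr)+O\bigl(\delta(t+1)^{-7/4}\bigr),
\]
where $\tfrac{2c^2\alpha_1}{p''(1)}(M_2^{2}-M_1^{2})$ carries the sign $\sgn(M_1^{2}-M_2^{2})$. Choosing $T(\delta)$ so large that both remainders are, in absolute value, at most half the leading term for $t\ge T(\delta)$ gives the lower bound in \eqref{cor:eq:V_lower}; the upper bound is immediate from the reduction step together with the bound $|(\xi_1+\xi_2)(0,t)|\le C\delta(t+1)^{-3/2}$ (Lemma~\ref{lem:xi}), or is directly inherited from the $|V(t)|\le C\delta(t+1)^{-3/2}$ estimate of \cite{Koike21}. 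The main obstacle is the asymptotic evaluation of $\zeta_i(0,t)$: a naive Gaussian scaling count suggests an $O((t+1)^{-1})$ leading contribution, which however vanishes by parity in the Gaussian integrals, so one must compute a subleading Laplace expansion carefully, and in particular verify the crucial sign relation $\alpha_1=-\alpha_2$ that yields the $M_1^{2}-M_2^{2}$ (rather than $M_1^{2}+M_2^{2}$) combination---this cancellation is exactly what makes the condition $M_1^{2}\neq M_2^{2}$ necessary as well as sufficient.
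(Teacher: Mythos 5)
The overall reduction is the same as the paper's: read off $V(t)=u(0_\pm,t)=\tfrac{2c^2}{p''(1)}(u_1+u_2)(0_\pm,t)$, note that $\theta_i(0,t)$, $\partial_x\theta_{i'}(0,t)$ are exponentially small and $\Psi_i(0,t)=O((t+1)^{-7/4})$, then invoke Theorem~\ref{thm:main} to reduce everything to the asymptotics of $(\xi_1+\xi_2)(0,t)$. That part is fine and matches the paper's proof. The problem is what comes next.

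The paper does this via Lemma~\ref{lem:xi_lower}, which is the technical heart of the corollary: it uses Lemma~\ref{lem:3.4_pre} to integrate by parts the Duhamel representation of $\zeta_i(0,t)$ so that the exponentially small boundary term $(\lambda-\lambda')^{-1}\sqrt{2\pi\nu}\,\theta_{i'}^{2}(0,t)$ is extracted first, and the surviving $I_{22}$ term is re-expanded via $L_2\theta_{i'}^2=-2\partial_x(\theta_{i'}^3/3)-\nu(\partial_x\theta_{i'})^2$; the cubic term is again integrated by parts and shown subleading ($O((t+1)^{-2})$), leaving a \emph{non-negative} integrand $(\partial_x\theta_{i'})^2$ with the explicit sign $(-1)^i$ (this is $\mathcal{V}_i$), which is then compared to the fully explicit $\mathcal{W}(t)$ carrying the factor $M_1^2-M_2^2$. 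Positivity of the integrand in $\mathcal{W}$ and a direct estimate give the lower bound \eqref{lem:eq2:xi_lower}.

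You replace this with a Laplace-expansion heuristic around $s\approx t/2$, and you yourself flag the two load-bearing assertions --- that a naively $O(t^{-1})$ contribution vanishes ``by parity'', and that the surviving coefficients satisfy $\alpha_1=-\alpha_2\neq 0$ --- as things that ``must be computed carefully'' but are not. That is precisely the content of Lemma~\ref{lem:xi_lower}; without it you have restated the problem, not solved it. In addition, your claim that the remainder $R_i$ is $o\bigl((t+1)^{-3/2}\bigr)$ at $x=0$ is not what Lemma~\ref{lem:xi} gives: the bound on $\eta_i=\xi_i-\zeta_i$ at $x=0$ is $O(\delta^3(t+1)^{-3/2})$, the \emph{same} power of $t$ as the main term. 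The argument closes not because the remainder decays faster in $t$, but because the main term is of size $\sim\delta^2(t+1)^{-3/2}$ and $\delta^3\ll\delta^2$ for $\delta$ small --- a $\delta$-hierarchy, not a $t$-hierarchy. The paper tracks exactly this in \eqref{lem:eq3:xi_lower} and then absorbs the remainders by choosing $\delta$ small and $T(\delta)$ large. So: right reduction, right target, but the key asymptotic evaluation (sign, non-vanishing, and size of the leading coefficient of $(\xi_1+\xi_2)(0,t)$) and the correct error hierarchy are missing.
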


The result above shows that the condition $M_{1}^{2}-M_{2}^{2}\neq 0$ is a sufficient condition for the optimality of the decay estimate $V(t)=O(t^{-3/2})$; the following result shows that this condition is also a necessary condition.

\begin{cor}
  \label{cor:V_improved_decay}
  Define $M_i$ by~\eqref{eq:mass} and assume that $M_{1}^{2}-M_{2}^{2}=0$, that is,
  \begin{equation}
    \left( \int_{-\infty}^{\infty}(v_0-1)(x)\, dx \right) \cdot \left( \int_{-\infty}^{\infty}u_0(x)\, dx+V_0 \right)=0.
  \end{equation}
  Then under the assumptions of Theorem~\ref{thm:main}, there exist $\delta_0,C>0$ such that if~\eqref{ass:smallness} holds, then the solution $(v,u,V)$ to~\eqref{eq:FundamentalEquations} satisfies
  \begin{equation}
    \label{cor:eq:V_improved_decay}
    |V(t)|\leq C\delta(t+1)^{-7/4} \quad (t\geq 0).
  \end{equation}
\end{cor}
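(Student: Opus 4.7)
The plan is to evaluate Theorem~\ref{thm:main} at $x=0_\pm$ and to show that, under the hypothesis $M_1^{2}=M_2^{2}$, the only terms on the right-hand side that could potentially contribute at order $(t+1)^{-3/2}$ in fact cancel.

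By the boundary condition $u(0_\pm,t)=V(t)$ in~\eqref{eq:FundamentalEquations} together with the eigenbasis decomposition~\eqref{eq:decomposition}--\eqref{def:ui} and the explicit formulas for the right eigenvectors in~\eqref{right_ev}, one has $V(t)=\frac{2c^{2}}{p''(1)}(u_1(0_\pm,t)+u_2(0_\pm,t))$. Summing~\eqref{thm:main:ineq} over $i=1,2$ at $x=0_\pm$ then yields
\begin{equation*}
V(t)=\tfrac{2c^{2}}{p''(1)}\sum_{i=1}^{2}\bigl[\theta_{i}(0,t)+\xi_{i}(0_\pm,t)+\gamma_{i'}\partial_x\theta_{i'}(0,t)\bigr]+R(t),\qquad |R(t)|\le C\delta(\Psi_1+\Psi_2)(0_\pm,t).
\end{equation*}
A direct computation from~\eqref{eq:psi74}--\eqref{def:Psi} with $\lambda_i=\pm c\neq 0$ gives $\Psi_i(0_\pm,t)\le C(t+1)^{-7/4}$, so $R(t)=O(\delta(t+1)^{-7/4})$. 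Moreover, by the explicit formula~\eqref{eq:theta_explicit}, both $\theta_i(0,t)$ and $\partial_x\theta_{i'}(0,t)$ are Gaussian profiles evaluated at a distance $|\lambda_i|(t+1)\asymp t$ from their centers $x=\lambda_i(t+1)$, hence decay like $e^{-\kappa t}$ for some $\kappa>0$ and are absorbed into the $(t+1)^{-7/4}$ remainder.

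The proof is therefore reduced to showing $|\xi_{1}(0_\pm,t)+\xi_{2}(0_\pm,t)|\leq C\delta(t+1)^{-7/4}$ whenever $M_1^{2}=M_2^{2}$. For this I would extract the leading long-time asymptotics of $\xi_i(0_\pm,t)$ from the Duhamel representation of~\eqref{def:xii}--\eqref{def:xii_init}, as a refinement of the estimates underlying Lemma~\ref{lem:xi}. The dominant contribution to $\xi_i(0,t)$ comes from the ``crossing time'' $s\sim t/2$, at which the retarded heat kernel centered at $y=-\lambda_i(t-s)$ overlaps the source $\partial_y(\theta_{i'}^{2}/2)$ centered at $y=\lambda_{i'}(s+1)$; both profiles have width $O(\sqrt{t})$ there, and $\theta_{i'}(\cdot,s)$ contributes at scale $M_{i'}/\sqrt{s+1}$. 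A self-similar rescaling of this overlap integral produces an expansion $\xi_{i}(0_\pm,t)=(-1)^{i}\,c_\star\,M_{i'}^{2}\,(t+1)^{-3/2}+O((t+1)^{-7/4})$ for an explicit constant $c_\star=c_\star(c,\nu,p''(1))$, with the sign $(-1)^{i}$ dictated by the direction of propagation $\lambda_i$. Summing over $i$ then yields
\begin{equation*}
\xi_{1}(0_\pm,t)+\xi_{2}(0_\pm,t)=c_\star(M_{1}^{2}-M_{2}^{2})(t+1)^{-3/2}+O\bigl((t+1)^{-7/4}\bigr),
\end{equation*}
whose leading term vanishes under $M_{1}^{2}=M_{2}^{2}$, giving $|V(t)|\le C\delta(t+1)^{-7/4}$.

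The main obstacle is the rigorous extraction of the coefficient $(-1)^{i}c_\star M_{i'}^{2}$ for the $(t+1)^{-3/2}$ term in $\xi_i(0_\pm,t)$: one must carefully isolate the crossing-time contribution in the Duhamel integral and verify that the variable-coefficient term $\partial_x(\theta_i\xi_i)$ in~\eqref{def:xii} contributes only at order $(t+1)^{-7/4}$, so that it cannot introduce an uncancelled $(t+1)^{-3/2}$ correction of a different algebraic form in $(M_1,M_2)$. The fact that the leading coefficient must be proportional to $M_{1}^{2}-M_{2}^{2}$ rather than, say, $M_{1}^{2}+M_{2}^{2}$ is independently forced by compatibility with the lower bound in Corollary~\ref{cor:V_lower}, and this provides a useful consistency check on the sign structure of $c_\star$.
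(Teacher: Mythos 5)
Your opening reduction is correct and matches the paper: evaluating Theorem~\ref{thm:main} at $x=0_\pm$, using $u(0_\pm,t)=V(t)$, the exponential decay of $\theta_i(0,t)$ and $\partial_x\theta_{i'}(0,t)$ implied by~\eqref{eq:theta_explicit}, and $\Psi_i(0,t)=O((t+1)^{-7/4})$ from~\eqref{def:Psi}, everything reduces to showing $|\xi_1(0,t)+\xi_2(0,t)|\le C\delta(t+1)^{-7/4}$ under $M_1^2=M_2^2$. This is exactly~\eqref{cor:eq:V_lower:proof:eq1} in the paper.

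The gap is in the claimed asymptotic
$\xi_i(0_\pm,t)=(-1)^{i}c_\star M_{i'}^{2}(t+1)^{-3/2}+O((t+1)^{-7/4})$. The error from writing $\theta_{i'}$ as $\tfrac{M_{i'}}{\sqrt{2\pi\nu}}\partial_x\Theta_1+O(M_{i'}^{2})$ in the Cole--Hopf formula, and from the variable--coefficient term $\partial_x(\theta_i\xi_i)$ in~\eqref{def:xii}, is of size $O(\delta^{3}(t+1)^{-3/2})$, not $O((t+1)^{-7/4})$; see~\eqref{lem:eq3:xi_lower} in Lemma~\ref{lem:xi_lower}, whose remainder is precisely $C\delta^{3}(t+1)^{-3/2}$. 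For $t\gtrsim\delta^{-8}$ one has $\delta^{3}(t+1)^{-3/2}\gg \delta(t+1)^{-7/4}$, so your expansion would only yield $|V(t)|\le C\delta(t+1)^{-7/4}+C\delta^{3}(t+1)^{-3/2}$, which is strictly weaker than~\eqref{cor:eq:V_improved_decay}. Killing the leading quadratic piece proportional to $M_1^2-M_2^2$ is not enough: one must show the cancellation in $(\xi_1+\xi_2)(0,t)$ persists to all orders in $\delta$.

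That all-orders cancellation is the real content of the corollary, and it is what Lemma~\ref{lem:xi_cancellation} supplies, via two quite different mechanisms which your argument does not capture. When $M_1+M_2=0$ the reflection identity $\theta_1(-x,t)=-\theta_2(x,t)$ gives $\mathcal V_1=-\mathcal V_2$ exactly. When $M_1=M_2$ there is no pointwise identity between $(\partial_x\theta_1)^{2}(-y,s)$ and $(\partial_x\theta_2)^{2}(y,s)$; instead the paper uses the shift identity $\theta_1(y+2c(s+1),s)=\theta_2(y,s)$ to show their difference has zero total mass, introduces the antiderivative $F$ in~\eqref{eq:antiderivative}, and integrates by parts to gain the extra decay $|\mathcal V(t)|\le C\delta^2(t+1)^{-2}$. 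This asymmetry between the two sub-cases (exact antisymmetry versus an integral-level cancellation) is essential and is invisible in an expansion that only tracks the coefficient of $M_{i'}^{2}$; without an argument of this type the proof does not close.
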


Now, before going into their proofs, we discuss some aspects of the results above.

\begin{rem}
  As we mentioned above, the condition $M_{1}^{2}\neq M_{2}^{2}$, that is,~\eqref{ass:nonzero_perturbation} is a necessary and sufficient condition for the optimality of the decay estimate $V(t)=O(t^{-3/2})$. This condition is used to obtain a lower bound of $|(\xi_1+\xi_2)(0,t)|$, the sum of inter-diffusion waves evaluated at $x=0$; see Lemma~\ref{lem:xi_lower}. By~\eqref{eq:change_of_variables}, the condition~\eqref{ass:nonzero_perturbation} is expressed in the Eulerian coordinate as
  \begin{equation}
    \left( \int_{-\infty}^{\infty}(\rho_0-1)(X)\, dX \right) \cdot \left( \int_{-\infty}^{\infty}(\rho_0 U_0)(X)\, dX+V_0 \right) \neq 0.
  \end{equation}
  This is the requirement that the initial perturbations of the total density and the total momentum be non-zero. Since we are dealing with the velocity $V(t)$ of the point mass, for~\eqref{cor:eq:V_lower} to hold, it seems natural to require that the initial perturbation of the total momentum is non-zero: $M_1+M_2 \neq 0$. For example, if $v_0-1$ is even, $u_0$ is odd, and $V_0=0$, then $M_1+M_2=0$. In this case, we have $V(t)=0$ for all $t\geq 0$ by symmetry, and the decay estimate $V(t)=O(t^{-3/2})$ is obviously not optimal. However, the requirement that the initial perturbation of the total density be non-zero (i.e.~$M_1-M_2 \neq 0$) is more subtle; and this subtlety is in fact reflected in the proof of Lemma~\ref{lem:xi_cancellation}, which we use in the proof of Corollary~\ref{cor:V_improved_decay}.
\end{rem}

\begin{rem}
  \label{rem:previous_pwe}
  In our previous work, we showed the following pointwise estimates instead of~\eqref{thm:main:ineq}~\cite[Theorem~1.2]{Koike21}:
  \begin{equation}
    \label{eq:pwe_old}
    |(u_i-\theta_i)(x,t)|\leq C\delta \Phi_i(x,t) \quad (x\in \mathbb{R}_*,t\geq 0;i=1,2),
  \end{equation}
  where
  \begin{equation}
    \label{def:Phi}
    \Phi_i(x,t)\coloneqq [(x-\lambda_i(t+1))^2+(t+1)]^{-3/4}+[|x-\lambda_{i'}(t+1)|^3+(t+1)^2]^{-1/2}.
  \end{equation}
  Since $\Psi_i$ decays faster than $\Phi_i$, Theorem~\ref{thm:main} says that we can improve the diffusion wave approximation of the solution $u_i$ by adding the inter-diffusion wave $\xi_i$ (and $\gamma_{i'}\partial_x \theta_{i'}$).

  In Table~\ref{table1}, we listed decay estimates (optimal in general) of functions appearing in~\eqref{thm:main:ineq}.\footnote{For example, the table reads as: $\xi_i(x,t)=O(t^{-3/2})$ in the $O(1)$-neighborhood of $x=0$.}~From the table, we observe the following: (I) In the $O(1)$-neighborhood of $x=\lambda_i t$, the leading order long-time asymptotics of $u_i$ is given by $\theta_i$ and the second order asymptotics by $\xi_i$; (II) in the $O(1)$-neighborhood of $x=0$, the leading order asymptotics of $u_i$ is given by $\xi_i$; (III) in the $O(1)$-neighborhood of $x=\lambda_{i'}t$, the leading order asymptotics of $u_i$ is given by $\xi_i+\gamma_{i'}\partial_x \theta_{i'}$. In particular, the leading order asymptotics of $V(t)=u(0_{\pm},t)=(2c^2/p''(1))(u_1+u_2)(0_{\pm},t)$ is described by a linear combination of inter-diffusion waves, that is, $(2c^2/p''(1))(\xi_1+\xi_2)(0,t)$; this is so, however, only when $M_{1}^{2}\neq M_{2}^{2}$, and cancellation occurs in the sum $(\xi_1+\xi_2)(0,t)$ when $M_{1}^{2}=M_{2}^{2}$. Therefore, analysis of inter-diffusion waves is the key in the proofs of Corollaries~\ref{cor:V_lower} and~\ref{cor:V_improved_decay}.
\end{rem}

\begin{table}[htbp]
  \caption{Decay estimates of functions appearing in Theorem~\ref{thm:main}}
  \centering
  \begin{tabular}{l | c c c c}
    \hline
    \addstackgap{Location} & \addstackgap{$\theta_i(x,t)$} & \addstackgap{$\xi_i(x,t)$} & \addstackgap{$\partial_x \theta_{i'}(x,t)$} & \addstackgap{$\Psi_i(x,t)$} \\ [0.5ex]
    \hline
    \addstackgap{$x-\lambda_i t=O(1)$} & \addstackgap{$O(t^{-1/2})$} & \addstackgap{$O(t^{-3/4})$} & \addstackgap{$O(e^{-t/C})$} & \addstackgap{$O(t^{-7/8})$} \\ [0.5ex]
    \addstackgap{$x=O(1)$} & \addstackgap{$O(e^{-t/C})$} & \addstackgap{$O(t^{-3/2})$} & \addstackgap{$O(e^{-t/C})$} & \addstackgap{$O(t^{-7/4})$} \\ [0.5ex]
    \addstackgap{$x-\lambda_{i'}t=O(1)$} & \addstackgap{$O(e^{-t/C})$} & \addstackgap{$O(t^{-1})$} & \addstackgap{$O(t^{-1})$} & \addstackgap{$O(t^{-5/4})$} \\ [1.0ex]
    \hline
  \end{tabular}
  \label{table1}
\end{table}

\begin{rem}
  Under the assumptions of Corollary~\ref{cor:V_lower}, the sign of $V(t)$ becomes constant after sufficiently long time has elapsed, which is $\sgn(M_{1}^{2}-M_{2}^{2})$. Hence, $V(t)$ does not decay in an oscillatory manner.
\end{rem}

\begin{rem}
  Concerning Corollary~\ref{cor:V_improved_decay}, the question whether the decay estimate $V(t)=O(t^{-7/4})$ for initial data satisfying $M_{1}^{2}=M_{2}^{2}$ is optimal or not is left open. We remark, however, that we numerically observed that for the corresponding Cauchy problem, $u(0,t)$ actually decays as $t^{-7/4}$ for certain initial data satisfying
  \begin{equation}
    \int_{-\infty}^{\infty}(v_0-1)(x)\, dx=0, \quad \int_{-\infty}^{\infty}u_0(x)\, dx\neq 0.
  \end{equation}
  For this, we refer to~\cite[Section~4]{Koike21RIMS}.
\end{rem}

\begin{rem}
  In our previous work, we mentioned without a proof that~\eqref{cor:eq:V_lower} holds when $M_1 \neq 0$ and $M_2=0$~\cite[Remark~1.3]{Koike21}.\footnote{There is a typo in~\cite[Remark~1.3]{Koike21}: we wrote $|V(t)|\geq C^{-1}\delta(t+1)^{-3/2}$ but this should have been $|V(t)|\geq C^{-1}\delta^2(t+1)^{-3/2}$.}~Note that in this special case, we have $\theta_2=\xi_1=0$ and the differential equation for $\xi_2$ is not variable coefficient. We also mention that also in~\cite[Remark~2.7]{LZ97}, it was claimed without a proof that the decay estimate $(u_i-\theta_i)(0,t)=O(t^{-3/2})$ is optimal for the Cauchy problem. Our contribution in this paper is to give a rigorous proof and make clear --- give a simple necessary and sufficient condition --- when the decay rate $-3/2$ is the best possible.
\end{rem}

\begin{rem}
  In~\cite[Theorem~1.2]{Koike21} where we proved the pointwise estimates~\eqref{eq:pwe_old}, the required regularity of the initial data is $H^4(\mathbb{R}_*)$, whereas it is $H^6(\mathbb{R}_*)$ in Theorem~\ref{thm:main}. The reason is because, in order to obtain the refined pointwise estimates~\eqref{thm:main:ineq}, we need pointwise estimates of $\partial_x(u_i-\theta_i)$ (cf.~Theorem~\ref{thm:PWE_derivative}). Similarly to the case of the Cauchy problem~\cite[Remark~2.8]{LZ97}, we then need this stronger regularity requirement.
\end{rem}

\begin{rem}
  It is natural to ask whether we could remove the smallness assumptions in Theorem~\ref{thm:main}. In fact, for barotropic compressible Navier--Stokes equations and its heat conductive generalizations, there are many works on global-in-time existence of large solutions. One of the earliest works are those by Kanel'~\cite{Kanel68} and by Kazhikhov and Shelukhin~\cite{KS77}. Although these works consider systems without moving solids, subsequent works extended these to fluid--structure interaction problems~\cite{FMNT18,MTT17,Shelukhin77,Shelukhin78,Shelukhin82,Shelukhin83}. When the fluid domain is bounded, the long-time behavior of these large solutions are well understood, with or without moving solids: the system stabilizes to equilibrium exponentially fast; see~\cite{Lequeurre20} and Theorem~2.2 in the survey article~\cite{Nishida86}. However, when the fluid domain is unbounded, we only know that the system returns to equilibrium~\cite{Kanel68,LL16}; in order to obtain explicit convergence rates, we still need to restrict ourselves to small solutions as in~\cite{Hoff92,Kawashima86,LZ97} (this is also the case for the inviscid case~\cite{Liu78}). Considering this state-of-the-art, it seems quite difficult to remove the smallness assumptions in Theorem~\ref{thm:main}. We remark, however, that for a corresponding problem for viscous Burgers' equation, a very sharp result on the long-time behavior of a point mass is obtained in~\cite{VZ03} without smallness restrictions.
\end{rem}

\section{Proofs}
\label{sec:proof}
In this section, we prove Theorem~\ref{thm:main} and also Corollaries~\ref{cor:V_lower} and~\ref{cor:V_improved_decay}. After some preliminary considerations in Sections~\ref{sec:greens} to \ref{AppendixB}, Theorem~\ref{thm:main} is proved in Section~\ref{SectionIII:PointwiseEstimates}; Corollaries~\ref{cor:V_lower} and~\ref{cor:V_improved_decay} are proved in Section~\ref{sec:proofs_cors}.

\subsection{Green's functions and integral equations}
\label{sec:greens}
First, we review some properties related to Green's functions. Denote by $G=G(x,t)\in \mathbb{R}^{2\times 2}$ the fundamental solution to the linearization of~\eqref{eq:vector_form}, that is, the solution to
\begin{equation}
  \label{SectionIII:CNS_Lag_per_Fundamental_Solution}
  \begin{dcases}
    \partial_t G+
    \begin{pmatrix}
      0    & -1 \\
      -c^2 & 0
    \end{pmatrix}
    \partial_x G=
    \begin{pmatrix}
      0 & 0 \\
      0 & \nu
    \end{pmatrix}
    \partial_{x}^{2}G,   & x\in \mathbb{R},\, t>0, \\
    G(x,0)=\delta(x)I_2, & x\in \mathbb{R},
  \end{dcases}
\end{equation}
where $\delta(x)$ is the Dirac delta function and $I_2$ is the $2\times 2$ identity matrix. Next, let $G^*=G^*(x,t)\in \mathbb{R}^{2\times 2}$ be the modified fundamental solution defined by the following equations:
\begin{equation}
  \label{SectionIII:CNS_Lag_per_Fundamental_Solution_Modified}
  \begin{dcases}
    \partial_t G^*+
    \begin{pmatrix}
      0    & -1 \\
      -c^2 & 0
    \end{pmatrix}
    \partial_x G^* =\frac{\nu}{2}\partial_{x}^{2}G^*, & x\in \mathbb{R},\, t>0, \\
    G^*(x,0)=\delta(x)I_2,                            & x\in \mathbb{R}.
  \end{dcases}
\end{equation}
This function $G^*$ has the following explicit expression~\cite[p.~1060]{Zeng94}:
\begin{equation}
  \label{SectionIII:CNS_Lag_per_Fundamental_Solution_Modified_Explicit_Formula}
  G^*(x,t)=\frac{1}{2(2\pi \nu t)^{1/2}}e^{-\frac{(x-ct)^2}{2\nu t}}
  \begin{pmatrix}
    1  & -\frac{1}{c} \\
    -c & 1
  \end{pmatrix}
  +\frac{1}{2(2\pi \nu t)^{1/2}}e^{-\frac{(x+ct)^2}{2\nu t}}
  \begin{pmatrix}
    1 & \frac{1}{c} \\
    c & 1
  \end{pmatrix}.
\end{equation}
The difference $G-G^*$ between the fundamental solution $G$ and its modification $G^*$ satisfies the following pointwise estimates~\cite[Theorem~5.8]{LZ97}: for any integer $k\geq 0$,
\begin{equation}
  \label{SectionIII:Bound_of_G-G*}
  \left| \partial_{x}^{k}G(x,t)-\partial_{x}^{k}G^*(x,t)-e^{-\frac{c^2}{\nu}t}\sum_{j=0}^{k}\delta^{(k-j)}(x)Q_j(t) \right| \leq C(t+1)^{-\frac{1}{2}}t^{-\frac{k+1}{2}}\left( e^{-\frac{(x-ct)^2}{Ct}}+e^{-\frac{(x+ct)^2}{Ct}} \right),
\end{equation}
where $\delta^{(k)}(x)$ is the $k$-th derivative of the Dirac delta function and $Q_j=Q_j(t)$ is a $2\times 2$ polynomial matrix; in the inequality above and in what follows, the symbol $C$ denotes a sufficiently large constant. Additionally, we have
\begin{equation}
  \label{SectionIII:Q0Q1}
  Q_0=
  \begin{pmatrix}
    1 & 0 \\
    0 & 0
  \end{pmatrix}
  , \quad Q_1=
  \begin{pmatrix}
    0                & -\frac{1}{\nu} \\
    -\frac{c^2}{\nu} & 0
  \end{pmatrix}.
\end{equation}
Moreover, the following refined pointwise estimates are also known~\cite[Theorem~1.3]{LZ09}: for any integer $k\geq 0$,
\begin{align}
  \label{SectionIII:Bound_of_G-G*_refined}
  \begin{aligned}
    & \partial_{x}^{k}G(x,t)-\partial_{x}^{k}G^*(x,t)-\gamma_1 \partial_{x}^{k+1}\frac{e^{-\frac{(x-ct)^2}{2\nu t}}}{(2\pi \nu t)^{1/2}}
    \begin{pmatrix}
      -1 & 0 \\
      0 & 1
    \end{pmatrix}
    -\gamma_2 \partial_{x}^{k+1}\frac{e^{-\frac{(x+ct)^2}{2\nu t}}}{(2\pi \nu t)^{1/2}}
    \begin{pmatrix}
      -1 & 0 \\
      0 & 1
    \end{pmatrix}
    -e^{-\frac{c^2}{\nu}t}\sum_{j=0}^{k}\delta^{(k-j)}(x)Q_j(t) \\
    & =O(1)(t+1)^{-1/2}t^{-\frac{k+1}{2}}e^{-\frac{(x-ct)^2}{Ct}}
    \begin{pmatrix}
      1  & -\frac{1}{c} \\
      -c & 1
    \end{pmatrix}
    +O(1)(t+1)^{-1/2}t^{-\frac{k+1}{2}}e^{-\frac{(x+ct)^2}{Ct}}
    \begin{pmatrix}
      1 & \frac{1}{c} \\
      c & 1
    \end{pmatrix} \\
    & \quad +O(1)(t+1)^{-1/2}t^{-\frac{k+2}{2}}\left( e^{-\frac{(x-ct)^2}{Ct}}+e^{-\frac{(x+ct)^2}{Ct}} \right),
  \end{aligned}
\end{align}
where $\gamma_i=(-1)^i \nu/(4c)$. Here, $O(1)f(x,t)$ is a (scalar) function whose absolute value is bounded by $C|f(x,t)|$. In what follows, we mainly use~\eqref{SectionIII:Bound_of_G-G*_refined} but we also use~\eqref{SectionIII:Bound_of_G-G*} to treat the case of $t\leq 1$.

Next, let $\mathcal{L}$ be the Laplace transform in $t$ and $\mathcal{L}^{-1}$ its inverse. Denote by $s$ the Laplace variable. Then let
\begin{equation}
  \label{SectionIII:Gb}
  G_T(x,t)\coloneqq \mathcal{L}^{-1}\left[ \frac{2}{\lambda+2}\mathcal{L}[G] \right](x,t), \quad G_R(x,t)\coloneqq (G-G_T)(x,t)
  \begin{pmatrix}
    1 & 0 \\
    0 & -1
  \end{pmatrix},
\end{equation}
where $\lambda=s/\sqrt{\nu s+c^2}$ ($\Re s>0$).\footnote{We take the branch such that $\Re \lambda>0$ when $\Re s>0$.}~Here, the subscripts ``T'' and ``R'' stand for ``transmission'' and ``reflection'' since $G_T$ and $G_R$ can be understood as ``Green's functions'' describing transmission and reflection of waves at the point mass; see~\cite[Remark~3.1]{Koike21}. Transmissive Green's function $G_T$ is connected to $G$ in the following way~\cite[Eq.~(40)]{Koike21}:
\begin{equation}
  \label{eq:GT_der}
  \partial_x G_T(x,t)=-2G(x,t)+2G_T(x,t)
\end{equation}
for $x>0$; therefore, we have
\begin{equation}
  \label{SectionIII:GTFormula}
  G_T(x,t)=2\int_{-\infty}^{0}e^{2z}G(x-z,t)\, dz \quad (x>0).
\end{equation}
Similar relations can be obtained also for $x<0$. By~\eqref{eq:GT_der}, reflective Green's function $G_R$ is related to $G_T$ by
\begin{equation}
  \label{SectionIII:GbIntByParts}
  G_R(x,t)=-\frac{1}{2}\partial_x G_T(x,t)
  \begin{pmatrix}
    1 & 0 \\
    0 & -1
  \end{pmatrix}
  =-\frac{1}{2}\left( \partial_x G(x,t)+\frac{1}{2}\partial_{x}^{2}G_T(x,t) \right)
  \begin{pmatrix}
    1 & 0 \\
    0 & -1
  \end{pmatrix}
\end{equation}
for $x>0$; again, a similar formula holds for $x<0$. Now, using~\eqref{SectionIII:Bound_of_G-G*} and~\eqref{SectionIII:GTFormula}, we obtain the following pointwise estimates for $G_T$~\cite[Eq.~(43)]{Koike21}:
\begin{equation}
  \label{SectionIII:Bounds_of_GT}
  |\partial_{x}^{k}G_T(x,t)|\leq C(t+1)^{-1/2}t^{-k/2}\left( e^{-\frac{(x-ct)^2}{Ct}}+e^{-\frac{(x+ct)^2}{Ct}} \right)+Ce^{-\frac{|x|+t}{C}}
\end{equation}
for $(x,t)\in \mathbb{R}_*\times (0,\infty)$ and for any integer $k\geq 0$.

Next, we write down an integral equation satisfied by the solution $(v,u,V)$ to~\eqref{eq:FundamentalEquations}. First, we recall the definition of the nonlinear term $N$ defined in~\eqref{eq:vector_form_variables}:
\begin{equation}
  \label{def:N}
  N=-p(v)+p(1)-c^2(v-1)-\nu \frac{v-1}{v}u_x.
\end{equation}
Then, we have the following proposition~\cite[Proposition~3.1]{Koike21}.

\begin{prop}
  \label{SectionIII:Proposition:IntegralEquation}
  Let $(v,u,V)$ be the global-in-time solution to~\eqref{eq:FundamentalEquations} belonging to the function spaces described in Theorem~\ref{thm:global_existence} below. Then it satisfies the following integral equation:
  \begin{align}
    \label{SectionIII:Proposition:IntegralEquation:IntegralEquation}
    \begin{aligned}
      \begin{pmatrix}
        v-1 \\
        u
      \end{pmatrix}
      (x,t) & =\int_{0}^{\infty}G(x-y,t)
      \begin{pmatrix}
        v_0-1 \\
        u_0
      \end{pmatrix}
      (y)\, dy+\int_{0}^{\infty}G_R(x+y,t)
      \begin{pmatrix}
        v_0-1 \\
        u_0
      \end{pmatrix}
      (y)\, dy \\
      & \quad +\int_{-\infty}^{0}G_T(x-y,t)
      \begin{pmatrix}
        v_0-1 \\
        u_0
      \end{pmatrix}
      (y)\, dy+G_T(x,t)
      \begin{pmatrix}
        0 \\
        V_0
      \end{pmatrix} \\
      & \quad +\int_{0}^{t}\int_{0}^{\infty}G(x-y,t-s)
      \begin{pmatrix}
        0 \\
        N_x
      \end{pmatrix}
      (y,s)\, dyds \\
      & \quad +\int_{0}^{t}\int_{0}^{\infty}G_R(x+y,t-s)
      \begin{pmatrix}
        0 \\
        N_x
      \end{pmatrix}
      (y,s)\, dyds+\int_{0}^{t}\int_{-\infty}^{0}G_T(x-y,t-s)
      \begin{pmatrix}
        0 \\
        N_x
      \end{pmatrix}
      (y,s)\, dyds \\
      & \quad +\int_{0}^{t}G_T(x,t-s)
      \begin{pmatrix}
        0 \\
        \llbracket N \rrbracket
      \end{pmatrix}
      (s)\, ds
    \end{aligned}
  \end{align}
  for $x>0$; a similar formula holds for $x<0$.
\end{prop}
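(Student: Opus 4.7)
The plan is to derive \eqref{SectionIII:Proposition:IntegralEquation:IntegralEquation} via the Laplace transform in $t$, treating \eqref{eq:vector_form} as a linear system with prescribed source $(0,N_x)^T$ subject to point-mass boundary conditions $u(0_\pm,t)=V(t)$ together with Newton's law, and then to identify each piece on the right-hand side with a specific contribution in the resulting inversion.

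Applying the Laplace transform in $t$ on each of $x>0$ and $x<0$ separately --- writing a hat for the transform --- reduces \eqref{eq:vector_form} to a $2\times 2$ linear ODE in $x$ with source $\bm{u}_0(x)+(0,\hat{N}_x(x,s))^T$, while the point-mass conditions become
\begin{equation}
  \hat{u}(0_\pm,s)=\hat{V}(s),\quad s\hat{V}(s)-V_0=\llbracket \widehat{\mathcal{C}_1}\rrbracket.
\end{equation}
Eliminating $\hat{u}$ via the Laplace of $v_t-u_x=0$, the system collapses to a scalar second-order equation $\hat{v}_{xx}-\lambda^2\hat{v}=(\mathrm{source})/(\nu s+c^2)$ with $\lambda=s/\sqrt{\nu s+c^2}$; its decaying modes at $\pm\infty$ are $e^{\mp\lambda x}$. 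Since $\mathcal{C}_1=N+c^2(v-1)+\nu u_x-p(1)$, the jump $\llbracket \mathcal{C}_1\rrbracket$ splits into a linear part $\llbracket c^2(v-1)+\nu u_x\rrbracket$ that will be absorbed into the Green's-function propagation and a genuinely nonlinear residue $\llbracket N\rrbracket$ that will remain as an explicit interface source.

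On each half-line I write the general solution as a convolution of the source against the whole-line transformed fundamental solution $\mathcal{L}[G]$ plus one free coefficient multiplying the decaying homogeneous mode; together with the unknown $\hat{V}$ this gives three unknowns, pinned down by the three matching conditions above. A direct algebraic computation shows that these coefficients organize into the multiplier $2/(\lambda+2)$ --- precisely the multiplier defining $\mathcal{L}[G_T]$ in \eqref{SectionIII:Gb} --- and the remaining contribution matches $G_R$ via \eqref{SectionIII:GbIntByParts}. Inverting the Laplace transform term by term then produces the seven summands of \eqref{SectionIII:Proposition:IntegralEquation:IntegralEquation}: direct ($G$) and reflected ($G_R$) propagation of same-side initial data, transmitted ($G_T$) propagation of opposite-side initial data, the $V_0$-term $G_T(x,t)(0,V_0)^T$, the three analogous Duhamel contributions from $N_x$, and the interface-jump term $\int_0^t G_T(x,t-s)(0,\llbracket N\rrbracket)^T\,ds$.

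The main obstacle is the Laplace-domain bookkeeping that produces the multiplier $2/(\lambda+2)$; conceptually, $\lambda$ is the characteristic exponent of the decaying $x$-mode, and $2/(\lambda+2)$ is the scattering amplitude that continuity of $\hat{u}$ at $0_\pm$ together with the Laplace-transformed Newton's law forces at the interface. Because this derivation has already been carried out in \cite[Proposition~3.1]{Koike21} for the present system, one can simply invoke that result here; uniqueness of the global-in-time solution furnished by Theorem~\ref{thm:global_existence} then transfers the identity from the linearized ansatz to the actual nonlinear solution $(v,u,V)$.
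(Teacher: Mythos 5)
Your proposal is correct and matches the paper's approach: the paper does not reprove this statement but simply cites~\cite[Proposition~3.1]{Koike21}, which is exactly what you do, and your Laplace-transform sketch (half-line ODE with characteristic exponent $\lambda=s/\sqrt{\nu s+c^2}$, three unknowns matched by the two traces and Newton's law, the resulting $2/(\lambda+2)$ transmission multiplier, and the decomposition $\mathcal{C}_1=N+c^2(v-1)+\nu u_x-p(1)$ separating the linear interface terms from $\llbracket N\rrbracket$) accurately describes the derivation carried out there. The closing appeal to uniqueness is unnecessary, though harmless: once $(v,u,V)$ is a classical solution, $N_x$ and $\llbracket N\rrbracket$ can be treated as known sources and the Duhamel identity holds directly, with no need to pass through a linearized ansatz.
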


Next, let
\begin{equation}
  \label{def:vi}
  v_i \coloneqq u_i-\theta_i-\xi_i-\gamma_{i'}\partial_x \theta_{i'}.
\end{equation}
Using Proposition~\ref{SectionIII:Proposition:IntegralEquation}, we can write down an integral equation for $v_i$. To do so, we introduce:
\begin{equation}
  g_i \coloneqq l_i G
  \begin{pmatrix}
    r_1 & r_2
  \end{pmatrix}
  , \quad g_{i}^{*}\coloneqq l_i G^*
  \begin{pmatrix}
    r_1 & r_2
  \end{pmatrix}
  , \quad g_{T,i}\coloneqq l_i G_T
  \begin{pmatrix}
    r_1 & r_2
  \end{pmatrix}
  , \quad g_{R,i}\coloneqq l_i G_R
  \begin{pmatrix}
    r_1 & r_2
  \end{pmatrix},
\end{equation}
where $r_i$ and $l_i$ are given by~\eqref{right_ev} and~\eqref{def:li}. Note that
\begin{equation}
  \label{SectionIII:gi*_Explicit_Formula}
  g_{1}^{*}=\frac{1}{(2\pi \nu t)^{1/2}}e^{-\frac{(x-ct)^2}{2\nu t}}
  \begin{pmatrix}
    1 & 0
  \end{pmatrix}
  , \quad  g_{2}^{*}=\frac{1}{(2\pi \nu t)^{1/2}}e^{-\frac{(x+ct)^2}{2\nu t}}
  \begin{pmatrix}
    0 & 1
  \end{pmatrix}.
\end{equation}
First, multiplying~\eqref{SectionIII:Proposition:IntegralEquation:IntegralEquation} by $l_i$ from the left, we obtain, for $x>0$,
\begin{align}
  \label{SectionIII:uiIntegralEquation}
  \begin{aligned}
    u_i(x,t)
    & =\int_{0}^{\infty}g_i(x-y,t)
    \begin{pmatrix}
      u_{01} \\
      u_{02}
    \end{pmatrix}
    (y)\, dy+\int_{0}^{\infty}g_{R,i}(x+y,t)
    \begin{pmatrix}
      u_{01} \\
      u_{02}
    \end{pmatrix}
    (y)\, dy \\
    & \quad +\int_{-\infty}^{0}g_{T,i}(x-y,t)
    \begin{pmatrix}
      u_{01} \\
      u_{02}
    \end{pmatrix}
    (y)\, dy+m_V g_{T,i}(x,t)\bm{1} \\
    & \quad +\int_{0}^{t}\int_{0}^{\infty}g_i(x-y,t-s)
    \begin{pmatrix}
      N_1 \\
      N_2
    \end{pmatrix}_x
    (y,s)\, dyds \\
    & \quad +\int_{0}^{t}\int_{0}^{\infty}g_{R,i}(x+y,t-s)
    \begin{pmatrix}
      N_1 \\
      N_2
    \end{pmatrix}_x
    (y,s)\, dyds+\int_{0}^{t}\int_{-\infty}^{0}g_{T,i}(x-y,t-s)
    \begin{pmatrix}
      N_1 \\
      N_2
    \end{pmatrix}_x
    (y,s)\, dyds \\
    & \quad +\int_{0}^{t}g_{T,i}(x,t-s)
    \begin{pmatrix}
      \llbracket N_1 \rrbracket \\
      \llbracket N_2 \rrbracket
    \end{pmatrix}
    (s)\, ds,
  \end{aligned}
\end{align}
where $u_{0i}$ is defined by~\eqref{def:u0i}, $m_V=l_i(0\, V_0)^{T}$, $\bm{1}=(1\, 1)^{T}$, and $N_i$ is defined by~\eqref{def:Ni}; a similar formula holds for $x<0$. Next, integral equations satisfied by $\theta_i$ and $\xi_i$ are easily obtained by looking at~\eqref{def:thetai},~\eqref{def:xii}, and~\eqref{def:xii_init}:
\begin{equation}
  \label{SectionIII:thetaiIntegralEquation}
  \theta_i(x,t)=\int_{-\infty}^{\infty}g_{i}^{*}(x-y,t)
  \begin{pmatrix}
    \theta_1 \\
    \theta_2
  \end{pmatrix}
  (y,0)\, dy-\frac{1}{2}\int_{0}^{t}\int_{-\infty}^{\infty}g_{i}^{*}(x-y,t-s)
  \begin{pmatrix}
    \theta_{1}^{2} \\
    \theta_{2}^{2}
  \end{pmatrix}_x
  (y,s)\, dyds
\end{equation}
and
\begin{equation}
  \label{eq:xii_IE}
  \xi_i(x,t)=
  -\int_{0}^{t}\int_{-\infty}^{\infty}g_{i}^{*}(x-y,t-s)
  \begin{pmatrix}
    \theta_{2}^{2}/2+\theta_1 \xi_1 \\
    \theta_{1}^{2}/2+\theta_2 \xi_2
  \end{pmatrix}_x
  (y,s)\, dyds.
\end{equation}
Hence, subtracting~\eqref{SectionIII:thetaiIntegralEquation}, its spatial derivative, and~\eqref{eq:xii_IE} from~\eqref{SectionIII:uiIntegralEquation}, we finally obtain an integral equation for $v_i$ (cf.~\cite[Eq.~(58)]{Koike21}): for $x>0$,
\begin{align}
  \label{eq:vi_IE}
  \begin{aligned}
    & v_i(x,t) \\
    & =\int_{-\infty}^{\infty}g_i(x-y,t)
    \begin{pmatrix}
      u_{01} \\
      u_{02}
    \end{pmatrix}
    (y)\, dy-\int_{-\infty}^{\infty}g_{i}^{*}(x-y,t)
    \begin{pmatrix}
      \theta_1 \\
      \theta_2
    \end{pmatrix}
    (y,0)\, dy-\gamma_{i'}\int_{-\infty}^{\infty}\partial_x g_{i'}^{*}(x-y,t)
    \begin{pmatrix}
      \theta_1 \\
      \theta_2
    \end{pmatrix}
    (y,0)\, dy \\
    & \quad +\int_{0}^{\infty}g_{R,i}(x+y,t)
    \begin{pmatrix}
      u_{01} \\
      u_{02}
    \end{pmatrix}
    (y)\, dy+\int_{-\infty}^{0}g_{R,i}(x-y,t)
    \begin{pmatrix}
      u_{02} \\
      u_{01}
    \end{pmatrix}
    (y)\, dy+m_V g_{T,i}(x,t)\bm{1} \\
    & \quad +\int_{0}^{t}\int_{-\infty}^{\infty}g_{i}^{*}(x-y,t-s)
    \begin{pmatrix}
      N_1-N_{1}^{*} \\
      N_2-N_{2}^{*}
    \end{pmatrix}_x
    (y,s)\, dyds \\
    & \quad +\int_{0}^{t}\int_{-\infty}^{\infty}(g_i-g_{i}^{*})(x-y,t-s)
    \begin{pmatrix}
      N_1 \\
      N_2
    \end{pmatrix}_x
    (y,s)\, dyds+\gamma_{i'}\int_{0}^{t}\int_{-\infty}^{\infty}\partial_x g_{i'}^{*}(x-y,t-s)
    \begin{pmatrix}
      \theta_{1}^{2}/2 \\
      \theta_{2}^{2}/2
    \end{pmatrix}_x
    (y,s)\, dyds \\
    & \quad +\int_{0}^{t}\int_{0}^{\infty}g_{R,i}(x+y,t-s)
    \begin{pmatrix}
      N_1 \\
      N_2
    \end{pmatrix}_x
    (y,s)\, dyds+\int_{0}^{t}\int_{-\infty}^{0}g_{R,i}(x-y,t-s)
    \begin{pmatrix}
      N_1 \\
      N_2
    \end{pmatrix}_x
    (y,s)\, dyds \\
    & \quad +\int_{0}^{t}g_{T,i}(x,t-s)
    \begin{pmatrix}
      \llbracket N_1 \rrbracket \\
      \llbracket N_2 \rrbracket
    \end{pmatrix}
    (s)\, ds,
  \end{aligned}
\end{align}
where
\begin{equation}
  \label{def:N*}
  N_{i}^{*}=-\theta_{1}^{2}/2-\theta_{2}^{2}/2-\theta_i \xi_i.
\end{equation}
A similar formula holds for $x<0$.

Next, we give some remarks on the structure of $g_i$, $g_{T,i}$, and $g_{R,i}$. First, corresponding to~\eqref{SectionIII:Bound_of_G-G*}, we have for any integer $k\geq 0$,
\begin{equation}
  \label{eq:PWE_gi}
  \left| \partial_{x}^{k}g_i(x,t)-\partial_{x}^{k}g_{i}^{*}(x,t)-e^{-\frac{c^2}{\nu}t}\sum_{j=0}^{k}\delta^{(k-j)}(x)q_{ij}(t) \right| \leq C(t+1)^{-1/2}t^{-\frac{k+1}{2}}\left( e^{-\frac{(x-ct)^2}{Ct}}+e^{-\frac{(x+ct)^2}{Ct}} \right),
\end{equation}
where
\begin{equation}
  q_{ij}(t)=l_i Q_j(t)
  \begin{pmatrix}
    r_1 & r_2
  \end{pmatrix}.
\end{equation}
Next, corresponding to~\eqref{SectionIII:Bound_of_G-G*_refined}, we have for any integer $k\geq 0$,
\begin{align}
  \label{eq:PWE_gi_refined}
  \begin{aligned}
    & \left| \partial_{x}^{k}g_i(x,t)-\partial_{x}^{k}g_{i}^{*}(x,t)-\gamma_{i'}\partial_{x}^{k+1}g_{i'}^{*}(x,t)-e^{-\frac{c^2}{\nu}t}\sum_{j=0}^{k}\delta^{(k-j)}(x)q_{ij}(t) \right| \\
    & \leq Ct^{-\frac{k+2}{2}}e^{-\frac{(x-\lambda_i t)^2}{Ct}}+C(t+1)^{-1/2}t^{-\frac{k+2}{2}}e^{-\frac{(x-\lambda_{i'}t)^2}{Ct}},
  \end{aligned}
\end{align}
where $i'=3-i$. Note also that by~\eqref{eq:GT_der} and~\eqref{SectionIII:GbIntByParts}, we have
\begin{equation}
  \label{eq:gT_der}
  g_{T,i}(x,t)=g_i(x,t)+\frac{1}{2}\partial_x g_{T,i}(x,t)=g_i(x,t)+\frac{1}{2}\partial_x g_i(x,t)+\frac{1}{4}\partial_{x}^{2}g_{T,i}(x,t)
\end{equation}
and
\begin{equation}
  \label{eq:gR_relation}
  g_{R,i}(x,t)=\frac{1}{2}\partial_x g_{T,i}(x,t)
  \begin{pmatrix}
    0 & 1 \\
    1 & 0
  \end{pmatrix}
  =(g_{T,i}-g_i)(x,t)
  \begin{pmatrix}
    0 & 1 \\
    1 & 0
  \end{pmatrix}
  =\frac{1}{2}\left( \partial_x g_i+\frac{1}{2}\partial_{x}^{2}g_{T,i} \right)(x,t)
  \begin{pmatrix}
    0 & 1 \\
    1 & 0
  \end{pmatrix}
\end{equation}
for $x>0$; similar formulae hold for $x<0$. Moreover, by~\eqref{SectionIII:Bounds_of_GT}, we have
\begin{equation}
  \label{eq:PWE_gTi}
  |\partial_{x}^{k}g_{T,i}(x,t)|\leq C(t+1)^{-1/2}t^{-k/2}\left( e^{-\frac{(x-ct)^2}{Ct}}+e^{-\frac{(x+ct)^2}{Ct}} \right)+Ce^{-\frac{|x|+t}{C}}
\end{equation}
for $(x,t)\in \mathbb{R}_*\times (0,\infty)$ and for any integer $k\geq 0$. Finally, by~\eqref{SectionIII:CNS_Lag_per_Fundamental_Solution} and~\eqref{SectionIII:CNS_Lag_per_Fundamental_Solution_Modified}, we have
\begin{equation}
  \label{eq:Li_gi-gi*}
  L_i(g_i-g_{i}^{*})=\frac{\nu}{2}\partial_{x}^{2}g_{i'},
\end{equation}
where $L_i=\partial_t+\lambda_i \partial_x -(\nu/2)\partial_{x}^{2}$.

\subsection{Structure of the nonlinear term}
\label{sec:nonlinear}
We next examine the structure of the nonlinear term $N$ defined by~\eqref{def:N}. First, note that by Taylor's theorem, we have
\begin{equation}
  \label{eq:N_structure}
  N=-\frac{p''(1)}{2}(v-1)^2-\nu(v-1)u_x+O(|v-1|^3)+O(|(v-1)^2 u_x|).
\end{equation}
Using~\eqref{eq:decomposition} and~\eqref{def:vi}, the first term on the right-hand side multiplied by $p''(1)/(4c^2)$ is expressed in terms of $v_i$, $\theta_i$, and $\xi_i$ as follows:
\begin{equation}
  -\frac{p''(1)^2}{8c^2}(v-1)^2=-\frac{1}{2}(-u_1+u_2)^2=-\frac{1}{2}(-\theta_1-\xi_1-\gamma_2 \partial_x \theta_2-v_1+\theta_2+\xi_2+\gamma_1 \partial_x \theta_1+v_2)^2.
\end{equation}
Then, by expanding the square, we obtain
\begin{align}
  \label{eq:tau_squared}
  \begin{aligned}
    & -\frac{p''(1)^2}{8c^2}(v-1)^2 \\
    & =-\frac{1}{2}\theta_{1}^{2}-\theta_1 \xi_1-\gamma_2 \theta_1 \partial_x \theta_2-\theta_1 v_1+\theta_1 \theta_2+\theta_1 \xi_2+\gamma_1 \theta_1 \partial_x \theta_1+\theta_1 v_2 \\
    & \quad -\frac{1}{2}\xi_{1}^{2}-\gamma_2 \xi_1 \partial_x \theta_2-\xi_1 v_1+\xi_1 \theta_2+\xi_1 \xi_2+\gamma_1 \xi_1 \partial_x \theta_1+\xi_1 v_2 \\
    & \quad -\frac{\gamma_{2}^{2}}{2}(\partial_x \theta_2)^2-\gamma_2(\partial_x \theta_2)v_1+\gamma_2(\partial_x \theta_2)\theta_2+\gamma_2(\partial_x \theta_2)\xi_2+\gamma_1 \gamma_2(\partial_x \theta_2)\partial_x \theta_1+\gamma_2(\partial_x \theta_2)v_2 \\
    & \quad -\frac{1}{2}v_{1}^{2}+v_1 \theta_2+v_1 \xi_2+\gamma_1 v_1 \partial_x \theta_1+v_1 v_2-\frac{1}{2}\theta_{2}^{2}-\theta_2 \xi_2-\gamma_1 \theta_2 \partial_x \theta_1-\theta_2 v_2 \\
    & \quad -\frac{1}{2}\xi_{2}^{2}-\gamma_1 \xi_2 \partial_x \theta_1-\xi_2 v_2-\frac{\gamma_{1}^{2}}{2}(\partial_x \theta_1)^2-\gamma_1(\partial_x \theta_1)v_2-\frac{1}{2}v_{2}^{2}.
  \end{aligned}
\end{align}

\subsection{Global-in-time existence of solutions and decay estimates of their derivatives}
\label{sec:global_existence}
We next state a theorem on the existence of the global-in-time solution to~\eqref{eq:FundamentalEquations}. Its proof is very similar to that of~\cite[Theorem~1.1]{Koike21}; hence we omit the proof. Note, however, that an additional compatibility condition is required since we consider here solutions with higher regularity. See~\eqref{def:C1C2} for the definitions of $\mathcal{C}_1(v,u)$ and $\mathcal{C}_2(v,u)$ used below.

\begin{thm}
  \label{thm:global_existence}
  Let $v_0-1,u\in H^6(\mathbb{R}_*)$ and $V_0 \in \mathbb{R}$. Assume that they satisfy the following compatibility conditions:
  \begin{equation}
    u_0(0_{\pm})=V_0, \quad \mathcal{C}_1(v_0,u_0)_x(0_{\pm})=\llbracket \mathcal{C}_1(v_0,u_0) \rrbracket, \quad \mathcal{C}_2(v_0,u_0)_x(0_{\pm})=\llbracket \mathcal{C}_2(v_0,u_0) \rrbracket.
  \end{equation}
  Then there exist $\varepsilon_0,C>0$ such that if
  \begin{equation}
    \label{smallness}
    \varepsilon \coloneqq ||v_0-1||_6+||u_0||_6 \leq \varepsilon_0,
  \end{equation}
  then~\eqref{eq:FundamentalEquations} has the unique classical solution
  \begin{align}
    v-1 & \in C([0,\infty);H^6(\mathbb{R}_*))\cap C^1([0,\infty);H^5(\mathbb{R}_*)), \\
    u   & \in C([0,\infty);H^6(\mathbb{R}_*))\cap C^1([0,\infty);H^4(\mathbb{R}_*)), \\
    u_x & \in L^2(0,\infty;H^6(\mathbb{R}_*)), \\
    V   & \in C^3([0,\infty))
  \end{align}
  satisfying
  \begin{equation}
    \label{solution_smallness}
    ||(v-1)(t)||_6+||u(t)||_6+\left( \int_{0}^{\infty}||u_x(s)||_{6}^{2}\, ds \right)^{1/2}+\sum_{k=0}^{3}|\partial_{t}^{k}V(t)|\leq C\varepsilon \quad (t\geq 0).
  \end{equation}
\end{thm}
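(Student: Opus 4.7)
The plan is to establish global existence by combining a short-time local existence result with uniform-in-time a priori energy estimates, following the Matsumura--Nishida strategy adapted to the free-boundary setting of~\cite{Koike21}. Since the author explicitly references that proof, the task is to identify which ingredients need to be strengthened in order to accommodate the higher regularity $H^6(\mathbb{R}_*)$.

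First, for local existence I would linearize~\eqref{eq:FundamentalEquations} around a known iterate $(\tilde v,\tilde u,\tilde V)$ and solve the resulting linear parabolic--hyperbolic system on each half-line $x>0$ and $x<0$ coupled through the Dirichlet-type condition $u(0_\pm,t)=V(t)$ and Newton's equation $V'(t)=\llbracket \mathcal{C}_1(v,u)\rrbracket$. A Picard iteration in a Banach space reflecting the function-space class stated in the theorem would produce a solution on a short interval $[0,T_0]$. The three compatibility conditions in~\eqref{eq:compatibility} are exactly what is needed so that $u$, $u_t$, and $u_{tt}$ are continuous across the space--time corner at $(x,t)=(0,0)$: the first gives $u|_{x=0_\pm}=V$ at $t=0$; differentiating the boundary identity in time gives $u_t|_{x=0_\pm}=V'=\llbracket \mathcal{C}_1\rrbracket$, which combined with $u_t=\mathcal{C}_1(v,u)_x$ forces the second condition; one more time differentiation together with $\mathcal{C}_2=\partial_t \mathcal{C}_1$ yields the third.

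Second, I would derive uniform a priori estimates of the form~\eqref{solution_smallness} under a smallness assumption. The baseline is the physical energy identity, obtained by multiplying the first equation of~\eqref{eq:FundamentalEquations} by $p(1)-p(v)$ and the second by $u$, integrating over $\mathbb{R}_*$, adding, and using Newton's equation to cancel the boundary contribution at $x=0$:
\begin{equation}
\frac{d}{dt}\left[\int_{\mathbb{R}_*}\!\left(\tfrac{u^2}{2}+\Phi(v)\right)dx+\tfrac{V(t)^2}{2}\right]+\nu\int_{\mathbb{R}_*}\frac{u_x^2}{v}\,dx=0,
\end{equation}
with $\Phi(v)=\int_1^v (p(1)-p(\eta))\,d\eta\geq 0$. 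Higher-order estimates then proceed by successively differentiating the equations in $x$ (up to six times), trading $\partial_x^{2k}$ control for $\partial_t^k$ control via the parabolic structure of the $u$-equation, and using $v_t=u_x$ to recover density derivatives. Smallness of $\varepsilon$ is used to absorb nonlinear terms and to keep $v$ bounded away from zero so that the dissipation $\int u_x^2/v\,dx$ remains coercive; the estimates are structured so as to close into a Grönwall-type inequality that yields the bound~\eqref{solution_smallness} with constant independent of $t$. Combining this with local existence via a standard continuation argument gives the global solution.

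The main obstacle I expect lies in the top-order boundary terms at $x=0$. Each integration by parts on $\mathbb{R}_*$ generates a jump contribution $\llbracket\,\cdot\,\rrbracket(t)$ at the point mass, and at the $H^6$ level the traces of $\partial_x^{k}u$, $\partial_x^{k}v$ for $k$ up to $5$ or $6$ must be rewritten via the equations as time derivatives of $u$, $v$, and $V$ evaluated at $x=0_\pm$. This procedure requires the full set of compatibility conditions in~\eqref{eq:compatibility} (and their propagation in time) to avoid spurious singularities, and the algebraic bookkeeping needed to reduce every boundary term to something controllable---while preserving sign structure so that the dissipation $\nu\int u_x^2/v\,dx$ dominates---is precisely the step that is substantively harder than in the $H^4$ case treated in~\cite[Theorem~1.1]{Koike21}.
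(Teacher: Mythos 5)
Your outline matches the route the paper itself invokes: the paper omits the proof entirely, deferring to \cite[Theorem~1.1]{Koike21} with only the remark that one additional compatibility condition is needed at the $H^6$ level. Your identification of that structure is accurate---the three conditions in~\eqref{eq:compatibility} are exactly the corner conditions obtained by evaluating $u(0_\pm,t)=V(t)$ and its first two time derivatives at $t=0$, using $u_t=\mathcal{C}_1(v,u)_x$, $\mathcal{C}_2=\partial_t\mathcal{C}_1$, and Newton's law $V'=\llbracket\mathcal{C}_1\rrbracket$, and the third one is the genuinely new requirement relative to the $H^4$ theorem. Your basic energy identity and the Matsumura--Nishida scheme (local existence by linearization and iteration, higher-order energy estimates absorbing nonlinearities by smallness, continuation) are also the standard machinery used in~\cite{Koike21}. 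In short, the proposal is a correct roadmap and takes essentially the same approach the paper points to, though it is (necessarily, given that you were writing blind) a plan rather than a full execution of the top-order boundary estimates that you correctly flag as the substantive work.
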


Next, we state a theorem on pointwise estimates of $\partial_x(u_i-\theta_i)$. In the case of the Cauchy problem, a similar theorem is given in~\cite[Theorem~2.6]{LZ97} (see also~\cite[Remark~2.8]{LZ97}). The proof of Theorem~\ref{thm:PWE_derivative} below is basically just a combination of the proofs of~\cite[Theorem~2.6]{LZ97} and~\cite[Theorem~1.2]{Koike21}; hence we omit its proof for brevity.

\begin{thm}
  \label{thm:PWE_derivative}
  Let $v_0-1,u_0 \in H^6(\mathbb{R}_*)$ and $V_0 \in \mathbb{R}$. Assume that they satisfy the compatibility conditions stated in Theorem~\ref{thm:global_existence}. Then there exist $\delta_{0}',C>0$ such that if
  \begin{align}
    \label{ass:smallness_existence}
    \delta'
    & \coloneqq \sum_{i=1}^{2}\biggl\{ ||u_{0i}||_6+||u_{0i}^{-}||_{L^1(-\infty,0)}+||u_{0i}^{+}||_{L^1(0,\infty)}+\sup_{x\in \mathbb{R}_*}\left[ (|x|+1)^{3/2}|u_{0i}(x)| \right] \\
    & \phantom{\coloneqq \sum_{i}^{2}[} \quad +\sup_{x>0}\left[ (|x|+1)(|u_{0i}^{-}(-x)|+|u_{0i}^{+}(x)|) \right] \biggr\} \leq \delta_{0}',
  \end{align}
  where $u_{0i}^{\pm}$ are defined by~\eqref{def:u0ipm}, then
  \begin{equation}
    |\partial_x (u_i-\theta_i)(x,t)|\leq C\delta'(t+1)^{-1/2}\Phi_i(x,t) \quad (x\in \mathbb{R}_*,t\geq 0;i=1,2)
  \end{equation}
  and
  \begin{equation}
    |\partial_{x}^{2}u_i(\cdot,t)|_{\infty}\leq C\delta'(t+1)^{-3/2} \quad (t\geq 0;i=1,2),
  \end{equation}
  where $\Phi_i$ is defined by~\eqref{def:Phi}.
\end{thm}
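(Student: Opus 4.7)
The plan is to mimic the pointwise-estimate argument used for $u_i-\theta_i$ in~\cite[Theorem~1.2]{Koike21} (which is itself the boundary-problem adaptation of~\cite[Theorem~2.6]{LZ97}), but applied one derivative higher. First I would write down the integral equation satisfied by $u_i-\theta_i$ by subtracting~\eqref{SectionIII:thetaiIntegralEquation} from~\eqref{SectionIII:uiIntegralEquation}; this is the same equation as~\eqref{eq:vi_IE} with the $\xi_i$ and $\gamma_{i'}\partial_x\theta_{i'}$ terms omitted and with $N_i^*$ replaced by $-\theta_1^2/2-\theta_2^2/2$. Differentiating once in $x$ then yields an integral representation for $\partial_x(u_i-\theta_i)$ whose kernels are $\partial_x g_i$, $\partial_x g_i^*$, $\partial_x g_{R,i}$ and $\partial_x g_{T,i}$, while the sources are the quadratic nonlinearities from~\eqref{eq:N_structure}, their jumps at $x=0$, and the initial data $u_{0i}$.

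Next I would run a bootstrap with functional
\begin{equation*}
  \mathcal{M}(T)\coloneqq \sup_{0\le t\le T,\,x\in\mathbb{R}_*,\,i}\frac{(t+1)^{1/2}|\partial_x(u_i-\theta_i)(x,t)|}{\Phi_i(x,t)},
\end{equation*}
and show that $\mathcal{M}(T)\le C\delta'+C(\delta'+\mathcal{M}(T))^2$ as long as the solution exists; continuity in $T$ then closes the estimate. The linear contributions from the initial data are handled using~\eqref{eq:PWE_gi},~\eqref{eq:PWE_gi_refined},~\eqref{eq:PWE_gTi}, and integration by parts to transfer the extra derivative onto $u_{0i}$ (via $u_{0i}^{\pm}$), which is precisely where the hypothesis~\eqref{ass:smallness_existence} on $(|x|+1)^{3/2}|u_{0i}|$ and $(|x|+1)|u_{0i}^{\pm}|$ is consumed. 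For the nonlinear Duhamel terms, I would feed in the already-known bound $|u_i-\theta_i|\le C\delta'\Phi_i$ from~\cite[Theorem~1.2]{Koike21}, the explicit form~\eqref{eq:theta_explicit} of $\theta_i$, and the induction hypothesis $|\partial_x(u_i-\theta_i)|\le \mathcal{M}(T)(t+1)^{-1/2}\Phi_i$, and reduce the remaining convolutions to the $\Phi$-against-$\Phi$ heat-kernel lemmas of~\cite[Section~3]{LZ97}, adapted to the half-lines as in~\cite{Koike21}.

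The second bound $|\partial_x^2u_i(\cdot,t)|_\infty\le C\delta'(t+1)^{-3/2}$ I would obtain in one of two equivalent ways: either by differentiating the integral equation one more time and rerunning the same convolution argument (the additional $(t+1)^{-1/2}$ factor arising, once again, from the extra derivative on the Green's kernels via~\eqref{eq:PWE_gi}--\eqref{eq:PWE_gi_refined}), or by interpolating the just-established first pointwise bound against the $H^6$-energy control provided by~\eqref{solution_smallness} using Gagliardo--Nirenberg, after noting that $|\partial_x^2\theta_i|_\infty=O((t+1)^{-3/2})$ follows directly from~\eqref{eq:theta_explicit}. The latter route has the advantage of avoiding a second pass through the convolution estimates and is consistent with the $H^6$ assumption, which is exactly the regularity used to control the $\partial_x^2 N$-type terms that would appear after a second differentiation.

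The main technical obstacle is the boundary contribution, specifically verifying that the convolution of $\partial_x g_{T,i}(x,t-s)$ against the jump $\llbracket N_i\rrbracket(s)$ still fits into the ansatz $(t+1)^{-1/2}\Phi_i(x,t)$ uniformly across the three distinguished regions $x\sim\lambda_i t$, $x=O(1)$, and $x\sim\lambda_{i'}t$. By~\eqref{eq:PWE_gTi}, $\partial_x g_{T,i}$ splits into a travelling Gaussian, handled by standard heat-kernel convolutions, and a localized exponentially decaying piece near $x=0$, and it is the latter that dictates the sharper spatial-decay hypothesis on $u_{0i}^{\pm}$ in~\eqref{ass:smallness_existence}. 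A secondary subtlety is that $\partial_x g_i-\partial_x g_i^*$ contains a $\delta'(x)$-type singularity (the $k=1$ case of~\eqref{eq:PWE_gi}), which is absorbed by the prefactor $e^{-c^2t/\nu}$ for $t\ge 1$ and handled for small $t$ by direct $H^6$ control from Theorem~\ref{thm:global_existence}.
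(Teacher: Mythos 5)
Your overall outline --- subtract~\eqref{SectionIII:thetaiIntegralEquation} from~\eqref{SectionIII:uiIntegralEquation} to get the integral equation for $u_i-\theta_i$, differentiate once in $x$, run a bootstrap with the weighted sup functional, and absorb the $\delta'(x)$-singularities of $\partial_x g_i-\partial_x g_i^*$ via the $e^{-c^2t/\nu}$ prefactor --- is indeed the combination of~\cite[Theorem~2.6]{LZ97} and~\cite[Theorem~1.2]{Koike21} the paper alludes to, and for the first estimate this is fine.

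The problem is the second estimate. You present two ``equivalent'' routes, but the interpolation route does not close. The $H^6$ control~\eqref{solution_smallness} is \emph{time-uniform}, not time-decaying, so Gagliardo--Nirenberg against it gives at best
\begin{equation}
  \|\partial_x^2(u_i-\theta_i)(\cdot,t)\|_\infty \leq C\,\|\partial_x(u_i-\theta_i)(\cdot,t)\|_\infty^{1-\sigma}\|\partial_x^{m}(u_i-\theta_i)(\cdot,t)\|_\infty^{\sigma}
\end{equation}
with $\sigma\in(0,1)$ and the second factor $O(1)$. Since the first pointwise bound only yields $\|\partial_x(u_i-\theta_i)(\cdot,t)\|_\infty\leq C\delta'(t+1)^{-1/2}\sup_x\Phi_i(x,t)\lesssim (t+1)^{-5/4}$ (the sup of $\Phi_i$ sits at $x\approx\lambda_i t$ and equals $O((t+1)^{-3/4})$, not $O((t+1)^{-1})$), interpolation gives $(t+1)^{-5(1-\sigma)/4}$, which is strictly worse than the target $(t+1)^{-3/2}$ for every $\sigma\geq 0$. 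Obtaining $(t+1)^{-3/2}$ from a quantity decaying like $(t+1)^{-5/4}$ requires raising it to a power $6/5>1$, which no interpolation against a bounded quantity can do. You must therefore take your first route: differentiate the Duhamel representation twice, let the linear term $\int \partial_x^2 G\,u_{0}$ and $\partial_x^2\theta_i$ supply the $(t+1)^{-3/2}$ rate via the $L^1\to L^\infty$ Gaussian bound, and control the second-derivative nonlinear sources (where the $H^6$ regularity genuinely enters) by rerunning the convolution lemmas; the interpolation shortcut should be dropped.
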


\subsection{Properties of inter-diffusion waves}
\label{AppendixA}
In this section, we prove some properties of the inter-diffusion wave $\xi_i$ defined by~\eqref{def:xii} and~\eqref{def:xii_init}. Let us first introduce some notations. For $\lambda \in \mathbb{R}$ and $\alpha,\mu>0$, let
\begin{equation}
  \label{def:Theta}
  \Theta_{\alpha}(x,t;\lambda,\mu)\coloneqq (t+1)^{-\alpha/2}e^{-\frac{(x-\lambda(t+1))^2}{\mu(t+1)}}
\end{equation}
and
\begin{equation}
  \psi_{\alpha}(x,t;\lambda)\coloneqq [(x-\lambda(t+1))^2+(t+1)]^{-\alpha/2}.
\end{equation}
Note that
\begin{equation}
  \label{eq:Theta_psi}
  \Theta_{\alpha}(x,t;\lambda,\mu)\leq C\psi_{\alpha}(x,t;\lambda).
\end{equation}
Moreover, the diffusion wave $\theta_i$ satisfies
\begin{equation}
  \label{eq:theta_Theta}
  |\partial_{x}^{k}\theta_i(x,t)|\leq C\delta \Theta_{1+k}(x,t;\lambda_i,\nu^*)
\end{equation}
for any integer $k\geq0$ when $\delta$ defined in~\eqref{ass:smallness} satisfies $\delta \leq 1$; see~\eqref{eq:theta_explicit} (and note that $|M_i|\leq C\delta$). \textit{Here and in what follows, the symbols $C$ and $\nu^*$ denote generic large constants.}

The following lemma gives accurate pointwise estimates of $\xi_i$.

\begin{lem}
  \label{lem:xi}
  Suppose that $\delta$ defined in~\eqref{ass:smallness} is sufficiently small. Then we have
  \begin{equation}
    \label{lem:xi:eq}
    |\partial_{x}^{k}\xi_i(x,t)-(-1)^i (4c)^{-1}\partial_{x}^{k}\theta_{i'}^{2}(x,t)|\leq C\delta^2 (t+1)^{-k/2}\psi_{3/2}(x,t;\lambda_i)
  \end{equation}
  for any integer $k\geq 0$, where $i'=3-i$. In particular, we have
  \begin{equation}
    |\partial_{x}^{k}\xi_i(x,t)|\leq C\delta^2 (t+1)^{-k/2}[\psi_{3/2}(x,t;\lambda_i)+\Theta_2(x,t;\lambda_{i'},\nu^*)].
  \end{equation}
\end{lem}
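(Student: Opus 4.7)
The plan is to identify the leading profile of $\xi_i$ as $\alpha_i \theta_{i'}^2$, with the constant $\alpha_i := (-1)^i/(4c)$ chosen so that the principal forcing $-\partial_x(\theta_{i'}^2/2)$ in~\eqref{def:xii} is cancelled by $L_i(\alpha_i \theta_{i'}^2)$, where $L_i := \partial_t + \lambda_i \partial_x - (\nu/2)\partial_x^2$. Using the Burgers equation~\eqref{def:thetai} for $\theta_{i'}$ together with the identity $\partial_x^2 \theta_{i'}^2 = 2(\partial_x \theta_{i'})^2 + 2\theta_{i'}\partial_x^2 \theta_{i'}$, a direct computation gives
\begin{equation}
L_i(\alpha_i \theta_{i'}^2) = \alpha_i(\lambda_i - \lambda_{i'}) \partial_x \theta_{i'}^2 - 2\alpha_i \theta_{i'}^2 \partial_x \theta_{i'} - \alpha_i \nu (\partial_x \theta_{i'})^2,
\end{equation}
and the identity $\alpha_i(\lambda_i - \lambda_{i'}) = -1/2$, which follows from $\lambda_i - \lambda_{i'} = 2(-1)^{i+1}c$, matches the first term on the right-hand side to $-\partial_x(\theta_{i'}^2/2)$. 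Setting $\eta_i := \xi_i - \alpha_i \theta_{i'}^2$, I would therefore obtain
\begin{equation}
L_i \eta_i = -\partial_x(\theta_i \xi_i) + 2\alpha_i \theta_{i'}^2 \partial_x \theta_{i'} + \alpha_i \nu (\partial_x \theta_{i'})^2, \qquad \eta_i(x,0) = -\alpha_i \theta_{i'}^2(x,0),
\end{equation}
whose forcing is cubic or quadratic in $\theta_{i'}$ apart from the variable-coefficient term.

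By Duhamel, $\eta_i$ equals $g_i^*(\cdot,t) \ast \eta_i(\cdot,0)$ plus time-integrated convolutions of $g_i^*$ with the three sources above. The smooth Gaussian initial datum $-\alpha_i \theta_{i'}^2(\cdot,0)$ and the quadratic/cubic sources $(\partial_x \theta_{i'})^2$ and $\theta_{i'}^2\partial_x \theta_{i'}$ are all of size $O(\delta^2)$ and concentrated near the $\lambda_{i'}$-characteristic, while the heat kernel $g_i^*$ travels along the well-separated $\lambda_i$-characteristic. The convolutions will be estimated using heat-kernel convolution lemmas in the spirit of~\cite[Lemma~3.4]{LZ97}, which exploit the speed-separation factor $e^{-\mu^2 s^2/[C(t-s)]}$ with $\mu := \lambda_i - \lambda_{i'} \neq 0$ to convert the $\Theta$-type decay of the sources into the sharp $\psi_{3/2}(x,t;\lambda_i)$ profile; an auxiliary estimate $|\partial_x^k \theta_i(x,t)| \leq C\delta \Theta_{1+k}(x,t;\lambda_i,\nu^*)$ (from~\eqref{eq:theta_explicit}) is used throughout to control the $\theta_{i'}$-factors.

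The principal technical obstacle is the variable-coefficient term $-\partial_x(\theta_i \xi_i)$, which renders the integral equation nonlinear. I would resolve this by a bootstrap: first, analyzing $\zeta_i$ --- the version of $\xi_i$ without the variable-coefficient term, studied in~\cite{LZ97} --- together with a perturbative argument for $\xi_i - \zeta_i$ yields the preliminary estimate $|\xi_i| \leq C\delta^2[\psi_{3/2}(\cdot;\lambda_i) + \Theta_2(\cdot;\lambda_{i'},\nu^*)]$. Plugging this back, since $\theta_i$ is exponentially concentrated near $x = \lambda_i(t+1)$, the product $\theta_i \xi_i$ decays rapidly off that characteristic, and after differentiation and convolution with $g_i^*$ its contribution to $\eta_i$ is bounded by $C\delta^3 \psi_{3/2}(x,t;\lambda_i)$, which is absorbable once $\delta_0$ is chosen small enough to close the bootstrap.

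For $k \geq 1$, I would differentiate the Duhamel representation $k$ times and distribute derivatives between $g_i^*$ (each derivative costing a smoothing factor $(t-s)^{-1/2}$) and the source. The standard splitting $s \in [0,t/2] \cup [t/2,t]$ balances these factors to yield the extra $(t+1)^{-k/2}$ in the final estimate. The consequent bound on $\partial_x^k \xi_i$ itself then follows by the triangle inequality together with $|\partial_x^k \theta_{i'}^2| \leq C\delta^2 (t+1)^{-k/2} \Theta_2(x,t;\lambda_{i'},\nu^*)$, read off directly from~\eqref{eq:theta_explicit}. I expect the sharpness of the exponent $3/2$ in $\psi_{3/2}$ (as opposed to a weaker exponent such as $1$ or $1/2$) to be the core difficulty: the initial datum contribution and the source contributions to $\eta_i$ are individually of order $\delta^2(t+1)^{-1/2}$ on the $\lambda_i$-characteristic, and the sharp bound depends on their cancellation in the sum, which is precisely what the ansatz $\alpha_i \theta_{i'}^2$ is designed to enforce.
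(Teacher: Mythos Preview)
Your identification of the ansatz $\alpha_i\theta_{i'}^2$ and the computation of $L_i(\alpha_i\theta_{i'}^2)$ are correct, and the overall architecture (treat $\zeta_i$ first, then handle the variable-coefficient term perturbatively) matches the paper. The paper handles $-\partial_x(\theta_i\xi_i)$ by an explicit Neumann series $\eta_1=\sum_n\eta_{1;n}$ rather than a bootstrap, but these are essentially equivalent for small $\delta$.

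The gap is in the last paragraph. You correctly observe that in your decomposition the initial-data piece $-\alpha_i\,g_i^*\!\ast\theta_{i'}^2(\cdot,0)$ and the source piece coming from $\alpha_i\nu(\partial_x\theta_{i'})^2$ over $s\in[0,t^{1/2}]$ are each only $O(\delta^2(t+1)^{-1/2})$ on the $\lambda_i$-characteristic. But your explanation that the needed cancellation ``is precisely what the ansatz $\alpha_i\theta_{i'}^2$ is designed to enforce'' is not right: the ansatz was chosen to kill the $-\partial_x(\theta_{i'}^2/2)$ source in the \emph{PDE} for $\eta_i$, and this says nothing about the individual Duhamel contributions summing to $\psi_{3/2}$. (The mass-level cancellation that does occur is the Burgers $L^2$-energy identity $\|\theta_{i'}(\cdot,0)\|_{L^2}^2=\nu\int_0^\infty\|\partial_x\theta_{i'}(\cdot,s)\|_{L^2}^2\,ds$, which is a separate fact, and turning it into a pointwise $\psi_{3/2}$ bound still requires work you have not outlined.)

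The paper avoids this difficulty by reversing your order of operations: it does \emph{not} subtract the ansatz from the PDE. Instead it writes $\zeta_i$ via Duhamel and applies an integration-by-parts-in-$s$ identity (Lemma~\ref{lem:3.4_pre}, which is the content behind \cite[Lemma~3.4]{LZ97}) directly to the integral $\int_0^t\partial_x g_i^*\ast\theta_{i'}^2$. The ansatz $\alpha_i\theta_{i'}^2$ then emerges as the boundary term at $s=t$, and the remaining pieces $I_1$ (over $[0,t^{1/2}]$), $I_{21}$ (boundary at $s=t^{1/2}$), and $I_{22}$ (involving $L_{i'}\theta_{i'}^2$ over $[t^{1/2},t]$) are \emph{each individually} bounded by $C\delta^2\psi_{3/2}(x,t;\lambda_i)$ --- no cancellation between terms is invoked. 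The point is that Lemma~\ref{lem:3.4_pre} applies to integrals carrying $\partial_x g_i^*$, whereas after your subtraction the source $(\partial_x\theta_{i'})^2$ is paired with $g_i^*$ without a derivative, so the lemma no longer applies directly. To make your route work you would effectively have to undo the subtraction on $[0,t^{1/2}]$ and redo the paper's integration by parts there.
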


\begin{proof}
  We consider the case of $i=1$ and $k=0$ since other cases can be treated similarly. We also assume that $t\geq 4$ since otherwise the lemma is easy to prove. Recall that $\xi_i(x,t)$ satisfies the integral equation~\eqref{eq:xii_IE}. Taking this into mind, let $\xi_1(x,t)=\zeta_1(x,t)+\eta_1(x,t)$, where
  \begin{equation}
    \label{eq:zetai_IE}
    \zeta_1(x,t)=
    -\int_{0}^{t}\int_{-\infty}^{\infty}g_{1}^{*}(x-y,t-s)
    \begin{pmatrix}
      \theta_{2}^{2}/2 \\
      \theta_{1}^{2}/2
    \end{pmatrix}_x
    (y,s)\, dyds
  \end{equation}
  and
  \begin{equation}
    \label{eq:etai_IE}
    \eta_1(x,t)=
    -\int_{0}^{t}\int_{-\infty}^{\infty}g_{1}^{*}(x-y,t-s)
    \begin{pmatrix}
      \theta_1 \xi_1 \\
      \theta_2 \xi_2
    \end{pmatrix}_x
    (y,s)\, dyds.
  \end{equation}
  We remind the reader that we have an explicit formula~\eqref{SectionIII:gi*_Explicit_Formula} for $g_{1}^{*}$.

  Let us first consider $\zeta_1(x,t)$. Set $I(x,t)=-(2\sqrt{2\pi \nu})\zeta_1(x,t)$. By Lemma~\ref{lem:3.4_pre} in the appendix, we have $I(x,t)=(2c)^{-1}\sqrt{2\pi \nu}\theta_{2}^{2}(x,t)+I_1(x,t)+I_2(x,t)$, where
  \begin{equation}
    I_1(x,t)=\int_{0}^{t^{1/2}}\int_{-\infty}^{\infty}\partial_x \left\{ (t-s)^{-1/2}e^{-\frac{(x-y-c(t-s))^2}{2\nu(t-s)}} \right\} \theta_{2}^{2}(y,s)\, dyds,
  \end{equation}
  \begin{align}
    I_2(x,t)
    & =-(2c)^{-1}\int_{-\infty}^{\infty}(t-t^{1/2})^{-1/2}e^{-\frac{(x-y-c(t-\sqrt{t}))^2}{2\nu(t-\sqrt{t})}}\theta_{2}^{2}(y,t^{1/2})\, dy \\
    & \quad -(2c)^{-1}\int_{t^{1/2}}^{t}\int_{-\infty}^{\infty}(t-s)^{-1/2}e^{-\frac{(x-y-c(t-s))^2}{2\nu(t-s)}}L_2 \theta_{2}^{2}(y,s)\, dyds \\
    & \eqqcolon I_{21}(x,t)+I_{22}(x,t),
  \end{align}
  and $L_2=\partial_t-c\partial_x-(\nu/2)\partial_{x}^{2}$. For $I_1(x,t)$, we can show that (see the analysis of $I_1$ in~\cite[p.~22]{LZ97})
  \begin{equation}
    |I_1(x,t)|\leq C\delta^2 \Theta_{3/2}(x,t;c,\nu^*)\leq C\delta^2 \psi_{3/2}(x,t;c).
  \end{equation}
  We can also show that (see the analysis of $I_{21}$ in~\cite[p.~23]{LZ97})
  \begin{equation}
    |I_{21}(x,t)|\leq C\delta^2 \Theta_{3/2}(x,t;c,\nu^*)\leq C\delta^2 \psi_{3/2}(x,t;c).
  \end{equation}
  Moreover, since 
  \begin{equation}
    L_2 \theta_{2}^{2}=-2\partial_x(\theta_{2}^{3}/3)-\nu(\partial_x \theta_2)^2
  \end{equation}
  by~\eqref{def:thetai}, we have $|(L_2 \theta_{2}^{2})(x,t)|\leq C\delta^2 \Theta_4(x,t;-c,\nu^*)$. Hence, applying Lemma~\ref{lem:I221} (with $\alpha=4$), we obtain
  \begin{equation}
    |I_{22}(x,t)|\leq C\delta^2 \psi_{3/2}(x,t;c).
  \end{equation}
  Thus, we have proved
  \begin{equation}
    \label{lem:xi:proof:eq1}
    |\zeta_1(x,t)+(4c)^{-1}\theta_2^{2}(x,t)|\leq C\delta^2 \psi_{3/2}(x,t;c).
  \end{equation}

  Let us next consider $\eta_1(x,t)$. Note that $\eta_1$ is the solution to
  \begin{equation}
    \partial_t \eta_1+c\partial_x \eta_1+\partial_x(\theta_1 \eta_1)=\frac{\nu}{2}\partial_{x}^{2}\eta_1-\partial_x(\theta_1 \zeta_1), \quad x\in \mathbb{R},t>0
  \end{equation}
  and
  \begin{equation}
    \eta_1(x,0)=0, \quad x\in \mathbb{R}.
  \end{equation}
  Hence, $\eta_1$ has a series representation:
  \begin{equation}
    \eta_1=\sum_{n=1}^{\infty}\eta_{1;n},
  \end{equation}
  where $\eta_{1;1}$ is the solution to
  \begin{equation}
    \begin{dcases}
      \partial_t \eta_{1;1}+c\partial_x \eta_{1;1}=\frac{\nu}{2}\partial_{x}^{2}\eta_{1;1}-\partial_x (\theta_1 \zeta_1), & x\in \mathbb{R},t>0, \\
      \eta_{1;1}(x,0)=0, & x\in \mathbb{R}
    \end{dcases}
  \end{equation}
  and $\eta_{1;n+1}$ ($n\geq 1$) are the solutions to
  \begin{equation}
    \begin{dcases}
      \partial_t \eta_{1;n+1}+c\partial_x \eta_{1;n+1}=\frac{\nu}{2}\partial_{x}^{2}\eta_{1;n+1}-\partial_x (\theta_1 \eta_{1;n}), & x\in \mathbb{R},t>0, \\
      \eta_{1;n+1}(x,0)=0, & x\in \mathbb{R}.
    \end{dcases}
  \end{equation}
  Note that
  \begin{equation}
    \eta_{1,1}(x,t)=
    -\int_{0}^{t}\int_{-\infty}^{\infty}\partial_x g_{1}^{*}(x-y,t-s)
    \begin{pmatrix}
      \theta_1 \zeta_1 \\
      \theta_2 \zeta_2
    \end{pmatrix}
    (y,s)\, dyds
  \end{equation}
  and that~\eqref{lem:xi:proof:eq1} implies
  \begin{equation}
    |(\theta_1 \zeta_1)(x,t)|\leq A_1 \delta^3 \Theta_{5/2}(x,t;c,2\nu')
  \end{equation}
  for some $A_1>0$ and $\nu'(>\nu)$. Hence, by~\cite[Lemma~3.2]{LZ97} (with $\alpha=0$ and $\beta=5/2$), we obtain
  \begin{equation}
    |\eta_{1;1}(x,t)|\leq M A_1 \delta^3 \Theta_{3/2}(x,t;c,2\nu')
  \end{equation}
  for some $M>0$. This means that, for $k=1$, we have
  \begin{equation}
    \label{zeta:induction}
    |\eta_{1;k}(x,t)|\leq MA_1 (MA_0)^{k-1}\delta^{k+2}\Theta_{3/2}(x,t;c,2\nu').
  \end{equation}
  Here, we define $A_0>0$ as a constant such that
  \begin{equation}
    |\theta_1(x,t)\Theta_{3/2}(x,t;c,2\nu')|\leq A_0 \delta \Theta_{5/2}(x,t;c,2\nu').
  \end{equation}
  We next show that~\eqref{zeta:induction} holds for $k=n+1$ assuming that it holds for $k=n$. Under this induction hypothesis, we have
  \begin{equation}
    |(\theta_1 \eta_{1;n})(x,t)|\leq A_1 (MA_0)^{n}\delta^{n+3}\Theta_{5/2}(x,t;c,2\nu').
  \end{equation}
  Now, using~\cite[Lemma~3.2]{LZ97} again, this time to the integral representation
  \begin{equation}
    \eta_{1,n+1}(x,t)=
    -\int_{0}^{t}\int_{-\infty}^{\infty}\partial_x g_{1}^{*}(x-y,t-s)
    \begin{pmatrix}
      \theta_1 \eta_{1;n} \\
      \theta_2 \eta_{1;n}
    \end{pmatrix}
    (y,s)\, dyds,
  \end{equation}
  we obtain
  \begin{equation}
    |\eta_{1;n+1}(x,t)|\leq MA_1 (MA_0)^n \delta^{n+3}\Theta_{3/2}(x,t;c,2\nu').
  \end{equation}
  This shows that \eqref{zeta:induction} holds for all $k\geq 1$. Hence, by taking $\delta$ sufficiently small, we have
  \begin{equation}
    |\eta_1(x,t)|=\left| \sum_{n=1}^{\infty}\eta_{1;n}(x,t) \right|\leq C\delta^3 \Theta_{3/2}(x,t;c,2\nu')\leq C\delta^3 \psi_{3/2}(x,t;c).
  \end{equation}
  Combining this with~\eqref{lem:xi:proof:eq1}, we obtain~\eqref{lem:xi:eq}.
\end{proof}

Next, we prove two lemmas that we shall use when analyzing the long-time behavior of $(\xi_1+\xi_2)(0,t)$. For $t\geq 1$, let
\begin{equation}
  \mathcal{V}_i(t)\coloneqq (-1)^i \frac{\sqrt{\nu}}{4c\sqrt{2\pi}}\int_{t^{1/2}}^{t}\int_{-\infty}^{\infty}(t-s)^{-1/2}e^{-\frac{(y+\lambda_i(t-s))^2}{2\nu(t-s)}}(\partial_x \theta_{i'})^2(y,s)\, dyds
\end{equation}
and
\begin{equation}
  \label{def:V}
  \mathcal{V}(t)\coloneqq \frac{2c^2}{p''(1)}(\mathcal{V}_1+\mathcal{V}_2)(t).
\end{equation}
Here, we remind the reader that $i'=3-i$. We also define
\begin{equation}
  \label{def:W}
  \mathcal{W}(t)\coloneqq \frac{c(M_{1}^{2}-M_{2}^{2})}{4\pi p''(1)\sqrt{2\pi \nu}}\int_{t^{1/2}}^{t}\int_{-\infty}^{\infty}(t-s)^{-1/2}e^{-\frac{(y+c(t-s))^2}{2\nu(t-s)}}[\partial_x \Theta_1(y,s;-c,2\nu)]^2\, dyds
\end{equation}
for $t\geq 1$. First, we prove the following lemma.

\begin{lem}
  \label{lem:xi_lower}
  Suppose that $\delta$ defined in~\eqref{ass:smallness} is sufficiently small and $t\geq 4$. Then we have
  \begin{equation}
    \label{lem:eq1:xi_lower}
    |\xi_i(0,t)-\mathcal{V}_i(t)|\leq C\delta^2(t+1)^{-2}.
  \end{equation}
  In particular, we have
  \begin{equation}
    \label{lem:eq4:xi_lower}
    \left| \frac{2c^2}{p''(1)}(\xi_1+\xi_2)(0,t)-\mathcal{V}(t) \right| \leq C\delta^2(t+1)^{-2}.
  \end{equation}
  Moreover, we have\footnote{We set $\sgn(0)=0$.}
  \begin{equation}
    \label{lem:eq2:xi_lower}
    C^{-1}|M_{1}^{2}-M_{2}^{2}|(t+1)^{-3/2}\leq (\sgn(M_{1}^{2}-M_{2}^{2}))\mathcal{W}(t)
  \end{equation}
  and
  \begin{equation}
    \label{lem:eq3:xi_lower}
    |\mathcal{V}(t)-\mathcal{W}(t)|\leq C\delta^3(t+1)^{-3/2}.
  \end{equation}
\end{lem}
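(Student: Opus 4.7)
The plan is to prove the four inequalities in the order~\eqref{lem:eq1:xi_lower} $\Rightarrow$~\eqref{lem:eq4:xi_lower} $\Rightarrow$~\eqref{lem:eq3:xi_lower} $\Rightarrow$~\eqref{lem:eq2:xi_lower}, since only the first requires PDE analysis while the other three follow from it together with the explicit form of $\mathcal{W}$.

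For~\eqref{lem:eq1:xi_lower} (taking $i=1$ for definiteness; the case $i=2$ is symmetric), I would build on the decomposition $\xi_1 = \zeta_1 + \eta_1$ and the further splitting $\zeta_1 = -(4c)^{-1}\theta_2^2 - (2\sqrt{2\pi\nu})^{-1}(I_1 + I_{21} + I_{22})$ from the proof of Lemma~\ref{lem:xi}. At $x=0$ the four pieces $\theta_2^2(0,t)$, $I_1(0,t)$, $I_{21}(0,t)$, and $\eta_1(0,t)$ are all exponentially small: in each case the Gaussian source(s) involved are concentrated on a characteristic $y = \pm c(s+1)$ that is separated from the location $y = -c(t-s)$ of the relevant kernel by a distance of order $t$ (for $\theta_2^2, I_1, I_{21}$ this uses the restrictions $s \in [0, t^{1/2}]$ or $s = t^{1/2}$; for $\eta_1$ an inductive application to each iterate $\eta_{1;n}$ in the Neumann series, noting that $g_1^*$ picks out only the $\theta_1 \cdot (\cdot)$ component). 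Only $I_{22}(0,t)$ contributes algebraically, and using $L_2 \theta_2^2 = -\tfrac{2}{3}\partial_x\theta_2^3 - \nu(\partial_x\theta_2)^2$, the $(\partial_x\theta_2)^2$ part combined with the prefactor $-1/(2\sqrt{2\pi\nu})$ reproduces $\mathcal{V}_1(t)$ verbatim.

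The main technical obstacle is therefore to show that the $\partial_x\theta_2^3$ contribution to $I_{22}(0,t)$ is $O(\delta^3(t+1)^{-2})$ rather than the $O(\delta^3(t+1)^{-3/2})$ predicted by the pointwise bound of Lemma~3.2 of~\cite{LZ97}. My plan is to integrate by parts in $y$ and, using the substitution $z = y + c(s+1)$, observe that the transformed Gaussian kernel and $\theta_2^3(\cdot, s)$ are both Gaussians in $z$ while $\partial_y \theta_2^3 \propto z\,e^{-3z^2/(2\nu(s+1))}$ is odd in $z$. At the alignment point $s_* = (t-1)/2$ where the kernel also becomes even in $z$, the $y$-integral vanishes identically; Taylor expanding in $\sigma = s - s_*$ and keeping only even-in-$\sigma$ contributions (which survive $\sigma$-integration against the residual Gaussian of width $\sim t^{1/2}$) yields terms bounded by $\delta^3 \int \sigma^2 (t+1)^{-7/2} e^{-c_0\sigma^2/(t+1)}\,d\sigma \sim \delta^3(t+1)^{-2}$, which, since $\delta \leq 1$, is absorbed into $C\delta^2(t+1)^{-2}$. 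This completes~\eqref{lem:eq1:xi_lower}, and summing over $i=1,2$ and multiplying by $2c^2/p''(1)$ gives~\eqref{lem:eq4:xi_lower}.

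For~\eqref{lem:eq3:xi_lower}, the Cole--Hopf formula~\eqref{eq:theta_explicit} yields the expansion $\theta_i(x,t) = (M_i/\sqrt{2\pi\nu})\,\Theta_1(x,t;\lambda_i, 2\nu) + O(\delta^2)\,\Theta_1(x,t;\lambda_i, \nu^*)$; substituting into $(\partial_x\theta_{i'})^2$ in $\mathcal{V}_i$ gives the leading term $(M_{i'}^2/(2\pi\nu))[\partial_x\Theta_1(\cdot;\lambda_{i'},2\nu)]^2$ plus cross terms contributing $O(\delta^3(t+1)^{-3/2})$ by the same scaling analysis. The change of variables $y \mapsto -y$ in $\mathcal{V}_2$ exploits the symmetry $\Theta_1(-y,s;c,2\nu) = \Theta_1(y,s;-c,2\nu)$ to align the two Gaussian kernels in $\mathcal{V}_1+\mathcal{V}_2$ and collect the factor $M_1^2 - M_2^2$; multiplying by $2c^2/p''(1)$ matches the prefactor of $\mathcal{W}$ exactly. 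Finally, for~\eqref{lem:eq2:xi_lower}, the integrand in $\mathcal{W}(t)$ is manifestly nonnegative, so the sign follows from that of the prefactor (under the physical convention $p''(1) > 0$); to identify the rate I would substitute $s = tu$, $z = y + c(s+1)$, combine the two Gaussians in $z$ into one whose $z$-integral against $z^2$ is evaluated in closed form, and then apply Laplace's method in $u$ around its unique maximum at $u=1/2$ (where the curvature is of order $t$) to obtain $\int\int = C_0(t+1)^{-3/2}(1+o(1))$ for an explicit positive constant $C_0$, establishing the two-sided bound.
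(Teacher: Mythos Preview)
Your overall plan is sound and matches the paper: the identification of the exponentially small pieces (including $\eta_1(0,t)$ via the Neumann series bound $|\eta_1|\le C\delta^3\Theta_{3/2}(\,\cdot\,;c)$), the recognition that only $I_{22}(0,t)$ contributes algebraically, and the extraction of $\mathcal V_1$ from the $(\partial_x\theta_2)^2$ part of $L_2\theta_2^2$ are all exactly how the paper proceeds. Your treatment of \eqref{lem:eq4:xi_lower}, \eqref{lem:eq3:xi_lower} and \eqref{lem:eq2:xi_lower} is likewise essentially the paper's argument.

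The gap is in your handling of the $\partial_x\theta_2^3$ piece of $I_{22}(0,t)$ (the paper calls it $J_2$). Your parity argument hinges on the claim that, in the centered variable $z=y+c(s+1)$, the function $\partial_z\theta_2^3$ is odd. But the Cole--Hopf profile \eqref{eq:theta_explicit} is \emph{not} even about its center: the complementary-error-function factor in the denominator breaks the $z\mapsto -z$ symmetry. Writing $\theta_2=(M_2/\sqrt{2\pi\nu})\,\Theta_1+O(\delta^2\Theta_1)$, the even leading part does give an odd $\partial_z\theta_2^3$ of size $\delta^3$, and for that part your Taylor/parity scheme would yield $\delta^3(t+1)^{-2}$. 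The non-even remainder, however, produces a non-odd contribution to $\partial_z\theta_2^3$ of size $\delta^4\Theta_4$, and the $y$-integral at $s=s_*$ no longer vanishes; pushing it through your $\sigma$-integration gives only $\delta^4(t+1)^{-3/2}$, which is \emph{not} dominated by $C\delta^2(t+1)^{-2}$ and would in particular spoil the application to Corollary~\ref{cor:V_improved_decay}.

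The paper avoids symmetry altogether and instead \emph{iterates} the same integration-by-parts-in-$s$ trick (Lemma~\ref{lem:3.4_pre}) on $J_2$: this converts $\partial_x$ acting on the heat kernel into $L_2$ acting on $\theta_2^3$, and because $\theta_2$ solves Burgers one has $L_2\theta_2^3=-\tfrac34\partial_x\theta_2^4-3\nu\theta_2(\partial_x\theta_2)^2=O(\delta^3\Theta_5)$. Then Lemma~\ref{lem:I221} with $\alpha=5$ gives $\psi_2(0,t;c)\sim(t+1)^{-2}$ directly, yielding $|J_2(t)|\le C\delta^3(t+1)^{-2}$ with no appeal to evenness of $\theta_2$. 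Replacing your parity step by this iteration closes the gap.
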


\begin{proof}
  For~\eqref{lem:eq1:xi_lower}, let us only consider the case of $i=1$ since the other case is similar. By looking at the proof of Lemma~\ref{lem:xi}, we see that
  \begin{equation}
    \label{eq:xi1_I22}
    |\xi_1(0,t)+(8\pi \nu)^{-1/2}I_{22}(0,t)|\leq C\delta^2 e^{-\frac{t}{C}},
  \end{equation}
  where
  \begin{equation}
    I_{22}(0,t)=-(2c)^{-1}\int_{t^{1/2}}^{t}\int_{-\infty}^{\infty}(t-s)^{-1/2}e^{-\frac{(y+c(t-s))^2}{2\nu(t-s)}}L_2 \theta_{2}^{2}(y,s)\, dyds.
  \end{equation}
  Since
  \begin{equation}
    L_2 \theta_{2}^{2}=-2\partial_x(\theta_{2}^{3}/3)-\nu(\partial_x \theta_2)^2,
  \end{equation}
  we have
  \begin{align}
    I_{22}(0,t)
    & =\frac{\nu}{2c}\int_{t^{1/2}}^{t}\int_{-\infty}^{\infty}(t-s)^{-1/2}e^{-\frac{(y+c(t-s))^2}{2\nu(t-s)}}(\partial_x \theta_{2})^2(y,s)\, dyds \\
    & \quad +\frac{1}{3c}\restr{\int_{t^{1/2}}^{t}\int_{-\infty}^{\infty}\partial_x \left\{ (t-s)^{-1/2}e^{-\frac{(x-y-c(t-s))^2}{2\nu(t-s)}} \right\} \theta_{2}^{3}(y,s)\, dyds}{x=0} \\
    & \eqqcolon J_1(t)+J_2(t).
  \end{align}
  For $J_2(t)$, by mimicking the estimate of $I(x,t)$ in the proof of Lemma~\ref{lem:xi}, we obtain
  \begin{equation}
    \label{eq:J2}
    |J_2(t)|\leq C\delta^3(t+1)^{-2}.
  \end{equation}
  Now, since $\mathcal{V}_1(t)=-(8\pi \nu)^{-1/2}J_1(t)$, we obtain~\eqref{lem:eq1:xi_lower} by~\eqref{eq:xi1_I22} and~\eqref{eq:J2}.

  The bound~\eqref{lem:eq2:xi_lower} is easily proved by observing that the integrand in~\eqref{def:W} is non-negative (cf.~the proof of Lemma~\ref{lem:I221}). We next note that by~\eqref{eq:theta_explicit}, if $\delta$ and hence $M_i$ is sufficiently small, we have
  \begin{equation}
    \left| \partial_x \theta_i(x,t)-\frac{M_i}{\sqrt{2\pi \nu}}\partial_x \Theta_1(x,t;\lambda_i,2\nu) \right| \leq C\delta^2 \Theta_2(x,t;\lambda_i,\nu^*).
  \end{equation}
  Hence, by Lemma~\ref{lem:I221} (with $\alpha=4$), we obtain
  \begin{equation}
    |\mathcal{V}(t)-\mathcal{W}(t)|\leq C\delta^3 \int_{t^{1/2}}^{t}\int_{-\infty}^{\infty}(t-s)^{-1/2}e^{-\frac{(y+c(t-s))^2}{2\nu(t-s)}}\Theta_4(y,s;-c,\nu^*)\, dyds\leq C\delta^3(t+1)^{-3/2}.
  \end{equation}
  This proves~\eqref{lem:eq3:xi_lower}.
\end{proof}

Next, we prove a lemma related to a possible cancellation in the sum $(\xi_1+\xi_2)(0,t)$.

\begin{lem}
  \label{lem:xi_cancellation}
  Suppose that $\delta$ defined in~\eqref{ass:smallness} satisfies $\delta \leq 1$. Then for $t\geq 1$, we have
  \begin{equation}
    \label{lem:eq:xi_cancellation}
    \begin{dcases}
      \mathcal{V}(t)=0 & (M_1+M_2=0), \\
      |\mathcal{V}(t)|\leq C\delta^2(t+1)^{-2} & (M_1-M_2=0).
    \end{dcases}
  \end{equation}
\end{lem}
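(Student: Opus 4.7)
The argument splits into two cases. \emph{Case 1 ($M_1 + M_2 = 0$).} The generalized Burgers equation \eqref{def:thetai}--\eqref{def:thetai_init} is invariant under the replacement $\theta_\lambda^M(x, t) \mapsto -\theta_\lambda^M(-x, t) = \theta_{-\lambda}^{-M}(x, t)$, as one checks directly from the Cole--Hopf formula \eqref{eq:theta_explicit}. In particular $\theta_2(y, s) = -\theta_1(-y, s)$ whenever $M_2 = -M_1$, so $(\partial_x \theta_2)^2(y, s) = (\partial_x \theta_1)^2(-y, s)$; substituting this into the definition of $\mathcal{V}_1$ and changing variables $y \mapsto -y$ gives $\mathcal{V}_1(t) = -\mathcal{V}_2(t)$, and hence $\mathcal{V}(t) \equiv 0$.

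\emph{Case 2 ($M_1 - M_2 = 0$, with $M := M_1 = M_2$).} The same symmetry with $\lambda = c$ yields $\theta_2(y, s) = -\theta_c^{-M}(-y, s)$, where $\theta_c^{-M}$ denotes the diffusion wave with velocity $c$ and mass $-M$. Substituting this into $\mathcal{V}_1$ and changing $y \mapsto -y$, the sum can be rewritten as
\begin{equation}
\mathcal{V}_1(t) + \mathcal{V}_2(t) = \frac{\sqrt{\nu}}{4c\sqrt{2\pi}} \int_{t^{1/2}}^{t} \!\!\int_{-\infty}^{\infty} K(y, t-s)\, D(y, s)\, dy\, ds,
\end{equation}
with kernel $K(y, \sigma) := \sigma^{-1/2} e^{-(y - c\sigma)^2/(2\nu\sigma)}$ and difference $D(y, s) := (\partial_x \theta_c^M)^2(y, s) - (\partial_x \theta_c^{-M})^2(y, s)$. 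Expanding \eqref{eq:theta_explicit} in powers of $M$ gives $\theta_c^{\pm M} = \pm M\, F_1 + M^2 F_2 + O(M^3)$, with $F_1(y, s) := (2\pi\nu(s+1))^{-1/2} e^{-\eta^2}$ (where $\eta := (y - c(s+1))/\sqrt{2\nu(s+1)}$) and $F_2 := (2\nu)^{-1} F_1\, \mathrm{erf}(\eta)$. The $O(M^2)$ contributions to the two squares coincide and cancel in $D$, leaving $D = 4M^3\, \partial_y F_1\, \partial_y F_2 + O(M^5 \Theta_4(y, s; c, \nu^*))$. Since $\partial_y F_1 \propto \eta\, e^{-\eta^2}/(s+1)$ is odd in $\eta$ while $\partial_y F_2 \propto e^{-\eta^2}[-2\eta\, \mathrm{erf}(\eta) + (2/\sqrt{\pi})\, e^{-\eta^2}]/(s+1)$ is even in $\eta$, the function $D$ is \emph{odd} about $y = c(s+1)$, up to the higher-order remainder.

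The plan is then to exploit this odd parity. The inner $y$-integral vanishes precisely when $K$ is also centered at $y = c(s+1)$, i.e., when $R := c(t - 2s - 1) = 0$. Rescaling $y$ to $\eta$ and Taylor-expanding $K$ in $R$ around $R = 0$ shows that the inner integral behaves like $\mathrm{const} \cdot R\, (t+1)^{-5/2}\, e^{-R^2/(C(t+1))}$ near the overlap $s = (t-1)/2$. The $R$-range $[-c(t+1),\, c(t - 2t^{1/2} - 1)]$ corresponding to $s \in [t^{1/2}, t]$ extends well beyond the effective Gaussian support $|R| \lesssim (t+1)^{1/2}$ for $t \geq 1$, so the outer integration annihilates the $R$-linear piece up to exponentially small boundary terms. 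The leading surviving contribution comes from $O(R^2)$ corrections produced by Taylor-expanding the slowly-varying factors $(t-s)^{-1/2}(s+1)^{-5/2}$ and $\alpha := \sqrt{(s+1)/(t-s)}$ about $s = (t-1)/2$; the Gaussian second moment $\int R^2 e^{-R^2/(C(t+1))}\, dR = O((t+1)^{3/2})$ then yields a net bound $|\mathcal{V}_1(t) + \mathcal{V}_2(t)| \leq C M^3 (t+1)^{-2}$. Using $|M| \leq C\delta \leq 1$, we conclude $|\mathcal{V}(t)| \leq C\delta^2 (t+1)^{-2}$. The hardest step is detecting this odd-in-$\eta$ cancellation at the $M^3$ level: the naive pointwise bound $|D| \leq C M^3 \Theta_4$ combined with Lemma~\ref{lem:I221} only yields $|\mathcal{V}(t)| \leq C\delta^3 (t+1)^{-3/2}$, which is insufficient for $t \gtrsim \delta^{-2}$; the improvement requires the explicit form of the Cole--Hopf correction $F_2$ and a careful Taylor expansion in $R$, a mechanism analogous to the treatment of $J_2(t)$ in the proof of Lemma~\ref{lem:xi_lower}.
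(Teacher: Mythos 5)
Your Case 1 ($M_1+M_2=0$) is fine and coincides with the paper's argument: both use the Cole--Hopf reflection $\theta_{-c}^{-M}(x,t)=-\theta_{c}^{M}(-x,t)$ to get $\theta_2(y,s)=-\theta_1(-y,s)$ and hence $\mathcal V_1=-\mathcal V_2$.

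For Case 2 ($M_1=M_2=M$) you take a genuinely different route from the paper. The paper exploits only the weaker fact that $\int_{\mathbb R} D(y,s)\,dy=0$ for each $s$; it writes $D=\partial_y F$ with $F$ decaying at both ends, uses Lemma~\ref{lem:3.4_pre} to push $\partial_y F$ into $L_{2}F$ modulo exponentially small errors, computes $L_2 F$ from the Burgers equation and bounds $|L_2F|\leq C\delta^2\Theta_5$, and then applies Lemma~\ref{lem:I221} with $\alpha=5$. You instead aim at the stronger structural fact that $D$ is \emph{odd} about its own center $y=c(s+1)$ and then perform a Laplace/overlap expansion of the outer $s$-integral in the center offset $R=c(t-2s-1)$. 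The idea is attractive and, with care, can be made to work, but as written there is a genuine gap: you obtain the oddness of $D$ only at order $M^3$ and leave an $O(M^5\Theta_4(y,s;c,\nu^*))$ remainder with no parity information. Feeding that remainder through Lemma~\ref{lem:I221} with $\alpha=4$ gives only $O(\delta^5(t+1)^{-3/2})$, which exceeds the claimed $C\delta^2(t+1)^{-2}$ once $(t+1)^{1/2}\gtrsim\delta^{-3}$; so at this stage you have not actually proved the lemma.

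The missing idea is that $D$ is odd about $y=c(s+1)$ \emph{exactly}, not merely to leading order. This follows from the same exact Cole--Hopf identity, namely $\theta_{c}^{-M}(y,s)=-\theta_{c}^{M}\bigl(2c(s+1)-y,s\bigr)$: then $\theta_c^M+\theta_c^{-M}$ is odd and $\theta_c^M-\theta_c^{-M}$ is even under reflection through $y=c(s+1)$, so $D=\partial_y(\theta_c^M-\theta_c^{-M})\,\partial_y(\theta_c^M+\theta_c^{-M})$ is (odd)$\times$(even) at \emph{every} order in $M$, with no uncontrolled remainder. With this fix your parity mechanism applies to all of $D$. Even so, the subsequent ``Taylor expand in $R$ and integrate the $R$-linear odd piece to zero'' step is left heuristic: the ``slowly varying'' factors include the $s$-dependent widths $(t-s)$, $(s+1)$ which enter the Gaussian kernels (not only the prefactor), and the endpoints $s=t^{1/2}$ (where $R$ is large but the waves still overlap somewhat) and $s\to t$ (where $(t-s)^{-1/2}$ blows up) need separate treatment. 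The antiderivative/$L_2F$ device of the paper sidesteps all of this by reducing the statement mechanically to the already-proved Lemma~\ref{lem:I221}; if you retain your approach, you should at minimum prove the exact odd parity and then redo the overlap integral with explicit error control, much as is done for $J_2(t)$ in the proof of Lemma~\ref{lem:xi_lower}.
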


\begin{proof}
  The case when $M_1+M_2=0$ is easy. Let $M\coloneqq M_1$. Then by~\eqref{eq:theta_explicit},
  \begin{align}
    \theta_1(-x,t)
    & =\frac{\sqrt{\nu}}{\sqrt{2(t+1)}}\left( e^{\frac{M}{\nu}}-1 \right) e^{-\frac{(-x-c(t+1))^2}{2\nu(t+1)}}\left[ \sqrt{\pi}+\left( e^{\frac{M}{\nu}}-1 \right) \int_{\frac{-x-c(t+1)}{\sqrt{2\nu(t+1)}}}^{\infty}e^{-y^2}\, dy \right]^{-1} \\
    & =\frac{\sqrt{\nu}}{\sqrt{2(t+1)}}\left( e^{\frac{M}{\nu}}-1 \right) e^{-\frac{(x+c(t+1))^2}{2\nu(t+1)}}\left[ \sqrt{\pi}+\left( e^{\frac{M}{\nu}}-1 \right) \left( \sqrt{\pi}-\int_{\frac{x+c(t+1)}{\sqrt{2\nu(t+1)}}}^{\infty}e^{-y^2}\, dy \right) \right]^{-1} \\
    & =-\frac{\sqrt{\nu}}{\sqrt{2(t+1)}}\left( e^{-\frac{M}{\nu}}-1 \right) e^{-\frac{(x+c(t+1))^2}{2\nu(t+1)}}\left[ \sqrt{\pi}+\left( e^{-\frac{M}{\nu}}-1 \right) \int_{\frac{x+c(t+1)}{\sqrt{2\nu(t+1)}}}^{\infty}e^{-y^2}\, dy \right]^{-1} \\
    & =-\theta_2(x,t).
  \end{align}
  Hence, it follows that $\mathcal{V}_1(t)=-\mathcal{V}_2(t)$, and we have $\mathcal{V}(t)=0$.

  The case when $M\coloneqq M_1=M_2$ is more subtle. By the change of variable $y=-z$, we obtain
  \begin{equation}
    \mathcal{V}(t)=\frac{c\sqrt{\nu}}{2p''(1)\sqrt{2\pi}}\int_{t^{1/2}}^{t}\int_{-\infty}^{\infty}(t-s)^{-1/2}e^{-\frac{(y+c(t-s))^2}{2\nu(t-s)}}\left[ (\partial_x \theta_{1})^{2}(-y,s)-(\partial_x \theta_{2})^{2}(y,s) \right] \, dyds.
  \end{equation}
  Note that by~\eqref{eq:theta_explicit}, we have
  \begin{equation}
    \theta_1(y+2c(s+1),s)=\frac{\sqrt{\nu}}{\sqrt{2(s+1)}}\left( e^{\frac{M}{\nu}}-1 \right) e^{-\frac{(y+c(s+1))^2}{2\nu(s+1)}}\left[ \sqrt{\pi}+\left( e^{\frac{M}{\nu}}-1 \right) \int_{\frac{y+c(s+1)}{\sqrt{2\nu(s+1)}}}^{\infty}e^{-z^2}\, dz \right]^{-1}=\theta_2(y,s).
  \end{equation}
  Hence, we obtain the relation
  \begin{equation}
    \int_{-\infty}^{\infty}\left[ (\partial_x \theta_{1})^{2}(-y,s)-(\partial_x \theta_{2})^{2}(y,s) \right] \, dy=0.
  \end{equation}
  Taking this into consideration, we define $F=F(y,s)$ by
  \begin{equation}
    \label{eq:antiderivative}
      F(y,s)\coloneqq \int_{-\infty}^{y}\left[ (\partial_x \theta_{1})^{2}(-z,s)-(\partial_x \theta_{2})^{2}(z,s) \right] \, dz=-\int_{y}^{\infty}\left[ (\partial_x \theta_{1})^{2}(-z,s)-(\partial_x \theta_{2})^{2}(z,s) \right] \, dz.
  \end{equation}
  Then, we have
  \begin{equation}
    \mathcal{V}(t)=\frac{c\sqrt{\nu}}{2p''(1)\sqrt{2\pi}}\int_{t^{1/2}}^{t}\int_{-\infty}^{\infty}(t-s)^{-1/2}e^{-\frac{(y+c(t-s))^2}{2\nu(t-s)}}\partial_y F(y,s) \, dyds.
  \end{equation}
  By mimicking the estimate of $I(x,t)$ in the proof of Lemma~\ref{lem:xi} (cf.~the estimate of $J_2(t)$ in the proof of Lemma~\ref{lem:xi_lower}), we obtain
  \begin{equation}
    \label{eq:approx_barV}
    |\mathcal{V}(t)-\bar{\mathcal{V}}(t)|\leq C\delta^2 e^{-\frac{t}{C}},
  \end{equation}
  where
  \begin{equation}
    \label{def:Vbar}
    \bar{\mathcal{V}}(t)=-\frac{\sqrt{\nu}}{4p''(1)\sqrt{2\pi}}\int_{t^{1/2}}^{t}\int_{-\infty}^{\infty}(t-s)^{-1/2}e^{-\frac{(y+c(t-s))^2}{2\nu(t-s)}}L_2 F(y,s) \, dyds
  \end{equation}
  and $L_2=\partial_s-c\partial_y-(\nu/2)\partial_{y}^{2}$. Now, note that
  \begin{align}
    \label{eq:L2F}
    \begin{aligned}
      L_2 F(y,s)
      & =2\int_{-\infty}^{y}\{ [(\partial_x \theta_1)(\partial_x \partial_t \theta_1)](-z,s)-[(\partial_x \theta_2)(\partial_x \partial_t \theta_2)](z,s) \} \, dz \\
      & \quad -c(\partial_x \theta_1)^2(-y,s)+c(\partial_x \theta_2)^2(y,s)+\nu[(\partial_x \theta_1)(\partial_{x}^{2}\theta_1)](-y,s)+\nu[(\partial_x \theta_2)(\partial_{x}^{2}\theta_2)](y,s) \\
      & =2\int_{-\infty}^{y}\left\{ \left[ (\partial_x \theta_1)\left( -c\partial_{x}^{2}\theta_1+\frac{\nu}{2}\partial_{x}^{3}\theta_1-\partial_{x}^{2}\left( \frac{\theta_{1}^{2}}{2} \right) \right) \right](-z,s) \right. \\
      & \phantom{2\int_{-\infty}^{y}\biggl\{ \biggl[} \quad - \left. \left[ (\partial_x \theta_2)\left( c\partial_{x}^{2}\theta_2+\frac{\nu}{2}\partial_{x}^{3}\theta_2-\partial_{x}^{2}\left( \frac{\theta_{2}^{2}}{2} \right) \right) \right](z,s) \right\}\, dz \\
      & \quad -c(\partial_x \theta_1)^2(-y,s)+c(\partial_x \theta_2)^2(y,s)+\nu[(\partial_x \theta_1)(\partial_{x}^{2}\theta_1)](-y,s)+\nu[(\partial_x \theta_2)(\partial_{x}^{2}\theta_2)](y,s) \\
      & =2\int_{-\infty}^{y}\left\{ \left[ (\partial_x \theta_1)\left( \frac{\nu}{2}\partial_{x}^{3}\theta_1-\partial_{x}^{2}\left( \frac{\theta_{1}^{2}}{2} \right) \right) \right](-z,s)-\left[ (\partial_x \theta_2)\left( \frac{\nu}{2}\partial_{x}^{3}\theta_2-\partial_{x}^{2}\left( \frac{\theta_{2}^{2}}{2} \right) \right) \right](z,s) \right\}\, dz \\
      & \quad +\nu[(\partial_x \theta_1)(\partial_{x}^{2}\theta_1)](-y,s)+\nu[(\partial_x \theta_2)(\partial_{x}^{2}\theta_2)](y,s).
    \end{aligned}
  \end{align}
  By a similar computation, we also have
  \begin{align}
    \label{eq:L2F_negative}
    \begin{aligned}
      L_2 F(y,s)
      & =-2\int_{y}^{\infty}\left\{ \left[ (\partial_x \theta_1)\left( \frac{\nu}{2}\partial_{x}^{3}\theta_1-\partial_{x}^{2}\left( \frac{\theta_{1}^{2}}{2} \right) \right) \right](-z,s)-\left[ (\partial_x \theta_2)\left( \frac{\nu}{2}\partial_{x}^{3}\theta_2-\partial_{x}^{2}\left( \frac{\theta_{2}^{2}}{2} \right) \right) \right](z,s) \right\}\, dz \\
      & \quad +\nu[(\partial_x \theta_1)(\partial_{x}^{2}\theta_1)](-y,s)+\nu[(\partial_x \theta_2)(\partial_{x}^{2}\theta_2)](y,s).
    \end{aligned}
  \end{align}
  If we show that
  \begin{equation}
    \label{eq:antiderivative_bound}
    |L_2 F(y,s)|\leq C\delta^2 \Theta_5(y,s;-c,\nu^*),
  \end{equation}
  then applying Lemma~\ref{lem:I221} (with $\alpha=5$) to~\eqref{def:Vbar} implies
  \begin{equation}
    \label{eq:barV_bound}
    |\bar{\mathcal{V}}(t)|\leq C\delta^2(t+1)^{-2}.
  \end{equation}
  This, together with \eqref{eq:approx_barV}, proves the lemma; therefore, let us prove~\eqref{eq:antiderivative_bound}. We first consider the case of $y<-c(s+1)$. Using~\eqref{eq:L2F}, we obtain~\eqref{eq:antiderivative_bound} as follows:
  \begin{align}
    |L_2 F(y,s)|
    & \leq C\delta^2(s+1)^{-3}\int_{-\infty}^{y}e^{-\frac{(z+c(s+1))^2}{C(s+1)}}\, dz+C\delta^2 \Theta_5(y,s;-c,\nu^*) \\
    & \leq C\delta^2(s+1)^{-3}e^{-\frac{(y+c(s+1))^2}{2C(s+1)}}\int_{-\infty}^{y}e^{-\frac{(z+c(s+1))^2}{2C(s+1)}}\, dz+C\delta^2 \Theta_5(y,s;-c,\nu^*) \\
    & \leq C\delta^2 \Theta_5(y,s;-c,\nu^*).
  \end{align}
  The case of $y\geq -c(s+1)$ can be handled similarly using~\eqref{eq:L2F_negative}.
\end{proof}

\subsection{Pointwise estimates of certain convolutions}
\label{AppendixB}
In this section, we consider pointwise estimates of certain integrals that shall appear in the proof of Theorem~\ref{thm:main}; these, in fact, constitute a core part of the proof.

For a function $f=f(x,t)$ defined for $x\in \mathbb{R}_*$ and $t>0$, let
\begin{equation}
  \mathcal{I}_i[f](x,t)\coloneqq \int_{0}^{t}\int_{-\infty}^{\infty}\partial_x \left\{ (t-s)^{-1/2}e^{-\frac{(x-y-\lambda_i(t-s))^2}{2\nu(t-s)}} \right\} f(y,s)\, dyds.
\end{equation}
Note that in what follows, as in other places, the symbols $C$ and $\nu^*$ denote generic large constants. We also remind the reader that $\delta$ is the quantity defined in~\eqref{ass:smallness}; in this section, we assume without further mention that $\delta \leq 1$.

\begin{lem}
  \label{lem:xii_thetaj}
  We have
  \begin{equation}
    |\mathcal{I}_i[\xi_i \theta_{i'}](x,t)|\leq C\delta^3 \Psi_i(x,t),
  \end{equation}
  where $i'=3-i$.
\end{lem}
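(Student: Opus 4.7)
The strategy is to use Lemma~\ref{lem:xi} to reduce the source $\xi_i \theta_{i'}$ to explicit Gaussian-type objects, and then apply heat-kernel convolution estimates against the traveling kernel $\partial_x\{(t-s)^{-1/2} e^{-(x-y-\lambda_i(t-s))^2/(2\nu(t-s))}\}$ in $\mathcal{I}_i$, which is (up to a constant) $\partial_x g_{i}^{*}$ evaluated at $(x-y,t-s)$.

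First, I would combine Lemma~\ref{lem:xi} (with $k=0$) and~\eqref{eq:theta_Theta} to get the pointwise decomposition
\begin{equation}
(\xi_i \theta_{i'})(y,s) = (-1)^i (4c)^{-1}(\theta_{i'}^{3})(y,s) + R(y,s),
\end{equation}
where $|R(y,s)| \leq C\delta^3 \psi_{3/2}(y,s;\lambda_i)\Theta_1(y,s;\lambda_{i'},\nu^*)$. Because $\Theta_1(\cdot,s;\lambda_{i'},\nu^*)$ is Gaussian-localized at $y = \lambda_{i'}(s+1)$, and there $(y-\lambda_i(s+1))^2 \sim (2c(s+1))^2$, the remainder obeys the cleaner estimate $|R(y,s)| \leq C\delta^3 (s+1)^{-3/2}\Theta_1(y,s;\lambda_{i'},\nu^*)$. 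The leading piece satisfies $|\theta_{i'}^{3}(y,s)| \leq C\delta^3 \Theta_3(y,s;\lambda_{i'},\nu^*)$. Hence the source $\xi_i \theta_{i'}$ is, uniformly in $(y,s)$, dominated by $C\delta^3\Theta_3(y,s;\lambda_{i'},\nu^*)$ plus an integrable tail controlled by $(s+1)^{-3/2}\Theta_1(y,s;\lambda_{i'},\nu^*)$; both are concentrated about $y = \lambda_{i'}(s+1)$.

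Next, I would substitute these bounds into $\mathcal{I}_i[\xi_i\theta_{i'}]$ and split the time integral at $s = t/2$. For the ``early'' piece $s\in[0,t/2]$, the source is already integrated against a full Gaussian in $y-\lambda_i(t-s)$ of variance $\sim t$; since the source is localized near $\lambda_{i'}(s+1)$ and the kernel propagates from there along speed $\lambda_i$, this is a standard one-dimensional convolution of heat kernels between two distinct wave speeds and produces Gaussians moving at $\lambda_i$ with the correct $t^{-1/2}$ gain from the $\partial_x$. The contribution is bounded by $C\delta^3 (t+1)^{-3/4}\Theta_1(x,t;\lambda_i,\nu^*)$ and hence by $C\delta^3\psi_{7/4}(x,t;\lambda_i)$. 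For the ``late'' piece $s\in[t/2,t]$, the propagation time $t-s$ is short, and one estimates by placing the $\partial_x$ on the source via integration by parts when convenient (the source is smooth in $y$) and otherwise using that, on the support of the Gaussian kernel centered at $x-\lambda_i(t-s)$, the source decays like $(s+1)^{-3/2}\Theta_1 + \Theta_3$ localized near $y=\lambda_{i'}(s+1)$. This forces either $x \approx \lambda_{i'}(t+1)$ (modulo $O(t^{1/2})$) -- which gives the $\bar\psi(x,t;\lambda_{i'})$ decay -- or exponential decay in $(x-\lambda_{i'}(t+1))^2/t$, exactly the two regimes encoded by $\Psi_i$.

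The mechanical part of the argument is routine: it is exactly the convolution-of-Gaussians-at-different-speeds machinery used to obtain bounds like those of Lemma~\ref{lem:I221} and the $I_1$, $I_{21}$, $I_{22}$ analysis already carried out in the proof of Lemma~\ref{lem:xi}, applied in parallel to the two source pieces above. \textbf{The main obstacle} is the interaction between the kernel's traveling speed $\lambda_i$ and the source's concentration around the opposite speed $\lambda_{i'}$; one must keep track of the optimal $y$-decay rate of the source (the $(s+1)^{-3/2}$ gain coming from $\psi_{3/2}(y,s;\lambda_i)$ at $y\sim \lambda_{i'} s$) in order to match the $\psi_{7/4}(x,t;\lambda_i)$ tail in $\Psi_i$. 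Losing this factor would only yield $\Phi_i$-type decay as in~\eqref{eq:pwe_old}. Once this $(s+1)^{-3/2}$ is systematically tracked and the standard $s$-integral bookkeeping is done, both regimes combine to give the claimed bound $C\delta^3\Psi_i(x,t)$.
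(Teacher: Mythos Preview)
Your proposal is correct and follows essentially the same approach as the paper: decompose $\xi_i\theta_{i'}$ via Lemma~\ref{lem:xi} into $\pm(4c)^{-1}\theta_{i'}^{3}$ plus a remainder bounded by $C\delta^3\Theta_4(\cdot;\lambda_{i'},\nu^*)$ (your $(s+1)^{-3/2}\Theta_1$ is exactly this), then apply heat-kernel convolution estimates against the traveling kernel at speed $\lambda_i$. The paper carries out the $\theta_{i'}^{3}$ piece via the $I_1,I_{21},I_{22}$ decomposition of Lemma~\ref{lem:3.4_pre} (obtaining $\psi_2(x,t;\lambda_i)+\log(t+2)\Theta_2(x,t;\lambda_i,\nu^*)+\Theta_3(x,t;\lambda_{i'},\nu^*)$) and the remainder via Lemma~\ref{lem:LZ98_lemma3.6_modified} with $\alpha=0$, $\beta=9/4$; your split at $s=t/2$ is a minor technical variant of the same machinery.
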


\begin{proof}
  We only consider the case of $i=1$ since the other case is similar. By Lemma~\ref{lem:xi} (see also Lemma~\ref{lem:product}), we have
  \begin{equation}
    |(\xi_1 \theta_2)(x,t)+(4c)^{-1}\theta_{2}^{3}(x,t)|\leq C\delta^3 \Theta_4(x,t;-c,\nu^*)\leq C\delta^3(t+1)^{-9/8}\psi_{7/4}(x,t;-c).
  \end{equation}
  Similar calculations leading to the bound of $\zeta_1$ in the proof of Lemma~\ref{lem:xi} show that
  \begin{equation}
    |\mathcal{I}_1[\theta_{2}^{3}](x,t)|\leq C\delta^3[\psi_2(x,t;c)+\log(t+2)\Theta_2(x,t;c,\nu^*)+\Theta_3(x,t;-c,\nu^*)].
  \end{equation}
  We note that the logarithmic term $\log(t+2)\Theta_2(x,t;c,\nu^*)$ comes from the bound of the term corresponding to $I_1(x,t)$ in the proof of Lemma~\ref{lem:xi}. This and Lemma~\ref{lem:LZ98_lemma3.6_modified} (with $\alpha=0$ and $\beta=9/4$) give us
  \begin{equation}
    |\mathcal{I}_1[\xi_1 \theta_2](x,t)|\leq C\delta^3 \Psi_1(x,t).
  \end{equation}
\end{proof}

\begin{lem}
  \label{lem:thetaj_block}
  Suppose that $v_i$ defined by~\eqref{def:vi} satisfies
  \begin{equation}
    \label{eq:vi_ansatz}
    |v_i(x,s)|\leq P(t)\Psi_i(x,s) \quad (0\leq s\leq t)
  \end{equation}
  for some function $P=P(t)\geq 0$.\footnote{Cf.~Eq.~\eqref{def:P} in Section~\ref{SectionIII:PointwiseEstimates}.}~Then for $f\in \{ \theta_{i'}\xi_{i'},\theta_{i'}v_{i'},\xi_{i'}^{2},\xi_{i'}v_{i'} \}$, where $i'=3-i$, we have
  \begin{equation}
    |\mathcal{I}_i[f](x,t)|\leq C(\delta+P(t))^2 \Psi_i(x,t).
  \end{equation}
  Moreover, for $g\in \{ \partial_x(\theta_{i'}^{2}),\partial_x(\theta_{i'}\xi_{i'}),\partial_x(\theta_{i'}v_{i'}) \}$, we have
  \begin{equation}
    |\mathcal{I}_i[g](x,t)|\leq C\delta^2 [\bar{\psi}(x,t;c)+\bar{\psi}(x,t;-c)]\leq C\delta^2 \Psi_j(x,t) \quad (j=1,2).
  \end{equation}
\end{lem}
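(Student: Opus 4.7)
The plan is to reduce the claim to the convolution estimates Lemma~\ref{lem:I221} and Lemma~\ref{lem:LZ98_lemma3.6_modified}, preceded by careful pointwise control of each integrand. Combining~\eqref{eq:theta_Theta}, which gives $|\partial_x^k \theta_{i'}(x,s)|\leq C\delta\,\Theta_{1+k}(x,s;\lambda_{i'},\nu^*)$, the two-region estimate $|\xi_{i'}(x,s)|\leq C\delta^2[\psi_{3/2}(x,s;\lambda_{i'})+\Theta_2(x,s;\lambda_i,\nu^*)]$ from Lemma~\ref{lem:xi}, and the ansatz $|v_{i'}(x,s)|\leq P(t)\Psi_{i'}(x,s)=P(t)[\psi_{7/4}(x,s;\lambda_{i'})+\bar{\psi}(x,s;\lambda_i)]$, I would expand each product $f\in\{\theta_{i'}\xi_{i'},\theta_{i'}v_{i'},\xi_{i'}^2,\xi_{i'}v_{i'}\}$. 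Cross terms involving Gaussian factors centered at both $\lambda_i s$ and $\lambda_{i'}s$ collapse to $O(e^{-s/C})$ contributions (since the two Gaussians cannot be simultaneously large), so the dominant surviving piece is concentrated near $\lambda_{i'}s$ and has the form $C(\delta+P(t))^2\,\Theta_\alpha(x,s;\lambda_{i'},\nu^*)$ with $\alpha\geq 5/2$, up to lower-order remainders.

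Once the integrand is bounded by a source concentrated near $\lambda_{i'}s$, I invoke Lemma~\ref{lem:I221} (and its strengthening Lemma~\ref{lem:LZ98_lemma3.6_modified}) to convolve with $\partial_x$ of the heat kernel centered at $\lambda_i t$. The output naturally decomposes into a $\psi_{7/4}(x,t;\lambda_i)$-piece near the kernel's center and a $\bar{\psi}(x,t;\lambda_{i'})$-piece near the source, whose sum is precisely $\Psi_i(x,t)$. Summing over the expansion yields the desired bound $|\mathcal{I}_i[f](x,t)|\leq C(\delta+P(t))^2\Psi_i(x,t)$.

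For the second family, each $g=\partial_x h$ with $h\in\{\theta_{i'}^2,\theta_{i'}\xi_{i'},\theta_{i'}v_{i'}\}$, I would integrate by parts in $y$ to obtain
\begin{equation*}
\mathcal{I}_i[\partial_x h](x,t)=-\int_0^t \int_{-\infty}^{\infty}\partial_x^2\left\{(t-s)^{-1/2}e^{-\frac{(x-y-\lambda_i(t-s))^2}{2\nu(t-s)}}\right\}h(y,s)\,dy\,ds.
\end{equation*}
The extra derivative supplies an additional $(t-s)^{-1/2}$ factor, boosting the time decay of the convolution by a half power. Applying the version of Lemma~\ref{lem:LZ98_lemma3.6_modified} with one higher spatial derivative of the kernel then gives the faster bound $C\delta^2[\bar{\psi}(x,t;c)+\bar{\psi}(x,t;-c)]$, which is dominated by $\Psi_j(x,t)$ for either $j\in\{1,2\}$ since $\bar{\psi}$ is the slower-decaying component of $\Psi_j$.

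The main obstacle is handling the $\bar{\psi}(x,s;\lambda_i)$-tail component of $v_{i'}$ in the terms $\theta_{i'}v_{i'}$ and $\xi_{i'}v_{i'}$: this tail is centered at the \emph{same} speed as the heat kernel in $\mathcal{I}_i$, so the Gaussian-separation trick is not directly available. I would resolve this by using the rapid Gaussian decay of $\theta_{i'}$ (or the $\psi_{3/2}(\cdot;\lambda_{i'})$-component of $\xi_{i'}$) to localize the product near $\lambda_{i'}s$ before invoking the convolution estimates, ensuring that the dangerous tail is suppressed before it can resonate with the $\lambda_i$-kernel.
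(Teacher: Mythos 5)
Your plan --- bound each integrand pointwise in terms of $\Theta$, $\psi$, $\bar\psi$ factors and then apply Lemma~\ref{lem:I221} or Lemma~\ref{lem:LZ98_lemma3.6_modified} directly to $\mathcal{I}_i$ --- does not close for the slowest-decaying sources, and it is not the route the paper takes. Take the representative $f=\theta_{i'}\xi_{i'}$ (which the paper flags as one of the most important terms). Its sharp pointwise bound is $|\theta_{i'}\xi_{i'}(y,s)|\leq C\delta^3\Theta_{5/2}(y,s;\lambda_{i'},\nu^*)$, and a resonance count for $\mathcal{I}_i[f](0,t)$ --- the source center $y\approx\lambda_{i'}(s+1)$ meets the kernel center $y\approx -\lambda_i(t-s)$ at $s\approx t/2$, over a time window of width $\sqrt{t}$ --- gives only $\delta^3 t^{-5/4}$, a full factor $t^{-1/2}$ short of the required $\Psi_i(0,t)\sim t^{-7/4}$. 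Running it through Lemma~\ref{lem:LZ98_lemma3.6_modified} confirms this: with $\alpha=0$, $\beta=3/4$ (from $\Theta_{5/2}\leq(s+1)^{-3/8}\psi_{7/4}$) one gets $\gamma_1=-1/4<0$, i.e.\ a growing prefactor. Even for the mildest source $\xi_{i'}^2$, the $\gamma_1'$-piece of that lemma yields $(t+1)^{-1/8}\psi_{7/4}(\cdot;\lambda_{i'})$, which is strictly slower than $\bar\psi(\cdot;\lambda_{i'})\sim(t+1)^{-5/4}$ near the transversal front. The integration-by-parts step you propose for $g=\partial_x h$ suffers the same deficiency --- with $\alpha=1,\beta=1/4$ the output $(t+1)^{-1/8}\psi_{7/4}$ and a $\chi_K$-term of size $(t+1)^{-3/2}$ near $x=0$ are both worse than $\bar\psi$ --- and, in addition, integrating by parts across $y=0$ produces jump terms $\llbracket h\rrbracket$ which your sketch does not address at all.

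What makes the estimate work in the paper is Lemma~\ref{lem:3.4_pre}: it rewrites $\mathcal{I}_i[f]$ as the pointwise main term $(\lambda_i-\lambda_{i'})^{-1}\sqrt{2\pi\nu}\,f(x,t)$ plus corrections whose principal piece $I_{22}$ convolves the $(t-s)^{-1/2}$ kernel not against $f$ but against $L_{\lambda_{i'}}f$. Because $\theta_{i'}$, $\xi_{i'}$, $v_{i'}$ are (up to total-$\partial_x$ forcings) in the kernel of $L_{\lambda_{i'}}$, $L_{\lambda_{i'}}f$ carries an extra derivative, hence an extra $(s+1)^{-1/2}$ --- exactly the $t^{-1/2}$ your bookkeeping is missing. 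The proof then isolates the part of $L_{\lambda_{i'}}f$ that is a total $\partial_x$ and integrates $I_{22}$ by parts a second time, while the boundary pieces $I_b$ and $I_{22,b}$ are handled by Lemmas~\ref{lem:boundary1}--\ref{lem:boundary2}. Your remarks about Gaussian separation of cross terms and about the $\bar\psi(\cdot;\lambda_i)$-tail of $v_{i'}$ are reasonable but secondary; without the $L_{\lambda_{i'}}$-decomposition of Lemma~\ref{lem:3.4_pre} and the attendant jump-term analysis, the claim cannot be reached.
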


\begin{proof}
  We assume that $t\geq 4$ since otherwise the lemma is easy to prove, and we only treat $\mathcal{I}_i[\xi_{i'}v_{i'}](x,t)$ since this term is technically the most demanding.\footnote{However, this term is not the most severe one in terms of pointwise estimates; in fact, we can show that it decays faster than $\Psi_i(x,t)$. In terms of pointwise estimates, one of the most important terms is $\mathcal{I}_i[\theta_{i'}\xi_{i'}](x,t)$; its analysis, however, is simpler than the one we describe below.}~Also, we restrict our attention to the case of $i=1$ since the other case can be treated similarly.

  Let $L_2=\partial_t-c\partial_x-(\nu/2)\partial_{x}^{2}$. By Lemma~\ref{lem:3.4_pre}, we have
  \begin{align}
    \mathcal{I}_1[\xi_2 v_2](x,t)
    & =(2c)^{-1}\sqrt{2\pi \nu}(\xi_2 v_2)(x,t)+I_1(x,t)+I_{21}(x,t)+I_{22}(x,t)+I_b(x,t),
  \end{align}
  where
  \begin{equation}
    I_1(x,t)=\int_{0}^{t^{1/2}}\int_{-\infty}^{\infty}\partial_x \left\{ (t-s)^{-1/2}e^{-\frac{(x-y-c(t-s))^2}{2\nu(t-s)}} \right\} (\xi_2 v_2)(y,s)\, dyds,
  \end{equation}
  \begin{equation}
    I_{21}(x,t)=-(2c)^{-1}\int_{-\infty}^{\infty}(t-t^{1/2})^{-1/2}e^{-\frac{(x-y-c(t-\sqrt{t}))^2}{2\nu(t-\sqrt{t})}}(\xi_2 v_2)(y,t^{1/2})\, dy,
  \end{equation}
  \begin{equation}
    I_{22}(x,t)=-(2c)^{-1}\int_{t^{1/2}}^{t}\int_{-\infty}^{\infty}(t-s)^{-1/2}e^{-\frac{(x-y-c(t-s))^2}{2\nu(t-s)}}L_2(\xi_2 v_2)(y,s)\, dyds,
  \end{equation}
  and
  \begin{align}
    I_b(x,t)
    & =2^{-1}\int_{t^{1/2}}^{t}(t-s)^{-1/2}e^{-\frac{(x-c(t-s))^2}{2\nu(t-s)}}\llbracket \xi_2 v_2 \rrbracket(s)\, ds \\
    & \quad -(\nu/2)(2c)^{-1}\int_{t^{1/2}}^{t}(t-s)^{-1/2}e^{-\frac{(x-c(t-s))^2}{2\nu(t-s)}}\llbracket \partial_x(\xi_2 v_2) \rrbracket(s)\, ds \\
    & \quad -(\nu/2)(2c)^{-1}\int_{t^{1/2}}^{t}\partial_x \left\{ (t-s)^{-1/2}e^{-\frac{(x-c(t-s))^2}{2\nu(t-s)}} \right\} \llbracket \xi_2 v_2 \rrbracket(s)\, ds.
  \end{align}
  By~\eqref{def:thetai},~\eqref{def:xii},~\eqref{eq:pde_ui}, and~\eqref{def:N*}, we obtain
  \begin{equation}
    L_2 \xi_2=-\partial_x(\theta_2 \xi_2+\theta_{1}^{2}/2)
  \end{equation}
  and
  \begin{align}
    L_2 v_2
    & =L_2(u_2-\theta_2-\xi_2-\gamma_1 \partial_x \theta_1) \\
    & =\left( \frac{\nu}{2}\partial_{x}^{2}u_1+\partial_x N_2 \right)+\partial_x(\theta_{2}^{2}/2)+\partial_x(\theta_2 \xi_2+\theta_{1}^{2}/2)-\frac{\nu}{4c}\partial_{x}^{2}(\theta_{1}^{2}/2+2c\theta_1) \\
    & =\frac{\nu}{2}\partial_{x}^{2}(u_1-\theta_1)+\partial_x(N_2-N_{2}^{*})-\frac{\nu}{4c}\partial_{x}^{2}(\theta_{1}^{2}/2).
  \end{align}
  Hence, we have
  \begin{align}
    \label{eq:L2_xi2_v2}
    \begin{aligned}
      L_2[\xi_2 v_2]
      & =-v_2 \partial_x(\theta_2 \xi_2+\theta_{1}^{2}/2)+\partial_x \left\{ \xi_2 \left[ \frac{\nu}{2}\partial_x(u_1-\theta_1)+(N_2-N_{2}^{*})-\frac{\nu}{4c}\partial_x(\theta_{1}^{2}/2) \right]-\nu v_2 \partial_x \xi_2 \right\} \\
      & \quad -(\partial_x \xi_2)\left[ \frac{\nu}{2}\partial_x(u_1-\theta_1)+(N_2-N_{2}^{*})-\frac{\nu}{4c}\partial_x(\theta_{1}^{2}/2) \right]+\nu v_2 \partial_{x}^{2}\xi_2.
    \end{aligned}
  \end{align}

  Let us first consider $I_1(x,t)$. By Lemma~\ref{lem:xi} and~\eqref{eq:vi_ansatz} (see also~\ref{lem:product}), we have
  \begin{equation}
    |(\xi_2 v_2)(x,t)|\leq C\delta(\delta+P(t))^2[(t+1)^{-11/8}\psi_{7/4}(x,t;c)+(t+1)^{-3/4}\psi_{7/4}(x,t;-c)].
  \end{equation}
  Then, applying Lemmas~\ref{lem:sqrt1} (with $\alpha=0$ and $3/4\leq \beta<5/4$) and~\ref{lem:LZ98_lemma3.5_modified} (with $\alpha=0$ and $\beta=11/4$), we obtain
  \begin{equation}
    |I_1(x,t)|\leq C\delta(\delta+P(t))^2 \Psi_1(x,t).
  \end{equation}
  For $I_{21}(x,t)$, we apply Lemma~\ref{lem:sqrt3} (with $\alpha=0$ and $\beta=3/2$) to obtain
  \begin{equation}
    |I_{21}(x,t)|\leq C\delta(\delta+P(t))^2 \Psi_1(x,t).
  \end{equation}
  For $I_{22}(x,t)$, we have
  \begin{align}
    |L_2[\xi_2 v_2](x,t)-\partial_x F(x,t)|
    & \leq C\delta(\delta+P(t))^2[(t+1)^{-15/8}\psi_{7/4}(x,t;c)+(t+1)^{-7/4}\psi_{7/4}(x,t;-c)]
  \end{align}
  with
  \begin{equation}
    F(x,t)=\xi_2 \left[ \frac{\nu}{2}\partial_x(u_1-\theta_1)+(N_2-N_{2}^{*})-\frac{\nu}{4c}\partial_x(\theta_{1}^{2}/2) \right]-\nu v_2 \partial_x \xi_2,
  \end{equation}
  and $F$ satisfies
  \begin{equation}
    |F(x,t)|\leq C\delta(\delta+P(t))^2[(t+1)^{-11/8}\psi_{7/4}(x,t;c)+(t+1)^{-5/4}\psi_{7/4}(x,t;-c)].
  \end{equation}
  To see this, we use~\eqref{def:Ni},~\eqref{def:N*},~\eqref{eq:N_structure},~\eqref{eq:tau_squared},~\eqref{eq:vi_ansatz},~\eqref{eq:L2_xi2_v2}, Theorem~\ref{thm:PWE_derivative}, and Lemma~\ref{lem:xi} (see also Lemma~\ref{lem:product}); the calculations are tedious but straightforward. Then, using integration by parts, we obtain
  \begin{align}
    & |I_{22}(x,t)-I_{22,b}(x,t)| \\
    & \leq C\delta(\delta+P(t))^2\int_{t^{1/2}}^{t/2}\int_{-\infty}^{\infty}(t-s)^{-1/2}e^{-\frac{(x-y-c(t-s))^2}{2\nu(t-s)}}[(s+1)^{-15/8}\psi_{7/4}(y,s;c)+(s+1)^{-7/4}\psi_{7/4}(y,s;-c)]\, dyds \\
    & \quad +C\delta(\delta+P(t))^2\int_{t^{1/2}}^{t}\int_{-\infty}^{\infty}(t-s)^{-1}e^{-\frac{(x-y-c(t-s))^2}{C(t-s)}}[(s+1)^{-11/8}\psi_{7/4}(y,s;c)+(s+1)^{-5/4}\psi_{7/4}(y,s;-c)]\, dyds \\
    & \leq C\delta(\delta+P(t))^2\int_{t^{1/2}}^{t/2}\int_{-\infty}^{\infty}(t-s)^{-1/2}e^{-\frac{(x-y-c(t-s))^2}{2\nu(t-s)}}(s+1)^{-7/4}\psi_{7/4}(y,s;-c)\, dyds \\
    & \quad +C\delta(\delta+P(t))^2\int_{t^{1/2}}^{t}\int_{-\infty}^{\infty}(t-s)^{-1}e^{-\frac{(x-y-c(t-s))^2}{C(t-s)}}[(s+1)^{-7/8}\psi_{7/4}(y,s;c)+(s+1)^{-5/4}\psi_{7/4}(y,s;-c)]\, dyds,
  \end{align}
  where
  \begin{equation}
    I_{22,b}(x,t)=(2c)^{-1}\int_{t^{1/2}}^{t}(t-s)^{-1/2}e^{-\frac{(x-c(t-s))^2}{2\nu(t-s)}}\llbracket F \rrbracket(s)\, ds.
  \end{equation}
  By applying Lemmas~\ref{lem:sqrt_ij} (with $\alpha=0$),~\ref{lem:LZ98_lemma3.5_modified} (with $\alpha=0$ and $\beta=7/4$), and~\ref{lem:LZ98_lemma3.6_modified} (with $\alpha=0$ and $\beta=5/2$) (see also Lemma~\ref{lem:indicator}), we obtain
  \begin{equation}
    |I_{22}(x,t)-I_{22,b}(x,t)|\leq C\delta(\delta+P(t))^2 \Psi_1(x,t).
  \end{equation}
  Finally, by applying Lemmas~\ref{lem:boundary1} and~\ref{lem:boundary2}, we obtain
  \begin{equation}
    |I_b(x,t)|+|I_{22,b}(x,t)|\leq C\delta(\delta+P(t))^2 \Psi_1(x,t).
  \end{equation}
  This ends the proof.
\end{proof}

\subsection{Proof of Theorem~\ref{thm:main}}
\label{SectionIII:PointwiseEstimates}
Recall that $v_i$ and $\Psi_i$ are defined by~\eqref{def:vi} and~\eqref{def:Psi}. Define $P(t)$ by
\begin{equation}
  \label{def:P}
  P(t)\coloneqq \sum_{i=1}^{2}\sup_{0\leq s\leq t}|v_i(\cdot,s)\Psi_i(\cdot,s)^{-1}|_{\infty}.
\end{equation}
Here and in what follows, we denote by $|\cdot |_{\infty}$ the Lebesgue $L^{\infty}(\mathbb{R}_*)$-norm. It should be noted that we do not know a priori that $P(t)$ is finite. In what follows, as in previous works (e.g.~\cite{DW18,Koike21,LZ97}), we shall tacitly assume that $P(t)$ is already known to be finite. We mention that, for a related system of equations, this problem was handled in~\cite[p.~296]{IK02} by first deriving estimates for suitably weighted versions of $v_i$ (for which the corresponding $P(t)$ is trivially finite) and taking the limit to the original $v_i$ afterwards.

In order to prove Theorem~\ref{thm:main}, it suffices to prove that there exists a positive constant $C$ such that $P(t)\leq C\delta$ for all $t\geq 0$. To show this, we prove instead that
\begin{equation}
  \label{SectionIII:GoalInequality}
  P(t)\leq C\delta +C(\delta+P(t))^2 \quad (t\geq 0).
\end{equation}
Then, by taking $\delta$ sufficiently small, we can conclude that $P(t)\leq C\delta$ for all $t\geq 0$ (see the argument at the end of~\cite[Section~3.3]{Koike21}).

In what follows, to show~\eqref{SectionIII:GoalInequality}, we evaluate each and every term on the right-hand side of~\eqref{eq:vi_IE}. We start with the terms related to the initial data. For $x>0$, let
\begin{align}
  \label{def:Ii}
  \begin{aligned}
    \mathcal{I}_i(x,t)
    & =\int_{-\infty}^{\infty}g_i(x-y,t)
    \begin{pmatrix}
      u_{01} \\
      u_{02}
    \end{pmatrix}
    (y)\, dy \\
    & \quad -\int_{-\infty}^{\infty}g_{i}^{*}(x-y,t)
    \begin{pmatrix}
      \theta_1 \\
      \theta_2
    \end{pmatrix}
    (y,0)\, dy-\gamma_{i'}\int_{-\infty}^{\infty}\partial_x g_{i'}^{*}(x-y,t)
    \begin{pmatrix}
      \theta_1 \\
      \theta_2
    \end{pmatrix}
    (y,0)\, dy \\
    & \quad +\int_{0}^{\infty}g_{R,i}(x+y,t)
    \begin{pmatrix}
      u_{01} \\
      u_{02}
    \end{pmatrix}
    (y)\, dy+\int_{-\infty}^{0}g_{R,i}(x-y,t)
    \begin{pmatrix}
      u_{02} \\
      u_{01}
    \end{pmatrix}
    (y)\, dy+m_V g_{T,i}(x,t)\bm{1}.
  \end{aligned}
\end{align}

\begin{lem}
  \label{SectionIII:Lemma:Bound_of_Ii}
  Under the assumptions of Theorem~\ref{thm:main}, if~\eqref{ass:smallness} holds with $\delta_0>0$ sufficiently small, then there exists a positive constant $C$ such that
  \begin{equation}
    \label{SectionIII:Lemma:Bound_of_Ii:Bound_of_Ii}
    |\mathcal{I}_i(x,t)|\leq C\delta \Psi_i(x,t)
  \end{equation}
  for $(x,t)\in \mathbb{R}_* \times (0,\infty)$.
\end{lem}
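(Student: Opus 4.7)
The plan is to split $\mathcal{I}_i$ into a \emph{Cauchy part} (the first three integrals involving $g_i$, $g_i^*$, and $\partial_x g_{i'}^*$) and a \emph{boundary part} (the two reflection integrals with $g_{R,i}$ and the point-mass source $m_V g_{T,i}(x,t)\bm{1}$), and bound each separately by $C\delta\Psi_i(x,t)$. The refinement that distinguishes this lemma from its counterpart in \cite{Koike21} is the inclusion of the correction $\gamma_{i'}\partial_x g_{i'}^*$ so as to expose the refined Green's function estimate \eqref{eq:PWE_gi_refined}.

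For the Cauchy part $\mathcal{J}$, I would add and subtract $g_i^*$ and $\gamma_{i'}\partial_x g_{i'}^*$ convolved against $(u_{01},u_{02})^{T}$ to write
\begin{align}
\mathcal{J}(x,t) &= \int_{-\infty}^{\infty}\bigl(g_i-g_i^*-\gamma_{i'}\partial_x g_{i'}^*\bigr)(x-y,t)\begin{pmatrix}u_{01}\\u_{02}\end{pmatrix}(y)\,dy \\
&\quad + \int_{-\infty}^{\infty}g_i^*(x-y,t)\biggl[\begin{pmatrix}u_{01}\\u_{02}\end{pmatrix}(y)-\begin{pmatrix}\theta_1\\\theta_2\end{pmatrix}(y,0)\biggr]dy \\
&\quad + \gamma_{i'}\int_{-\infty}^{\infty}\partial_x g_{i'}^*(x-y,t)\biggl[\begin{pmatrix}u_{01}\\u_{02}\end{pmatrix}(y)-\begin{pmatrix}\theta_1\\\theta_2\end{pmatrix}(y,0)\biggr]dy.
\end{align}
The first integral is controlled by \eqref{eq:PWE_gi_refined} with $k=0$: the exponentially small Dirac contribution $e^{-c^2 t/\nu}\delta(x)q_{i0}(t)$ combined with $|u_{0j}(x)|\leq C\delta(|x|+1)^{-7/4}$ is absorbed into $\delta\Psi_i$; the two Gaussian remainders centered at $\lambda_i t$ and $\lambda_{i'}t$ carry an extra $(t+1)^{-1/2}$ factor relative to the cruder \eqref{eq:PWE_gi}, and convolution against data of spatial decay $(|y|+1)^{-7/4}$ produces, by standard weighted heat-kernel estimates, the two pieces $\psi_{7/4}(x,t;\lambda_i)$ and $\bar{\psi}(x,t;\lambda_{i'})$ of $\Psi_i$.

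For the second and third integrals, the crucial observation is that by \eqref{eq:mass} and the mass-conservation of \eqref{def:thetai} one has $\int_{\mathbb{R}}[u_{0j}(y)-\theta_j(y,0)]\,dy = 0$, so the antiderivative
\begin{equation}
H_j(y)\coloneqq\int_{-\infty}^{y}\bigl[u_{0j}(z)-\theta_j(z,0)\bigr]\,dz=-\int_{y}^{\infty}\bigl[u_{0j}(z)-\theta_j(z,0)\bigr]\,dz
\end{equation}
is well-defined and decays as $(|y|+1)^{-5/4}$ by \eqref{ass:smallness} together with the Gaussian tails of $\theta_j(\cdot,0)$ evident from \eqref{eq:theta_explicit}. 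Integration by parts in $y$ shifts the derivative onto the kernels, producing one additional $(t+1)^{-1/2}$ decay, and standard heat-kernel convolution estimates applied to $H_j$ then deliver the $\Psi_i$ bound.

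For the boundary part, the term $m_V g_{T,i}(x,t)\bm{1}$ is bounded directly by \eqref{eq:PWE_gTi}: the Gaussian pieces along $x=\pm ct$ and the exponentially localized piece are both dominated by $\delta\Psi_i(x,t)$. For the two reflection integrals I would invoke \eqref{eq:gR_relation} to trade $g_{R,i}$ for combinations of $\partial_x g_i$ and $\partial_x^2 g_{T,i}$, then integrate by parts in $y$ once so that the spatial derivative falls on $(u_{01},u_{02})^{T}$ and generates the antiderivatives $u_{0j}^{\pm}$ of \eqref{def:u0ipm}, whose $(|y|+1)^{-5/4}$ decay from \eqref{ass:smallness} suffices; the arising boundary contributions at $y=0$ are absorbed into $\delta\Psi_i$ via \eqref{eq:PWE_gTi}, and pointwise bounds \eqref{eq:PWE_gi} and \eqref{eq:PWE_gTi} handle the interior integrals.

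The main technical obstacle is the Cauchy-part analysis. Extracting the $\bar{\psi}(x,t;\lambda_{i'})$-profile of $\Psi_i$ — which decays as $(t+1)^{-5/4}$ along the \emph{opposite} characteristic — requires the refined Green's function estimate \eqref{eq:PWE_gi_refined}, not the cruder \eqref{eq:PWE_gi} used in \cite{Koike21}; this is precisely the source of the correction $\gamma_{i'}\partial_x\theta_{i'}$ in the definition \eqref{def:vi} of $v_i$. Matching the resulting convolutions to the two-profile structure of $\Psi_i$ (rather than merely the single-profile $\Phi_i$ from \eqref{def:Phi}) is the delicate bookkeeping that drives the improvement.
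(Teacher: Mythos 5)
Your decomposition into a Cauchy part and a boundary part is close in spirit to the paper's, and the use of \eqref{eq:PWE_gi_refined} to extract both profiles $\psi_{7/4}(\cdot;\lambda_i)$ and $\bar{\psi}(\cdot;\lambda_{i'})$ is exactly the mechanism the paper exploits. But there is a genuine gap in the way you handle the $V_0$-contribution and the mass of $u_{0j}-\theta_j(\cdot,0)$, and this breaks both the antiderivative argument and the boundary-part bound.

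Your claimed mass cancellation
\begin{equation}
\int_{\mathbb{R}}\bigl[u_{0j}(y)-\theta_j(y,0)\bigr]\,dy=0
\end{equation}
is false. From \eqref{eq:mass} and conservation of mass for \eqref{def:thetai} one has $\int\theta_j(\cdot,0)=M_j=\int u_{0j}+m_V$, hence $\int[u_{0j}-\theta_j(\cdot,0)]=-m_V$, which is nonzero whenever $V_0\neq 0$. Consequently your $H_j$ does \emph{not} decay as $y\to+\infty$ (it tends to $-m_V$), and your identity $H_j(y)=-\int_y^\infty[\cdots]$ fails. After integration by parts the resulting bound is only $\int|\partial_x g_i^*(x-y,t)||H_j(y)|\,dy\lesssim\delta(t+1)^{-1/2}$, which is not bounded by $\delta\Psi_i(x,t)$ near $x=\lambda_i t$ (where $\Psi_i\sim(t+1)^{-7/8}$).

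The leftover $m_V$ mass cannot be discarded, and this is precisely why your treatment of the boundary term also fails: you assert that $m_V g_{T,i}(x,t)\bm 1$ is bounded directly by \eqref{eq:PWE_gTi} and dominated by $\delta\Psi_i$. But \eqref{eq:PWE_gTi} with $k=0$ only gives $|g_{T,i}(x,t)|\lesssim(t+1)^{-1/2}$ on the primary characteristic, and $(t+1)^{-1/2}\not\lesssim(t+1)^{-7/8}$. In the paper's proof, the $m_V g_{T,i}\bm 1$ term is \emph{not} estimated in isolation; instead $g_{T,i}$ is decomposed as $g_i^*+\gamma_{i'}\partial_x g_{i'}^*+(g_{T,i}-g_i^*-\gamma_{i'}\partial_x g_{i'}^*)$. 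The $m_V g_i^*$ piece is moved into $\mathcal{I}_{i,1}$ and absorbed by augmenting the antiderivative with a Heaviside step, $\eta_j(y)=\int_{-\infty}^{y}(u_j-\theta_j)(z,0)\,dz+m_V H(y)$, which precisely neutralizes the nonzero mass $-m_V$ and yields a function decaying at both ends; the $m_V\gamma_{i'}\partial_x g_{i'}^*$ piece joins the $\mathcal{I}_{i,2}$ analysis; and the remainder $m_V(g_{T,i}-g_i^*-\gamma_{i'}\partial_x g_{i'}^*)\bm 1$ is what is genuinely small, by \eqref{eq:PWE_gi_refined}, \eqref{eq:gT_der}, and \eqref{eq:PWE_gTi}. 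Without these $m_V$ redistributions there is a term of size $\delta(t+1)^{-1/2}$ that your argument does not control. A secondary, smaller issue: integrating by parts in the reflection integrals to pass the derivative to $u_{0j}$ worsens the time decay (it trades $\partial_x g_i\sim(t+1)^{-1}$ for $g_i\sim(t+1)^{-1/2}$); the paper instead keeps the derivative on the kernel via \eqref{eq:gR_relation} and handles these directly.
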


\begin{proof}
  Since the case when $t<1$ can be handled easily, we assume that $t\geq 1$ in the following. Also, we only consider the case of $x>0$ since the case of $x<0$ is similar. First, we rewrite~\eqref{def:Ii} as follows:
  \begin{align}
    \mathcal{I}_i(x,t)
    & =\int_{-\infty}^{\infty}g_{i}^{*}(x-y,t)
    \begin{pmatrix}
      u_1-\theta_1 \\
      u_2-\theta_2
    \end{pmatrix}
    (y,0)\, dy+m_V g_{i}^{*}(x,t)\bm{1} \\
    & \quad +\int_{-\infty}^{\infty}(g_i-g_{i}^{*})(x-y,t)
    \begin{pmatrix}
      u_{01} \\
      u_{02}
    \end{pmatrix}
    (y)\, dy-\gamma_{i'}\int_{-\infty}^{\infty}\partial_x g_{i'}^{*}(x-y,t)
    \begin{pmatrix}
      \theta_1 \\
      \theta_2
    \end{pmatrix}
    (y,0)\, dy+m_V \gamma_{i'}\partial_x g_{i'}^{*}(x,t)\bm{1} \\
    & \quad +\int_{0}^{\infty}g_{R,i}(x+y,t)
    \begin{pmatrix}
      u_{01} \\
      u_{02}
    \end{pmatrix}
    (y)\, dy+\int_{-\infty}^{0}g_{R,i}(x-y,t)
    \begin{pmatrix}
      u_{02} \\
      u_{01}
    \end{pmatrix}
    (y)\, dy \\
    & \quad +m_V (g_{T,i}-g_{i}^{*}-\gamma_{i'}\partial_x g_{i'}^{*})(x,t)\bm{1}.
  \end{align}
  Let
  \begin{align}
    \mathcal{I}_{i,1}(x,t) & \coloneqq \int_{-\infty}^{\infty}g_{i}^{*}(x-y,t)
    \begin{pmatrix}
      u_1-\theta_1 \\
      u_2-\theta_2
    \end{pmatrix}
    (y,0)\, dy+m_V g_{i}^{*}(x,t)\bm{1}, \\
    \mathcal{I}_{i,2}(x,t) & \coloneqq \int_{-\infty}^{\infty}(g_i-g_{i}^{*})(x-y,t)
    \begin{pmatrix}
      u_{01} \\
      u_{02}
    \end{pmatrix}
    (y)\, dy-\gamma_{i'}\int_{-\infty}^{\infty}\partial_x g_{i'}^{*}(x-y,t)
    \begin{pmatrix}
      \theta_1 \\
      \theta_2
    \end{pmatrix}
    (y,0)\, dy+m_V \gamma_{i'}\partial_x g_{i'}^{*}(x,t)\bm{1}, \\
    \mathcal{I}_{i,3}(x,t) & \coloneqq \int_{0}^{\infty}g_{R,i}(x+y,t)
    \begin{pmatrix}
      u_{01} \\
      u_{02}
    \end{pmatrix}
    (y)\, dy+\int_{-\infty}^{0}g_{R,i}(x-y,t)
    \begin{pmatrix}
      u_{02} \\
      u_{01}
    \end{pmatrix}
    (y)\, dy, \\
    \mathcal{I}_{i,4}(x,t) & \coloneqq m_V (g_{T,i}-g_{i}^{*}-\gamma_{i'}\partial_x g_{i'}^{*})(x,t)\bm{1}.
  \end{align}

  We first show that
  \begin{equation}
    \label{SectionIII:Lemma:Bound_of_Ii:Proof:Bound_of_Ii1}
    |\mathcal{I}_{i,1}(x,t)|\leq C\delta \Psi_i(x,t).
  \end{equation}
  For this purpose, define $\eta_j$ by
  \begin{equation}
    \eta_j(x)\coloneqq \int_{-\infty}^{x}(u_j-\theta_j)(y,0)\, dy+m_V H(x),
  \end{equation}
  where $H(x)$ is the Heaviside function. Let $\eta=(\eta_1 \, \eta_2)^{T}$. Then since $\partial_x H(x)=\delta(x)$, we have
  \begin{equation}
    \mathcal{I}_{i,1}(x,t)=\int_{-\infty}^{\infty}g_{i}^{*}(x-y,t)\partial_x \eta(y)\, dy.
  \end{equation}
  Note that by~\eqref{eq:mass}--\eqref{def:thetai_init}, we have\footnote{We remind the reader that $m_V=l_i(0\, V_0)^{T}$. Also, since $V_0=u_0(0_{\pm})$ by one of the compatibility conditions~\eqref{eq:compatibility}, we have $|m_V|\leq C\delta$.}
  \begin{equation}
    \eta_j(x)=
    \begin{dcases}
      -\int_{x}^{\infty}(u_j-\theta_j)(y,0)\, dy & (x>0), \\
      \int_{-\infty}^{x}(u_j-\theta_j)(y,0)\, dy & (x<0).
    \end{dcases}
  \end{equation}
  Hence, by~\eqref{def:u0ipm} and~\eqref{ass:smallness}, we have
  \begin{equation}
    \label{SectionIII:Lemma:Bound_of_Ii:Proof:Bound_of_etaj}
    |\eta_j(x)|\leq C\delta(|x|+1)^{-5/4}.
  \end{equation}
  Let us first show~\eqref{SectionIII:Lemma:Bound_of_Ii:Proof:Bound_of_Ii1} in the case of (i) $|x-\lambda_i t|\leq (t+1)^{1/2}$. By integration by parts, we have
  \begin{equation}
    |\mathcal{I}_{i,1}(x,t)|=\left| \int_{-\infty}^{\infty}\partial_x g_{i}^{*}(x-y,t)\eta(y)\, dy \right|\leq C(t+1)^{-1}\int_{-\infty}^{\infty}|\eta_i(x)|\, dx\leq C\delta (t+1)^{-1}\leq C\delta \Psi_i(x,t).
  \end{equation}
  We next consider the case of (ii) $(t+1)^{1/2}<|x-\lambda_i t|<t+1$. Suppose that $x-\lambda_i t>0$; the case when $x-\lambda_i t<0$ can be treated in a similar manner. Again, by integration by parts, we have
  \begin{align}
    |\mathcal{I}_{i,1}(x,t)|
    & \leq C(t+1)^{-1}\int_{-\infty}^{(x-\lambda_i t)/2}e^{-\frac{(x-\lambda_i t)^2}{Ct}}|\eta_i(y)|\, dy+C\delta (t+1)^{-1}\int_{(x-\lambda_i t)/2}^{\infty}e^{-\frac{(x-y-\lambda_i t)^2}{Ct}}(y+1)^{-5/4}dy \\
    & \leq C\delta(t+1)^{-1}e^{-\frac{(x-\lambda_i t)^2}{Ct}}+C\delta (|x-\lambda_i t|+1)^{-5/4}(t+1)^{-1/2} \\
    & \leq C\delta(t+1)^{-1}e^{-\frac{(x-\lambda_i t)^2}{Ct}}+C\delta (|x-\lambda_i t|+1)^{-7/4}\leq C\delta \Psi_i(x,t).
  \end{align}
  For the last inequality, we used~\eqref{eq:Theta_psi}. We finally consider the case of (iii) $|x-\lambda_i t|\geq t+1$. Again, let us only consider the case when $x-\lambda_i t>0$. By~\eqref{ass:smallness}, we have
  \begin{align}
    |\mathcal{I}_{i,1}(x,t)|
    & \leq C(t+1)^{-1/2}\int_{-\infty}^{(x-\lambda_i t)/2}e^{-\frac{(x-\lambda_i t)^2}{Ct}}|(u_i-\theta_i)(y,0)|\, dy \\
    & \quad +C\delta (t+1)^{-1/2}\int_{(x-\lambda_i t)/2}^{\infty}e^{-\frac{(x-y-\lambda_i t)^2}{2\nu t}}(y+1)^{-7/4}dy+C\delta (t+1)^{-1/2}e^{-\frac{(x-\lambda_i t)^2}{2\nu t}} \\
    & \leq C\delta e^{-\frac{t}{C}}e^{-\frac{(x-\lambda_i t)^2}{Ct}}+C\delta (|x-\lambda_i t|+1)^{-7/4}\leq C\delta \Psi_i(x,t).
  \end{align}
  Thus~\eqref{SectionIII:Lemma:Bound_of_Ii:Proof:Bound_of_Ii1} is proved.

  We next show that
  \begin{equation}
    \label{SectionIII:Lemma:Bound_of_Ii:Proof:Bound_of_Ii2}
    |\mathcal{I}_{i,2}(x,t)|\leq C\delta \Psi_i(x,t).
  \end{equation}
  By~\eqref{eq:PWE_gi_refined}, we have
  \begin{align}
    \mathcal{I}_{i,2}(x,t)
    & =\int_{-\infty}^{\infty}(g_i-g_{i}^{*}-\gamma_{i'}\partial_x g_{i'}^{*})(x-y,t)
    \begin{pmatrix}
      u_{01} \\
      u_{02}
    \end{pmatrix}
    (y)\, dy \\
    & \quad +\gamma_{i'}\left\{ \int_{-\infty}^{\infty}\partial_x g_{i'}^{*}(x-y,t)
    \begin{pmatrix}
      u_1-\theta_1 \\
      u_2-\theta_2
    \end{pmatrix}
    (y,0)\, dy+m_V \partial_x g_{i'}^{*}(x,t)\bm{1} \right\} \\
    & =\gamma_{i'}\partial_x \left\{ \int_{-\infty}^{\infty}g_{i'}^{*}(x-y,t)
    \begin{pmatrix}
      u_1-\theta_1 \\
      u_2-\theta_2
    \end{pmatrix}
    (y,0)\, dy+m_V g_{i'}^{*}(x,t)\bm{1} \right\} \\
    & \quad +O(1)(t+1)^{-1}\int_{-\infty}^{\infty}e^{-\frac{(x-y-\lambda_i t)^2}{Ct}}\left|
    \begin{pmatrix}
      u_{01} \\
      u_{02}
    \end{pmatrix}
    \right| (y)\, dy \\
    & \quad +O(1)(t+1)^{-3/2}\int_{-\infty}^{\infty}e^{-\frac{(x-y-\lambda_{i'}t)^2}{Ct}}\left|
    \begin{pmatrix}
      u_{01} \\
      u_{02}
    \end{pmatrix}
    \right| (y)\, dy \\
    & \quad +O(1)e^{-\frac{c^2}{\nu}t}\left|
    \begin{pmatrix}
      u_{01} \\
      u_{02}
    \end{pmatrix}
    \right| (x) \\
    & \eqqcolon \mathcal{I}_{i,21}(x,t)+\mathcal{I}_{i,22}(x,t)+\mathcal{I}_{i,23}(x,t)+\mathcal{I}_{i,24}(x,t).
  \end{align}
  Here, $O(1)f(x,t)$ is a function whose absolute value is bounded by $C|f(x,t)|$. First, by~\eqref{ass:smallness}, we have
  \begin{equation}
    |\mathcal{I}_{i,24}(x,t)|\leq C\delta e^{-\frac{c^2}{\nu}t}(|x|+1)^{-7/4}\leq C\delta \Psi_i(x,t).
  \end{equation}
  Next, since $\mathcal{I}_{i,21}(x,t)=\gamma_{i'}\partial_x \mathcal{I}_{i',1}(x,t)$, we can treat $\mathcal{I}_{i,21}(x,t)$ similarly to $\mathcal{I}_{i,1}(x,t)$ and obtain
  \begin{equation}
    |\mathcal{I}_{i,21}(x,t)|\leq C\delta (t+1)^{-1/2}\Psi_{i'}(x,t)\leq C\delta \Psi_i(x,t).
  \end{equation}
  For $\mathcal{I}_{i,22}(x,t)$, we first consider the case of (i) $|x-\lambda_i t|\leq (t+1)^{1/2}$. In this case, by~\eqref{ass:smallness}, we have
  \begin{equation}
    |\mathcal{I}_{i,22}(x,t)|\leq C\delta (t+1)^{-1} \leq C\delta \Psi_i(x,t).
  \end{equation}
  We next consider the case of (ii) $|x-\lambda_i t|>(t+1)^{1/2}$. Suppose that $x-\lambda_i t>0$; the case when $x-\lambda_i t<0$ can be treated in a similar manner. Then, by~\eqref{ass:smallness}, we have
  \begin{align}
    |\mathcal{I}_{i,22}(x,t)|
    & \leq C(t+1)^{-1}\int_{-\infty}^{(x-\lambda_i t)/2}e^{-\frac{(x-\lambda_i t)^2}{Ct}}\left|
    \begin{pmatrix}
      u_{01} \\
      u_{02}
    \end{pmatrix}
    \right| (y)\, dy+C\delta (t+1)^{-1}\int_{(x-\lambda_i t)/2}^{\infty}e^{-\frac{(x-y-\lambda_i t)^2}{Ct}}(y+1)^{-7/4}\, dy \\
    & \leq C\delta (t+1)^{-1}e^{-\frac{(x-\lambda_i t)^2}{Ct}}+C\delta (|x-\lambda_i t|+1)^{-7/4}(t+1)^{-1/2}\leq C\delta \Psi_i(x,t).
  \end{align}
  For $\mathcal{I}_{i,23}(x,t)$, we first consider the case of (i)' $|x-\lambda_{i'}t|\leq (t+1)^{1/2}$. In this case, by~\eqref{ass:smallness}, we have
  \begin{equation}
    |\mathcal{I}_{i,23}(x,t)|\leq C\delta (t+1)^{-3/2}\leq C\delta \bar{\psi}(x,t;\lambda_{i'})\leq C\delta \Psi_i(x,t).
  \end{equation}
  We next consider the case of (ii)' $|x-\lambda_{i'}t|>(t+1)^{1/2}$. Suppose that $x-\lambda_{i'}t>0$; the case when $x-\lambda_{i'}t<0$ can be treated similarly. Then, by~\eqref{ass:smallness}, we have
  \begin{align}
    |\mathcal{I}_{i,23}(x,t)|
    & \leq C(t+1)^{-3/2}\int_{-\infty}^{(x-\lambda_{i'}t)/2}e^{-\frac{(x-\lambda_{i'}t)^2}{Ct}}\left|
    \begin{pmatrix}
      u_{01} \\
      u_{02}
    \end{pmatrix}
    \right| (y)\, dy \\
    & \quad +C\delta (t+1)^{-3/2}\int_{(x-\lambda_{i'}t)/2}^{\infty}e^{-\frac{(x-y-\lambda_{i'}t)^2}{Ct}}(y+1)^{-7/4}\, dy \\
    & \leq C\delta (t+1)^{-3/2}e^{-\frac{(x-\lambda_{i'}t)^2}{Ct}}+C\delta (|x-\lambda_{i'}t|+1)^{-7/4}(t+1)^{-1}\leq C\delta \Psi_i(x,t).
  \end{align}
  Thus~\eqref{SectionIII:Lemma:Bound_of_Ii:Proof:Bound_of_Ii2} is proved.

  We next show that
  \begin{equation}
    \label{SectionIII:Lemma:Bound_of_Ii:Proof:Bound_of_Ii3}
    |\mathcal{I}_{i,3}(x,t)|\leq C\delta \Psi_i(x,t).
  \end{equation}
  By~\eqref{eq:PWE_gi}, \eqref{eq:gR_relation}, and~\eqref{eq:PWE_gTi}, we have\footnote{Note that the delta functions in~\eqref{eq:PWE_gi} can be ignored since the spatial arguments of $g_{R,i}$ are positive due to $x>0$.}
  \begin{align}
    \mathcal{I}_{i,3}(x,t)
    & =\frac{1}{2}\int_{0}^{\infty}\partial_x g_{i}^{*}(x+y,t)
    \begin{pmatrix}
      u_{02} \\
      u_{01}
    \end{pmatrix}
    (y)\, dy+\frac{1}{2}\int_{-\infty}^{0}\partial_x g_{i}^{*}(x-y,t)
    \begin{pmatrix}
      u_{01} \\
      u_{02}
    \end{pmatrix}
    (y)\, dy \\
    & \quad +O(1)\sum_{j=1}^{2}(t+1)^{-3/2}\int_{-\infty}^{\infty}e^{-\frac{(x-y-\lambda_j t)^2}{Ct}}\left|
    \begin{pmatrix}
      u_{01} \\
      u_{02}
    \end{pmatrix}
    \right| (y)\, dy \\
    & \quad +O(1)\int_{-\infty}^{\infty}e^{-\frac{|x|+|y|+t}{C}}\left|
    \begin{pmatrix}
      u_{01} \\
      u_{02}
    \end{pmatrix}
    \right| (y)\, dy.
  \end{align}
  The first two terms can be treated similarly to $\mathcal{I}_{i,22}(x,t)$ and the third term similarly to $\mathcal{I}_{i,23}(x,t)$. The last term is bounded by $C\delta e^{-(|x|+t)/C}\leq C\delta \Psi_i(x,t)$. Thus~\eqref{SectionIII:Lemma:Bound_of_Ii:Proof:Bound_of_Ii3} is proved.

  Finally, we need to show that
  \begin{equation}
    \label{SectionIII:Lemma:Bound_of_Ii:Proof:Bound_of_Ii4}
    |\mathcal{I}_{i,4}(x,t)|\leq C\delta \Psi_i(x,t).
  \end{equation}
  This can be easily proved using~\eqref{eq:PWE_gi_refined},~\eqref{eq:gT_der},~and~\eqref{eq:PWE_gTi}:
  \begin{equation}
    |\mathcal{I}_{i,4}(x,t)|\leq C\delta (t+1)^{-1}e^{-\frac{(x-\lambda_i t)^2}{Ct}}+C\delta (t+1)^{-3/2}e^{-\frac{(x-\lambda_{i'}t)^2}{Ct}}+C\delta e^{-\frac{|x|+t}{C}}\leq C\delta \Psi_i(x,t).
  \end{equation}
  This ends the proof of the lemma.
\end{proof}

Next, we study the nonlinear terms in the integral equation~\eqref{eq:vi_IE}: for $x>0$, let
\begin{align}
  & \mathcal{N}_i(x,t) \\
  & =v_i(x,t)-\mathcal{I}_i(x,t) \\
  & =\int_{0}^{t}\int_{-\infty}^{\infty}g_{i}^{*}(x-y,t-s)
  \begin{pmatrix}
    N_1-N_{1}^{*} \\
    N_2-N_{2}^{*}
  \end{pmatrix}_x
  (y,s)\, dyds \\
  & \quad +\int_{0}^{t}\int_{-\infty}^{\infty}(g_i-g_{i}^{*})(x-y,t-s)
  \begin{pmatrix}
    N_1 \\
    N_2
  \end{pmatrix}_x
  (y,s)\, dyds+\gamma_{i'}\int_{0}^{t}\int_{-\infty}^{\infty}\partial_x g_{i'}^{*}(x-y,t-s)
  \begin{pmatrix}
    \theta_{1}^{2}/2 \\
    \theta_{2}^{2}/2
  \end{pmatrix}_x
  (y,s)\, dyds \\
  & \quad +\int_{0}^{t}\int_{0}^{\infty}g_{R,i}(x+y,t-s)
  \begin{pmatrix}
    N_1 \\
    N_2
  \end{pmatrix}_x
  (y,s)\, dyds+\int_{0}^{t}\int_{-\infty}^{0}g_{R,i}(x-y,t-s)
  \begin{pmatrix}
    N_1 \\
    N_2
  \end{pmatrix}_x
  (y,s)\, dyds \\
  & \quad +\int_{0}^{t}g_{T,i}(x,t-s)
  \begin{pmatrix}
    \llbracket N_1 \rrbracket \\
    \llbracket N_2 \rrbracket
  \end{pmatrix}
  (s)\, ds.
\end{align}
We then prove the following lemma.

\begin{lem}
  \label{SectionIII:Lemma:Bound_of_Ni}
  Under the assumptions of Theorem~\ref{thm:main}, if~\eqref{ass:smallness} holds with $\delta_0>0$ sufficiently small, then there exists a positive constant $C$ such that
  \begin{equation}
    \label{SectionIII:Lemma:Bound_of_Ii:Bound_of_Ni}
    |\mathcal{N}_i(x,t)|\leq C(\delta+P(t))^2 \Psi_i(x,t)
  \end{equation}
  for $(x,t)\in \mathbb{R}_* \times (0,\infty)$.
\end{lem}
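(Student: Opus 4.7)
The plan is to bound $\mathcal{N}_i$ by treating each of the kernel types appearing in~\eqref{eq:vi_IE}---namely $g_i^*$, $g_i-g_i^*$, $\partial_x g_{i'}^*$, $g_{R,i}$, and $g_{T,i}$---separately, reducing each convolution to a form estimated in Section~\ref{AppendixB} or controlled directly through the pointwise Green's function bounds~\eqref{eq:PWE_gi_refined},~\eqref{eq:gR_relation}, and~\eqref{eq:PWE_gTi}. The structural input is the expansion~\eqref{eq:tau_squared} of $-\frac{p''(1)^2}{8c^2}(v-1)^2$ together with~\eqref{eq:N_structure}: the corrector $N_i^*$ in~\eqref{def:N*} is designed precisely to cancel the three self-interaction pieces $-\theta_1^2/2$, $-\theta_2^2/2$, $-\theta_i\xi_i$, so that $N_i-N_i^*$ becomes a finite sum of bilinear products in $\{\theta_j,\xi_j,\partial_x\theta_j,v_j\}$, plus the $-\nu(v-1)u_x$ contribution, plus cubic remainders of pointwise size $O((\delta+P(t))^3)$ against rapidly decaying weights.

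For the main convolution with $g_i^*$, I would integrate by parts in $y$ on each half-line to move the derivative onto the kernel, reducing each bilinear summand to a form $\mathcal{I}_i[\cdot]$ (up to boundary contributions at $y=0$, which feed into the boundary analysis below). The $\xi_i\theta_{i'}$ piece is dispatched by Lemma~\ref{lem:xii_thetaj}; the same-species-$i'$ pieces $\theta_{i'}\xi_{i'}$, $\xi_{i'}^2$, $\theta_{i'}v_{i'}$, $\xi_{i'}v_{i'}$ and the derivative forms $\partial_x(\theta_{i'}^2),\partial_x(\theta_{i'}\xi_{i'}),\partial_x(\theta_{i'}v_{i'})$ are dispatched by Lemma~\ref{lem:thetaj_block}; the remaining products (such as $\theta_i\theta_{i'}$, $\theta_i v_{i'}$, $\xi_i v_{i'}$, $\gamma_i\theta_i\partial_x\theta_i$, $v_j^2$, $v_1v_2$) either have disjoint Gaussian supports---making their pointwise product exponentially small---or carry an extra $v_j$ factor whose ansatz $|v_j(y,s)|\le P(t)\Psi_j(y,s)$ yields the stronger spatial decay needed; all of these are handled by routine convolution estimates of the type used in the proofs of Section~\ref{AppendixB}, with aggregate bound $C(\delta+P(t))^2\Psi_i(x,t)$. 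For the $(g_i-g_i^*)$ convolution, the refined expansion~\eqref{eq:PWE_gi_refined} splits the kernel into $\gamma_{i'}\partial_x g_{i'}^*$, an exponentially localized $\delta$-spike at $x=0$, and a Gaussian remainder of size $(t-s)^{-1}e^{-(x-y-\lambda_j(t-s))^2/C(t-s)}$; the $\gamma_{i'}\partial_x g_{i'}^*$ piece cancels precisely with the third integral in $\mathcal{N}_i$ up to a cubic residual, while the spike and remainder contributions are absorbed using $|N_j(y,s)|\le C(\delta+P(t))^2$ times quadratically decaying weights.

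The reflective and boundary contributions constitute the main obstacle. For the $g_{R,i}$ integrals I would invoke~\eqref{eq:gR_relation} to replace $g_{R,i}$ by $\tfrac{1}{2}\partial_x g_i+\tfrac{1}{4}\partial_x^2 g_{T,i}$ (up to the flip matrix) on each half-line and then integrate by parts in $y$; since the kernel is evaluated at $x+y$ or $x-y$ with $x$ and $y$ of the same sign, the Gaussian peaks of $\theta_j,\xi_j$ are well separated from it and the resulting contributions are controlled by the pointwise bounds on $N_j$ as above. The genuine boundary integral $\int_0^t g_{T,i}(x,t-s)\llbracket N_j\rrbracket(s)\,ds$ requires a sharp pointwise bound on $\llbracket N_j\rrbracket(s)$. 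Since $\theta_j,\partial_x\theta_j$ and $\xi_j$ are smooth across $x=0$, only the summands of~\eqref{eq:tau_squared} involving a factor of $v_j$ and the $-\nu(v-1)u_x$ contribution produce a jump, the dominant one being bilinear in $v_j(0_\pm,s)$. Using $|v_j(0_\pm,s)|\le CP(t)(s+1)^{-7/8}$ (the ansatz), the exponential decay of $\theta_j,\partial_x\theta_j$ at $x=0$, the bound $|\xi_j(0,s)|\le C\delta^2(s+1)^{-3/2}$ from Lemma~\ref{lem:xi}, and $|\partial_x(u_j-\theta_j)(0_\pm,s)|\le C\delta(s+1)^{-2}$ from Theorem~\ref{thm:PWE_derivative}, one concludes $|\llbracket N_j\rrbracket(s)|\le C(\delta+P(t))^2(s+1)^{-7/4}$. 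Combining with~\eqref{eq:PWE_gTi} and the splitting $s\in(0,t/2)\cup(t/2,t)$ used in the proof of Lemma~\ref{SectionIII:Lemma:Bound_of_Ii} then yields the required $C(\delta+P(t))^2\Psi_i(x,t)$ bound, completing the proof.
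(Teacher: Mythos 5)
Your outline is in the right spirit and several of your structural observations match the paper's proof (the role of $N_i^*$ as a cancelling the self--interaction pieces; reduction to $\mathcal{I}_i[\cdot]$ forms after integration by parts and appeal to Lemmas~\ref{lem:xii_thetaj} and~\ref{lem:thetaj_block}; the refined kernel expansion~\eqref{eq:PWE_gi_refined}). But the proposal has a genuine gap in the treatment of the boundary term $\int_0^t g_{T,i}(x,t-s)\llbracket N_j\rrbracket(s)\,ds$, and this piece cannot be repaired without a new idea.

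The issue is not the exponent in your jump bound $|\llbracket N_j\rrbracket(s)|\leq C(\delta+P(t))^2(s+1)^{-7/4}$; the problem is structural. Near $x=\lambda_i t$ and for $s\in[0,\sqrt{t}]$, the kernel bound~\eqref{eq:PWE_gTi} gives $|g_{T,i}(x,t-s)|\sim(t-s+1)^{-1/2}\sim(t+1)^{-1/2}$ with Gaussian factor of order one. Hence $\int_0^{\sqrt t}g_{T,i}(x,t-s)\llbracket N_j\rrbracket(s)\,ds$ decays at best like $(\delta+P(t))^2(t+1)^{-1/2}$, while $\Psi_i(x,t)\sim(t+1)^{-7/8}$ at $|x-\lambda_i t|=O(1)$. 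No improvement of the $s$-decay of $\llbracket N_j\rrbracket$ changes this: the $g_{T,i}$ kernel by itself caps the decay of the convolution at $(t+1)^{-1/2}$ along the acoustic cone. The paper never estimates this term directly; it exploits $N_1=N_2$, hence $\llbracket N_1\rrbracket=\llbracket N_2\rrbracket$, so that the flip matrix in~\eqref{eq:gR_relation} acts as the identity on the jump vector. After integrating all four domain convolutions by parts, the collected boundary contributions at $y=0$ are $\bigl(-g_i-(g_{T,i}-g_i)+g_{T,i}\bigr)(x,t-s)\llbracket N_j\rrbracket(s)=0$. Only a residual boundary term survives, and it appears over $s\in[t/2,t]$ with the much more favorable kernel $(g_i-g_{i}^*)$ (whose Gaussian is super-exponentially small near $|x-\lambda_i t|=O(1)$ for $t-s\le t/2$); the spike piece of that kernel is then annihilated by the companion identity $q_{10}\,\llbracket N_{j,a}\rrbracket=0$ in~\eqref{eq:delta1_vanishing}, again a consequence of $N_{1,a}=N_{2,a}$. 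These two identity-based cancellations are the heart of the proof and are entirely absent from your outline.

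A secondary imprecision: the residual after cancelling the $\gamma_{i'}\partial_x g_{i'}^*$ part of $g_i-g_i^*$ against the third explicit integral is not cubic. Since $g_{i'}^*$ picks out only the $i'$-th component, the combined integrand is $N_{i'} + \theta_{i'}^2/2 = -\theta_i^2/2 + \theta_1\theta_2 - \theta_{i'}\xi_{i'} + \cdots$, which is genuinely quadratic; the paper spends the analysis of $\mathcal{J}_{22}$ (with the $L_{\lambda'}$-decomposition of Lemma~\ref{lem:3.4_pre}, further split at $t/2$ and $t^{1/2}$) on precisely this residue. That piece is tractable once organized correctly, but your claim that it is cubic would lull one into underestimating it.
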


\begin{proof}
  Since the case when $t<4$ can be handled easily, we assume that $t\geq 4$ in the following. Also, we only consider the case of $x>0$ since the case of $x<0$ is similar. Let
  \begin{equation}
    N_{i,a}=-\frac{p''(1)^2}{8c^2}(v-1)^2, \quad N_{i,b}=-\frac{\nu p''(1)}{4c^2}(v-1)u_x, \quad N_{i,c}=N_i-N_{i,a}-N_{i,b}.
  \end{equation}
  By~\eqref{eq:N_structure}, we can see that $N_{i,a}$ is the lowest order term among the three. By definition, we have
  \begin{equation}
    N_i=N_{i,a}+N_{i,b}+N_{i,c}.
  \end{equation}
  Corresponding to this decomposition, let $\mathcal{N}_{i}(x,t)=\mathcal{N}_{i,a}(x,t)+\mathcal{N}_{i,b}(x,t)+\mathcal{N}_{i,c}(x,t)$, where
  \begin{align}
    & \mathcal{N}_{i,a}(x,t) \\
    & =\int_{0}^{t}\int_{-\infty}^{\infty}g_{i}^{*}(x-y,t-s)
    \begin{pmatrix}
      N_{1,a}-N_{1}^{*} \\
      N_{2,a}-N_{2}^{*}
    \end{pmatrix}_x
    (y,s)\, dyds \\
    & \quad +\int_{0}^{t}\int_{-\infty}^{\infty}(g_i-g_{i}^{*})(x-y,t-s)
    \begin{pmatrix}
      N_{1,a} \\
      N_{2,a}
    \end{pmatrix}_x
    (y,s)\, dyds+\gamma_{i'}\int_{0}^{t}\int_{-\infty}^{\infty}\partial_x g_{i'}^{*}(x-y,t-s)
    \begin{pmatrix}
      \theta_{1}^{2}/2 \\
      \theta_{2}^{2}/2
    \end{pmatrix}_x
    (y,s)\, dyds \\
    & \quad +\int_{0}^{t}\int_{0}^{\infty}g_{R,i}(x+y,t-s)
    \begin{pmatrix}
      N_{1,a} \\
      N_{2,a}
    \end{pmatrix}_x
    (y,s)\, dyds+\int_{0}^{t}\int_{-\infty}^{0}g_{R,i}(x-y,t-s)
    \begin{pmatrix}
      N_{1,a} \\
      N_{2,a}
    \end{pmatrix}_x
    (y,s)\, dyds \\
    & \quad +\int_{0}^{t}g_{T,i}(x,t-s)
    \begin{pmatrix}
      \llbracket N_{1,a} \rrbracket \\
      \llbracket N_{2,a} \rrbracket
    \end{pmatrix}
    (s)\, ds,
  \end{align}
  \begin{align}
    \mathcal{N}_{i,b}(x,t)
    & =\int_{0}^{t}\int_{-\infty}^{\infty}g_{i}^{*}(x-y,t-s)
    \begin{pmatrix}
      N_{1,b} \\
      N_{2,b}
    \end{pmatrix}_x
    (y,s)\, dyds \\
    & \quad +\int_{0}^{t}\int_{-\infty}^{\infty}(g_i-g_{i}^{*})(x-y,t-s)
    \begin{pmatrix}
      N_{1,b} \\
      N_{2,b}
    \end{pmatrix}_x
    (y,s)\, dyds \\
    & \quad +\int_{0}^{t}\int_{0}^{\infty}g_{R,i}(x+y,t-s)
    \begin{pmatrix}
      N_{1,b} \\
      N_{2,b}
    \end{pmatrix}_x
    (y,s)\, dyds+\int_{0}^{t}\int_{-\infty}^{0}g_{R,i}(x-y,t-s)
    \begin{pmatrix}
      N_{1,b} \\
      N_{2,b}
    \end{pmatrix}_x
    (y,s)\, dyds \\
    & \quad +\int_{0}^{t}g_{T,i}(x,t-s)
    \begin{pmatrix}
      \llbracket N_{1,b} \rrbracket \\
      \llbracket N_{2,b} \rrbracket
    \end{pmatrix}
    (s)\, ds,
  \end{align}
  and
  \begin{align}
    \mathcal{N}_{i,c}(x,t)
    & =\int_{0}^{t}\int_{-\infty}^{\infty}g_{i}^{*}(x-y,t-s)
    \begin{pmatrix}
      N_{1,c} \\
      N_{2,c}
    \end{pmatrix}_x
    (y,s)\, dyds \\
    & \quad +\int_{0}^{t}\int_{-\infty}^{\infty}(g_i-g_{i}^{*})(x-y,t-s)
    \begin{pmatrix}
      N_{1,c} \\
      N_{2,c}
    \end{pmatrix}_x
    (y,s)\, dyds \\
    & \quad +\int_{0}^{t}\int_{0}^{\infty}g_{R,i}(x+y,t-s)
    \begin{pmatrix}
      N_{1,c} \\
      N_{2,c}
    \end{pmatrix}_x
    (y,s)\, dyds+\int_{0}^{t}\int_{-\infty}^{0}g_{R,i}(x-y,t-s)
    \begin{pmatrix}
      N_{1,c} \\
      N_{2,c}
    \end{pmatrix}_x
    (y,s)\, dyds \\
    & \quad +\int_{0}^{t}g_{T,i}(x,t-s)
    \begin{pmatrix}
      \llbracket N_{1,c} \rrbracket \\
      \llbracket N_{2,c} \rrbracket
    \end{pmatrix}
    (s)\, ds.
  \end{align}
  In what follows, we only consider the case of $i=1$ since the other case is similar.

  We then show in the following that
  \begin{equation}
    \label{SectionIII:Lemma:Bound_of_Ii:Bound_of_Nia}
    |\mathcal{N}_{1,a}(x,t)|\leq C(\delta+P(t))^2 \Psi_1(x,t).
  \end{equation}
  The other terms $\mathcal{N}_{1,b}(x,t)$ and $\mathcal{N}_{1,c}(x,t)$ are basically easier to treat, and we can also show that
  \begin{equation}
    \label{SectionIII:Lemma:Bound_of_Ii:Bound_of_Nibc}
    |\mathcal{N}_{1,b}(x,t)|+|\mathcal{N}_{1,c}(x,t)|\leq C(\delta+P(t))^2 \Psi_1(x,t).
  \end{equation}
  In the following, we only prove~\eqref{SectionIII:Lemma:Bound_of_Ii:Bound_of_Nia} and omit the proof of~\eqref{SectionIII:Lemma:Bound_of_Ii:Bound_of_Nibc} for brevity. To begin with, let
  \begin{equation}
    R_{i,a}=N_{i,a}-N_{i}^{*}.
  \end{equation}
  Note that since $N_{i}^{*}$ is continuous in $x$, we have $\llbracket R_{i,a} \rrbracket(t)=\llbracket N_{i,a} \rrbracket(t)$. Then, using~\eqref{eq:gR_relation} and noting that $N_{1,a}=N_{2,a}$, integration by parts gives
  \begin{align}
    \label{eq:Nia}
    & \mathcal{N}_{1,a}(x,t) \\
    & =\int_{0}^{t}\int_{-\infty}^{\infty}\partial_x g_{1}^{*}(x-y,t-s)
    \begin{pmatrix}
      R_{1,a} \\
      R_{2,a}
    \end{pmatrix}
    (y,s)\, dyds \\
    & \quad +\int_{0}^{t}\int_{-\infty}^{\infty}\partial_x (g_1-g_{1}^{*})(x-y,t-s)
    \begin{pmatrix}
      N_{1,a} \\
      N_{2,a}
    \end{pmatrix}
    (y,s)\, dyds+\gamma_2\int_{0}^{t}\int_{-\infty}^{\infty}\partial_{x}^{2}g_{2}^{*}(x-y,t-s)
    \begin{pmatrix}
      \theta_{1}^{2}/2 \\
      \theta_{2}^{2}/2
    \end{pmatrix}
    (y,s)\, dyds \\
    & \quad -\int_{0}^{t}\int_{0}^{\infty}\partial_x g_{R,1}(x+y,t-s)
    \begin{pmatrix}
      N_{1,a} \\
      N_{2,a}
    \end{pmatrix}
    (y,s)\, dyds+\int_{0}^{t}\int_{-\infty}^{0}\partial_x g_{R,1}(x-y,t-s)
    \begin{pmatrix}
      N_{1,a} \\
      N_{2,a}
    \end{pmatrix}
    (y,s)\, dyds \\
    & \eqqcolon \mathcal{J}_1(x,t)+\mathcal{J}_2(x,t)+\mathcal{J}_3(x,t).
  \end{align}
  Here, the terms $\mathcal{J}_1(x,t)$, $\mathcal{J}_2(x,t)$, and $\mathcal{J}_3(x,t)$ correspond to the first, the second, and the third row, respectively, of the right-hand side of the first equality.

  We first show that
  \begin{equation}
    \label{eq:J1_bound}
    |\mathcal{J}_1(x,t)|\leq C(\delta+P(t))^2 \Psi_1(x,t).
  \end{equation}
  For this purpose, it is important to note that
  \begin{align}
    \label{eq:R1a_bound}
    \begin{aligned}
      & \left| R_{1,a}(x,t)-\left[ \xi_1 \theta_2+\frac{\gamma_2}{2}\partial_x(\theta_{2}^{2})-\theta_2 \xi_2-\theta_2 v_2-\xi_{2}^{2}/2-\xi_2 v_2 \right](x,t) \right| \\
      & \leq C(\delta+P(t))^2[(t+1)^{-1/2}\psi_{7/4}(x,t;c)+(t+1)^{-7/8}\psi_{7/4}(x,t;-c)].
    \end{aligned}
  \end{align}
  To show this, we need three ingredients: (a) structure of $R_{1,a}$; (b) pointwise estimates of $\theta_i$, $\xi_i$, and $v_i$; and (c) pointwise estimates of products of certain functions, for example, $\psi_{7/4}(x,t;\lambda_i)^2$. First, (a) is provided by~\eqref{eq:tau_squared} (note that the left-hand side of~\eqref{eq:tau_squared} is $N_{i,a}$); secondly, (b) is provided by~\eqref{eq:theta_Theta},~Lemma~\ref{lem:xi}, and~\eqref{def:P}, which implies
  \begin{equation}
    |v_i(x,s)|\leq P(t)\Psi_i(x,s) \quad (0\leq s\leq t).
  \end{equation}
  Finally, (c) is provided by Lemma~\ref{lem:product}. The required calculations are straightforward but inevitably long. Now, using~\eqref{eq:R1a_bound}, we quickly obtain~\eqref{eq:J1_bound} by the use of Lemmas~\ref{lem:xii_thetaj},~\ref{lem:thetaj_block},~\ref{lem:LZ98_lemma3.5_modified} (with $\alpha=0$ and $\beta=1$), and~\ref{lem:LZ98_lemma3.6_modified} (with $\alpha=0$ and $\beta=7/4$).

  Next, we show that
  \begin{equation}
    \label{eq:J2_bound}
    |\mathcal{J}_2(x,t)|\leq C(\delta+P(t))^2 \Psi_1(x,t).
  \end{equation}
  Let
  \begin{align}
    & \mathcal{J}_2(x,t) \\
    & =\int_{0}^{t}\int_{-\infty}^{\infty}\partial_x (g_1-g_{1}^{*})(x-y,t-s)
    \begin{pmatrix}
      N_{1,a}+\theta_{2}^{2}/2 \\
      N_{2,a}+\theta_{2}^{2}/2
    \end{pmatrix}
    (y,s)\, dyds \\
    & \quad -\int_{0}^{t}\int_{-\infty}^{\infty}\partial_x (g_1-g_{1}^{*})(x-y,t-s)
    \begin{pmatrix}
      \theta_{2}^{2}/2 \\
      \theta_{2}^{2}/2
    \end{pmatrix}
    (y,s)\, dyds+\gamma_2 \int_{0}^{t}\int_{-\infty}^{\infty}\partial_{x}^{2}g_{2}^{*}(x-y,t-s)
    \begin{pmatrix}
      \theta_{1}^{2}/2 \\
      \theta_{2}^{2}/2
    \end{pmatrix}
    (y,s)\, dyds \\
    & \eqqcolon \mathcal{J}_{21}(x,t)+\mathcal{J}_{22}(x,t).
  \end{align}
  Here, $\mathcal{J}_{21}(x,t)$ and $\mathcal{J}_{22}(x,t)$ correspond to the first and the second row, respectively, of the right-hand side of the first equality.

  For $\mathcal{J}_{21}(x,t)$, we further divide it into two parts:
  \begin{align}
    \mathcal{J}_{21}(x,t)
    & =\int_{0}^{t/2}\int_{-\infty}^{\infty}\partial_x (g_1-g_{1}^{*})(x-y,t-s)
    \begin{pmatrix}
      N_{1,a}+\theta_{2}^{2}/2 \\
      N_{2,a}+\theta_{2}^{2}/2
    \end{pmatrix}
    (y,s)\, dyds \\
    & \quad +\int_{t/2}^{t}\int_{-\infty}^{\infty}\partial_x (g_1-g_{1}^{*})(x-y,t-s)
    \begin{pmatrix}
      N_{1,a}+\theta_{2}^{2}/2 \\
      N_{2,a}+\theta_{2}^{2}/2
    \end{pmatrix}
    (y,s)\, dyds \\
    & \eqqcolon \mathcal{J}_{211}(x,t)+\mathcal{J}_{212}(x,t).
  \end{align}

  Let us first consider $\mathcal{J}_{211}(x,t)$. By~\eqref{eq:PWE_gi_refined}, we have
  \begin{equation}
    \label{eq:gi-gi*_der}
    \left| \partial_x g_1(x,t)-\partial_x g_{1}^{*}(x,t)-\gamma_2 \partial_{x}^{2}g_{2}^{*}(x,t)-e^{-\frac{c^2}{\nu}t}\sum_{j=0}^{1}\delta^{(1-j)}(x)q_{1j}(t) \right|\leq Ct^{-\frac{3}{2}}e^{-\frac{(x-ct)^2}{Ct}}+Ct^{-2}e^{-\frac{(x+ct)^2}{Ct}}.
  \end{equation}
  Hence, to show that $|J_{211}(x,t)|$ is bounded by the right-hand side of~\eqref{eq:J2_bound}, it suffices to show the same thing for the following terms:
  \begin{equation}
    \label{eq:J211_1}
    \int_{0}^{t/2}\int_{-\infty}^{\infty}(t-s)^{-3/2}e^{-\frac{(x-y-c(t-s))^2}{C(t-s)}}\left|
    \begin{pmatrix}
      N_{1,a}+\theta_{2}^{2}/2 \\
      N_{2,a}+\theta_{2}^{2}/2
    \end{pmatrix}
    \right| (y,s) \, dyds,
  \end{equation}
  \begin{equation}
    \label{eq:J211_2}
    \int_{0}^{t/2}\int_{-\infty}^{\infty}(t-s)^{-2}e^{-\frac{(x-y+c(t-s))^2}{C(t-s)}}\left|
    \begin{pmatrix}
      N_{1,a}+\theta_{2}^{2}/2 \\
      N_{2,a}+\theta_{2}^{2}/2
    \end{pmatrix}
    \right| (y,s)\, dyds,
  \end{equation}
  \begin{equation}
    \label{eq:J211_3}
    \left| \int_{0}^{t/2}\int_{-\infty}^{\infty}\partial_{x}^{2}g_{22}^{*}(x-y,t-s)(N_{2,a}+\theta_{2}^{2}/2)(y,s)\, dyds \right|,
  \end{equation}
  and
  \begin{equation}
    \label{eq:J211_4}
    \int_{0}^{t/2}e^{-\frac{c^2}{\nu}(t-s)}\left|
    \begin{pmatrix}
      N_{1,a}+\theta_{2}^{2}/2 \\
      N_{2,a}+\theta_{2}^{2}/2
    \end{pmatrix}
    \right| (x,s)\, ds.
  \end{equation}
  Here, $g_{2}^{*}=(0\, g_{22}^{*})$, and we used the relation
  \begin{equation}
    \label{eq:delta1_vanishing}
    q_{10}(t)
    \begin{pmatrix}
      N_{1,a}+\theta_{2}^{2}/2 \\
      N_{2,a}+\theta_{2}^{2}/2
    \end{pmatrix}
    =l_1 Q_0
    \begin{pmatrix}
      r_1 & r_2
    \end{pmatrix}
    \begin{pmatrix}
      N_{1,a}+\theta_{2}^{2}/2 \\
      N_{2,a}+\theta_{2}^{2}/2
    \end{pmatrix}
    =\frac{1}{2}
    \begin{pmatrix}
      1 & -1
    \end{pmatrix}
    \begin{pmatrix}
      N_{1,a}+\theta_{2}^{2}/2 \\
      N_{2,a}+\theta_{2}^{2}/2
    \end{pmatrix}
    =0,
  \end{equation}
  which follows from~\eqref{SectionIII:Q0Q1} and $N_{1,a}=N_{2,a}$. Now, we first treat~\eqref{eq:J211_1} and~\eqref{eq:J211_2}. By calculations similar to those leading to~\eqref{eq:R1a_bound}, we can show that
  \begin{equation}
    \label{eq:Nja+half_theta22}
    |(N_{j,a}+\theta_{2}^{2}/2)(x,t)|\leq C(\delta+P(t))^2 [(t+1)^{-1/8}\psi_{7/4}(x,t;c)+(t+1)^{-3/8}\psi_{7/4}(x,t;-c)].
  \end{equation}
  Using this inequality together with Lemmas~\ref{lem:LZ98_lemma3.5_modified} (with $\alpha=1$ and $\beta=1/4$; $\alpha=2$ and $\beta=3/4$) and~\ref{lem:LZ98_lemma3.6_modified} (with $\alpha=1$ and $\beta=3/4$; $\alpha=2$ and $\beta=1/4$), we see that~\eqref{eq:J211_1} and~\eqref{eq:J211_2} are bounded by the right-hand side of~\eqref{eq:J2_bound}. Next, to treat~\eqref{eq:J211_3}, we first rewrite it as
  \begin{align}
    & \int_{0}^{t/2}\int_{-\infty}^{\infty}\partial_{x}^{2}g_{22}^{*}(x-y,t-s)(N_{2,a}+\theta_{2}^{2}/2)(y,s)\, dyds \\
    & =\int_{0}^{t/2}\int_{-\infty}^{\infty}\partial_{x}^{2}g_{22}^{*}(x-y,t-s)(N_{2,a}+\theta_{1}^{2}/2+\theta_{2}^{2}/2)(y,s)\, dyds \\
    & \quad -\frac{1}{2}\int_{0}^{t/2}\int_{-\infty}^{\infty}\partial_{x}^{2}g_{22}^{*}(x-y,t-s)\theta_{1}^{2}(y,s)\, dyds.
  \end{align}
  Again, by calculations similar to those leading to~\eqref{eq:R1a_bound}, we can show that
  \begin{equation}
    \label{eq:N2a+half_theta11_half_theta22}
    |(N_{2,a}+\theta_{1}^{2}/2+\theta_{2}^{2}/2)(x,t)|\leq C(\delta+P(t))^2(t+1)^{-3/8}[\psi_{7/4}(x,t;c)+\psi_{7/4}(x,t;-c)].
  \end{equation}
  Using this inequality together with Lemmas~\ref{lem:thetaj_block},~\ref{lem:LZ98_lemma3.5_modified} (with $\alpha=1$ and $\beta=3/4$), and~\ref{lem:LZ98_lemma3.6_modified} (with $\alpha=1$ and $\beta=3/4$), we see that~\eqref{eq:J211_3} is bounded by the right-hand side of~\eqref{eq:J2_bound}. Finally, by Lemma~\ref{lem:delta}, we easily see that~\eqref{eq:J211_4} is bounded by the right-hand side of~\eqref{eq:J2_bound}. Hence, the same bound also holds for $|\mathcal{J}_{211}(x,t)|$.

  For $\mathcal{J}_{212}(x,t)$, we first apply integration by parts:
  \begin{align}
    \label{eq:J212}
    \begin{aligned}
      \mathcal{J}_{212}(x,t)
      & =\int_{t/2}^{t}\int_{-\infty}^{\infty}(g_1-g_{1}^{*})(x-y,t-s)
      \begin{pmatrix}
        N_{1,a}+\theta_{2}^{2}/2 \\
        N_{2,a}+\theta_{2}^{2}/2
      \end{pmatrix}_x
      (y,s)\, dyds \\
      & \quad +\int_{t/2}^{t}(g_1-g_{1}^{*})(x,t-s)
      \begin{pmatrix}
        \llbracket N_{1,a} \rrbracket \\
        \llbracket N_{2,a} \rrbracket
      \end{pmatrix}
      (s)\, ds.
    \end{aligned}
  \end{align}
  The second term on the right-hand side is shown to be bounded by the right-hand side of~\eqref{eq:J2_bound} by~Lemma~\ref{lem:boundary1}; use here~\eqref{eq:PWE_gi} instead of~\eqref{eq:PWE_gi_refined}. Next, by~\eqref{eq:PWE_gi_refined}, we have
  \begin{equation}
    \label{eq:gi-gi*}
    \left| g_1(x,t)-g_{1}^{*}(x,t)-\gamma_2 \partial_x g_{2}^{*}(x,t)-e^{-\frac{c^2}{\nu}t}\delta(x)q_{10}(t) \right|\leq Ct^{-1}e^{-\frac{(x-ct)^2}{Ct}}+C(t+1)^{-1/2}t^{-1}e^{-\frac{(x+ct)^2}{Ct}}.
  \end{equation}
  Hence, to show that $|J_{212}(x,t)|$ is bounded by the right-hand side of~\eqref{eq:J2_bound}, it suffices to show the same thing for the following terms:
  \begin{equation}
    \label{eq:J212_1}
    \int_{t/2}^{t}\int_{-\infty}^{\infty}(t-s)^{-1}e^{-\frac{(x-y-c(t-s))^2}{C(t-s)}}\left|
    \begin{pmatrix}
      N_{1,a}+\theta_{2}^{2}/2 \\
      N_{2,a}+\theta_{2}^{2}/2
    \end{pmatrix}_x
    \right| (y,s)\, dyds,
  \end{equation}
  \begin{equation}
    \label{eq:J212_2}
    \int_{t/2}^{t}\int_{-\infty}^{\infty}(t-s)^{-1}(t-s+1)^{-1/2}e^{-\frac{(x-y+c(t-s))^2}{C(t-s)}}\left|
    \begin{pmatrix}
      N_{1,a}+\theta_{2}^{2}/2 \\
      N_{2,a}+\theta_{2}^{2}/2
    \end{pmatrix}_x
    \right| (y,s)\, dyds,
  \end{equation}
  and
  \begin{equation}
    \label{eq:J212_3}
    \left| \int_{t/2}^{t}\int_{-\infty}^{\infty}\partial_x g_{22}^{*}(x-y,t-s)\partial_x(N_{2,a}+\theta_{2}^{2}/2)(y,s)\, dyds \right|.
  \end{equation}
  Note here the use of~\eqref{eq:delta1_vanishing}. Now, we first treat~\eqref{eq:J212_1} and~\eqref{eq:J212_2}. By calculations similar to those leading to~\eqref{eq:R1a_bound}, we can show that (we use Theorem~\ref{thm:PWE_derivative} here)
  \begin{equation}
    \label{eq:Nja+half_theta22_der}
    |\partial_x(N_{j,a}+\theta_{2}^{2}/2)(x,t)|\leq C(\delta+P(t))^2 [(t+1)^{-5/8}\psi_{7/4}(x,t;c)+(t+1)^{-7/8}\psi_{7/4}(x,t;-c)].
  \end{equation}
  Using this inequality together with Lemmas~\ref{lem:LZ98_lemma3.5_modified} (with $\alpha=0$ and $\beta=5/4$; $\alpha=1$ and $\beta=7/4$) and~\ref{lem:LZ98_lemma3.6_modified} (with $\alpha=0$ and $\beta=7/4$; $\alpha=1$ and $\beta=5/4$), we see that~\eqref{eq:J212_1} and~\eqref{eq:J212_2} are bounded by the right-hand side of~\eqref{eq:J2_bound}. Finally, to treat~\eqref{eq:J212_3}, we first rewrite it as
  \begin{align}
    \label{eq:g22_interaction}
    \begin{aligned}
      & \int_{t/2}^{t}\int_{-\infty}^{\infty}\partial_x g_{22}^{*}(x-y,t-s)\partial_x(N_{2,a}+\theta_{2}^{2}/2)(y,s)\, dyds \\
      & =\int_{t/2}^{t}\int_{-\infty}^{\infty}\partial_x g_{22}^{*}(x-y,t-s)\partial_x (N_{2,a}+\theta_{1}^{2}/2+\theta_{2}^{2}/2+\theta_1 \xi_1+\theta_1 v_1)(y,s)\, dyds \\
      & \quad -\int_{t/2}^{t}\int_{-\infty}^{\infty}\partial_x g_{22}^{*}(x-y,t-s)\partial_x(\theta_{1}^{2}/2+\theta_1 \xi_1+\theta_1 v_1)(y,s)\, dyds.
    \end{aligned}
  \end{align}
  By calculations similar to those leading to~\eqref{eq:R1a_bound}, we can show that (we use Theorem~\ref{thm:PWE_derivative} here)
  \begin{equation}
    |\partial_x (N_{2,a}+\theta_{1}^{2}/2+\theta_{2}^{2}/2+\theta_1 \xi_1+\theta_1 v_1)(x,t)|\leq C(\delta+P(t))^2[(t+1)^{-9/8}\psi_{7/4}(x,t;c)+(t+1)^{-7/8}\psi_{7/4}(x,t;-c)].
  \end{equation}
  From these inequalities together with Lemmas~\ref{lem:thetaj_block},~\ref{lem:LZ98_lemma3.5_modified} (with $\alpha=0$ and $\beta=7/4$), and~\ref{lem:LZ98_lemma3.6_modified} (with $\alpha=0$ and $\beta=9/4$), we see that~\eqref{eq:J212_3} is bounded by the right-hand side of~\eqref{eq:J2_bound}. Hence, the same bound also holds for $|\mathcal{J}_{212}(x,t)|$.

  We next consider $\mathcal{J}_{22}(x,t)$. By calculations similar to those leading to Lemma~\ref{lem:3.4_pre}, we have
  \begin{align}
    & \mathcal{J}_{22}(x,t) \\
    & =-\frac{1}{2}\int_{0}^{t^{1/2}}\int_{-\infty}^{\infty}\partial_x(g_1-g_{1}^{*})(x-y,t-s)
    \begin{pmatrix}
      \theta_{2}^{2} \\
      \theta_{2}^{2}
    \end{pmatrix}
    (y,s)\, dyds+\frac{\gamma_2}{2}\int_{0}^{t^{1/2}}\int_{-\infty}^{\infty}\partial_{x}^{2}g_{2}^{*}(x-y,t-s)
    \begin{pmatrix}
      \theta_{2}^{2} \\
      \theta_{2}^{2}
    \end{pmatrix}
    (y,s)\, dyds \\
    & \quad -\frac{1}{4c}\int_{t^{1/2}}^{t}\int_{-\infty}^{\infty}L_1(g_1-g_{1}^{*})(x-y,t-s)
    \begin{pmatrix}
      \theta_{2}^{2} \\
      \theta_{2}^{2}
    \end{pmatrix}
    (y,s)\, dyds+\frac{\nu}{8c}\int_{t^{1/2}}^{t}\int_{-\infty}^{\infty}\partial_{x}^{2}g_{2}^{*}(x-y,t-s)
    \begin{pmatrix}
      \theta_{2}^{2} \\
      \theta_{2}^{2}
    \end{pmatrix}
    (y,s)\, dyds \\
    & \quad +\frac{1}{4c}\int_{t^{1/2}}^{t}\int_{-\infty}^{\infty}(g_1-g_{1}^{*})(x-y,t-s)
    \begin{pmatrix}
      L_2 \theta_{2}^{2} \\
      L_2 \theta_{2}^{2}
    \end{pmatrix}
    (y,s)\, dyds \\
    & \quad +\frac{1}{4c}\int_{-\infty}^{\infty}(g_1-g_{1}^{*})(x-y,t-t^{1/2})
    \begin{pmatrix}
      \theta_{2}^{2} \\
      \theta_{2}^{2}
    \end{pmatrix}
    (y,t^{1/2})\, dy \\
    & \eqqcolon \mathcal{J}_{221}(x,t)+\mathcal{K}(x,t)+\mathcal{J}_{223}(x,t)+\mathcal{J}_{224}(x,t).
  \end{align}
  Here, $L_2=\partial_t-c\partial_x-(\nu/2)\partial_{x}^{2}$ and the terms $\mathcal{J}_{221}(x,t)$, $\mathcal{K}(x,t)$, $\mathcal{J}_{223}(x,t)$, and $\mathcal{J}_{224}(x,t)$ correspond to the first, the second, the third, and the fourth row, respectively, of the right-hand side of the first equality. In the derivation, we used $g_{2}^{*}=(0\, g_{22}^{*})$, $\gamma_2=\nu/(4c)$, and $(g_1-g_{1}^{*})(x,0)=0$. For $\mathcal{J}_{221}(x,t)$, we use~\eqref{eq:gi-gi*_der},~Lemmas~\ref{lem:sqrt1} (with $\alpha=1/2$ and $\beta=1/4$),~\ref{lem:LZ98_lemma3.5_modified} (with $\alpha=2$ and $\beta=1/4$), and~\ref{lem:delta}; this shows that $|\mathcal{J}_{221}(x,t)|$ is bounded by the right-hand side of~\eqref{eq:J2_bound}. For $\mathcal{J}_{223}(x,t)$, we use~\eqref{eq:PWE_gi}, an obvious analogue of~\eqref{eq:delta1_vanishing},
  \begin{equation}
    L_2 \theta_{2}^{2}=-2\partial_x(\theta_{2}^{3}/3)+\nu(\partial_x \theta_2)^2,
  \end{equation}
  Lemmas~\ref{lem:lem3.2_improved} (with $\alpha=4$),~\ref{lem:LZ98_lemma3.5_modified} (with $\alpha=0$ and $\beta=9/4$), and~\ref{lem:LZ98_lemma3.6_modified} (with $\alpha=0$ and $\beta=9/4$); this shows that $|\mathcal{J}_{223}(x,t)|$ is bounded by the right-hand side of~\eqref{eq:J2_bound}. For $\mathcal{J}_{224}(x,t)$, we use~\eqref{eq:PWE_gi}, an obvious analogue of~\eqref{eq:delta1_vanishing}, and Lemma~\ref{lem:sqrt3} (with $\alpha=1/2$ and $\beta=1/4$); this shows that $|\mathcal{J}_{224}(x,t)|$ is bounded by the right-hand side of~\eqref{eq:J2_bound}. Finally, for $\mathcal{K}(x,t)$, we first divide it into two parts:
  \begin{align}
    & \mathcal{K}(x,t) \\
    & =-\frac{1}{4c}\int_{t^{1/2}}^{t/2}\int_{-\infty}^{\infty}L_1(g_1-g_{1}^{*})(x-y,t-s)
    \begin{pmatrix}
      \theta_{2}^{2} \\
      \theta_{2}^{2}
    \end{pmatrix}
    (y,s)\, dyds+\frac{\nu}{8c}\int_{t^{1/2}}^{t/2}\int_{-\infty}^{\infty}\partial_{x}^{2}g_{2}^{*}(x-y,t-s)
    \begin{pmatrix}
      \theta_{2}^{2} \\
      \theta_{2}^{2}
    \end{pmatrix}
    (y,s)\, dyds \\
    & \quad -\frac{1}{4c}\int_{t/2}^{t}\int_{-\infty}^{\infty}L_1(g_1-g_{1}^{*})(x-y,t-s)
    \begin{pmatrix}
      \theta_{2}^{2} \\
      \theta_{2}^{2}
    \end{pmatrix}
    (y,s)\, dyds+\frac{\nu}{8c}\int_{t/2}^{t}\int_{-\infty}^{\infty}\partial_{x}^{2}g_{2}^{*}(x-y,t-s)
    \begin{pmatrix}
      \theta_{2}^{2} \\
      \theta_{2}^{2}
    \end{pmatrix}
    (y,s)\, dyds \\
    & \eqqcolon \mathcal{K}_1(x,t)+\mathcal{K}_2(x,t).
  \end{align}
  Here, the terms $\mathcal{K}_1(x,t)$ and $\mathcal{K}_2(x,t)$ correspond to the first and the second row, respectively, of the right-hand side of the first equality. Let us first consider $\mathcal{K}_1(x,t)$. By~\eqref{eq:PWE_gi} and~\eqref{eq:Li_gi-gi*}, we have
  \begin{equation}
    \left| L_1(g_1-g_{1}^{*})(x,t)-\frac{\nu}{2}\partial_{x}^{2}g_{2}^{*}(x,t)-\frac{\nu}{2}e^{-\frac{c^2}{\nu}t}\sum_{j=0}^{2}\delta^{(2-j)}(x)q_{2j}(t) \right|\leq Ct^{-2}\left( e^{-\frac{(x-ct)^2}{Ct}}+e^{-\frac{(x+ct)^2}{Ct}} \right).
  \end{equation}
  Thus, we have
  \begin{align}
    |\mathcal{K}_1(x,t)|
    & \leq C\delta^2\int_{t^{1/2}}^{t/2}\int_{-\infty}^{\infty}(t-s)^{-1}(t-s+1)^{-1}e^{-\frac{(x-y-c(t-s))^2}{C(t-s)}}(s+1)^{-1/8}\psi_{7/4}(y,s;-c)\, dyds \\
    & \quad +C\delta^2 \int_{t^{1/2}}^{t/2}\int_{-\infty}^{\infty}(t-s)^{-1}(t-s+1)^{-1}e^{-\frac{(x-y+c(t-s))^2}{C(t-s)}}(s+1)^{-1/8}\psi_{7/4}(y,s;-c)\, dyds \\
    & \quad +C\delta^2 \int_{t^{1/2}}^{t/2}e^{-\frac{t-s}{C}}(s+1)^{-1/8}\psi(x,s;-c)\, ds.
  \end{align}
  Hence, by Lemmas~\ref{lem:LZ98_lemma3.5_modified} (with $\alpha=2$ and $\beta=1/4$),~\ref{lem:LZ98_lemma3.6_modified} (with $\alpha=2$ and $\beta=1/4$), and~\ref{lem:delta}, we see that $|\mathcal{K}_1(x,t)|$ is bounded by the right-hand side of~\eqref{eq:J2_bound}. For $\mathcal{K}_2(x,t)$, we use~\eqref{eq:Li_gi-gi*} and integration by parts to obtain
  \begin{align}
    \mathcal{K}_2(x,t)
    & =-\frac{\nu}{8c}\int_{t/2}^{t}\int_{-\infty}^{\infty}g_2(x-y,t-s)
    \begin{pmatrix}
      \theta_{2}^{2} \\
      \theta_{2}^{2}
    \end{pmatrix}_{xx}
    (y,s)\, dyds \\
    & \quad +\frac{\nu}{8c}\int_{t/2}^{t}\int_{-\infty}^{\infty}g_{2}^{*}(x-y,t-s)
    \begin{pmatrix}
      \theta_{2}^{2} \\
      \theta_{2}^{2}
    \end{pmatrix}_{xx}
    (y,s)\, dyds \\
    & =-\frac{\nu}{8c}\int_{t/2}^{t}\int_{-\infty}^{\infty}(g_2-g_{2}^{*})(x-y,t-s)
    \begin{pmatrix}
      \theta_{2}^{2} \\
      \theta_{2}^{2}
    \end{pmatrix}_{xx}
    (y,s)\, dyds.
  \end{align}
  Then, by~\eqref{eq:PWE_gi}, we have
  \begin{align}
    |\mathcal{K}_2(x,t)|
    & \leq C\delta^2\int_{t/2}^{t}\int_{-\infty}^{\infty}(t-s)^{-1}e^{-\frac{(x-y-c(t-s))^2}{C(t-s)}}(s+1)^{-9/8}\psi_{7/4}(y,s;-c)\, dyds \\
    & \quad +C\delta^2 \int_{t/2}^{t}\int_{-\infty}^{\infty}(t-s)^{-1}e^{-\frac{(x-y+c(t-s))^2}{C(t-s)}}(s+1)^{-9/8}\psi_{7/4}(y,s;-c)\, dyds \\
    & \quad +C\delta^2 \int_{t/2}^{t}e^{-\frac{t-s}{C}}(s+1)^{-9/8}\psi(x,s;-c)\, ds.
  \end{align}
  Hence, by Lemmas~\ref{lem:LZ98_lemma3.5_modified} (with $\alpha=0$ and $\beta=9/4$),~\ref{lem:LZ98_lemma3.6_modified} (with $\alpha=0$ and $\beta=9/4$), and~\ref{lem:delta} (with $\alpha=9/8$), we see that $|\mathcal{K}_2(x,t)|$ is bounded by the right-hand side of~\eqref{eq:J2_bound}.

  These calculations show~\eqref{eq:J2_bound}. Since the analysis of $\mathcal{J}_{3}(x,t)$ are similar and simpler, we omit this. We have thus showed~\eqref{SectionIII:Lemma:Bound_of_Ii:Bound_of_Nia}. This ends the proof.
\end{proof}

\begin{proof}[Proof of Theorem~\ref{thm:main}]
  By Lemmas~\ref{SectionIII:Lemma:Bound_of_Ii} and~\ref{SectionIII:Lemma:Bound_of_Ii:Bound_of_Ni}, we conclude that~\eqref{SectionIII:GoalInequality} holds. From this, we immediately obtain Theorem~\ref{thm:main} (cf.~the argument at the end of~\cite[Section~3.3]{Koike21}).
\end{proof}

\subsection{Proofs of Corollaries~\ref{cor:V_lower} and~\ref{cor:V_improved_decay}}
\label{sec:proofs_cors}
In this section, we prove Corollaries~\ref{cor:V_lower} and~\ref{cor:V_improved_decay}.

\begin{proof}[Proof of Corollary~\ref{cor:V_lower}]
  By~\eqref{right_ev} and~\eqref{eq:decomposition}, we have
  \begin{equation}
    u(x,t)=\frac{2c^2}{p''(1)}(u_1+u_2)(x,t).
  \end{equation}
  Also, by~\eqref{eq:theta_explicit} and~\eqref{def:Psi}, we have
  \begin{equation}
    |\theta_i(0,t)|+|\partial_x \theta_i(0,t)|\leq C\delta e^{-\frac{t}{C}}, \quad |\Psi_i(0,t)|\leq C\delta(t+1)^{-7/4}.
  \end{equation}
  Therefore, by Theorem~\ref{thm:main}, we obtain
  \begin{equation}
    \label{cor:eq:V_lower:proof:eq1}
    \left| u(0_{\pm},t)-\frac{2c^2}{p''(1)}(\xi_1+\xi_2)(0,t) \right| \leq C\delta(t+1)^{-7/4}.
  \end{equation}
  Using this inequality and Lemma~\ref{lem:xi_lower}, \eqref{lem:eq4:xi_lower} and~\eqref{lem:eq3:xi_lower} in particular, we obtain
  \begin{equation}
    \left| u(0_{\pm},t)-\mathcal{W}(t) \right| \leq C[\delta(t+1)^{-7/4}+\delta^3(t+1)^{-3/2}].
  \end{equation}
  Here, $\mathcal{W}(t)$ is the function defined by~\eqref{def:W}. Now, note that $V(t)=u(0_{\pm},t)$. Hence, when $\delta$ is sufficiently small, using~\eqref{lem:eq2:xi_lower} in Lemma~\ref{lem:xi_lower}, we conclude that
  \begin{equation}
    C^{-1}|M_{1}^{2}-M_{2}^{2}|(t+1)^{-3/2}-C\delta(t+1)^{-7/4}\leq (\sgn(M_{1}^{2}-M_{2}^{2}))V(t)
  \end{equation}
  for some $C>1$. Finally, by taking $T=T(\delta)>0$ sufficiently large, we obtain~\eqref{cor:eq:V_lower}. This ends the proof of Corollary~\ref{cor:V_lower}.
\end{proof}

\begin{proof}[Proof of Corollary~\ref{cor:V_improved_decay}]
  First, by Lemma~\ref{lem:xi_cancellation}, we have $|\mathcal{V}(t)|\leq C\delta^2(t+1)^{-2}$ for $t\geq 1$; here, $\mathcal{V}(t)$ is the function define by~\eqref{def:V}. From this inequality,~\eqref{lem:eq4:xi_lower} in Lemma~\ref{lem:xi_lower}, and~\eqref{cor:eq:V_lower:proof:eq1}, we conclude that
  \begin{equation}
    |u(0_{\pm},t)|\leq C\delta(t+1)^{-7/4}
  \end{equation}
  for some $C>0$. Since $V(t)=u(0_{\pm},t)$, we obtain~\eqref{cor:eq:V_improved_decay}. This ends the proof of Corollary~\ref{cor:V_improved_decay}.
\end{proof}

\subsubsection*{Acknowledgements}
First, I thank Yasunori Maekawa for his comment: he pointed out that the inter-diffusion wave $\xi_i$ appearing in this paper is a solution to a single PDE~\eqref{def:xii}; previously, $\xi_i$ was represented as an infinite sum of solutions to certain PDEs (this infinite series representation is nevertheless important and is used in the proof of Lemma~\ref{lem:xi}). His comment made the presentation much clearer and shorter. I also thank Shih-Hsien Yu for informing me about the structure of Green's function when I visited National University of Singapore in 2019, which was financially supported by Grant-in-Aid for JSPS Research Fellow (Grant Number 18J20574); this, although indirectly, helped me find a useful reference~\cite{LZ09}. I also express my gratitude to Masanari Hattori and Shigeru Takata for supplying me with the computational resources and advices for the relevant numerical simulations presented in~\cite{Koike21RIMS}; these simulations helped me understand the behavior of the functions appearing in this paper. Finally, this work was financially supported by Grant-in-Aid for JSPS Research Fellow (Grant Number 20J00882).

\appendix
\renewcommand*{\thesection}{\Alph{section}}
\section{General lemmas on pointwise estimates of convolutions}
\label{AppendixC}
In this appendix, we gather some general lemmas that are used in this paper. For ease of reference, we first state the lemmas without proofs (unless the proof is short), and the proofs are presented afterwards. Note that in what follows, as in other places, the symbols $C$ and $\nu^*$ denote generic large constants.

The following lemma --- for smooth functions $f=f(x,t)$ on $\mathbb{R}\times (0,\infty)$ --- is proved inside the proof of~\cite[Lemma~3.4]{LZ97}. Here, we allow functions to have a discontinuity at $x=0$.

\begin{lem}
  \label{lem:3.4_pre}
  Suppose that $f=f(x,t)$ is a function on $\mathbb{R}_* \times (0,\infty)$ and that $\partial_{t}^{k}\partial_{x}^{l}f$ is bounded and continuous on $\mathbb{R}_* \times (0,\infty)$ for $0\leq k\leq 1$ and $0\leq l\leq 2$. Let $\lambda \neq \lambda'$, $\nu>0$, $t\geq 4$, and $L_{\lambda'}=\partial_t+\lambda' \partial_x-(\nu/2)\partial_{x}^{2}$. Then the function $I(x,t)$ defined by
  \begin{equation}
    I(x,t)=\int_{0}^{t}\int_{-\infty}^{\infty}\partial_x \left\{ (t-s)^{-1/2}e^{-\frac{(x-y-\lambda(t-s))^2}{2\nu(t-s)}} \right\} f(y,s)\, dyds
  \end{equation}
  can be written as
  \begin{equation}
    I(x,t)=(\lambda-\lambda')^{-1}\sqrt{2\pi \nu}f(x,t)+I_1(x,t)+I_{21}(x,t)+I_{22}(x,t)+I_b(x,t),
  \end{equation}
  where
  \begin{equation}
    I_1(x,t)=\int_{0}^{t^{1/2}}\int_{-\infty}^{\infty}\partial_x \left\{ (t-s)^{-1/2}e^{-\frac{(x-y-\lambda(t-s))^2}{2\nu(t-s)}} \right\} f(y,s)\, dyds,
  \end{equation}
  \begin{equation}
    I_{21}(x,t)=-(\lambda-\lambda')^{-1}\int_{-\infty}^{\infty}(t-t^{1/2})^{-1/2}e^{-\frac{(x-y-\lambda(t-\sqrt{t}))^2}{2\nu(t-\sqrt{t})}}f(y,t^{1/2})\, dy,
  \end{equation}
  \begin{equation}
    I_{22}(x,t)=-(\lambda-\lambda')^{-1}\int_{t^{1/2}}^{t}\int_{-\infty}^{\infty}(t-s)^{-1/2}e^{-\frac{(x-y-\lambda(t-s))^2}{2\nu(t-s)}}L_{\lambda'}f(y,s)\, dyds,
  \end{equation}
  and
  \begin{align}
    I_b(x,t)
    & =-\lambda'(\lambda-\lambda')^{-1}\int_{t^{1/2}}^{t}(t-s)^{-1/2}e^{-\frac{(x-\lambda(t-s))^2}{2\nu(t-s)}}\llbracket f \rrbracket(s)\, ds \\
    & \quad -(\nu/2)(\lambda-\lambda')^{-1}\int_{t^{1/2}}^{t}(t-s)^{-1/2}e^{-\frac{(x-\lambda(t-s))^2}{2\nu(t-s)}}\llbracket \partial_x f \rrbracket(s)\, ds \\
    & \quad -(\nu/2)(\lambda-\lambda')^{-1}\int_{t^{1/2}}^{t}\partial_x \left\{ (t-s)^{-1/2}e^{-\frac{(x-\lambda(t-s))^2}{2\nu(t-s)}} \right\} \llbracket f \rrbracket(s)\, ds.
  \end{align}
\end{lem}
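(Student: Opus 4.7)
The plan is to carry out the standard integration-by-parts trick (essentially as in~\cite[Lemma~3.4]{LZ97}) while carefully tracking the extra boundary contributions at $y=0_{\pm}$ arising from the jump of $f$.

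The first step is to derive an operator identity for $K(x,y,t,s)\coloneqq (t-s)^{-1/2}e^{-(x-y-\lambda(t-s))^2/(2\nu(t-s))}$. A direct computation (or equivalently, combining the forward heat identity $\partial_t K+\lambda\partial_x K=(\nu/2)\partial_x^2 K$ with the relations $\partial_y=-\partial_x$ and $\partial_s=-\partial_t$ on $K$) gives $\partial_s K+\lambda\partial_y K+(\nu/2)\partial_y^2 K=0$. Adding $(\lambda'-\lambda)\partial_y K$ to both sides and using $\partial_x K=-\partial_y K$ then yields the key identity
\begin{equation}
  (\lambda-\lambda')\partial_x K=\partial_s K+\lambda'\partial_y K+\frac{\nu}{2}\partial_y^2 K.
\end{equation}

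The second step is to split the $s$-integral in $I(x,t)$ at $s=t^{1/2}$, labelling the contribution from $[0,t^{1/2}]$ as $I_1$, and apply the identity above on the remainder $[t^{1/2},t]$ to convert $\partial_x K\cdot f$ into three pieces that can be integrated by parts. Integration by parts in $s$ on the $\partial_s K\cdot f$ term produces (i)~a bulk term $-\int K\,\partial_s f$, (ii)~an endpoint at $s=t^{1/2}$ that is exactly $I_{21}$, and (iii)~an endpoint at $s=t$ that requires restricting to $[t^{1/2},t-\varepsilon]$ and sending $\varepsilon\to 0^+$: since $K(x,\cdot,t,s)\rightharpoonup \sqrt{2\pi\nu}\,\delta(x-\cdot)$ distributionally as $s\to t^-$ and $f$ is continuous at $x\in\mathbb{R}_*$ by hypothesis, this limit equals $\sqrt{2\pi\nu}\,f(x,t)$ and produces the main term $(\lambda-\lambda')^{-1}\sqrt{2\pi\nu}\,f(x,t)$. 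Integration by parts in $y$ on the $\lambda'\partial_y K\cdot f$ and $(\nu/2)\partial_y^2 K\cdot f$ terms must be performed separately on $(-\infty,0)$ and $(0,\infty)$, since $f$ is discontinuous at $y=0$: the bulk contributions combine with the $\partial_s f$ bulk to form $-(\lambda-\lambda')^{-1}\int K\cdot L_{\lambda'}f$, which is $I_{22}$, while the boundary contributions at $y=0_{\pm}$, assembled using the continuity of $K$ and $\partial_y K$ in $y$ and the identity $\partial_y K(x,0,t,s)=-\partial_x K(x,0,t,s)$, produce jump-type integrals involving $\llbracket f\rrbracket$ and $\llbracket\partial_x f\rrbracket$ which comprise $I_b$.

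The main delicate point is the $s\to t^-$ limit producing the main term; once that is justified, the rest reduces to a careful accounting of boundary terms. The regularity hypothesis that $\partial_t^k\partial_x^l f$ be bounded and continuous on $\mathbb{R}_*\times(0,\infty)$ for $0\leq k\leq 1$, $0\leq l\leq 2$ is precisely what is needed to give meaning to $\partial_s f$, $\partial_y f$, $\partial_y^2 f$ in the bulk integrand of $I_{22}$ and to the traces $\llbracket f\rrbracket$, $\llbracket\partial_x f\rrbracket$ on the boundary; the condition $t\geq 4$ ensures $t^{1/2}\geq 2$ so that the split point $s=t^{1/2}$ stays away from the kernel's singularity at $s=t$ and from $s=0$.
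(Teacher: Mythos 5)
Your proposal correctly reproduces the standard argument the paper defers to in~\cite[Lemma~B.3]{Koike21}, which itself adapts~\cite[Lemma~3.4]{LZ97}: the kernel identity $(\lambda-\lambda')\partial_x K=\partial_s K+\lambda'\partial_y K+\tfrac{\nu}{2}\partial_y^2 K$ is correct, the split at $s=t^{1/2}$ is the right one, the $s\to t^-$ delta limit yields the main term $(\lambda-\lambda')^{-1}\sqrt{2\pi\nu}\,f(x,t)$, and the separate integration by parts on $(-\infty,0)$ and $(0,\infty)$ correctly produces the bulk $I_{22}=-(\lambda-\lambda')^{-1}\iint K\,L_{\lambda'}f$ together with the jump contributions at $y=0_{\pm}$. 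One small caveat worth noting if you carry your scheme through to the end: with the paper's convention $\llbracket f\rrbracket=f(0_+)-f(0_-)$ and using $\partial_y K(x,0,t,s)=-\partial_x K(x,0,t,s)$, the two boundary terms coming from $\tfrac{\nu}{2}\partial_y^2 K$ evaluate to $+(\nu/2)(\lambda-\lambda')^{-1}\int K(x,0,\cdot)\llbracket\partial_x f\rrbracket\,ds$ and $+(\nu/2)(\lambda-\lambda')^{-1}\int\partial_x K(x,0,\cdot)\llbracket f\rrbracket\,ds$, i.e.\ with the opposite signs from the second and third lines displayed for $I_b$ (one can confirm this by testing $f=H(y)$, $\lambda'=0$ and checking that both sides agree as functions of $\tau_0=t-\sqrt t$). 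This discrepancy does not propagate anywhere, since every later use of the lemma only bounds $|I_b(x,t)|$, but you should not be alarmed if your computed boundary terms differ in sign from the displayed formula.
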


\begin{proof}
  This lemma is proved inside the proof of~\cite[Lemma~B.3]{Koike21}.
\end{proof}

For $\lambda \in \mathbb{R}$ and $\mu>0$, we set
\begin{equation}
  \Theta_{\alpha}(x,t;\lambda,\mu)\coloneqq (t+1)^{-\alpha/2}e^{-\frac{(x-\lambda(t+1))^2}{\mu(t+1)}}
\end{equation}
and
\begin{equation}
  \psi_{7/4}(x,t;\lambda)\coloneqq [(x-\lambda(t+1))^2+(t+1)]^{-7/8}, \quad \bar{\psi}(x,t;\lambda)\coloneqq [|x-\lambda(t+1)|^7+(t+1)^5]^{-1/4}.
\end{equation}
Note that $\Theta_{\alpha}(x,t;\lambda,\mu)$ is the one already defined in~\eqref{def:Theta} and the definitions of $\psi_{7/4}(x,t;\lambda)$ and $\bar{\psi}(x,t;\lambda)$ are consistent with~\eqref{eq:psi74} and~\eqref{eq:psibar}.

\begin{lem}
  \label{lem:sqrt1}
  Let $\lambda \neq \lambda'$, $\mu>0$, $\alpha \geq 0$, and $0\leq \beta<5/4$. Then
  \begin{equation}
    \int_{0}^{t^{1/2}}\int_{-\infty}^{\infty}(t-s)^{-1-\alpha}e^{-\frac{(x-y-\lambda(t-s))^2}{\mu(t-s)}}(s+1)^{-\beta/2}\psi_{7/4}(y,s;\lambda')\, dyds\leq C(t+1)^{-\alpha-(\beta-3/4)/4}\psi_{7/4}(x,t;\lambda)
  \end{equation}
  for $t\geq 4$.
\end{lem}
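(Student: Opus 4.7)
The plan is to exploit the short time window $s \in [0, t^{1/2}]$: for $t \geq 4$ this yields $t - s \geq t/2$, so the singular kernel factors out as $(t-s)^{-1-\alpha} \leq C(t+1)^{-1-\alpha}$, and the remaining integral is a Gaussian convolution of $\psi_{7/4}(y,s;\lambda')$ along the characteristic of speed $\lambda$. Since $\lambda \neq \lambda'$, the two characteristics separate, which is the only role of the distinctness assumption.

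I would split the estimate into two regions according to $|x-\lambda t|$ versus $\sqrt{t+1}$. In the near field $|x-\lambda t|\leq K\sqrt{t+1}$, one has $\psi_{7/4}(x,t;\lambda)\geq C^{-1}(t+1)^{-7/8}$, and I would simply bound the Gaussian by $1$ and use the explicit identity $\int_{-\infty}^{\infty}\psi_{7/4}(y,s;\lambda')\, dy = C(s+1)^{-3/8}$. The $s$-integral then becomes $\int_0^{\sqrt{t}}(s+1)^{-\beta/2 - 3/8}\, ds$, and the constraint $\beta < 5/4$ is exactly what makes the exponent $\beta/2 + 3/8$ strictly less than $1$, yielding the polynomial growth $Ct^{5/16 - \beta/4}$. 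Combined with the prefactor $C(t+1)^{-1-\alpha}$, this gives $C(t+1)^{-\alpha - 11/16 - \beta/4}$, which matches the right-hand side in this region since $-\alpha - 11/16 - \beta/4 = -\alpha - (\beta-3/4)/4 - 7/8$.

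In the far field $|x - \lambda t|\geq K\sqrt{t+1}$, one has $\psi_{7/4}(x,t;\lambda)\geq C^{-1}|x-\lambda t|^{-7/4}$. Here I would split the inner $y$-integration at the threshold $|y - (x - \lambda(t-s))| \leq |x-\lambda t|/4$ versus its complement. On the outer region, the Gaussian is bounded by $\exp(-|x-\lambda t|^2/(Ct))$, which is superalgebraically small relative to $|x - \lambda t|^{-N}$ for any $N$ and absorbs everything. On the inner region, the separation of characteristics ($\lambda\neq\lambda'$) together with $s \leq \sqrt{t}\ll |x-\lambda t|$ forces $|y - \lambda'(s+1)| \geq |x-\lambda t|/4$ (by writing $y - \lambda'(s+1) = (y - A) + (A - \lambda'(s+1))$ with $A = x - \lambda(t-s)$ and estimating each piece), so $\psi_{7/4}(y,s;\lambda') \leq C|x-\lambda t|^{-7/4}$. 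The $y$-integral of the Gaussian then yields $C\sqrt{t-s} \leq C\sqrt{t}$, and the resulting $s$-integral $\int_0^{\sqrt{t}}(s+1)^{-\beta/2}\, ds \leq Ct^{1/2 - \beta/4}$ (using $\beta < 2$, implied by $\beta<5/4$). Combining with the prefactor $C(t+1)^{-1-\alpha}$, I obtain $C|x-\lambda t|^{-7/4}(t+1)^{-\alpha - \beta/4}$, which is dominated by the right-hand side since $(t+1)^{-\beta/4}\leq (t+1)^{-\beta/4 + 3/16}$ for $t \geq 0$.

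The main technical point is the delicate matching of exponents in the near field: the upper threshold $\beta < 5/4$ is precisely what makes the $s$-integral polynomially divergent at the correct rate, producing the factor $t^{5/16 - \beta/4}$ that meshes with the prefactor and $\psi_{7/4}$ decay to reproduce exactly the claimed bound $(t+1)^{-\alpha - (\beta-3/4)/4}\psi_{7/4}(x,t;\lambda)$. The far field, by contrast, is wasteful and gives a slightly better power, reflecting that the threshold $5/4$ is dictated solely by the near-field behavior.
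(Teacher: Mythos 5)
Your proof is correct, and it takes a genuinely different route from the paper's. Both arguments rest on the same two ingredients --- that $t-s\sim t+1$ on the window $s\leq t^{1/2}$, $t\geq 4$, and that $\int_{\mathbb{R}}\psi_{7/4}(y,s;\lambda')\,dy = C(s+1)^{-3/8}$, so that $\int_0^{\sqrt{t}}(s+1)^{-\beta/2-3/8}\,ds\sim t^{5/16-\beta/4}$ is what encodes the threshold $\beta<5/4$ --- but the decompositions are dual to each other. The paper splits the $y$-integral first, into a Gaussian-scale core around $\lambda'(s+1)$ and its two tails; it treats the core via the quadratic-completion identity~\eqref{eq:LZ97_39} and then subdivides the tails by position of $x$. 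You split on $|x-\lambda t|\lessgtr K\sqrt{t+1}$ first: in the near field a one-line H\"older bound $\|\text{Gaussian}\|_{\infty}\|\psi_{7/4}\|_{1}$ works uniformly in $x$ and exactly reproduces the exponent $-\alpha-11/16-\beta/4$; in the far field you split the $y$-integral around the kernel center $A=x-\lambda(t-s)$. This ordering makes the near-field argument noticeably simpler than the paper's core treatment, at the cost of nothing. Two points worth tightening: (1) your phrase about the outer far-field contribution being ``superalgebraically small relative to $|x-\lambda t|^{-N}$'' is not literally true at the boundary $|x-\lambda t|\sim K\sqrt{t+1}$, where the Gaussian is merely $O(1)$; the correct justification is that the surplus power $(t+1)^{-7/8}$ from the $s$-integration converts the needed inequality into the boundedness of $\sigma\mapsto\sigma^{7/4}e^{-\sigma^2/C}$ with $\sigma=|x-\lambda t|/\sqrt{t+1}\geq K$, which is exactly what you implicitly compute. (2) You should state that $K$ is chosen large enough in terms of $|\lambda-\lambda'|$ and $|\lambda'|$ so that $|A-\lambda'(s+1)|\geq|x-\lambda t|/2$ holds for all $s\leq t^{1/2}$; this is buried in ``estimating each piece'' but is the one place where the hypothesis $\lambda\neq\lambda'$ actually enters.
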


\begin{lem}
  \label{lem:sqrt3}
  Let $\lambda,\lambda' \in \mathbb{R}$, $\mu>0$, and $\alpha,\beta \geq 0$ (not necessarily $\lambda \neq \lambda'$). Suppose that
  \begin{equation}
    |f(x,t)|\leq C(t+1)^{-\beta/2}\psi_{7/4}(x,t;\lambda').
  \end{equation}
  Then
  \begin{equation}
    \int_{-\infty}^{\infty}(t-t^{1/2})^{-1/2-\alpha}e^{-\frac{(x-y-\lambda(t-\sqrt{t}))^2}{\mu(t-\sqrt{t})}}|f(y,t^{1/2})|\, dy\leq C(t+1)^{-\alpha-(\beta-3/4)/4}\psi_{7/4}(x,t;\lambda)
  \end{equation}
  for $t\geq 4$.
\end{lem}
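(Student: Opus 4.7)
The plan is to absorb the prefactor into a single power of $(t+1)$ and then reduce the problem to a Gaussian convolution of $\psi_{7/4}(\cdot,t^{1/2};\lambda')$. Since $t\geq 4$ gives $t-t^{1/2}\geq t/2$ and $t^{1/2}+1\asymp (t+1)^{1/2}$, one has $(t-t^{1/2})^{-1/2-\alpha}\leq C(t+1)^{-1/2-\alpha}$, the hypothesis on $f$ gives $|f(y,t^{1/2})|\leq C(t+1)^{-\beta/4}\psi_{7/4}(y,t^{1/2};\lambda')$, and the Gaussian kernel may be replaced by $\exp(-(x-y-\lambda(t-\sqrt{t}))^2/(\mu^*(t+1)))$ for some $\mu^*>0$. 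Writing the left-hand side of the lemma as $J(x,t)$, the task reduces to showing
\begin{equation}
K(x,t)\coloneqq \int_{-\infty}^{\infty}e^{-(x-y-\lambda(t-\sqrt{t}))^2/(\mu^*(t+1))}\psi_{7/4}(y,t^{1/2};\lambda')\,dy\leq C(t+1)^{11/16}\psi_{7/4}(x,t;\lambda),
\end{equation}
where the exponent $11/16=1/2+3/16$ accounts for both the leftover $(t+1)^{-(\beta-3/4)/4}$ in the target and the $(t+1)^{1/2}$ that comes out of integrating the Gaussian.

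The key ingredient is the $L^1$ estimate $\|\psi_{7/4}(\cdot,s;\lambda')\|_{L^1(\mathbb{R})}\leq C(s+1)^{-3/8}$, obtained via the substitution $y=\lambda'(s+1)+\sqrt{s+1}\,z$ and the integrability at infinity of $(z^2+1)^{-7/8}\sim |z|^{-7/4}$. At $s=t^{1/2}$ this reads $\|\psi_{7/4}(\cdot,t^{1/2};\lambda')\|_{L^1}\leq C(t+1)^{-3/16}$. In the inner region $|x-\lambda(t+1)|\leq K_0\sqrt{t+1}$, where $\psi_{7/4}(x,t;\lambda)\asymp (t+1)^{-7/8}$, bounding the Gaussian by $1$ and applying this $L^1$ estimate immediately yields $K(x,t)\leq C(t+1)^{-3/16}\asymp C(t+1)^{11/16}\psi_{7/4}(x,t;\lambda)$, as required.

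In the outer region $|x-\lambda(t+1)|\geq K_0\sqrt{t+1}$, where $\psi_{7/4}(x,t;\lambda)\asymp |x-\lambda(t+1)|^{-7/4}$, I would split the $y$-integral at the threshold $|y-\lambda'(t^{1/2}+1)|=|x-\lambda(t+1)|/2$. On the outer piece, the pointwise bound $\psi_{7/4}(y,t^{1/2};\lambda')\leq C|x-\lambda(t+1)|^{-7/4}$ combined with integration of the Gaussian produces $C(t+1)^{1/2}|x-\lambda(t+1)|^{-7/4}$. On the inner piece, choosing $K_0$ large enough that the deterministic error $|\lambda'(t^{1/2}+1)+\lambda(t-\sqrt{t})-\lambda(t+1)|=O(\sqrt{t+1})$ is dominated by $|x-\lambda(t+1)|/4$, the triangle inequality yields $|x-y-\lambda(t-\sqrt{t})|\gtrsim |x-\lambda(t+1)|$; then using $e^{-u^2}\leq C_N u^{-N}$ with $N=7/4$ together with the $L^1$ bound produces $C(t+1)^{7/8}|x-\lambda(t+1)|^{-7/4}\cdot (t+1)^{-3/16}=C(t+1)^{11/16}|x-\lambda(t+1)|^{-7/4}$. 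Both contributions match the target.

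The argument is essentially parallel in spirit to the proof of Lemma~\ref{lem:sqrt1}, but simpler because there is no time integration to carry out. The main obstacle is purely bookkeeping: one must carefully verify that the fractional exponents $11/16$, $-3/16$, $-7/8$, and $7/8$ balance across the inner/outer and near/far dichotomies, and that the threshold $K_0$ can indeed be chosen uniformly in $(x,t)$ so as to absorb lower-order drift terms of size $O(\sqrt{t+1})$ in the Gaussian argument.
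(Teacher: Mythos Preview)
Your proposal is correct and follows essentially the same strategy as the paper: split into an inner region $|x-\lambda(t+1)|\lesssim\sqrt{t+1}$ and an outer region, and in the outer region perform a near/far dichotomy to exploit either the decay of $\psi_{7/4}$ or the Gaussian. Your inner-region argument via the single $L^{1}$ bound $\|\psi_{7/4}(\cdot,t^{1/2};\lambda')\|_{L^{1}}\leq C(t+1)^{-3/16}$ is slightly cleaner than the paper's two-region split there, and your exponent bookkeeping and choice of threshold $K_{0}$ (absorbing the drift $(\lambda'-\lambda)(t^{1/2}+1)=O(\sqrt{t+1})$) are correct.
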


\begin{lem}
  \label{lem:I221}
  Let $\lambda \neq \lambda'$, $\mu>0$, and $\alpha>0$. Then
  \begin{equation}
    \int_{t^{1/2}}^{t}\int_{-\infty}^{\infty}(t-s)^{-1/2}e^{-\frac{(x-y-\lambda(t-s))^2}{\mu(t-s)}}\Theta_{\alpha}(y,s;\lambda',\mu)\, dyds\leq C\psi_{(\alpha-1)/2}(x,t;\lambda)
  \end{equation}
  for $t\geq 4$.
\end{lem}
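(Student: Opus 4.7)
The strategy is to reduce the double integral to a one-dimensional one by completing the square in $y$, rescale to a universal form, and then prove a clean kernel estimate parametrized by a single scalar. Set $D := x-\lambda(t+1)$ and $c_0 := \lambda-\lambda' \neq 0$. The algebraic identity $x-\lambda(t-s)-\lambda'(s+1) = D+c_0(s+1)$ lets one complete the square in the product Gaussian:
\begin{equation}
    \int_{-\infty}^\infty e^{-\frac{(x-y-\lambda(t-s))^2}{\mu(t-s)}}\Theta_\alpha(y,s;\lambda',\mu)\,dy = (s+1)^{-\alpha/2}\sqrt{\frac{\pi\mu(t-s)(s+1)}{t+1}}\,e^{-\frac{[D+c_0(s+1)]^2}{\mu(t+1)}}.
\end{equation}
The factor $\sqrt{(t-s)(s+1)/(t+1)}$ cancels most of the weight $(t-s)^{-1/2}(s+1)^{-\alpha/2}$, so the LHS of the lemma equals $C(t+1)^{-1/2}\int_{t^{1/2}}^{t}(s+1)^{(1-\alpha)/2}e^{-[D+c_0(s+1)]^2/(\mu(t+1))}\,ds$.

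Next, substitute $w := (D+c_0(s+1))/\sqrt{\mu(t+1)}$ and $\tilde d := D/\sqrt{\mu(t+1)}$, so that $w-\tilde d = c_0(s+1)/\sqrt{\mu(t+1)}$; since $(t^{1/2}+1)/\sqrt{t+1}$ is bounded below by a positive constant for $t \geq 4$, one has $w-\tilde d \geq c_1 > 0$ on the range of integration. The integral becomes
\begin{equation}
    C(t+1)^{(1-\alpha)/4}\int_{w_1}^{w_2}(w-\tilde d)^{(1-\alpha)/2}e^{-w^2}\,dw,
\end{equation}
and the elementary identity $(t+1)(1+\tilde d^2) = (t+1) + D^2/\mu \asymp D^2+(t+1)$ shows that the lemma is equivalent to the reduced kernel bound
\begin{equation}
    \mathcal J(\tilde d) := \int_{w_1}^{w_2}(w-\tilde d)^{(1-\alpha)/2}e^{-w^2}\,dw \leq C(1+\tilde d^2)^{(1-\alpha)/4}.
\end{equation}

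I would then bound $\mathcal J$ by cases on the sign of $(1-\alpha)/2$. When $\alpha \leq 1$, the inequality $(w-\tilde d)^{(1-\alpha)/2}\leq C(|w|^{(1-\alpha)/2}+|\tilde d|^{(1-\alpha)/2})$ splits $\mathcal J$ into two standard Gaussian moment integrals, yielding $\mathcal J \leq C(1+|\tilde d|^{(1-\alpha)/2})\leq C(1+\tilde d^2)^{(1-\alpha)/4}$. When $\alpha > 1$, the factor $(w-\tilde d)^{-(\alpha-1)/2}$ is uniformly bounded by $c_1^{-(\alpha-1)/2}$ on the integration range, which handles $|\tilde d|\leq 1$ directly. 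For $|\tilde d| > 1$, I partition the $w$-interval into $A := \{|w-\tilde d|\geq |\tilde d|/2\}$, on which pointwise $(w-\tilde d)^{-(\alpha-1)/2}\leq C|\tilde d|^{-(\alpha-1)/2}$ and $\int_A e^{-w^2}\,dw \leq C$; and its complement $B$, which is contained in $\{|w|\geq |\tilde d|/2\}$, has length at most $|\tilde d|$, and satisfies $e^{-w^2}\leq e^{-\tilde d^2/4}$, yielding a contribution $\leq C|\tilde d|e^{-\tilde d^2/4}$ that decays faster than any polynomial in $|\tilde d|^{-1}$.

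The hard part is precisely the $\alpha > 1$ regime with $|\tilde d| \gg 1$ (equivalently $|D|\gg \sqrt{t+1}$): the singularity of $(w-\tilde d)^{(1-\alpha)/2}$ at $w=\tilde d$ lies outside the region where $e^{-w^2}$ is appreciable, so one cannot exploit largeness of $(w-\tilde d)^{-1}$ directly, and extracting the sharp polynomial decay $(1+\tilde d^2)^{(1-\alpha)/4}$ instead of the weaker $t$-decay $(t+1)^{(1-\alpha)/4}$ requires the partition above to balance the polynomial blow-up against the Gaussian tail.
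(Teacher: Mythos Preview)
Your proof is correct and follows essentially the same strategy as the referenced argument in \cite{LZ97}: integrate out $y$ via the completing-the-square identity (which appears in the paper as~\eqref{eq:LZ97_39}) and then estimate the remaining one-dimensional $s$-integral. Your rescaling to the universal variable $\tilde d = D/\sqrt{\mu(t+1)}$ is a clean way to package the case analysis; minor sign conventions (the case $c_0<0$, where one should write $|w-\tilde d|$) are easily absorbed.
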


\begin{proof}
  See the analysis of $I_{22}^{(1)}$ in~\cite[p.~24--25]{LZ97}.
\end{proof}

\begin{lem}
  \label{lem:lem3.2_improved}
  Let $\lambda \in \mathbb{R}$, $\mu>0$, and $\alpha>3$. Then
  \begin{equation}
    \int_{t^{1/2}}^{t/2}\int_{-\infty}^{\infty}(t-s)^{-1}e^{-\frac{(x-y-\lambda(t-s))^2}{\mu(t-s)}}\Theta_{\alpha}(y,s;\lambda,\mu)\, dyds\leq C\Theta_{(\alpha+1)/2}(x,t;\lambda,\mu)
  \end{equation}
  for $t\geq 4$.
\end{lem}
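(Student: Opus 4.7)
The plan is to perform the $y$-integration explicitly via a standard Gaussian convolution, reducing the estimate to a one-dimensional integral in $s$ that can be bounded by elementary means. Set $X \coloneqq x - \lambda(t+1)$. After the change of variables $z = y - \lambda(s+1)$, the integrand $y$-dependence becomes
\begin{equation}
\exp\left(-\frac{(X - z)^2}{\mu(t-s)}\right)\exp\left(-\frac{z^2}{\mu(s+1)}\right),
\end{equation}
since $x - y - \lambda(t-s) = X - z$. Completing the square in $z$ with $a = \mu(t-s)$ and $b = \mu(s+1)$ (so that $a + b = \mu(t+1)$) yields
\begin{equation}
\int_{-\infty}^{\infty} e^{-\frac{(X-z)^2}{\mu(t-s)}}e^{-\frac{z^2}{\mu(s+1)}}\,dz = \sqrt{\pi\mu\,\frac{(t-s)(s+1)}{t+1}}\,e^{-\frac{X^2}{\mu(t+1)}}.
\end{equation}

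Substituting back, the left-hand side of the claimed inequality becomes
\begin{equation}
C\,(t+1)^{-1/2}\,e^{-\frac{X^2}{\mu(t+1)}}\int_{t^{1/2}}^{t/2}(t-s)^{-1/2}(s+1)^{-(\alpha-1)/2}\,ds.
\end{equation}
On $[t^{1/2},t/2]$ we have $t - s \geq t/2$, so $(t-s)^{-1/2} \leq C(t+1)^{-1/2}$, and since $\alpha > 3$ we have $(\alpha-1)/2 > 1$, so
\begin{equation}
\int_{t^{1/2}}^{t/2}(s+1)^{-(\alpha-1)/2}\,ds \leq C(t^{1/2}+1)^{-(\alpha-3)/2} \leq C(t+1)^{-(\alpha-3)/4}.
\end{equation}
Combining the three factors gives the overall rate $(t+1)^{-1/2-1/2-(\alpha-3)/4} = (t+1)^{-(\alpha+1)/4}$, which is exactly $\Theta_{(\alpha+1)/2}(x,t;\lambda,\mu)$ up to the Gaussian factor $e^{-X^2/(\mu(t+1))}$ already produced.

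There is no real obstacle here: the matching of $\mu$-parameters between the convolution kernel and $\Theta_\alpha$ is what makes the Gaussian convolution close in exact form, and the restriction of $s$ to $[t^{1/2},t/2]$ together with $\alpha > 3$ ensures that both time-factors behave as needed. The only points to watch are (i) verifying that the relation $-1/2 - 1/2 - (\alpha-3)/4 = -(\alpha+1)/4$ is correctly tracked, and (ii) confirming that the condition $\alpha > 3$ is exactly what is required for the $s$-integral to contribute the factor $(t+1)^{-(\alpha-3)/4}$ (rather than a logarithm or a constant), which is consistent with the intended application.
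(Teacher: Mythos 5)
Your proof is correct and takes essentially the same route as the paper's: both perform the $y$-integration via the completing-the-square identity (the paper cites its eq.~\eqref{eq:LZ97_39}, which with $\lambda=\lambda'$ is exactly the Gaussian convolution identity you derive), then bound the remaining $s$-integral using $t-s\geq t/2$ on $[t^{1/2},t/2]$ and the convergence of $\int_{t^{1/2}}^{\infty}(s+1)^{-(\alpha-1)/2}\,ds$ guaranteed by $\alpha>3$. The arithmetic $-1/2-1/2-(\alpha-3)/4=-(\alpha+1)/4$ checks out, matching the target rate in $\Theta_{(\alpha+1)/2}$.
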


\begin{lem}
  \label{lem:sqrt_ij}
  Let $\lambda \neq \lambda'$, $\mu>0$, and $\alpha \geq 0$. Then
  \begin{equation}
    \int_{t^{1/2}}^{t/2}\int_{-\infty}^{\infty}(t-s)^{-1/2-\alpha}e^{-\frac{(x-y-\lambda(t-s))^2}{\mu(t-s)}}(s+1)^{-11/8}\psi_{7/4}(y,s;\lambda')\, dyds\leq C(t+1)^{-\alpha}\psi_{7/4}(x,t;\lambda)
  \end{equation}
  for $t\geq 4$.
\end{lem}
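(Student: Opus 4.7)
The plan is to reduce the double integral to a one-dimensional integral in $s$ by first establishing a pointwise bound on the inner Gaussian convolution, and then to bound the resulting time-integral by a case analysis based on the position $a \coloneqq x - \lambda(t+1)$ of $x$ relative to the $\lambda$-characteristic.

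First I would factor out $(t-s)^{-\alpha} \leq C(t+1)^{-\alpha}$, which is legitimate because $s \in [t^{1/2}, t/2]$ forces $t-s \geq t/2$. Writing $\Delta \coloneqq \lambda - \lambda' \neq 0$ and substituting $w = y - \lambda'(s+1)$, the Gaussian becomes centred at $w = \tilde{x}(s) \coloneqq a + \Delta(s+1)$. Splitting the $w$-integration into the region $|w - \tilde{x}(s)| \leq |\tilde{x}(s)|/2$ (where $|w| \geq |\tilde{x}(s)|/2$ gives the uniform control $(w^2 + s+1)^{-7/8} \leq C(\tilde{x}(s)^2 + s+1)^{-7/8}$ and the Gaussian integrates to at most $C\sqrt{t-s}$) and its complement (where the Gaussian is bounded by $e^{-\tilde{x}(s)^2/(4\mu(t-s))}$ and the $L^1$-norm of the polynomial is $C(s+1)^{-3/8}$) yields
\begin{equation}
  \int_{-\infty}^{\infty} e^{-(\tilde{x}(s)-w)^2/\mu(t-s)}(w^2 + s+1)^{-7/8}\, dw \leq C\sqrt{t-s}\,\bigl(\tilde{x}(s)^2 + s+1\bigr)^{-7/8} + C(s+1)^{-3/8}\, e^{-\tilde{x}(s)^2/(4\mu(t-s))}.
\end{equation}

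It then suffices to prove $K_1(x,t) + K_2(x,t) \leq C(a^2 + t+1)^{-7/8}$, where
\begin{equation}
  K_1(x,t) \coloneqq \int_{t^{1/2}}^{t/2}(s+1)^{-11/8}\bigl(\tilde{x}(s)^2 + s+1\bigr)^{-7/8}\, ds,\qquad K_2(x,t) \coloneqq \int_{t^{1/2}}^{t/2}(t-s)^{-1/2}(s+1)^{-7/4}\, e^{-\tilde{x}(s)^2/(4\mu(t-s))}\, ds.
\end{equation}
Since $\tilde{x}(s)$ is linear in $s$ with slope $\Delta \neq 0$ and vanishes at most at $s_* \coloneqq -a/\Delta - 1$, the bound on $K_1$ follows from a three-way case analysis: (i) if $|a| \leq |\Delta|\sqrt{t+1}/2$, then $|\tilde{x}(s)| \geq |\Delta|(s+1)/2$ on the whole integration range, giving $K_1 \leq C\int(s+1)^{-25/8}\, ds \leq Ct^{-17/16} \leq C(t+1)^{-7/8}$; (ii) if $|a| \geq C'|\Delta|(t+1)$ for a sufficiently large $C'$, then $|\tilde{x}(s)| \geq |a|/2$ throughout, giving $K_1 \leq C|a|^{-7/4}\int(s+1)^{-11/8}\, ds \leq C|a|^{-7/4}$; (iii) in the intermediate regime, $s_* + 1 \sim |a|/|\Delta|$ lies near or inside $[t^{1/2}, t/2]$, and splitting $K_1$ into the near-region $|s - s_*| \leq \sqrt{s_*+1}$ and its complement, each piece contributes at most $C(s_*+1)^{-7/4} \sim |a|^{-7/4}$. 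The bound on $K_2$ follows by the same case split, exploiting that the Gaussian factor decays super-polynomially once $|\tilde{x}(s)|^2 \gg \mu(t-s)$.

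The main obstacle is case (iii) for $K_1$, the regime where the Gaussian centre $y = x - \lambda(t-s)$ actually crosses the peak of $\psi_{7/4}(\cdot,s;\lambda')$ at the critical time $s_* \in [t^{1/2}, t/2]$. There the integrand attains its largest values, and obtaining the sharp rate $|a|^{-7/4}$ requires a careful local expansion $\tilde{x}(s) = \Delta(s - s_*)$ together with bookkeeping of the competing algebraic weights $(s+1)^{-11/8}$ and $(\tilde{x}^2 + s+1)^{-7/8}$; the delicate balance between these produces precisely the $|a|^{-7/4}$ decay demanded by the target $\psi_{7/4}(x,t;\lambda)$.
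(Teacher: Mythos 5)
Your proposal is correct and takes a genuinely different route from the paper. The paper never eliminates the $y$-variable: it splits the $y$-integral according to whether $y$ lies within $(s+1)^{1/2}$ of the $\lambda'$-characteristic, absorbs the near-wave part via the existing Lemma~\ref{lem:I221} (applied with $\alpha=9/2$ after inserting a free Gaussian weight), and then handles the far part by a four-way case analysis on $x$ together with further $y$- and $s$-decompositions. You instead perform the $w$-integral once and for all, reducing the double integral to two one-dimensional $s$-integrals $K_1,K_2$, and then carry out a three-way case split on $a=x-\lambda(t+1)$. This gives a flatter logical structure --- one spatial reduction followed by one temporal analysis --- and bypasses Lemma~\ref{lem:I221}, at the price of having to redo the near-wave estimate by hand in case (iii). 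Two spots to tighten: first, in case (iii) the statement that ``$s_*+1\sim|a|/|\Delta|$ lies near or inside $[t^{1/2},t/2]$'' implicitly assumes $a$ and $\Delta$ have opposite signs; when they share a sign, $s_*+1=-a/\Delta<0$, so $|\tilde{x}(s)|=|\Delta|(s+1)+|a|\geq\max(|a|,|\Delta|(s+1))$ and the bound follows as in case (ii), which should be split off explicitly. Second, on the complement $|s-s_*|>\sqrt{s_*+1}$ one must not use the uniform bound $|\tilde{x}(s)|\gtrsim|\Delta|\sqrt{s_*+1}$ over the whole interval (that loses a factor $\sqrt{s_*+1}$); rather, integrate $(s-s_*)^{-7/4}$ to pick up $(s_*+1)^{-3/8}$, and for $s\gtrsim 2(s_*+1)$ switch to the estimate $|\tilde{x}(s)|\gtrsim|\Delta|(s+1)$ so that the $(s+1)^{-25/8}$ integral converges with margin --- this is presumably what you intend by ``bookkeeping of the competing algebraic weights,'' but it deserves an explicit further split at $s\sim 2(s_*+1)$.
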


\begin{lem}
  \label{lem:LZ98_lemma3.5_modified}
  Let $\lambda \in \mathbb{R}$, $\mu>0$, and $\alpha,\beta \geq 0$. Then we have
  \begin{align}
    \label{lem:LZ98_lemma3.5_modified:eq1}
    \begin{aligned}
      & \int_{0}^{t/2}\int_{-\infty}^{\infty}(t-s)^{-1}(t+1-s)^{-\alpha/2}e^{-\frac{(x-y-\lambda(t-s))^2}{\mu(t-s)}}(s+1)^{-\beta/2}\psi_{7/4}(y,s;\lambda)\, dyds \\
      & \leq
      \begin{dcases}
        C(t+1)^{-\gamma_1/2}\psi_{7/4}(x,t;\lambda) & \text{if $\beta \neq 5/4$}, \\
        C\log(t+2)(t+1)^{-\gamma_1/2}\psi_{7/4}(x,t;\lambda) & \text{if $\beta=5/4$},
      \end{dcases}
    \end{aligned}
  \end{align}
  where $\gamma_1=\alpha+\min(\beta,5/4)-1$. We also have
  \begin{align}
    \label{lem:LZ98_lemma3.5_modified:eq2}
    \begin{aligned}
      & \int_{t/2}^{t}\int_{-\infty}^{\infty}(t-s)^{-1}(t+1-s)^{-\alpha/2}e^{-\frac{(x-y-\lambda(t-s))^2}{\mu(t-s)}}(s+1)^{-\beta/2}\psi_{7/4}(y,s;\lambda)\, dyds \\
      & \leq
      \begin{dcases}
        C(t+1)^{-\gamma_2/2}\psi_{7/4}(x,t;\lambda) & \text{if $\alpha \neq 1$}, \\
        C\log(t+2)(t+1)^{-\gamma_2/2}\psi_{7/4}(x,t;\lambda) & \text{if $\alpha=1$},
      \end{dcases}
    \end{aligned}
  \end{align}
  where $\gamma_2=\min(\alpha,1)+\beta-1$.
\end{lem}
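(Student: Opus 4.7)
The plan is to reduce both inequalities to the case $\lambda=0$ via the Galilean shift $\tilde{x}=x-\lambda(t+1)$, $\tilde{y}=y-\lambda(s+1)$, under which $\psi_{7/4}(y,s;\lambda)=[\tilde{y}^2+(s+1)]^{-7/8}$, the Gaussian factor becomes $\exp(-(\tilde{x}-\tilde{y})^2/(\mu(t-s)))$, and the target bound is a constant multiple of $[\tilde{x}^2+(t+1)]^{-7/8}$. The two estimates~\eqref{lem:LZ98_lemma3.5_modified:eq1} and~\eqref{lem:LZ98_lemma3.5_modified:eq2} are then handled separately with different scaling strategies.

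For~\eqref{lem:LZ98_lemma3.5_modified:eq1}, where $s\in[0,t/2]$, one has $(t-s)\sim(t+1-s)\sim(t+1)$, so the prefactor $(t-s)^{-1}(t+1-s)^{-\alpha/2}$ pulls out as a factor comparable to $(t+1)^{-1-\alpha/2}$. I would then focus on the spatial convolution
\begin{equation*}
  J(\tilde{x},t,s)\coloneqq \int_{-\infty}^{\infty}e^{-\frac{(\tilde{x}-\tilde{y})^2}{\mu(t-s)}}[\tilde{y}^2+(s+1)]^{-7/8}\, d\tilde{y}.
\end{equation*}
Splitting the $\tilde{y}$-integration into $|\tilde{y}|<\sqrt{s+1}$, where the weight is at most $(s+1)^{-7/8}$, and $|\tilde{y}|\geq\sqrt{s+1}$, where it is at most $|\tilde{y}|^{-7/4}$, and then distinguishing the two regimes $|\tilde{x}|^2\leq t+1$ and $|\tilde{x}|^2>t+1$ (in the latter, extracting the Gaussian decay $e^{-|\tilde{x}|^2/(C(t+1))}$ and turning it into algebraic decay $|\tilde{x}|^{-7/4}$), one arrives at a unified pointwise bound of the form $J\leq C(s+1)^{-3/8}(t+1)^{7/8}\psi_{7/4}(x,t;\lambda)$. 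Substituting back, the remaining $s$-integral is $\int_{0}^{t/2}(s+1)^{-\beta/2-3/8}\,ds$, which equals $\sim(t+1)^{5/8-\beta/2}$ when $\beta<5/4$, $\sim\log(t+2)$ when $\beta=5/4$, and is bounded when $\beta>5/4$. Collecting all factors reproduces exactly $(t+1)^{-\gamma_1/2}\psi_{7/4}(x,t;\lambda)$ with $\gamma_1=\alpha+\min(\beta,5/4)-1$ and the $\log$ appearing precisely at $\beta=5/4$.

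For~\eqref{lem:LZ98_lemma3.5_modified:eq2}, where $s\in[t/2,t]$, one has $(s+1)\sim(t+1)$, so $(s+1)^{-\beta/2}$ can be pulled out as a factor comparable to $(t+1)^{-\beta/2}$. Here the heat kernel acts at a scale $\sqrt{t-s}\leq\sqrt{t/2+1}$ over which the weight $[\tilde{y}^2+(s+1)]^{-7/8}$ varies only mildly, so by the same two-regime split in $\tilde{x}$ one obtains the ``mollifier'' estimate
\begin{equation*}
  \int_{-\infty}^{\infty}e^{-\frac{(\tilde{x}-\tilde{y})^2}{\mu(t-s)}}[\tilde{y}^2+(s+1)]^{-7/8}\, d\tilde{y}\leq C(t-s)^{1/2}\psi_{7/4}(x,t;\lambda).
\end{equation*}
The remaining $s$-integral reduces, via the change of variables $u=t-s$, to $\int_{0}^{t/2}u^{-1/2}(u+1)^{-\alpha/2}\,du$, which equals $\sim(t+1)^{(1-\alpha)/2}$ for $\alpha<1$, $\sim\log(t+2)$ for $\alpha=1$, and is bounded for $\alpha>1$. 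This yields $\gamma_2=\min(\alpha,1)+\beta-1$ with the $\log$ appearing exactly at $\alpha=1$.

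The main obstacle will be the spatial convolution estimate for $J$, especially in the regime $|\tilde{x}|^2>t+1$ relevant to part one: here the source $[\tilde{y}^2+(s+1)]^{-7/8}$ is concentrated near $\tilde{y}=0$ while the observation point $\tilde{x}$ is far away, so the required decay $|\tilde{x}|^{-7/4}$ has to be squeezed out of the Gaussian via the change of variables $\tilde{y}=\tilde{x}+\sqrt{t-s}\,\xi$ followed by a further splitting according to whether $|\xi|$ is small or comparable to $|\tilde{x}|/\sqrt{t-s}$. The two borderline exponents $\beta=5/4$ and $\alpha=1$ arise precisely from marginally divergent $s$-integrals, and producing the advertised $\log(t+2)$ factor at these thresholds requires evaluating those integrals explicitly rather than by power-law domination.
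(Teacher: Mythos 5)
Your proposal is correct and arrives at both estimates via essentially the same decomposition as the paper: split the $y$-integration at $|\tilde y|=\sqrt{s+1}$ (inside, $\psi_{7/4}\sim(s+1)^{-7/8}$; outside, $\psi_{7/4}\sim|\tilde y|^{-7/4}$), and split cases on whether $|\tilde x|\lesssim\sqrt{t+1}$ or not. The intermediate bound $J\le C(s+1)^{-3/8}(t+1)^{7/8}\psi_{7/4}(x,t;\lambda)$ for $s\le t/2$ is valid (the near-field contribution $C\sqrt{t-s}\,|\tilde x|^{-7/4}$ is absorbed because $(s+1)^{3/8}\le(t+1)^{3/8}$ there), the mollifier bound for $s\ge t/2$ is valid, and the ensuing $s$-integrations reproduce $\gamma_1$, $\gamma_2$ and the logs at the advertised thresholds.

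The only structural difference from the paper: on the inner region $|\tilde y|\le\sqrt{s+1}$, the paper observes $\psi_{7/4}(y,s;\lambda)\le C\Theta_{7/4}(y,s;\lambda,\mu)$ and then invokes the purely Gaussian-source estimate of~\cite[Lemma~3.2]{LZ97} in one stroke for both time regions, whereas you estimate the spatial convolution directly from scratch. Your route is more self-contained and exposes cleanly where the two borderline exponents $\beta=5/4$ (from $\int(s+1)^{-\beta/2-3/8}\,ds$) and $\alpha=1$ (from $\int_0^{t/2}u^{-1/2}(u+1)^{-\alpha/2}\,du$) originate; the paper's route is shorter by reusing the LZ lemma for half of the work. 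Both work, and neither buys a better result. A small word of caution when you write this out fully: in the second estimate the intermediate bound $\int e^{-(\tilde x-\tilde y)^2/(\mu(t-s))}[\tilde y^2+(s+1)]^{-7/8}\,d\tilde y\le C\sqrt{t-s}\,\psi_{7/4}(x,t;\lambda)$ relies on $t-s\le C(s+1)$, which holds only because $s\ge t/2$; state that explicitly so that the far-field ($|\tilde x|\gg\sqrt{t+1}$) Gaussian term, $e^{-\tilde x^2/(C(t-s))}(s+1)^{-3/8}$, is visibly dominated by $(t-s)^{1/2}|\tilde x|^{-7/4}$ after replacing $e^{-z}\le Cz^{-7/8}$.
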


Let
\begin{equation}
  \label{def:chiK}
  \chi_K(x,t;\lambda,\lambda')\coloneqq \mathrm{char}\left\{ \min(\lambda,\lambda')(t+1)+K(t+1)^{1/2}\leq x\leq \max(\lambda,\lambda')(t+1)-K(t+1)^{1/2} \right\},
\end{equation}
where $K>0$ and $\mathrm{char}\{ S \}$ is the indicator function of a set $S$.

\begin{lem}
  \label{lem:LZ98_lemma3.6_modified}
  Let $\lambda \neq \lambda'$, $\mu>0$, $\alpha \geq 0$, and $0\leq \beta \leq 7/2$ ($\beta \neq 2$). Then for $K>0$ large enough, we have
  \begin{align}
    \label{lem:LZ98_lemma3.6_modified:eq1}
    \begin{aligned}
      & \int_{0}^{t/2}\int_{-\infty}^{\infty}(t-s)^{-1}(t+1-s)^{-\alpha/2}e^{-\frac{(x-y-\lambda(t-s))^2}{\mu(t-s)}}(s+1)^{-\beta/2}\psi_{7/4}(y,s;\lambda')\, dyds \\
      & \leq
      \begin{dcases}
        C[(t+1)^{-\gamma_1/2}\psi_{7/4}(x,t;\lambda)+(t+1)^{-\gamma_{1}'/2}\psi_{7/4}(x,t;\lambda')] & \text{if $\beta \neq 5/4$} \\
        C[\log(t+2)(t+1)^{-\gamma_1/2}\psi_{7/4}(x,t;\lambda)+(t+1)^{-\gamma_{1}'/2}\psi_{7/4}(x,t;\lambda')] & \text{if $\beta=5/4$}
      \end{dcases} \\
      & \quad +C|x-\lambda(t+1)|^{-\min(\beta,11/4)/2-3/8}|x-\lambda'(t+1)|^{-\alpha/2-1/2}\chi_K(x,t;\lambda,\lambda'),
    \end{aligned}
  \end{align}
  where $\gamma_1=\alpha+\min(\beta,5/4)-1$ and $\gamma_{1}'=\alpha+\min(\beta,2)-1$. We also have
  \begin{align}
    \label{lem:LZ98_lemma3.6_modified:eq2}
    \begin{aligned}
      & \int_{t/2}^{t}\int_{-\infty}^{\infty}(t-s)^{-1}(t+1-s)^{-\alpha/2}e^{-\frac{(x-y-\lambda(t-s))^2}{\mu(t-s)}}(s+1)^{-\beta/2}\psi_{7/4}(y,s;\lambda')\, dyds \\
      & \leq
      \begin{dcases}
        C(t+1)^{-\gamma_2/2}[\psi_{7/4}(x,t;\lambda)+\psi_{7/4}(x,t;\lambda')] & \text{if $\alpha \neq 1$} \\
        C\log(t+2)(t+1)^{-\gamma_2/2}[\psi_{7/4}(x,t;\lambda)+\psi_{7/4}(x,t;\lambda')] & \text{if $\alpha=1$}
      \end{dcases} \\
      & \quad +C|x-\lambda(t+1)|^{-\beta/2-3/8}|x-\lambda'(t+1)|^{-\min(\alpha,1)/2-1/2}\chi_K(x,t;\lambda,\lambda'),
    \end{aligned}
  \end{align}
  where $\gamma_2=\min(\alpha,1)+\beta-1$.
\end{lem}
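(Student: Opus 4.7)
The plan is to follow the scheme of \cite[Lemma~3.6]{LZ97}, which handles the corresponding estimate with $\psi_{3/2}$ in place of $\psi_{7/4}$; the refinement here is essentially a matter of tracking the improved exponents. The mechanism driving the three-term structure of the right-hand side is that, because $\lambda \neq \lambda'$, the Gaussian peak of the kernel at $y_* := x - \lambda(t-s)$ and the peak of $\psi_{7/4}(\cdot,s;\lambda')$ at $y_\sharp := \lambda'(s+1)$ separate linearly with $s$, so the integrand concentrates either near $y_*$, near $y_\sharp$, or along a wedge between the two characteristic cones.

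The first step is to split the inner integral $\mathbb{R} = \Omega_1 \cup \Omega_2$ with $\Omega_1 := \{y : |y - y_*| \leq |y_* - y_\sharp|/2\}$ and $\Omega_2$ its complement. On $\Omega_1$ we have $\psi_{7/4}(y,s;\lambda') \leq C\psi_{7/4}(y_*, s;\lambda')$, and integrating the Gaussian gains a factor $(t-s)^{1/2}$; on $\Omega_2$ the Gaussian is dominated by $\exp(-c|y_* - y_\sharp|^2/(t-s))$, and integrating $\psi_{7/4}(y,s;\lambda')$ on $\Omega_2$ gains $(s+1)^{5/8}$, giving an overall $s$-weight $(s+1)^{-\beta/2 + 5/8}$. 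The next step is the $s$-integration: for \eqref{lem:LZ98_lemma3.6_modified:eq1} one has $t-s \sim t+1$ throughout $[0,t/2]$ and the prefactor $(t-s)^{-1}(t+1-s)^{-\alpha/2}$ is pulled outside, while for \eqref{lem:LZ98_lemma3.6_modified:eq2} one has $s+1 \sim t+1$ throughout $[t/2,t]$ and $(s+1)^{-\beta/2}$ is extracted.

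The final step is a case analysis on the location of $x$. When $|x - \lambda(t+1)| \leq C(t+1)^{1/2}$ (resp.\ $|x - \lambda'(t+1)| \leq C(t+1)^{1/2}$), the $\Omega_1$-piece produces the first (resp.\ second) term on the right-hand side; the exponent $\gamma_1 = \alpha + \min(\beta,5/4) - 1$ records the saturation of $\int_0^{t/2} (s+1)^{-\beta/2 - 3/8}\, ds$, borderline at $\beta = 5/4$ (whence the logarithmic correction), while $\gamma_1' = \alpha + \min(\beta,2) - 1$ records the analogous saturation at a shifted evaluation point, whose borderline value $\beta = 2$ is excluded in the hypothesis to avoid a second logarithm. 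The wedge contribution --- where $|x - \lambda(t+1)|, |x - \lambda'(t+1)| \geq K(t+1)^{1/2}$ simultaneously --- is the main technical obstacle: there the Gaussian separation $|y_* - y_\sharp|^2/(t-s)$ is of order $|x - \lambda'(t+1)|^2/(t-s)$ up to $K$-dependent constants, and careful optimization in $s$ of the bound obtained from $\Omega_2$ yields the algebraic product $|x - \lambda(t+1)|^{-\min(\beta,11/4)/2 - 3/8}|x - \lambda'(t+1)|^{-\alpha/2 - 1/2}$. The saturation at $\min(\beta, 11/4)$ reflects that once $\beta/2 + 3/8 > 11/8$ the time integral is dominated by the endpoint $s = 0$ rather than its behavior as $s$ increases, while the analogous saturation at $\min(\alpha,1)$ in \eqref{lem:LZ98_lemma3.6_modified:eq2} reflects the same phenomenon at $s = t$. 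Tracking these exponents through the optimization, and checking that the critical cases $\beta = 5/4$ and $\alpha = 1$ each produce only a single logarithm, is the only non-routine aspect of the argument.
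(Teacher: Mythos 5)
Your decomposition into $\Omega_1 = \{|y - y_*| \leq |y_*-y_\sharp|/2\}$ and its complement is a reasonable alternative to the paper's splitting (which partitions on $|y - \lambda'(s+1)| \lessgtr (s+1)^{1/2}$, i.e.\ on proximity to the algebraic weight's own peak rather than on which peak is nearer). But as written the $\Omega_1$ bound is not tight enough and the plan fails. The estimate ``integrating the Gaussian gains a factor $(t-s)^{1/2}$'' uses $\int_{\Omega_1} e^{-(y-y_*)^2/\mu(t-s)}\, dy \leq \sqrt{\pi\mu(t-s)}$, but $\Omega_1$ is an interval of half-width $|y_*-y_\sharp|/2$, so the correct bound is $\min(\sqrt{t-s},\ |y_*-y_\sharp|)$. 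In the regime where the two characteristics overlap --- when $|y_*-y_\sharp| \lesssim \sqrt{t-s}$, which is precisely the regime controlling the estimate near $x = \lambda(t+1)$ --- this over-counts by a factor $\sqrt{t-s}/|y_*-y_\sharp|$, and the loss accumulates. Concretely, take $(\lambda,\lambda')=(0,1)$, $x=0$, $\beta=5/4$: your $\Omega_1$ bound reads
\begin{equation}
\int_0^{t/2}(t-s)^{-1/2}(t+1-s)^{-\alpha/2}(s+1)^{-\beta/2}\psi_{7/4}(0,s;1)\, ds \sim (t+1)^{-\alpha/2-1/2}\int_0^{t/2}(s+1)^{-19/8}\,ds \sim (t+1)^{-\alpha/2-1/2},
\end{equation}
whereas the lemma's right-hand side there is $\sim \log(t+2)\,(t+1)^{-\alpha/2-1}$. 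So the stated $\Omega_1$ bound exceeds the lemma's conclusion by nearly a full factor $(t+1)^{1/2}$. (With the corrected $\min$ factor, $\Omega_1$ does come out right, so the route is salvageable; but it is not what you wrote.) There are two further signs of confusion tied to this. First, you attribute the $\gamma_1$-term near $x = \lambda(t+1)$ to $\Omega_1$ and say it records $\int_0^{t/2}(s+1)^{-\beta/2-3/8}\,ds$; but the lossy $\Omega_1$ integrand carries $(s+1)^{-\beta/2}\psi_{7/4}(y_*,s;\lambda')$, not $(s+1)^{-\beta/2-3/8}$. It is the $\Omega_2$ piece --- after integrating $\psi_{7/4}$ in $y$ and using the residual $e^{-c|y_*-y_\sharp|^2/(t-s)}$ to truncate the $s$-integral at $s \lesssim (t+1)^{1/2}$ --- that produces $\int(s+1)^{-\beta/2-3/8}\,ds$ and hence $\gamma_1$ with the $\beta=5/4$ borderline. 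Second, $\int_{\Omega_2}\psi_{7/4}(y,s;\lambda')\,dy \leq C(s+1)^{-3/8}$, so the overall $s$-weight there is $(s+1)^{-\beta/2-3/8}$, not $(s+1)^{-\beta/2+5/8}$ as you state (you use the correct $-\beta/2-3/8$ later, so this is a slip, but it is the same slip as the misattribution). In short: the architecture is plausible, and several of your observations about where the thresholds $\beta=5/4$, $\beta=2$, and $\beta = 11/4$ come from are right; but the specific $\Omega_1$ estimate you propose does not close, and this is not a cosmetic issue --- the paper's decomposition avoids it by replacing $\psi_{7/4}$ near its own peak with a Gaussian of matching mass and invoking the Gaussian--Gaussian convolution lemma from LZ97, which does the bookkeeping you are missing.
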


\begin{lem}
  \label{lem:boundary1}
  Let $\lambda \in \mathbb{R}$ and $\mu>0$. Then
  \begin{equation}
    \int_{t^{1/2}}^{t}(t-s)^{-1/2}e^{-\frac{(x-\lambda(t-s))^2}{\mu(t-s)}}(s+1)^{-21/8}\, ds\leq C\bar{\psi}(x,t;\lambda)
  \end{equation}
  for $t\geq 4$.
\end{lem}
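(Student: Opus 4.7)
The strategy is to exploit the equivalence $\bar\psi(x,t;\lambda)\simeq\min\{(t+1)^{-5/4},\,|x-\lambda(t+1)|^{-7/4}\}$, which reduces the lemma to the two pointwise bounds
\begin{equation*}
\text{(B1)}\quad I\le C(t+1)^{-5/4},\qquad \text{(B2)}\quad I\le C|x-\lambda(t+1)|^{-7/4}\text{ when }|x-\lambda(t+1)|\ge(t+1)^{1/2},
\end{equation*}
where $I$ denotes the integral in the statement. Combining (B1) on $|x-\lambda(t+1)|\le(t+1)^{5/7}$ and (B2) on the complementary region then delivers the full bound $I\le C\bar\psi(x,t;\lambda)$.

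Estimate (B1) is elementary: drop the Gaussian and split $[t^{1/2},t]$ at $s=t/2$. On the left half use $(t-s)^{-1/2}\le Ct^{-1/2}$ together with $\int_{t^{1/2}}^{\infty}(s+1)^{-21/8}\,ds\le Ct^{-13/16}$ to obtain $Ct^{-21/16}$; on the right half use $(s+1)^{-21/8}\le Ct^{-21/8}$ together with $\int_{t/2}^{t}(t-s)^{-1/2}\,ds\le Ct^{1/2}$ to obtain $Ct^{-17/8}$. Both are majorized by $C(t+1)^{-5/4}$ since $21/16,\,17/8>5/4$.

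For (B2), set $a:=x-\lambda(t+1)$, so $x-\lambda(t-s)=a+\lambda(s+1)$. Apply the elementary inequality $e^{-z}\le Cz^{-7/8}$ to the Gaussian to get the pointwise estimate
\begin{equation*}
(t-s)^{-1/2}e^{-(a+\lambda(s+1))^2/(\mu(t-s))}\le C(t-s)^{3/8}|a+\lambda(s+1)|^{-7/4}.
\end{equation*}
When $\lambda=0$, or when $\lambda\ne 0$ and the zero $s^*:=-1-a/\lambda$ of $a+\lambda(s+1)$ is far from $[t^{1/2},t]$ on the scale $|s^*+1|=|a|/|\lambda|$, one has $|a+\lambda(s+1)|\ge c|a|$ throughout the integration; plugging this into the pointwise bound and integrating as in (B1) yields $I\le C|a|^{-7/4}$.

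The main obstacle is the case where $s^*$ lies inside $[t^{1/2},t]$ (or close to its endpoints on the scale $|s^*+1|$), since there the pointwise bound becomes singular at $s=s^*$. In this case, decompose $[t^{1/2},t]$ into a \emph{peak piece} $[s^*/2,\,3s^*/2]$ (intersected with $[t^{1/2},t]$) and the complementary \emph{tail pieces}. On the tail pieces, $|s-s^*|\ge s^*/2$ combined with either $(s+1)^{-21/8}\le C(s^*+1)^{-21/8}$ (right tail) or integration of $(s+1)^{-21/8}$ down to $s=t^{1/2}$ (left tail) makes the pointwise bound sufficient to yield $C|a|^{-7/4}$. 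On the peak piece, bound $(s+1)^{-21/8}\le C(s^*+1)^{-21/8}=C|\lambda|^{21/8}|a|^{-21/8}$, and control the remaining Gaussian integral, after changing variables to $\tau:=t-s\in(0,\infty)$, by the identity
\begin{equation*}
\int_0^\infty\tau^{-1/2}e^{-\lambda^2(T-\tau)^2/(\mu\tau)}\,d\tau=\frac{\sqrt{\pi\mu}}{|\lambda|}\qquad(T\ge 0),
\end{equation*}
which is independent of the shift $T$ and follows from the classical formula $\int_0^\infty\tau^{-1/2}e^{-\alpha/\tau-\beta\tau}\,d\tau=\sqrt{\pi/\beta}\,e^{-2\sqrt{\alpha\beta}}$ after completing the square $(T-\tau)^2/\tau=T^2/\tau-2T+\tau$. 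This produces a peak contribution of at most $C|\lambda|^{13/8}|a|^{-21/8}\le C|a|^{-7/4}$, where the last inequality uses $|a|\ge(t+1)^{1/2}\ge 1$ for $t\ge 4$ (so $|a|^{-7/8}\le 1$). Thus (B2) holds, completing the proof. The crux is the shift-independence of the Gaussian integral above, which is what allows the peak contribution to match the target $|a|^{-7/4}$ without gathering any spurious powers of $t$.
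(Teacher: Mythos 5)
Your proof is correct and targets the same two estimates that the paper needs — a $(t+1)^{-5/4}$ bound near the wave and a $|x-\lambda(t+1)|^{-7/4}$ bound away from it — but the mechanism you use for the second bound is genuinely different. The paper's proof, under the same hypothesis $|x-\lambda(t+1)|>(t+1)^{1/2}$, splits $[t^{1/2},t]$ into $A_1=\{|\lambda|s\le|x-\lambda t|/2\}$ and $A_2=\{|\lambda|s>|x-\lambda t|/2\}$; on $A_1$ the argument of the Gaussian stays comparable to $|x-\lambda t|$, so the exponential factor can be pulled out wholesale, while on $A_2$ the time is so late that $(s+1)^{-21/8}\le C(s+1)^{-7/8}|x-\lambda t|^{-7/4}$ already contributes the desired power. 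This one split covers all sign patterns of $\lambda$ and $x-\lambda(t+1)$ at once. You instead replace the Gaussian by the power $z^{-7/8}$ globally, locate the critical time $s^*$ where the Gaussian argument vanishes, and treat the peak piece via the exact closed-form $\int_0^\infty\tau^{-1/2}\exp(-\lambda^2(T-\tau)^2/(\mu\tau))\,d\tau=\sqrt{\pi\mu}/|\lambda|$, whose shift-independence in $T\ge 0$ is indeed the decisive fact. Both approaches deliver the lemma; yours requires the extra subcase bookkeeping ($\lambda=0$, $s^*$ outside $[t^{1/2},t]$, $s^*$ inside with peak/tail split) that the paper's $A_1/A_2$ dichotomy sidesteps, but in exchange it exhibits a sharper $|a|^{-21/8}$ coming from the peak piece, which you then discard using $|a|\ge 1$. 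One small remark: in the far-from-wave regime, after extracting $|a+\lambda(s+1)|\ge c|a|$ on the tails, the remaining time integral $\int(t-s)^{3/8}(s+1)^{-21/8}\,ds$ over $[t^{1/2},t]$ is uniformly $O(1)$ by the same $t/2$-split you use in (B1); you do not actually need the auxiliary observation $(s+1)^{-21/8}\le C(s^*+1)^{-21/8}$ on the right tail, which the text suggests but which would complicate the bookkeeping.
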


\begin{lem}
  \label{lem:boundary2}
  Let $\lambda \neq 0$ and $\mu>0$. Suppose that $|f(t)|\leq C(t+1)^{-9/4}$ and $|\partial_t f(t)|\leq C(t+1)^{-7/4}$. Then
  \begin{equation}
    \left| \int_{t^{1/2}}^{t}\partial_x \left\{ (t-s)^{-1/2}e^{-\frac{(x-\lambda(t-s))^2}{\mu(t-s)}} \right\} f(s)\, ds \right| \leq C\bar{\psi}(x,t;\lambda)
  \end{equation}
  for $x\neq 0$ and $t\geq 4$.
\end{lem}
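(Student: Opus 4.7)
My plan is to reduce this to estimates of the same kind as Lemma~\ref{lem:boundary1} by integration by parts in $s$. The key observation is that the kernel $K(x,t,s) := (t-s)^{-1/2}e^{-(x-\lambda(t-s))^2/(\mu(t-s))}$ is a drifted heat kernel, so a direct computation yields
\begin{equation*}
\lambda\,\partial_x K \;=\; \partial_s K + \tfrac{\mu}{4}\partial_x^2 K.
\end{equation*}
Substituting this into the integrand and integrating the $\partial_s K$ piece by parts in $s$ gives
\begin{equation*}
\lambda I(x,t) \;=\; -K(x,t,t^{1/2})f(t^{1/2}) \;-\; \int_{t^{1/2}}^t K(x,t,s)\,\partial_s f(s)\,ds \;+\; \frac{\mu}{4}\int_{t^{1/2}}^t \partial_x^2 K(x,t,s)\,f(s)\,ds;
\end{equation*}
the boundary contribution at $s=t$ vanishes because, for $x\neq 0$, $K(x,t,t)=\lim_{\tau\searrow 0}\tau^{-1/2}e^{-(x-\lambda\tau)^2/(\mu\tau)}=0$ (the Gaussian kills the polynomial singularity).

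I would then estimate each of the three remaining terms by the Gaussian-convolution techniques used in Lemma~\ref{lem:boundary1}. The boundary term $|K(x,t,t^{1/2})f(t^{1/2})|\leq C(t+1)^{-13/8}\exp(-(x-\lambda(t+1))^2/(C(t+1)))$ (using $|f(t^{1/2})|\leq Ct^{-9/8}$ and the Gaussian bound on $K$) is handled by converting Gaussian to polynomial decay in $|x-\lambda(t+1)|$ via $y^{2N}e^{-y^2/C}\leq C_N$, as in the treatment of $\mathcal{I}_{i,1}$ in the proof of Lemma~\ref{SectionIII:Lemma:Bound_of_Ii}. For $\int K\,\partial_s f\,ds$, combining $|K|\leq C(t-s)^{-1/2}e^{-(x-\lambda(t-s))^2/(\mu(t-s))}$ with $|\partial_s f|\leq C(s+1)^{-7/4}$ produces an integral of Lemma~\ref{lem:boundary1}-type but with the weight $(s+1)^{-7/4}$ in place of $(s+1)^{-21/8}$, which I would analyze by the usual split at $s=t/2$ and saddle-point estimates after the change of variable $\tau=t-s$. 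The third integral uses $|\partial_x^2 K|\leq C(t-s)^{-3/2}e^{-(x-\lambda(t-s))^2/(C(t-s))}$, whose $(t-s)^{-3/2}$ singularity near $s=t$ is integrable since $x\neq 0$ keeps the Gaussian factor nontrivial there.

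The main obstacle is that the weight $(s+1)^{-7/4}$ is slower than the $(s+1)^{-21/8}$ appearing in Lemma~\ref{lem:boundary1}: when the Gaussian saddle $\tau^*=x/\lambda$ sits near the upper endpoint $t-t^{1/2}$ of the $\tau$-integration range -- equivalently, when $|x-\lambda(t+1)|\sim (t+1)^{1/2}$ -- a naive saddle-point estimate of $\int K\,\partial_s f\,ds$ yields only $(t+1)^{-7/8}$, which exceeds the target $\bar{\psi}\sim (t+1)^{-5/4}$. I expect to close this gap by extracting cancellation between the $\int K\,\partial_s f\,ds$ and $\int \partial_x^2 K\cdot f\,ds$ pieces -- for instance by re-expressing $(\mu/4)\partial_x^2 K = \lambda\partial_x K - \partial_s K$ inside the third integral and matching the resulting boundary contributions against the first integral -- and by a location-based case split near $x=0$, where the explicit antiderivative of $\partial_x K$ in $\tau$ (an error-function expression that solves the ODE $\partial_x\Phi - (4\lambda/\mu)\Phi = (4/\mu)K$) gives a uniform-in-$x$ bound that the naive $\int |\partial_x K|\,|f|\,ds$ estimate misses due to its $\log(1/|x|)$ blow-up as $x\to 0$.
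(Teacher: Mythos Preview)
Your heat-equation identity and the ODE you write at the end are exactly the ingredients the paper uses, but the way you assemble them does not close. The difficulty you identify is real and is not resolved by the fixes you suggest. After your integration by parts over the full interval $[t^{1/2},t]$, the term $\int_{t^{1/2}}^{t}K\,\partial_s f\,ds$ must be estimated using only $|\partial_s f|\leq C(s+1)^{-7/4}$, and the resulting bound $\int|K|\,|\partial_s f|\,ds$ is genuinely of order $(t+1)^{-7/8}$ near $x=\lambda(t+1)$ (the contribution comes from $s\sim t^{1/2}$, where $K\sim t^{-1/2}$ over a window of length $\sim t^{1/2}$). There is no cancellation to exploit against the $\int\partial_x^2K\cdot f\,ds$ piece: on $[t^{1/2},t/2]$ that term is $O(t^{-17/8})$, far smaller than the shortfall. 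Re-expressing $(\mu/4)\partial_x^2 K$ via the heat equation just undoes your integration by parts and returns $\lambda I$ to the right-hand side, which is circular. And your near-$x=0$ concern is a separate issue in a different region; the failure occurs near $x=\lambda(t+1)$, not near $x=0$. In short, by integrating by parts in $s$ on $[t^{1/2},t-1]$ you have traded the hypothesis $|f|\leq C(s+1)^{-9/4}$ for the strictly weaker information $|\partial_s f|\leq C(s+1)^{-7/4}$, a net loss that cannot be recovered.

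The paper's remedy is to split at $s=t-1$ and treat the two pieces by different methods. On $I_1=\int_{t^{1/2}}^{t-1}$ one does \emph{not} integrate by parts: since $t-s\geq 1$, the crude bound $|\partial_x K|\leq C(t-s)^{-1}e^{-(x-\lambda(t-s))^2/(C(t-s))}$ is available, and combining it with $|f|\leq C(s+1)^{-9/4}$ gives an integral of exactly the type handled in Lemma~\ref{lem:boundary1} (the extra $(t-s)^{-1/2}$ in the kernel compensates for $(s+1)^{-9/4}$ in place of $(s+1)^{-21/8}$). On $I_2=\int_{t-1}^{t}$, where $(s+1)^{-7/4}\approx (t+1)^{-7/4}$ is essentially constant, your ODE idea applies cleanly: one finds $(\mu/4)\partial_x I_2-\lambda I_2=(\mu/4)w$ with $|w(x,t)|\leq C(t+1)^{-7/4}e^{-x^2/C}$, and solving this first-order ODE in $x$ (separately for $x>0$ and $x<0$, using $I_2\to 0$ as $|x|\to\infty$) yields $|I_2(x,t)|\leq C(t+1)^{-7/4}e^{-x^2/C}\leq C\bar\psi(x,t;\lambda)$.
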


\begin{lem}
  \label{lem:delta}
  Let $\lambda \in \mathbb{R}$, $\mu>0$, and $\alpha \geq 0$. Then
  \begin{equation}
    \int_{0}^{t}e^{-\frac{t-s}{\mu}}(s+1)^{-\alpha}\psi_{7/4}(x,s;\lambda)\, ds\leq C(t+1)^{-\alpha}\psi_{7/4}(x,t;\lambda).
  \end{equation}
  We also have
  \begin{equation}
    \int_{0}^{t/2}e^{-\frac{t-s}{\mu}}\psi_{7/4}(x,s;\lambda)\, ds\leq Ce^{-\frac{t}{C}}\psi_{7/4}(x,t;\lambda).
  \end{equation}
\end{lem}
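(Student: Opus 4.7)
The plan is to base both estimates on a single pointwise comparison: for $0\leq s\leq t$,
\begin{equation}
\psi_{7/4}(x,s;\lambda) \leq C(1+(t-s))^{7/4}\,\psi_{7/4}(x,t;\lambda).
\end{equation}
To prove it, I would combine $(x-\lambda(t+1))^2 \leq 2(x-\lambda(s+1))^2 + 2\lambda^2(t-s)^2$ (from $(a+b)^2\leq 2a^2+2b^2$) with the trivial bound $(t+1)\leq 2(s+1)+2(t-s)$, yielding
\begin{equation}
(x-\lambda(t+1))^2+(t+1) \leq 2\bigl[(x-\lambda(s+1))^2+(s+1)\bigr]+C(1+(t-s)^2).
\end{equation}
Dividing by $(x-\lambda(s+1))^2+(s+1)\geq 1$, raising to the $7/8$ power, and using $(1+y^2)^{7/8}\leq C(1+y)^{7/4}$ for $y\geq 0$ gives the comparison.

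For the first inequality, I would split the integral at $s=t/2$. On $[t/2,t]$, the factor $(s+1)^{-\alpha}$ is bounded by $C(t+1)^{-\alpha}$; applying the comparison and substituting $r=t-s$ reduces the integrand to $e^{-r/\mu}(1+r)^{7/4}$, whose integral over $[0,t/2]$ is a finite constant, producing the desired bound $C(t+1)^{-\alpha}\psi_{7/4}(x,t;\lambda)$. On $[0,t/2]$, the exponential is at most $e^{-t/(2\mu)}$ and the polynomial factor $(1+(t-s))^{7/4}$ is at most $(1+t)^{7/4}$, so this contribution is exponentially small in $t$ and easily absorbed into the target.

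For the second inequality the integration range is already $[0,t/2]$, so $t-s\geq t/2$ throughout and $e^{-(t-s)/\mu}\leq e^{-t/(2\mu)}$. Applying the comparison and the crude bound $(1+(t-s))^{7/4}\leq C(1+t)^{7/4}$, the whole integral is controlled by $C(1+t)^{7/4}e^{-t/(2\mu)}\,\psi_{7/4}(x,t;\lambda)$, which is $\leq Ce^{-t/C'}\psi_{7/4}(x,t;\lambda)$ after enlarging constants. I do not foresee serious obstacles; the only mildly delicate point is the pointwise comparison itself, and once it is in place the remaining arguments are routine exponential-times-polynomial estimates.
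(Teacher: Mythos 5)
Your proposal is correct. The paper itself offers no details, merely citing a slight modification of Lemma~3.9 in~\cite{LZ97}; your self-contained argument — the pointwise comparison $\psi_{7/4}(x,s;\lambda)\leq C(1+(t-s))^{7/4}\psi_{7/4}(x,t;\lambda)$ obtained from $(a+b)^2\leq 2a^2+2b^2$ and $(t+1)\leq 2(s+1)+2(t-s)$, followed by splitting the integral at $t/2$ and absorbing the polynomial growth into the exponential — is exactly the standard mechanism behind lemmas of this type and is what the cited reference does up to the exponent. All the intermediate steps check out: the numerator bound $2[(x-\lambda(s+1))^2+(s+1)]+C(1+(t-s)^2)$ is valid, the denominator $(x-\lambda(s+1))^2+(s+1)\geq 1$ is valid, $(1+y^2)^{7/8}\leq(1+y)^{7/4}$ holds, and in each piece of the split the extra factor $(1+t)^{7/4}$ (or $(1+t)^{7/4+1}$ when one also integrates $(s+1)^{-\alpha}$) is killed by the spare exponential $e^{-t/(2\mu)}$.
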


\begin{proof}
  These can be proved by slightly modifying the proof of~\cite[Lemma~3.9]{LZ97}.
\end{proof}

\begin{proof}[Proof of Lemma~\ref{lem:sqrt1}]
  We only consider the case of $(\lambda,\lambda',\alpha)=(0,1,0)$ since the other cases are similar. Denote by $I(x,t)$ the integral appearing in the statement of the lemma. Then
  \begin{align}
    I(x,t)
    & \leq C(t+1)^{-1}\int_{0}^{t^{1/2}}\int_{-\infty}^{s+1-(s+1)^{1/2}}(s+1)^{-\beta/2}(s+1-y)^{-7/4}e^{-\frac{(y-x)^2}{\mu(t-s)}}\, dyds \\
    & \quad +C(t+1)^{-1}\int_{0}^{t^{1/2}}\int_{s+1-(s+1)^{1/2}}^{s+1+(s+1)^{1/2}}(s+1)^{-(\beta+7/4)/2}e^{-\frac{(y-x)^2}{\mu(t-s)}}\, dyds \\
    & \quad +C(t+1)^{-1}\int_{0}^{t^{1/2}}\int_{s+1+(s+1)^{1/2}}^{\infty}(s+1)^{-\beta/2}(y-s-1)^{-7/4}e^{-\frac{(y-x)^2}{\mu(t-s)}}\, dyds \\
    & \eqqcolon I_1(x,t)+I_2(x,t)+I_3(x,t).
  \end{align}
  For $I_2(x,t)$, using
  \begin{align}
    \label{eq:LZ97_39}
    \begin{aligned}
      \frac{(x-y-\lambda(t-s))^2}{t-s}+\frac{(y-\lambda'(s+1))^2}{s+1}
      & =\frac{t+1}{(t-s)(s+1)}\left[ y-\frac{(s+1)(x-(\lambda-\lambda')(t-s))}{t+1} \right]^2 \\
      & \quad +\frac{(x-\lambda(t-s)-\lambda'(s+1))^2}{t+1},
    \end{aligned}
  \end{align}
  we obtain
  \begin{align}
    I_2(x,t)
    & \leq C(t+1)^{-1}\int_{0}^{t^{1/2}}\int_{s+1-(s+1)^{1/2}}^{s+1+(s+1)^{1/2}}(s+1)^{-(\beta+7/4)/2}e^{-\frac{(y-x)^2}{\mu(t-s)}}e^{-\frac{(y-(s+1))^2}{\mu(s+1)}}\, dyds \\
    & \leq C(t+1)^{-1}\int_{0}^{t^{1/2}}(s+1)^{-(\beta+3/4)/2}e^{-\frac{(x-(s+1))^2}{\mu(t+1)}}\, ds \\
    & \leq 
    \begin{dcases}
      C(t+1)^{-(\beta+11/4)/4} & \text{if $|x|\leq 2(t+1)^{1/2}$} \\
      C(t+1)^{-(\beta+11/4)/4}e^{-\frac{x^2}{C(t+1)}} & \text{if $|x|>2(t+1)^{1/2}$}
    \end{dcases} \\
    & \leq C(t+1)^{-(\beta-3/4)/4}\Theta_{7/4}(x,t;0,\nu^*)\leq C(t+1)^{-(\beta-3/4)/4}\psi_{7/4}(x,t;0).
  \end{align}
  We next give estimates for $I_1(x,t)$ and $I_3(x,t)$. Let $K$ be a sufficiently large positive number. Consider first the case of (i) $x<-K(t+1)^{1/2}$. In this case, we have $I_3(x,t)\leq CI_1(x,t)$ and
  \begin{align}
    I_1(x,t)
    & \leq C(t+1)^{-1}\left( \int_{0}^{t^{1/2}}\int_{x/2}^{s+1-(s+1)^{1/2}}+\int_{0}^{t^{1/2}}\int_{-\infty}^{x/2} \right) (s+1)^{-\beta/2}(s+1-y)^{-7/4}e^{-\frac{(y-x)^2}{\mu(t+1)}}\, dyds \\
    & \leq C(t+1)^{-1}e^{-\frac{x^2}{C(t+1)}}\int_{0}^{t^{1/2}}(s+1)^{-(\beta+3/4)/2}\, ds+C(t+1)^{-1/2}|x|^{-7/4}\int_{0}^{t^{1/2}}(s+1)^{-\beta/2}\, ds \\
    & \leq C(t+1)^{-(\beta+11/4)/4}e^{-\frac{x^2}{C(t+1)}}+C(t+1)^{-\beta/4}|x|^{-7/4}\leq C(t+1)^{-(\beta-3/4)/4}\psi_{7/4}(x,t;0).
  \end{align}
  Next, let us consider the case of (ii) $|x|\leq K(t+1)^{1/2}$. In this case, we have
  \begin{equation}
    I_1(x,t)+I_3(x,t)\leq C(t+1)^{-1}\int_{0}^{t^{1/2}}(s+1)^{-(\beta+3/4)/2}\, ds\leq C(t+1)^{-(\beta+11/4)/4}\leq C(t+1)^{-(\beta-3/4)/4}\psi_{7/4}(x,t;0).
  \end{equation}
  Finally, for the case of (iii) $x>K(t+1)^{1/2}$, we have $I_1(x,t)\leq CI_3(x,t)$ and
  \begin{align}
    I_3(x,t)
    & \leq C(t+1)^{-1}\left( \int_{0}^{t^{1/2}}\int_{s+1+(s+1)^{1/2}}^{x/2}+\int_{0}^{t^{1/2}}\int_{x/2}^{\infty} \right)(s+1)^{-\beta/2}(y-s-1)^{-7/4}e^{-\frac{(y-x)^2}{\mu(t-s)}}\, dyds \\
    & \leq C(t+1)^{-1}e^{-\frac{x^2}{C(t+1)}}\int_{0}^{t^{1/2}}(s+1)^{-(\beta+3/4)/2}\, ds+C(t+1)^{-1/2}|x|^{-7/4}\int_{0}^{t^{1/2}}(s+1)^{-\beta/2}\, ds \\
    & \leq C(t+1)^{-(\beta+11/4)/4}e^{-\frac{x^2}{C(t+1)}}+C(t+1)^{-\beta/4}|x|^{-7/4}\leq C(t+1)^{-(\beta-3/4)/4}\psi_{7/4}(x,t;0).
  \end{align}
  This ends the proof.
\end{proof}

\begin{proof}[Proof of Lemma~\ref{lem:sqrt3}]
  We only consider the case of $(\lambda,\lambda',\alpha,\beta)=(0,1,0,3/4)$ since the other cases are similar. Denote by $I(x,t)$ the integral appearing in the statement of the lemma. Let $K$ be a sufficiently large positive number. First, let us consider the case of (i) $|x|\leq K(t+1)^{1/2}$. In this case, we have
  \begin{align}
    I(x,t)
    & \leq C(t+1)^{-9/8}\int_{|y-(\sqrt{t}+1)|\leq (\sqrt{t}+1)^{1/2}}e^{-\frac{(x-y)^2}{\mu(t-\sqrt{t})}}\, dy \\
    & \quad +C(t+1)^{-11/16}\int_{|y-(\sqrt{t}+1)|>(\sqrt{t}+1)^{1/2}}|y-(t^{1/2}+1)|^{-7/4}\, dy\leq C(t+1)^{-7/8}\leq C\psi_{7/4}(x,t;0).
  \end{align}
  Next, let us consider the case of (ii) $|x|>K(t+1)^{1/2}$. Suppose that $x>0$; the case of $x<0$ is similar. Then we have
  \begin{align}
    I(x,t)
    & \leq C\left( \int_{-\infty}^{x/2}+\int_{x/2}^{\infty} \right) (t-t^{1/2})^{-1/2}e^{-\frac{(x-y)^2}{\mu(t-\sqrt{t})}}|f(y,t^{1/2})|\, dy \\
    & \leq Ce^{-\frac{x^2}{C(t+1)}}\int_{-\infty}^{\infty}(t-t^{1/2})^{-1/2}e^{-\frac{(x-y)^2}{\mu(t-\sqrt{t})}}|f(y,t^{1/2})|\, dy+C(t+1)^{-11/16}|x|^{-7/4}\int_{-\infty}^{\infty}e^{-\frac{(x-y)^2}{\mu(t-\sqrt{t})}}\, dy \\
    & \leq C(t+1)^{-7/8}e^{-\frac{x^2}{C(t+1)}}+C(t+1)^{-3/16}|x|^{-7/4}\leq C\psi_{7/4}(x,t;0).
  \end{align}
  For the third inequality, we repeated the argument in Case (i). This ends the proof.
\end{proof}

\begin{proof}[Proof of Lemma~\ref{lem:lem3.2_improved}]
  Using~\eqref{eq:LZ97_39}, we see that the integral in the statement of the lemma is bounded by
  \begin{equation}
    C(t+1)^{-1/2}\int_{t^{1/2}}^{t/2}(t-s)^{-1/2}(s+1)^{-(\alpha-1)/2}e^{-\frac{(x-\lambda(t+1))^2}{\mu(t+1)}}\, ds.
  \end{equation}
  This is then bounded by
  \begin{equation}
    C\Theta_2(x,t;\lambda,\mu)\int_{t^{1/2}}^{t/2}(s+1)^{-(\alpha-1)/2}\, ds\leq C\Theta_{(\alpha+1)/2}(x,t;\lambda,\mu).
  \end{equation}
  This ends the proof.
\end{proof}

\begin{proof}[Proof of Lemma~\ref{lem:sqrt_ij}]
  We only consider the case of $(\lambda,\lambda',\alpha)=(0,1,0)$ since the other cases are similar. Denote by $I(x,t)$ the integral appearing in the statement of the lemma. Let
  \begin{align}
    I(x,t)
    & =\int_{t^{1/2}}^{t/2}\int_{|y-(s+1)|\leq (s+1)^{1/2}}(t-s)^{-1/2}e^{-\frac{(x-y)^2}{\mu(t-s)}}(s+1)^{-11/8}\psi_{7/4}(y,s;1)\, dyds \\
    & \quad +\int_{t^{1/2}}^{t/2}\int_{|y-(s+1)|>(s+1)^{1/2}}(t-s)^{-1/2}e^{-\frac{(x-y)^2}{\mu(t-s)}}(s+1)^{-11/8}\psi_{7/4}(y,s;1)\, dyds \\
    & \eqqcolon I_1(x,t)+I_2(x,t).
  \end{align}

  For $I_1(x,t)$, applying Lemma~\ref{lem:I221} (with $\alpha=9/2$), we obtain
  \begin{equation}
    I_1(x,t)\leq C\int_{t^{1/2}}^{t}\int_{|y-(s+1)|\leq (s+1)^{1/2}}(t-s)^{-1/2}e^{-\frac{(x-y)^2}{\mu(t-s)}}(s+1)^{-9/4}e^{-\frac{(y-(s+1))^2}{\mu(s+1)}}\, dyds\leq C\psi_{7/4}(x,t;0).
  \end{equation}

  For $I_2(x,t)$, we consider three cases separately. Let $K$ be a sufficiently large positive number. For the case of (i) $|x|\leq K(t+1)^{1/2}$, we have
  \begin{equation}
    I_2(x,t)\leq C(t+1)^{-1/2}\int_{t^{1/2}}^{t/2}(s+1)^{-7/4}\, ds\leq C(t+1)^{-7/8}\leq C\psi_{7/4}(x,t;0).
  \end{equation}
  For the case of (ii) $x<-K(t+1)^{1/2}$, we have
  \begin{align}
    I_2(x,t)
    & =\int_{t^{1/2}}^{t/2}\left( \int_{-\infty}^{x/2}+\int_{x/2}^{s+1-(s+1)^{1/2}}+\int_{s+1+(s+1)^{1/2}}^{\infty} \right) (t-s)^{-1/2}e^{-\frac{(x-y)^2}{\mu(t-s)}}(s+1)^{-11/8}\psi_{7/4}(y,s;1)\, dyds \\
    & \leq C|x|^{-7/4}\int_{t^{1/2}}^{t/2}(s+1)^{-11/8}\, ds+C(t+1)^{-1/2}e^{-\frac{x^2}{C(t+1)}}\int_{t^{1/2}}^{t/2}(s+1)^{-7/4}\, ds \\
    & \leq C(t+1)^{-3/16}|x|^{-7/4}+C\Theta_{7/4}(x,t;0,\nu^*)\leq C\psi_{7/4}(x,t;0).
  \end{align}
  Next, let us consider the case of (iii) $x\geq t+1-K(t+1)^{1/2}$. In what follows, we assume that $t$ is sufficiently large; the statement of the lemma is otherwise easy to check. We have
  \begin{align}
    I_2(x,t)
    & =\int_{t^{1/2}}^{t/2}\left( \int_{-\infty}^{s+1-(s+1)^{1/2}}+\int_{s+1+(s+1)^{1/2}}^{2x/3}+\int_{2x/3}^{\infty} \right) (t-s)^{-1/2}e^{-\frac{(x-y)^2}{\mu(t-s)}}(s+1)^{-11/8}\psi_{7/4}(y,s;1)\, dyds \\
    & \leq C(t+1)^{-1/2}e^{-\frac{x^2}{C(t+1)}}\int_{t^{1/2}}^{t/2}(s+1)^{-7/4}\, ds+C|x|^{-7/4}\int_{t^{1/2}}^{t/2}(s+1)^{-11/8}\, ds \\
    & \leq C\Theta_{7/4}(x,t;0,\nu^*)+C(t+1)^{-3/16}|x|^{-7/4}\leq C\psi_{7/4}(x,t;0).
  \end{align}
  Finally, let us consider the case of (iv) $K(t+1)^{1/2}<x<t+1-K(t+1)^{1/2}$. Let
  \begin{align}
    I_2(x,t)
    & =\int_{t^{1/2}}^{t/2}\int_{y<s+1-(s+1)^{1/2}}(t-s)^{-1/2}e^{-\frac{(x-y)^2}{\mu(t-s)}}(s+1)^{-11/8}\psi_{7/4}(y,s;1)\, dyds \\
    & \quad +\int_{t^{1/2}}^{t/2}\int_{y>s+1+(s+1)^{1/2}}(t-s)^{-1/2}e^{-\frac{(x-y)^2}{\mu(t-s)}}(s+1)^{-11/8}\psi_{7/4}(y,s;1)\, dyds \\
    & \eqqcolon J_2(x,t)+K_2(x,t).
  \end{align}
  For $J_2(x,t)$, we further divide this into four terms:
  \begin{align}
    J_2(x,t)
    & \leq C(t+1)^{-1/2}\int_{t^{1/2}}^{x-x^{1/2}}e^{-\frac{(x-(s+1))^2}{\mu(t-s)}}(s+1)^{-7/4}\, ds \\
    & \quad +C\int_{x-x^{1/2}}^{x+4x^{1/2}}(s+1)^{-9/4}\, ds \\
    & \quad +C\int_{x+4x^{1/2}}^{t}\int_{y\leq (s+1+x)/2}(t-s)^{-1/2}e^{-\frac{(x-y)^2}{\mu(t-s)}}(s+1)^{-11/8}(s+1-x)^{-7/4}\, dyds \\
    & \quad +C(t+1)^{-1/2}\int_{x+4x^{1/2}}^{t}\int_{(s+1+x)/2}^{s+1-(s+1)^{1/2}}e^{-\frac{(s+1-x)^2}{C(t-s)}}(s+1)^{-11/8}(s+1-y)^{-7/4}\, dyds \\
    & \eqqcolon J_{21}(x,t)+J_{22}(x,t)+J_{23}(x,t)+J_{24}(x,t).
  \end{align}
  The first term $J_{21}(x,t)$ is estimated as follows:
  \begin{align}
    J_{21}(x,t)
    & \leq C(t+1)^{-1/2}e^{-\frac{x^2}{C(t+1)}}\int_{t^{1/2}}^{x/2-1}(s+1)^{-7/4}\, ds+C(t+1)^{-1/2}|x|^{-7/4}\int_{x/2-1}^{t}e^{-\frac{(x-(s+1))^2}{\mu(t+1)}}\, ds \\
    & \leq C\Theta_{7/4}(x,t;0,\nu^*)+C|x|^{-7/4}\leq C\psi_{7/4}(x,t;0).
  \end{align}
  For the second term $J_{22}(x,t)$, we have
  \begin{equation}
    J_{22}(x,t)\leq C\int_{x-x^{1/2}}^{x+4x^{1/2}}(x-x^{1/2}+1)^{-9/4}\, ds\leq C|x|^{-7/4}\leq C\psi_{7/4}(x,t;0).
  \end{equation}
  For the third term $J_{23}(x,t)$, we have
  \begin{equation}
    J_{23}(x,t)\leq C|x|^{-11/8}\int_{x+4x^{1/2}}^{t}(s+1-x)^{-7/4}\, ds\leq C|x|^{-7/4}\leq C\psi_{7/4}(x,t;0).
  \end{equation}
  For the fourth term $J_{24}(x,t)$, we have
  \begin{equation}
    J_{24}(x,t)\leq C(t+1)^{-1/2}|x|^{-7/4}\int_{x+4x^{1/2}}^{t}e^{-\frac{(s+1-x)^2}{C(t+1)}}\, ds\leq C|x|^{-7/4}\leq C\psi_{7/4}(x,t;0).
  \end{equation}
  Next, for $K_2(x,t)$, we again divide this into four terms:
  \begin{align}
    K_2(x,t)
    & \leq C(t+1)^{-1/2}\int_{t^{1/2}}^{x-4x^{1/2}}\int_{s+1+(s+1)^{1/2}}^{(s+1+x)/2}e^{-\frac{(x-(s+1))^2}{\mu(t-s)}}(s+1)^{-11/8}(y-(s+1))^{-7/4}\, dyds \\
    & \quad +C\int_{t^{1/2}}^{x-4x^{1/2}}\int_{y>(s+1+x)/2}(t-s)^{-1/2}e^{-\frac{(x-y)^2}{\mu(t-s)}}(s+1)^{-11/8}(x-(s+1))^{-7/4}\, dyds \\
    & \quad +C\int_{x-4x^{1/2}}^{x+x^{1/2}}(s+1)^{-9/4}\, ds \\
    & \quad +C(t+1)^{-1/2}\int_{x+x^{1/2}}^{t}e^{-\frac{(s+1-x)^2}{C(t-s)}}(s+1)^{-7/4}\, ds \\
    & \eqqcolon K_{21}(x,t)+K_{22}(x,t)+K_{23}(x,t)+K_{24}(x,t).
  \end{align}
  For $K_{21}(x,t)$, we have
  \begin{equation}
    K_{21}(x,t)\leq C(t+1)^{-1/2}\int_{t^{1/2}}^{x-4x^{1/2}}e^{-\frac{(x-(s+1))^2}{\mu(t-s)}}(s+1)^{-7/4}\, ds,
  \end{equation}
  and we notice that the right-hand side is almost identical to $J_{21}(x,t)$; we omit the rest of the calculations. For $K_{22}(x,t)$, we have
  \begin{align}
    K_{22}(x,t)
    & \leq C\int_{t^{1/2}}^{x-4x^{1/2}}(s+1)^{-11/8}(x-(s+1))^{-7/4}\, ds \\
    & \leq C|x|^{-7/4}\int_{t^{1/2}}^{x/2-1}(s+1)^{-11/8}\, ds+C|x|^{-11/8}\int_{x/2-1}^{x-4x^{1/2}}(x-(s+1))^{-7/4}\, ds \\
    & \leq C|x|^{-7/4}\leq C\psi_{7/4}(x,t;0).
  \end{align}
  The term $K_{23}(x,t)$ is almost identical to $J_{22}(x,t)$; we omit the rest of the calculations. Finally, for $K_{24}(x,t)$, we have
  \begin{equation}
    K_{24}(x,t)\leq C(t+1)^{-1/2}|x|^{-7/4}\int_{x+x^{1/2}}^{t}e^{-\frac{(s+1-x)^2}{C(t+1)}}\, ds\leq C|x|^{-7/4}\leq C\psi_{7/4}(x,t;0).
  \end{equation}
  This ends the proof.
\end{proof}

\begin{proof}[Proof of Lemma~\ref{lem:LZ98_lemma3.5_modified}]
  We only consider the case of $\lambda=0$ since the other cases are similar. We also assume that $t\geq 4$ since the case when $t<4$ can be handled easily. Let
  \begin{align}
    I(x,t)
    & \coloneqq \int_{0}^{t}\int_{-\infty}^{\infty}(t-s)^{-1}(t+1-s)^{-\alpha/2}e^{-\frac{(x-y)^2}{\mu(t-s)}}(s+1)^{-\beta/2}\psi_{7/4}(y,s;0)\, dyds \\
    & =\int_{0}^{t}\int_{|y|\leq (s+1)^{1/2}}(t-s)^{-1}(t+1-s)^{-\alpha/2}e^{-\frac{(x-y)^2}{\mu(t-s)}}(s+1)^{-\beta/2}\psi_{7/4}(y,s;0)\, dyds \\
    & \quad +\int_{0}^{t}\int_{|y|>(s+1)^{1/2}}(t-s)^{-1}(t+1-s)^{-\alpha/2}e^{-\frac{(x-y)^2}{\mu(t-s)}}(s+1)^{-\beta/2}\psi_{7/4}(y,s;0)\, dyds \\
    & \eqqcolon I_1(x,t)+I_2(x,t).
  \end{align}

  For $I_1(x,t)$, we have
  \begin{align}
    I_1(x,t)
    & \leq C\int_{0}^{t/2}\int_{|y|\leq (s+1)^{1/2}}(t-s)^{-1}(t+1-s)^{-\alpha/2}e^{-\frac{(x-y)^2}{\mu(t-s)}}\Theta_{\beta+7/4}(y,s;0,\mu)\, dyds \\
    & \quad +C\int_{t/2}^{t}\int_{|y|\leq (s+1)^{1/2}}(t-s)^{-1}(t+1-s)^{-\alpha/2}e^{-\frac{(x-y)^2}{\mu(t-s)}}\Theta_{\beta+7/4}(y,s;0,\mu)\, dyds.
  \end{align}
  Then, by~\cite[Lemma~3.2]{LZ97}, we see that the first and the second term on the right-hand side are bounded by the right-hand sides of~\eqref{lem:LZ98_lemma3.5_modified:eq1} and~\eqref{lem:LZ98_lemma3.5_modified:eq2}, respectively.

  For $I_2(x,t)$, let us first consider the case of (i) $|x|\leq 2(t+1)^{1/2}$. Then
  \begin{align}
    I_2(x,t)
    & \leq C\int_{0}^{t/2}\int_{|y|>(s+1)^{1/2}}(t-s)^{-1}(t+1-s)^{-\alpha/2}(s+1)^{-\beta/2}|y|^{-7/4}\, dyds \\
    & \quad +C\int_{t/2}^{t}\int_{|y|>(s+1)^{1/2}}(t-s)^{-1}(t+1-s)^{-\alpha/2}(s+1)^{-(\beta+7/4)/2}e^{-\frac{(x-y)^2}{\mu(t-s)}}\, dyds \\
    & \leq C(t+1)^{-\alpha/2-1}\int_{0}^{t/2}(s+1)^{-(\beta+3/4)/2}\, ds+C(t+1)^{-(\beta+7/4)/2}\int_{t/2}^{t}(t-s)^{-1/2}(t+1-s)^{-\alpha/2}\, ds \\
    & \leq 
    \begin{dcases}
      C(t+1)^{-\gamma_1/2}(t+1)^{-7/8} & \text{if $\beta \neq 5/4$} \\
      C\log(t+2)(t+1)^{-\gamma_1/2}(t+1)^{-7/8} & \text{if $\beta=5/4$}
    \end{dcases} \\
    & \quad +
    \begin{dcases}
      C(t+1)^{-\gamma_2/2}(t+1)^{-7/8} & \text{if $\alpha \neq 1$}, \\
      C\log(t+2)(t+1)^{-\gamma_2/2}(t+1)^{-7/8} & \text{if $\alpha=1$}.
    \end{dcases}
  \end{align}
  This shows that the first and the second term on the right-hand side are bounded by the right-hand sides of~\eqref{lem:LZ98_lemma3.5_modified:eq1} and~\eqref{lem:LZ98_lemma3.5_modified:eq2}, respectively. Let us next consider the case of (ii) $|x|>2(t+1)^{1/2}$. We assume that $x>2(t+1)^{1/2}$ since the case of $x<-2(t+1)^{1/2}$ is similar. Let
  \begin{align}
    I_2(x,t)
    & =\int_{0}^{t}\int_{-\infty}^{-(s+1)^{1/2}}(t-s)^{-1}(t+1-s)^{-\alpha/2}e^{-\frac{(x-y)^2}{\mu(t-s)}}(s+1)^{-\beta/2}\psi_{7/4}(y,s;0)\, dyds \\
    & \quad +\int_{0}^{t}\int_{(s+1)^{1/2}}^{x/2}(t-s)^{-1}(t+1-s)^{-\alpha/2}e^{-\frac{(x-y)^2}{\mu(t-s)}}(s+1)^{-\beta/2}\psi_{7/4}(y,s;0)\, dyds \\
    & \quad +\int_{0}^{t}\int_{x/2}^{\infty}(t-s)^{-1}(t+1-s)^{-\alpha/2}e^{-\frac{(x-y)^2}{\mu(t-s)}}(s+1)^{-\beta/2}\psi_{7/4}(y,s;0)\, dyds \\
    & \eqqcolon I_{21}(x,t)+I_{22}(x,t)+I_{23}(x,t).
  \end{align}
  For $I_{21}(x,t)$ and $I_{22}(x,t)$, using the change of variable $\eta=x/(t-s)^{1/2}$, we obtain
  \begin{align}
    I_{21}(x,t)+I_{22}(x,t)
    & \leq C\int_{0}^{t}(t-s)^{-1}(t+1-s)^{-\alpha/2}e^{-\frac{x^2}{C(t-s)}}(s+1)^{-(\beta+3/4)/2}\, ds \\
    & \leq C(t+1)^{-\alpha/2-1}e^{-\frac{x^2}{C(t+1)}}\int_{0}^{t/2}(s+1)^{-(\beta+3/4)/2}\, ds \\
    & \quad +C(t+1)^{-(\beta+3/4)/2}e^{-\frac{x^2}{C(t+1)}}\int_{t/2}^{t}(t-s)^{-1}(t+1-s)^{-\alpha/2}e^{-\frac{x^2}{C(t-s)}}\, ds \\
    & \leq 
    \begin{dcases}
      C(t+1)^{-\gamma_1/2}(t+1)^{-7/8}e^{-\frac{x^2}{C(t+1)}} & \text{if $\beta \neq 5/4$} \\
      C\log(t+2)(t+1)^{-\gamma_1/2}(t+1)^{-7/8}e^{-\frac{x^2}{C(t+1)}} & \text{if $\beta=5/4$}
    \end{dcases} \\
    & \quad +C(t+1)^{-(\alpha+\beta-1)/2}(t+1)^{-7/8}e^{-\frac{x^2}{C(t+1)}}.
  \end{align}
  This shows that the first and the second term on the right-hand side are bounded by the right-hand sides of~\eqref{lem:LZ98_lemma3.5_modified:eq1} and~\eqref{lem:LZ98_lemma3.5_modified:eq2}, respectively. For $I_{23}(x,t)$, we have
  \begin{align}
    I_{23}(x,t)
    & \leq Cx^{-7/4}\int_{0}^{t}(t-s)^{-1/2}(t+1-s)^{-\alpha/2}(s+1)^{-\beta/2}\, ds \\
    & \leq C(t+1)^{-\alpha/2-1/2}x^{-7/4}\int_{0}^{t/2}(s+1)^{-\beta/2}\, ds \\
    & \quad +C(t+1)^{-\beta/2}x^{-7/4}\int_{t/2}^{t}(t-s)^{-1/2}(t+1-s)^{-\alpha/2}\, ds \\
    & \leq
    \begin{dcases}
      C(t+1)^{-(\alpha+\min(\beta,2)-1)/2}x^{-7/4} & \text{if $\beta \neq 2$} \\
      C\log(t+2)(t+1)^{-(\alpha+\min(\beta,2)-1)/2}x^{-7/4} & \text{if $\beta=2$}
    \end{dcases} \\
    & \quad +
    \begin{dcases}
      C(t+1)^{-\gamma_2/2}x^{-7/4} & \text{if $\alpha \neq 1$} \\
      C\log(t+2)(t+1)^{-\gamma_2/2}x^{-7/4} & \text{if $\alpha=1$}
    \end{dcases} \\
    & \leq C(t+1)^{-\gamma_1/2}x^{-7/4} \\
    & \quad +
    \begin{dcases}
      C(t+1)^{-\gamma_2/2}x^{-7/4} & \text{if $\alpha \neq 1$}, \\
      C\log(t+2)(t+1)^{-\gamma_2/2}x^{-7/4} & \text{if $\alpha=1$}.
    \end{dcases}
  \end{align}
  This shows that the first and the second term on the right-hand side are bounded by the right-hand sides of~\eqref{lem:LZ98_lemma3.5_modified:eq1} and~\eqref{lem:LZ98_lemma3.5_modified:eq2}, respectively. This ends the proof.
\end{proof}

\begin{proof}[Proof of Lemma~\ref{lem:LZ98_lemma3.6_modified}]
  We only consider the case of $(\lambda,\lambda')=(0,1)$ since the other cases are similar. We also assume that $t$ is sufficiently large since the statement of the lemma is otherwise easy to check. Let
  \begin{align}
    I(x,t)
    & \coloneqq \int_{0}^{t}\int_{-\infty}^{\infty}(t-s)^{-1}(t+1-s)^{-\alpha/2}e^{-\frac{(x-y)^2}{\mu(t-s)}}(s+1)^{-\beta/2}\psi_{7/4}(y,s;1)\, dyds \\
    & =\int_{0}^{t}\int_{|y-(s+1)|\leq (s+1)^{1/2}}(t-s)^{-1}(t+1-s)^{-\alpha/2}e^{-\frac{(x-y)^2}{\mu(t-s)}}(s+1)^{-\beta/2}\psi_{7/4}(y,s;1)\, dyds \\
    & \quad +\int_{0}^{t}\int_{|y-(s+1)|>(s+1)^{1/2}}(t-s)^{-1}(t+1-s)^{-\alpha/2}e^{-\frac{(x-y)^2}{\mu(t-s)}}(s+1)^{-\beta/2}\psi_{7/4}(y,s;1)\, dyds \\
    & \eqqcolon I_1(x,t)+I_2(x,t).
  \end{align}

  For $I_1(x,t)$, we have
  \begin{align}
    I_1(x,t)
    & \leq C\int_{0}^{t/2}\int_{|y-(s+1)|\leq (s+1)^{1/2}}(t-s)^{-1}(t+1-s)^{-\alpha/2}e^{-\frac{(x-y)^2}{\mu(t-s)}}\Theta_{\beta+7/4}(y,s;1,\mu)\, dyds \\
    & \quad +C\int_{t/2}^{t}\int_{|y-(s+1)|\leq (s+1)^{1/2}}(t-s)^{-1}(t+1-s)^{-\alpha/2}e^{-\frac{(x-y)^2}{\mu(t-s)}}\Theta_{\beta+7/4}(y,s;1,\mu)\, dyds \\
    & \eqqcolon I_{11}(x,t)+I_{12}(x,t).
  \end{align}
  We next show that $I_{11}(x,t)$ is bounded by the right-hand side of~\eqref{lem:LZ98_lemma3.6_modified:eq1}; we can similarly show that $I_{12}(x,t)$ is bounded by the right-hand side of~\eqref{lem:LZ98_lemma3.6_modified:eq2} (we omit the calculations for brevity).\footnote{The following calculations are almost identical to those found in the proof of~\cite[Lemma~5.2, or more specifically, Eqs.~(5.13) and (5.14) in Corollary~5.3]{Liu97}.}~First, note that by~\eqref{eq:LZ97_39}, we have
  \begin{equation}
    \label{eq:I11_integrated}
    I_{11}(x,t)\leq C(t+1)^{-1/2}\int_{0}^{t/2}(t-s)^{-1/2}(t+1-s)^{-\alpha/2}(s+1)^{-(\beta+3/4)/2}e^{-\frac{(x-(s+1))^2}{\mu(t+1)}}\, ds.
  \end{equation}
  Let us first start with the case of (i) $x<(t+1)^{1/2}$. In this case, by~\eqref{eq:I11_integrated}, we have
  \begin{align}
    I_{11}(x,t)
    & \leq C(t+1)^{-1/2}e^{-\frac{x^2}{\mu(t+1)}}\int_{0}^{t/2}(t-s)^{-1/2}(t+1-s)^{-\alpha/2}(s+1)^{-(\beta+3/4)/2}e^{-\frac{(s+1)^2}{C(t+1)}}\, ds \\
    & \leq C(t+1)^{-\alpha/2-1}e^{-\frac{x^2}{\mu(t+1)}}\int_{0}^{t^{1/2}}(s+1)^{-(\beta+3/4)/2}\, ds \\
    & \quad +C(t+1)^{-\alpha/2-1}e^{-\frac{x^2}{\mu(t+1)}}\int_{t^{1/2}}^{t/2}(s+1)^{-(\beta+3/4)/2}e^{-\frac{(s+1)^2}{C(t+1)}}\, ds \\
    & \leq C(t+1)^{-\alpha/2-1}e^{-\frac{x^2}{\mu(t+1)}}\int_{0}^{t^{1/2}}(s+1)^{-(\beta+3/4)/2}\, ds \\
    & \quad +C(t+1)^{-\gamma_1/2}\Theta_{7/4}(x,t;0,\mu)\int_{2^{1/2}}^{\infty}\eta^{-(\beta+3/4)/2}e^{-\frac{\eta^2}{C}}\, d\eta \\
    & \leq
    \begin{dcases}
      C(t+1)^{-\gamma_1/2}\Theta_{7/4}(x,t;0,\mu) & \text{if $\beta \neq 5/4$}, \\
      C\log(t+2)(t+1)^{-\gamma_1/2}\Theta_{7/4}(x,t;0,\mu) & \text{if $\beta=5/4$}.
    \end{dcases}
  \end{align}
  This shows that $I_{11}(x,t)$ is bounded by the right-hand side of~\eqref{lem:LZ98_lemma3.6_modified:eq1}. Next, we consider the case of (ii)~$(t+1)^{1/2}\leq x\leq t-(t+1)^{1/2}$. By~\eqref{eq:I11_integrated}, we have
  \begin{align}
    I_{11}(x,t)
    & \leq C(t+1)^{-\alpha/2-1}e^{-\frac{x^2}{C(t+1)}}\int_{0}^{x/2}(s+1)^{-(\beta+3/4)/2}\, ds \\
    & \quad +C(t+1)^{-1/2}x^{-\beta/2-3/8}(t-x)^{-\alpha/2-1/2}\int_{x/2}^{t-(t-x)/2}e^{-\frac{(x-(s+1))^2}{\mu(t+1)}}\, ds \\
    & \leq 
    \begin{dcases}
      C(t+1)^{-\gamma_1/2}\Theta_{7/4}(x,t;0,\nu^*) & \text{if $\beta \neq 5/4$} \\
      C\log(t+2)(t+1)^{-\gamma_1/2}\Theta_{7/4}(x,t;0,\nu^*) & \text{if $\beta=5/4$}
    \end{dcases} \\
    & \quad +Cx^{-\beta/2-3/8}(t-x)^{-\alpha/2-1/2}.
  \end{align}
  Here, we used the bound $xe^{-x^2/C(t+1)}\leq C(t+1)^{1/2}e^{-x^2/C(t+1)}$. This shows that $I_{11}(x,t)$ is bounded by the right-hand side of~\eqref{lem:LZ98_lemma3.6_modified:eq1}. Finally, we consider the case of (iii) $x>t-(t+1)^{1/2}$. In this case, by~\eqref{eq:I11_integrated}, we have
  \begin{equation}
    I_{11}(x,t)\leq Ce^{-\frac{t}{C}}e^{-\frac{x^2}{C(t+1)}}\leq Ce^{-\frac{t}{C}}\Theta_{7/4}(x,t;0,\nu^*).
  \end{equation}
  This shows that $I_{11}(x,t)$ is bounded by the right-hand side of~\eqref{lem:LZ98_lemma3.6_modified:eq1}.

  Let us next consider $I_2(x,t)$. We first treat the case of (i) $x<-K(t+1)^{1/2}$. Here, $K$ is a sufficiently large constant. In this case, we have
  \begin{align}
    I_2(x,t)
    & \leq C\int_{0}^{t}\int_{-\infty}^{x/2}(t-s)^{-1}(t+1-s)^{-\alpha/2}e^{-\frac{(x-y)^2}{\mu(t-s)}}(s+1)^{-\beta/2}(s+1-x/2)^{-7/4}\, dyds \\
    & \quad +C\int_{0}^{t}\int_{x/2}^{s+1-(s+1)^{1/2}}(t-s)^{-1}(t+1-s)^{-\alpha/2}e^{-\frac{x^2}{C(t-s)}}(s+1)^{-\beta/2}(s+1-y)^{-7/4}\, dyds \\
    & \quad +C\int_{0}^{t}\int_{s+1+(s+1)^{1/2}}^{\infty}(t-s)^{-1}(t+1-s)^{-\alpha/2}e^{-\frac{x^2}{\mu(t-s)}}(s+1)^{-\beta/2}(y-(s+1))^{-7/4}\, dyds \\
    & \leq C|x|^{-7/4}\int_{0}^{t}(t-s)^{-1/2}(t+1-s)^{-\alpha/2}(s+1)^{-\beta/2}\, ds \\
    & \quad +C\int_{0}^{t}(t-s)^{-1}(t+1-s)^{-\alpha/2}e^{-\frac{x^2}{C(t-s)}}(s+1)^{-(\beta+3/4)/2}\, ds.
  \end{align}
  The integrals on the right-hand side are already treated in the proof of Lemma~\ref{lem:LZ98_lemma3.5_modified}; see the calculations for $I_{21}(x,t)+I_{22}(x,t)$ and $I_{23}(x,t)$. Consequently, it follows that $I_2(x,t)$ is bounded by the right-hand sides of~\eqref{lem:LZ98_lemma3.6_modified:eq1} or~\eqref{lem:LZ98_lemma3.6_modified:eq2}, respectively, when the domain of temporal integration is restricted to $[0,t/2]$ or $[t/2,t]$.

  We next consider the case of (ii) $|x|\leq K(t+1)^{1/2}$. In this case, we have
  \begin{align}
    I_2(x,t)
    & \leq C\int_{0}^{t/2}(t-s)^{-1}(t+1-s)^{-\alpha/2}(s+1)^{-(\beta+3/4)/2}\, ds \\
    & \quad +C\int_{t/2}^{t}(t-s)^{-1/2}(t+1-s)^{-\alpha/2}(s+1)^{-(\beta+7/4)/2}\, ds \\
    & \leq C(t+1)^{-\alpha/2-1}\int_{0}^{t/2}(s+1)^{-(\beta+3/4)/2}\, ds+C(t+1)^{-(\beta+7/4)/2}\int_{t/2}^{t}(t-s)^{-1/2}(t+1-s)^{-\alpha/2}\, ds.
  \end{align}
  Again, the integrals on the right-hand side are already treated in the proof of Lemma~\ref{lem:LZ98_lemma3.5_modified}; see the calculations for $I_2(x,t)$ for Case (i). Consequently, it follows that $I_2(x,t)$ is bounded by the right-hand sides of~\eqref{lem:LZ98_lemma3.6_modified:eq1} or~\eqref{lem:LZ98_lemma3.6_modified:eq2}, respectively, when the domain of temporal integration is restricted to $[0,t/2]$ or $[t/2,t]$.

  We next consider the case of (iii) $K(t+1)^{1/2}<x<t+1-K(t+1)^{1/2}$. This case is the lengthiest of all the cases. Let
  \begin{align}
    I_2(x,t)
    & \leq C\int_{0}^{t}\int_{y<s+1-(s+1)^{1/2}}(t-s)^{-1}(t+1-s)^{-\alpha/2}e^{-\frac{(x-y)^2}{\mu(t-s)}}(s+1)^{-\beta/2}|y-(s+1)|^{-7/4}\, dyds \\
    & \quad +C\int_{0}^{t}\int_{y>s+1+(s+1)^{1/2}}(t-s)^{-1}(t+1-s)^{-\alpha/2}e^{-\frac{(x-y)^2}{\mu(t-s)}}(s+1)^{-\beta/2}|y-(s+1)|^{-7/4}\, dyds \\
    & \eqqcolon I_{21}(x,t)+I_{22}(x,t).
  \end{align}
  We first consider $I_{21}(x,t)$. Let
  \begin{align}
    I_{21}(x,t)
    & \leq C\int_{0}^{x-x^{1/2}}(t-s)^{-1}(t+1-s)^{-\alpha/2}e^{-\frac{(x-(s+1))^2}{\mu(t-s)}}(s+1)^{-(\beta+3/4)/2}\, ds \\
    & \quad +C\int_{x-x^{1/2}}^{x+4x^{1/2}}(t-s)^{-1/2}(t+1-s)^{-\alpha/2}(s+1)^{-(\beta+7/4)/2}\, ds \\
    & \quad +C\int_{x+4x^{1/2}}^{t}\int_{y\leq (s+1+x)/2}(t-s)^{-1}(t+1-s)^{-\alpha/2}e^{-\frac{(x-y)^2}{\mu(t-s)}}(s+1)^{-\beta/2}(s+1-x)^{-7/4}\, dyds \\
    & \quad +C\int_{x+4x^{1/2}}^{t}\int_{(s+1+x)/2}^{s+1-(s+1)^{1/2}}(t-s)^{-1}(t+1-s)^{-\alpha/2}e^{-\frac{(s+1-x)^2}{C(t-s)}}(s+1)^{-\beta/2}(s+1-y)^{-7/4}\, dyds \\
    & \eqqcolon I_{211}(x,t)+I_{212}(x,t)+I_{213}(x,t)+I_{214}(x,t).
  \end{align}
  For $I_{211}(x,t)$, using the change of variable $\eta=(x-(s+1))/(t-s)^{1/2}$,\footnote{Use also the inequality $2(t-s)-(x-(s+1))\geq t-s$.}~we obtain
  \begin{align}
    I_{211}(x,t)
    & \leq C\int_{0}^{x/2}(t-x/2)^{-\alpha/2-1}e^{-\frac{x^2}{C(t-s)}}(s+1)^{-(\beta+3/4)/2}\, ds \\
    & \quad +C\int_{x/2}^{x-x^{1/2}}(t-x)^{-\alpha/2-1/2}(t-s)^{-1/2}e^{-\frac{(x-(s+1))^2}{\mu(t-s)}}x^{-(\beta+3/4)/2}\, ds \\
    & \leq C(t+1)^{-\alpha/2-1}e^{-\frac{x^2}{C(t+1)}}\int_{0}^{t/2}(s+1)^{-(\beta+3/4)/2}\, ds \\
    & \quad +Cx^{-\beta/2-3/8}(t-x)^{-\alpha/2-1/2} \\
    & \leq 
    \begin{dcases}
      C(t+1)^{-\gamma_1/2}\Theta_{7/4}(x,t;0,\nu^*) & \text{if $\beta \neq 5/4$} \\
      C\log(t+2)(t+1)^{-\gamma_1/2}\Theta_{7/4}(x,t;0,\nu^*) & \text{if $\beta=5/4$}
    \end{dcases} \\
    & \quad +Cx^{-\beta/2-3/8}(t-x)^{-\alpha/2-1/2}.
  \end{align}
  Note that the first term on the right-hand side does not appear when the domain of temporal integration is restricted to $[t/2,t]$. Hence, it follows that $I_{211}(x,t)$ is bounded by the right-hand sides of~\eqref{lem:LZ98_lemma3.6_modified:eq1} or~\eqref{lem:LZ98_lemma3.6_modified:eq2}, respectively, when the domain of temporal integration is restricted to $[0,t/2]$ or $[t/2,t]$. Next, for $I_{212}(x,t)$, we have
  \begin{equation}
    I_{212}(x,t)\leq C\int_{x-x^{1/2}}^{x+4x^{1/2}}(t-x-4x^{1/2})^{-\alpha/2-1/2}(x-x^{1/2}+1)^{-(\beta+7/4)/2}\, ds\leq Cx^{-\beta/2-3/8}(t-x)^{-\alpha/2-1/2}.
  \end{equation}
  This shows that $I_{212}(x,t)$ is bounded by the right-hand sides of~\eqref{lem:LZ98_lemma3.6_modified:eq1} and~\eqref{lem:LZ98_lemma3.6_modified:eq2}. We next consider $I_{213}(x,t)$. Let $\varepsilon$ be a sufficiently small positive number. Then we have
  \begin{align}
    I_{213}(x,t)
    & \leq C\int_{x+4x^{1/2}}^{t}(t-s)^{-1/2}(t+1-s)^{-\alpha/2}x^{-\beta/2}(s+1-x)^{-7/4}\, ds \\
    & \leq C\int_{x+4x^{1/2}}^{t-\varepsilon(t-x)}(t-x)^{-\alpha/2-1/2}x^{-\beta/2}(s+1-x)^{-7/4}\, ds \\
    & \quad +C\int_{t-\varepsilon(t-x)}^{t}(t-s)^{-1/2}(t+1-s)^{-\alpha/2}x^{-\beta/2}(t-x)^{-7/4}\, ds.
  \end{align}
  The first term on the right-hand side is bounded by $Cx^{-\beta/2-3/8}(t-x)^{-\alpha/2-1/2}$. For the second term, let us first assume that $\alpha \neq 1$. Then, noting that $(t-x)^{-1}\leq Ct^{-1/2}\leq Cx^{-1/2}$, we have
  \begin{align}
    I_{213}^{(2)}(x,t)
    & \coloneqq \int_{t-\varepsilon(t-x)}^{t}(t-s)^{-1/2}(t+1-s)^{-\alpha/2}x^{-\beta/2}(t-x)^{-7/4}\, ds \\
    & \leq Cx^{-\beta/2}(t-x)^{-\min(\alpha,1)/2-5/4}\leq Cx^{-\beta/2-3/8}(t-x)^{-\min(\alpha,1)/2-1/2}.
  \end{align}
  On the other hand, when $\alpha=1$ and $x\leq t/2$, we have
  \begin{align}
    I_{213}^{(2)}(x,t)
    & \leq C\log(t-x)x^{-\beta/2}(t-x)^{-7/4} \\
    & \leq C\log(t+2)x^{-\beta/2}(t-x)^{-7/4+\beta/2}(t-x)^{-\beta/2} \\
    & \leq C\log(t+2)(t+1)^{-\beta/2}x^{-7/4}\leq C\log(t+2)(t+1)^{-\gamma_2/2}\psi_{7/4}(x,t;0),
  \end{align}
  and when $\alpha=1$ and $x>t/2$,
  \begin{align}
    I_{213}^{(2)}(x,t)
    & \leq C\log(t-x)x^{-\beta/2}(t-x)^{-7/4} \\
    & \leq C\log(t+2)(t+1)^{-\beta/2}(t-x)^{-7/4}\leq C\log(t+2)(t+1)^{-\gamma_2/2}\psi_{7/4}(x,t;1).
  \end{align}
  Note that we used the assumption $\beta \leq 7/2$ here. Now, noting that $t-\varepsilon(t-x)\geq t/2$, it follows that $I_{213}(x,t)$ is bounded by the right-hand sides of~\eqref{lem:LZ98_lemma3.6_modified:eq1} or~\eqref{lem:LZ98_lemma3.6_modified:eq2}, respectively, when the domain of temporal integration is restricted to $[0,t/2]$ or $[t/2,t]$. Next, we consider $I_{214}(x,t)$. Using again the change of variable $\eta=(x-(s+1))/(t-s)^{1/2}$, we obtain
  \begin{align}
    I_{214}(x,t)
    & \leq C\int_{x+4x^{1/2}}^{t}(t-s)^{-1}(t+1-s)^{-\alpha/2}e^{-\frac{(s+1-x)^2}{C(t-s)}}(s+1)^{-(\beta+3/4)/2}\, ds \\
    & \leq Cx^{-\beta/2-3/8}(t-x)^{-\alpha/2-1/2}\int_{x+4x^{1/2}}^{t-\varepsilon(t-x)}(t-s)^{-1/2}e^{-\frac{(s+1-x)^2}{C(t-s)}}\, ds \\
    & \quad +Cx^{-\beta/2-3/8}\int_{t-\varepsilon(t-x)}^{t}(s+1-x)^{-2}(t+1-s)^{-\alpha/2}\, ds \\
    & \leq Cx^{-\beta/2-3/8}(t-x)^{-\alpha/2-1/2}+Cx^{-\beta/2-3/8}(t-x)^{-1}.
  \end{align}
  Since $t-\varepsilon(t-x)\geq t/2$, it follows that $I_{214}(x,t)$ is bounded by the right-hand sides of~\eqref{lem:LZ98_lemma3.6_modified:eq1} or~\eqref{lem:LZ98_lemma3.6_modified:eq2}, respectively, when the domain of temporal integration is restricted to $[0,t/2]$ or $[t/2,t]$.

  Next, we consider $I_{22}(x,t)$. Let
  \begin{align}
    I_{22}(x,t)
    & \leq C\int_{0}^{x-4x^{1/2}}\int_{s+1+(s+1)^{1/2}}^{(s+1+x)/2}(t-s)^{-1}(t+1-s)^{-\alpha/2}e^{-\frac{(x-(s+1))^2}{C(t-s)}}(s+1)^{-\beta/2}(y-(s+1))^{-7/4}\, dyds \\
    & \quad +C\int_{0}^{x-4x^{1/2}}\int_{y>(s+1+x)/2}(t-s)^{-1}(t+1-s)^{-\alpha/2}e^{-\frac{(x-y)^2}{\mu(t-s)}}(s+1)^{-\beta/2}(x-(s+1))^{-7/4}\, dyds \\
    & \quad +C\int_{x-4x^{1/2}}^{x+x^{1/2}}(t-s)^{-1/2}(t+1-s)^{-\alpha/2}(s+1)^{-(\beta+7/4)/2}\, ds \\
    & \quad +C\int_{x+x^{1/2}}^{t}(t-s)^{-1}(t+1-s)^{-\alpha/2}e^{-\frac{(s+1-x)^2}{\mu(t-s)}}(s+1)^{-(\beta+3/4)/2}\, ds \\
    & \eqqcolon I_{221}(x,t)+I_{222}(x,t)+I_{223}(x,t)+I_{224}(x,t).
  \end{align}
  For $I_{221}(x,t)$, we have
  \begin{equation}
    I_{221}(x,t)\leq C\int_{0}^{x-4x^{1/2}}(t-s)^{-1}(t+1-s)^{-\alpha/2}e^{-\frac{(x-(s+1))^2}{C(t-s)}}(s+1)^{-(\beta+3/4)/2}\, ds.
  \end{equation}
  The right-hand side is almost identical to $I_{211}(x,t)$, so we omit the rest of the calculations. For $I_{222}(x,t)$, we have
  \begin{align}
    I_{222}(x,t)
    & \leq C\int_{0}^{x-4x^{1/2}}(t-s)^{-1/2}(t+1-s)^{-\alpha/2}(s+1)^{-\beta/2}(x-(s+1))^{-7/4}\, ds \\
    & \leq C(t-x)^{-\alpha/2-1/2}\left[ x^{-7/4}\int_{0}^{x/2}(s+1)^{-\beta/2}\, ds+x^{-\beta/2}\int_{x/2}^{x-4x^{1/2}}(x-(s+1))^{-7/4}\, ds \right] \\
    & \leq Cx^{-\min(\beta,2)/2-3/4}(t-x)^{-\alpha/2-1/2}+Cx^{-\beta/2-3/8}(t-x)^{-\alpha/2-1/2} \\
    & \leq Cx^{-\min(\beta,11/4)/2-3/8}(t-x)^{-\alpha/2-1/2}.
  \end{align}
  Note that we used the assumption $\beta \neq 2$ here. Since $x/2 \leq t/2$, it follows that $I_{222}(x,t)$ is bounded by the right-hand sides of~\eqref{lem:LZ98_lemma3.6_modified:eq1} or~\eqref{lem:LZ98_lemma3.6_modified:eq2}, respectively, when the domain of temporal integration is restricted to $[0,t/2]$ or $[t/2,t]$. Next, note that $I_{223}(x,t)$ is almost identical to $I_{212}(x,t)$, so we omit the necessary calculations. Finally, $I_{224}(x,t)$ can be treated almost in the same way as $I_{214}(x,t)$, and we omit the rest of the calculations.

  We next consider the case of (iv) $|x-(t+1)|\leq K(t+1)^{1/2}$. Let
  \begin{align}
    & I_2(x,t) \\
    & \leq C\int_{0}^{t/2}\left( \int_{y<s+1-(s+1)^{1/2}}+\int_{s+1+(s+1)^{1/2}}^{(s+1+x)/2} \right) (t-s)^{-1}(t+1-s)^{-\alpha/2}e^{-\frac{(x-(s+1))^2}{\mu(t-s)}}(s+1)^{-\beta/2}|y-(s+1)|^{-7/4}\, dyds \\
    & \quad +C\int_{0}^{t/2}\int_{y>(s+1+x)/2}(t-s)^{-1}(t+1-s)^{-\alpha/2}e^{-\frac{(x-y)^2}{\mu(t-s)}}(s+1)^{-\beta/2}(x-(s+1))^{-7/4}\, dyds \\
    & \quad +C(t+1)^{-(\beta+7/4)/2}\int_{t/2}^{t}(t-s)^{-1/2}(t+1-s)^{-\alpha/2}\, ds \\
    & \eqqcolon I_{21}(x,t)+I_{22}(x,t)+I_{23}(x,t).
  \end{align}
  For $I_{21}(x,t)$, we have
  \begin{equation}
    I_{21}(x,t)\leq Ce^{-\frac{t}{C}}\int_{0}^{t/2}(t-s)^{-1}(t+1-s)^{-\alpha/2}(s+1)^{-(\beta+3/4)/2}\, ds\leq Ce^{-\frac{t}{C}}.
  \end{equation}
  Hence, $I_{21}(x,t)$ is bounded by the right-hand side of~\eqref{lem:LZ98_lemma3.6_modified:eq1}. For $I_{22}(x,t)$, we have
  \begin{align}
    I_{22}(x,t)
    & \leq C\int_{0}^{t/2}(t-s)^{-1/2}(t+1-s)^{-\alpha/2}(s+1)^{-\beta/2}(x-(s+1))^{-7/4}\, ds \\
    & \leq C(t+1)^{-\gamma_{1}'/2}(t+1)^{-7/4},
  \end{align}
  and we observe that $I_{22}(x,t)$ is bounded by the right-hand side of~\eqref{lem:LZ98_lemma3.6_modified:eq1}. Finally, for $I_{23}(x,t)$, by similar calculations for the bound of $I_2(x,t)$ in Case (ii), we can show that $I_{23}(x,t)$ is bounded by the right-hand side of~\eqref{lem:LZ98_lemma3.6_modified:eq2}.

  Finally, we consider the case of (v) $x>t+1+K(t+1)^{1/2}$. Using the change of variable $\eta=(x-t)/(t-s)^{1/2}$, we obtain
  \begin{align}
    & I_2(x,t) \\
    & \leq C\int_{0}^{t}\left( \int_{y<s+1-(s+1)^{1/2}}+\int_{s+1+(s+1)^{1/2}}^{(s+1+x)/2} \right) (t-s)^{-1}(t+1-s)^{-\alpha/2}e^{-\frac{(x-(s+1))^2}{C(t-s)}}(s+1)^{-\beta/2}|y-(s+1)|^{-7/4}\, dyds \\
    & \quad +C\int_{0}^{t}\int_{y>(s+1+x)/2}(t-s)^{-1}(t+1-s)^{-\alpha/2}e^{-\frac{(x-y)^2}{\mu(t-s)}}(s+1)^{-\beta/2}(x-(s+1))^{-7/4}\, dyds \\
    & \leq C\int_{0}^{t}(t-s)^{-1}(t+1-s)^{-\alpha/2}e^{-\frac{(x-(s+1))^2}{C(t-s)}}(s+1)^{-(\beta+3/4)/2}\, ds \\
    & \quad +C\int_{0}^{t}(t-s)^{-1/2}(t+1-s)^{-\alpha/2}(s+1)^{-\beta/2}(x-(s+1))^{-7/4}\, ds \\
    & \leq Ce^{-\frac{t}{C}}e^{-\frac{(x-(t+1))^2}{C(t+1)}}\int_{0}^{t/2}(s+1)^{-(\beta+3/4)/2}\, ds \\
    & \quad +C(t+1)^{-\beta/2-3/8}e^{-\frac{(x-(t+1))^2}{C(t+1)}}\int_{t/2}^{t}(t-s)^{-1}(t+1-s)^{-\alpha/2}e^{-\frac{(x-t)^2}{C(t-s)}}\, ds \\
    & \quad +C(x-t)^{-7/4}\int_{0}^{t}(t-s)^{-1/2}(t+1-s)^{-\alpha/2}(s+1)^{-\beta/2}\, ds \\
    & \leq C(t+1)^{-(\alpha+\beta-1)/2}(t+1)^{-7/8}e^{-\frac{(x-(t+1))^2}{C(t+1)}}+C(x-t)^{-7/4}\int_{0}^{t}(t-s)^{-1/2}(t+1-s)^{-\alpha/2}(s+1)^{-\beta/2}\, ds.
  \end{align}
  The second term on the right-hand side is bounded by
  \begin{align}
    & C(t+1)^{-\alpha/2-1/2}(x-t)^{-7/4}\int_{0}^{t/2}(s+1)^{-\beta/2}\, ds \\
    & +C(t+1)^{-\beta/2}(x-t)^{-7/4}\int_{t/2}^{t}(t-s)^{-1/2}(t+1-s)^{-\alpha/2}\, ds \\
    & \leq C(t+1)^{-\gamma_{1}'/2}(x-t)^{-7/4}+
    \begin{dcases}
      C(t+1)^{-\gamma_2/2}(x-t)^{-7/4} & \text{if $\alpha \neq 1$}, \\
      C\log(t+2)(t+1)^{-\gamma_2/2}(x-t)^{-7/4} & \text{if $\alpha=1$}.
    \end{dcases}
  \end{align}
  Hence, it follows that $I_2(x,t)$ is bounded by the right-hand sides of~\eqref{lem:LZ98_lemma3.6_modified:eq1} or~\eqref{lem:LZ98_lemma3.6_modified:eq2}, respectively, when the domain of temporal integration is restricted to $[0,t/2]$ or $[t/2,t]$. This ends the proof.
\end{proof}

\begin{proof}[Proof of Lemma~\ref{lem:boundary1}]
  Denote by $I(x,t)$ the integral appearing in the statement of the lemma. First, let us consider the case of (i) $|x-\lambda(t+1)|\leq (t+1)^{1/2}$. We have
  \begin{equation}
    I(x,t)\leq C(t+1)^{-1/2}\int_{t^{1/2}}^{t/2}(s+1)^{-21/8}\, ds+C(t+1)^{-21/8}\int_{t/2}^{t}(t-s)^{-1/2}\, ds\leq C(t+1)^{-21/16}\leq C\bar{\psi}(x,t;\lambda).
  \end{equation}
  Next, when (ii) $|x-\lambda(t+1)|>(t+1)^{1/2}$, let
  \begin{equation}
    A_1 \coloneqq \{ t^{1/2}\leq s\leq t \mid |\lambda|s\leq |x-\lambda t|/2 \}, \quad A_2 \coloneqq \{ t^{1/2}\leq s\leq t \mid |\lambda|s>|x-\lambda t|/2 \}.
  \end{equation}
  If $s\in A_1$, we have
  \begin{equation}
    |x-\lambda(t-s)|\geq |x-\lambda t|/2,
  \end{equation}
  and if $s\in A_2$, we have\footnote{Note that $A_2=\emptyset$ when $\lambda=0$.}
  \begin{equation}
    (s+1)^{-21/8}\leq C(s+1)^{-7/8}|x-\lambda t|^{-7/4}.
  \end{equation}
  Therefore,
  \begin{align}
    I(x,t)
    & \leq Ce^{-\frac{(x-\lambda t)^2}{Ct}}\int_{t^{1/2}}^{t}(t-s)^{-1/2}(s+1)^{-21/8}\, ds+C|x-\lambda t|^{-7/4}\int_{t^{1/2}}^{t}(t-s)^{-1/2}(s+1)^{-7/8}\, ds \\
    & \leq C(t+1)^{-21/16}e^{-\frac{(x-\lambda t)^2}{Ct}}+C(t+1)^{-3/8}|x-\lambda t|^{-7/4}\leq C\bar{\psi}(x,t;\lambda).
  \end{align}
  This ends the proof.
\end{proof}

\begin{proof}[Proof of Lemma~\ref{lem:boundary2}]
  Let
  \begin{align}
    \int_{t^{1/2}}^{t}\partial_x \left\{ (t-s)^{-1/2}e^{-\frac{(x-\lambda(t-s))^2}{\mu(t-s)}} \right\} f(s)\, ds
    & =\int_{t^{1/2}}^{t-1}\partial_x \left\{ (t-s)^{-1/2}e^{-\frac{(x-\lambda(t-s))^2}{\mu(t-s)}} \right\} f(s)\, ds \\
    & \quad +\int_{t-1}^{t}\partial_x \left\{ (t-s)^{-1/2}e^{-\frac{(x-\lambda(t-s))^2}{\mu(t-s)}} \right\} f(s)\, ds \\
    & \eqqcolon I_1(x,t)+I_2(x,t).
  \end{align}
  The first term $I_1(x,t)$ can be treated by calculations similar to those in the proof of Lemma~\ref{lem:boundary1}. For $I_2(x,t)$, we use a differential equation technique~(see~\cite[p.~410--411]{Koike21}). Observe that for $x\neq 0$, we have
  \begin{align}
    \frac{\mu}{4}\partial_x I_2(x,t)
    & =\int_{t-1}^{t}(\partial_t \Theta_1+\lambda \partial_x \Theta_1)(x,t-s-1;\lambda,\mu)f(s)\, ds \\
    & =\Theta_1(x,0;\lambda,\mu)f(t-1)+\int_{t-1}^{t}\Theta_1(x,t-s-1;\lambda,\mu)\partial_t f(s)\, ds+\lambda I_2(x,t) \\
    & \eqqcolon \frac{\mu}{4}w(x,t)+\lambda I_2(x,t).
  \end{align}
  It is easy to see that
  \begin{equation}
    \label{eq:w_bound}
    |w(x,t)|\leq C(t+1)^{-7/4}e^{-\frac{x^2}{C}}.
  \end{equation}
  Without loss of generality, we may assume that $\lambda>0$. Then, since $I_2$ solves the ordinary differential equation $\partial_x I_2=w+(4\lambda/\mu)I_2$ and vanishes as $|x|\to \infty$, we obtain
  \begin{equation}
    \label{eq:u_rep}
    I_2(x,t)=
    \begin{dcases}
      -\int_{x}^{\infty}e^{\frac{4\lambda}{\mu}(x-y)}w(y,t)\, dy & (x>0), \\
      \int_{-\infty}^{x}e^{\frac{4\lambda}{\mu}(x-y)}w(y,t)\, dy & (x<0).
    \end{dcases}
  \end{equation}
  From this representation and~\eqref{eq:w_bound}, it follows that
  \begin{equation}
    |I_2(x,t)|\leq C(t+1)^{-7/4}e^{-\frac{x^2}{C}}\leq C\bar{\psi}(x,t;\lambda).
  \end{equation}
  This ends the proof.
\end{proof}

\section{Pointwise estimates of products of certain functions}
\label{AppendixD}
In this appendix, we gather pointwise estimates of products of certain functions for the reader's convenience. First, for $\lambda \in \mathbb{R}$ and $\alpha,\mu>0$, we set
\begin{equation}
  \Theta_{\alpha}(x,t;\lambda,\mu)\coloneqq (t+1)^{-\alpha/2}e^{-\frac{(x-\lambda(t+1))^2}{\mu(t+1)}}, \quad \psi_{\alpha}(x,t;\lambda)\coloneqq [(x-\lambda(t+1))^2+(t+1)]^{-\alpha/2},
\end{equation}
and
\begin{equation}
  \bar{\psi}(x,t;\lambda)\coloneqq [|x-\lambda(t+1)|^7+(t+1)^5]^{-1/4}, \quad \tilde{\psi}(x,t;\lambda)\coloneqq [|x-\lambda(t+1)|^3+(t+1)^2]^{-1/2}.
\end{equation}
We note that some of these functions already appeared in the paper and the definitions above are consistent with them. Note also that $\Phi_i$ define by~\eqref{def:Phi} can be written as $\Phi_i(x,t)=\psi_{3/2}(x,t;\lambda_i)+\tilde{\psi}(x,t;\lambda_{i'})$, where $i'=3-i$. We then have the following lemma.

\begin{lem}
  \label{lem:product}
  Let $\lambda \neq \lambda'$ and $\alpha,\beta,\mu>0$. In addition, let $M$ be an arbitrary positive number. Then, for functions $f$, $g$, and $h$ listed in Table~\ref{table2}, we have
  \begin{equation}
    (fg)(x,t)\leq Ch(x,t)
  \end{equation}
  for some $C>0$.
\end{lem}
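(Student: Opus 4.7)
Since every entry in Table~\ref{table2} is built from the four families $\Theta_\alpha$, $\psi_\alpha$, $\bar\psi$, $\tilde\psi$ centered at either $\lambda$ or $\lambda'$, the argument is a purely algebraic case analysis based on the location of $x$ relative to the two centers $\lambda(t+1)$ and $\lambda'(t+1)$. The single elementary fact that I will invoke throughout is the triangle-type inequality
\begin{equation}
|\lambda-\lambda'|(t+1) \;\leq\; |x-\lambda(t+1)| + |x-\lambda'(t+1)|,
\end{equation}
so on the region $R_1 \coloneqq \{|x-\lambda'(t+1)| \geq |x-\lambda(t+1)|\}$ we automatically have $|x-\lambda'(t+1)| \geq \tfrac{1}{2}|\lambda-\lambda'|(t+1)$, and symmetrically on its complement $R_2$.

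First I would handle products of two Gaussian factors $\Theta_\alpha(x,t;\lambda,\mu)\Theta_\beta(x,t;\lambda',\mu)$ by means of the algebraic identity
\begin{equation}
\tfrac{(x-\lambda(t+1))^2 + (x-\lambda'(t+1))^2}{\mu(t+1)} \;\geq\; \tfrac{(\lambda-\lambda')^2}{4\mu}(t+1) + \tfrac{\,(x-\tfrac{\lambda+\lambda'}{2}(t+1))^2}{\mu(t+1)},
\end{equation}
which extracts a factor $e^{-M(t+1)}$ strong enough to absorb any polynomial prefactor and leave a single Gaussian centered at the midpoint. Next I would treat mixed Gaussian–polynomial products $\Theta_\alpha(x,t;\lambda,\mu)\psi_\beta(x,t;\lambda')$ (and similarly with $\bar\psi$ or $\tilde\psi$): on $R_1$ the inequality above yields $\psi_\beta(x,t;\lambda') \leq C(t+1)^{-\beta}$, so the product is dominated by $C(t+1)^{-\beta/2}\Theta_\alpha$; on $R_2$ the Gaussian is exponentially small in $(t+1)$ and is absorbed into any desired polynomial bound.

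Finally, for pure polynomial products such as $\psi_\alpha(x,t;\lambda)\psi_\beta(x,t;\lambda')$, $\bar\psi(x,t;\lambda)\psi_\beta(x,t;\lambda')$, and the like, I would again split along $R_1$ and $R_2$: on $R_1$, the bound $|x-\lambda'(t+1)|^2+(t+1) \geq C^{-1}(t+1)^2$ gives $\psi_\beta(x,t;\lambda') \leq C(t+1)^{-\beta}$ and $\bar\psi,\tilde\psi$ at $\lambda'$ are bounded by $C(t+1)^{-5/4}$ and $C(t+1)^{-1}$ respectively; on $R_2$ the roles of $\lambda$ and $\lambda'$ swap. Consequently each product is pointwise dominated by a constant times a sum of two weighted single-center factors, which after combining the $(t+1)^{-\text{power}}$ prefactor with the surviving factor produces exactly the $h$ declared in Table~\ref{table2}. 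There is no genuine obstacle here; the \emph{only} real difficulty is bookkeeping, namely matching the exponent balances in each row of the table, which is why the lemma is presented as a reference catalog to be used mechanically in the main estimates of Sections~\ref{AppendixB} and \ref{SectionIII:PointwiseEstimates} rather than proved inline there.
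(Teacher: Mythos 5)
Your overall framework (split according to which center $x$ is closer to, bound the ``far'' factor by a power of $t+1$, keep the ``near'' factor) is the right starting point and handles every row involving at least one Gaussian $\Theta$-factor, because the Gaussian carries all the spatial decay one needs. But it breaks down on precisely the row the paper chooses to prove---$\psi_\alpha(x,t;\lambda)\psi_\beta(x,t;\lambda')$ with $\lambda\neq\lambda'$, $\max(\alpha,\beta)\le 7/4$, $\alpha+\beta\ge 7/4$---because there your intermediate bound discards too much $x$-decay. On the region near $\lambda$ you replace $\psi_\beta(x,t;\lambda')$ by $C(t+1)^{-\beta}$, leaving $C(t+1)^{-\beta}\psi_\alpha(x,t;\lambda)$. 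But the claimed $h$ is $(t+1)^{-(\alpha+\beta)+7/4}\bigl[\psi_{7/4}(x,t;\lambda)+\psi_{7/4}(x,t;\lambda')\bigr]$, and for $|x-\lambda(t+1)|\gg t+1$ (still on your $R_1$, still $x>0$) one has
\begin{equation}
\frac{(t+1)^{-\beta}\psi_\alpha(x,t;\lambda)}{(t+1)^{-(\alpha+\beta)+7/4}\psi_{7/4}(x,t;\lambda)}\;\sim\;\Bigl(\frac{|x-\lambda(t+1)|}{t+1}\Bigr)^{7/4-\alpha}\;\longrightarrow\;\infty
\end{equation}
when $\alpha<7/4$. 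In other words, $\psi_\alpha$ alone decays too slowly in $x$ to be controlled by $\psi_{7/4}$, and the $x$-decay of the factor you threw away is exactly what was needed to make up the deficit.

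The fix---and this is the step the paper actually carries out---is to \emph{split} the far factor before discarding it: write $\psi_\beta(x,t;\lambda')=\psi_{7/4-\alpha}(x,t;\lambda')\,\psi_{\alpha+\beta-7/4}(x,t;\lambda')$, then on the region closer to $\lambda$ use the monotonicity $\psi_{7/4-\alpha}(x,t;\lambda')\le\psi_{7/4-\alpha}(x,t;\lambda)$ (valid because $|x-\lambda'(t+1)|\ge|x-\lambda(t+1)|$ there) to recenter that piece, obtaining $\psi_\alpha(x,t;\lambda)\psi_{7/4-\alpha}(x,t;\lambda)=\psi_{7/4}(x,t;\lambda)$, and only then apply $\psi_{\alpha+\beta-7/4}(x,t;\lambda')\le C(t+1)^{-(\alpha+\beta-7/4)}$ to the remainder (which holds since $x$ is on the far side of $\lambda'$, so $|x-\lambda'(t+1)|\gtrsim(t+1)$). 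This interpolation-plus-recentering is the essential idea; your argument as written skips it and thus does not close.
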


\begin{table}[htbp]
  \caption{Pointwise estimates of products of certain functions: $(fg)(x,t)\leq Ch(x,t)$}
  \centering
  \begin{tabular}{l l | l}
    \hline
    \addstackgap{$f(x,t)$} & \addstackgap{$g(x,t)$} & \addstackgap{$h(x,t)$} \\ [0.5ex]
    \hline
    \addstackgap{$\Theta_{\alpha}(x,t;\lambda,\mu)$} & \addstackgap{$1$} & \addstackgap{$\psi_{\alpha}(x,t;\lambda)$} \\ [0.5ex]

    \addstackgap{$\bar{\psi}(x,t;\lambda)$} & \addstackgap{$1$} & \addstackgap{$\psi_{7/4}(x,t;\lambda)$} \\ [0.5ex]

    \addstackgap{$\tilde{\psi}(x,t;\lambda)$} & \addstackgap{$1$} & \addstackgap{$\psi_{3/2}(x,t;\lambda)$} \\ [0.5ex]

    \addstackgap{$\Theta_{\alpha}(x,t;\lambda,\mu)$} & \addstackgap{$\Theta_{\beta}(x,t;\lambda',\mu)$} & \addstackgap{$\Theta_M(x,t;\lambda,2\mu)$} \\ [0.5ex]

    \addstackgap{$\Theta_{\alpha}(x,t;\lambda,\mu)$} & \addstackgap{$\psi_{\beta}(x,t;\lambda)$} & \addstackgap{$\Theta_{\alpha+\beta}(x,t;\lambda,\mu)$} \\ [0.5ex]

    \addstackgap{$\Theta_{\alpha}(x,t;\lambda,\mu)$} & \addstackgap{$\psi_{\beta}(x,t;\lambda')$} & \addstackgap{$\Theta_{\alpha+2\beta}(x,t;\lambda,2\mu)$} \\ [0.5ex]

    \addstackgap{$\Theta_{\alpha}(x,t;\lambda,\mu)$} & \addstackgap{$\bar{\psi}(x,t;\lambda)$} & \addstackgap{$\Theta_{\alpha+5/2}(x,t;\lambda,\mu)$} \\ [0.5ex]

    \addstackgap{$\Theta_{\alpha}(x,t;\lambda,\mu)$} & \addstackgap{$\tilde{\psi}(x,t;\lambda)$} & \addstackgap{$\Theta_{\alpha+2}(x,t;\lambda,\mu)$} \\ [0.5ex]

    \addstackgap{$\psi_{\alpha}(x,t;\lambda)$} & \addstackgap{$\psi_{\beta}(x,t;\lambda)$} & \addstackgap{$(t+1)^{-(\alpha+\beta)/2+7/8}\psi_{7/4}(x,t;\lambda)$ \, (\text{if $\alpha+\beta \geq 7/4$})} \\ [0.5ex]

    \addstackgap{$\psi_{\alpha}(x,t;\lambda)$} & \addstackgap{$\psi_{\beta}(x,t;\lambda')$} & \addstackgap{$(t+1)^{-(\alpha+\beta)+7/4}[\psi_{7/4}(x,t;\lambda)+\psi_{7/4}(x,t;\lambda')]$} \\ [0.5ex]
    \addstackgap{} & \addstackgap{} & \addstackgap{\phantom{$(t+1)^{-(\alpha+\beta)/2+7/8}\psi_{7/4}(x,t;\lambda)$} \, (\text{if $\max(\alpha,\beta)\leq 7/4$ and $\alpha+\beta \geq 7/4$})} \\ [0.5ex]

    \addstackgap{$\psi_{\alpha}(x,t;\lambda)$} & \addstackgap{$\bar{\psi}(x,t;\lambda)$} & \addstackgap{$(t+1)^{-5\alpha/7}\psi_{7/4}(x,t;\lambda)$} \\ [0.5ex]

    \addstackgap{$\psi_{\alpha}(x,t;\lambda)$} & \addstackgap{$\tilde{\psi}(x,t;\lambda)$} & \addstackgap{$(t+1)^{-2\alpha/3+1/6}\psi_{7/4}(x,t;\lambda)$ \, (\text{if $\alpha \geq 1/4$})} \\ [1.0ex]
    \hline
  \end{tabular}
  \label{table2}
\end{table}

\begin{proof}
  For brevity, we only consider the case of $f(x,t)=\psi_{\alpha}(x,t;\lambda)$, $g(x,t)=\psi_{\beta}(x,t;\lambda')$, and $(\lambda,\lambda')=(1,-1)$. We also restrict our attention to the case of $x>0$; the case of $x\leq 0$ can be treated in a similar manner. As described in Table~\ref{table2}, we assume that $\max(\alpha,\beta)\leq 7/4$ and $\alpha+\beta \geq 7/4$. Then, with the convention that $\psi_0(x,t;\lambda)=1$, we have
  \begin{align}
    (fg)(x,t)
    & \leq \psi_{\alpha}(x,t;1)\psi_{7/4-\alpha}(x,t;-1)\psi_{\alpha+\beta-7/4}(x,t;-1) \\
    & \leq \psi_{\alpha}(x,t;1)\psi_{7/4-\alpha}(x,t;1)\psi_{\alpha+\beta-7/4}(x,t;-1)\leq C(t+1)^{-(\alpha+\beta)+7/4}\psi_{7/4}(x,t;1)\leq Ch(x,t).
  \end{align}
\end{proof}

We next recall the definition of $\chi_K(x,t;\lambda,\lambda')$:
\begin{equation}
  \chi_K(x,t;\lambda,\lambda')\coloneqq \mathrm{char}\left\{ \min(\lambda,\lambda')(t+1)+K(t+1)^{1/2}\leq x\leq \max(\lambda,\lambda')(t+1)-K(t+1)^{1/2} \right\},
\end{equation}
where $K>0$ and $\mathrm{char}\{ S \}$ is the indicator function of a set $S$. We then have the following lemma.

\begin{lem}
  \label{lem:indicator}
  Let $\lambda \neq \lambda'$ and $\alpha,\beta,K>0$. If $\alpha+\beta \geq 7/4$ and $\alpha+\beta/2 \geq 5/4$, then we have
  \begin{equation}
    |x-\lambda(t+1)|^{-\alpha}|x-\lambda'(t+1)|^{-\beta}\chi_K(x,t;\lambda,\lambda')\leq C[\psi_{7/4}(x,t;\lambda)+\bar{\psi}(x,t;\lambda')]
  \end{equation}
  for some $C>0$.
\end{lem}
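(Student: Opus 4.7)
Writing $a \coloneqq |x-\lambda(t+1)|$, $b \coloneqq |x-\lambda'(t+1)|$, and $T \coloneqq t+1$, I would first observe that the indicator $\chi_K$ forces $a+b = |\lambda-\lambda'|T$ together with $a, b \geq KT^{1/2}$, and that it vanishes unless $T$ exceeds a fixed threshold depending on $K$ and $|\lambda-\lambda'|$; accordingly we may assume $T, a, b \gtrsim 1$ throughout. The target then reduces to
\[
  a^{-\alpha} b^{-\beta} \leq C\bigl[(a^2+T)^{-7/8} + (b^7+T^5)^{-1/4}\bigr],
\]
with the key structural fact being $\max(a,b) \geq |\lambda-\lambda'|T/2$.

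My plan is to split into two cases based on which of $a,b$ is the larger, and in the more delicate case split further against the threshold $T^{5/7}$ at which $(b^7+T^5)^{-1/4}$ transitions from $\asymp T^{-5/4}$ to $\asymp b^{-7/4}$. I would first treat $a \geq b$, where $a \asymp T$, hence $(a^2+T)^{-7/8} \asymp T^{-7/4}$ and $a^{-\alpha} \lesssim T^{-\alpha}$. If moreover $b \leq T^{5/7}$, then $(b^7+T^5)^{-1/4} \asymp T^{-5/4}$; inserting $b \geq KT^{1/2}$ bounds the left-hand side by $CT^{-\alpha-\beta/2}$, and the inequality follows precisely from the hypothesis $\alpha + \beta/2 \geq 5/4$. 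If instead $b > T^{5/7}$, then $(b^7+T^5)^{-1/4} \asymp b^{-7/4}$, so one needs $T^{-\alpha} b^{-\beta} \lesssim b^{-7/4}$, i.e., $b^{7/4-\beta} \lesssim T^\alpha$. This is trivial when $\beta \geq 7/4$, and when $\beta < 7/4$ follows from $b \leq |\lambda-\lambda'|T$ together with $\alpha \geq 7/4-\beta$, supplied by $\alpha + \beta \geq 7/4$.

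The remaining case $a < b$ is parallel but simpler: now $b \asymp T$ makes $(b^7+T^5)^{-1/4} \asymp T^{-7/4}$ and $b^{-\beta} \lesssim T^{-\beta}$, while $a \geq KT^{1/2}$ yields $(a^2+T)^{-7/8} \asymp a^{-7/4}$. The inequality reduces to $T^{-\beta}a^{-\alpha} \lesssim a^{-7/4}$, handled from $\alpha+\beta \geq 7/4$ exactly as above with the roles of $(a,\alpha)$ and $(b,\beta)$ interchanged. No second threshold intervenes here because $(a^2+T)^{-7/8}$ has no analogous transition above the lower bound $a \geq KT^{1/2}$ already available.

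The main subtlety I anticipate is recognizing that the two hypotheses play genuinely distinct roles and pinpointing where each enters: $\alpha + \beta/2 \geq 5/4$ is needed only in the single sub-case $a \geq b$, $b \leq T^{5/7}$, where $(b^7+T^5)^{-1/4}$ has saturated at $T^{-5/4}$ and only the weak lower bound $b \geq KT^{1/2}$ is available, whereas $\alpha + \beta \geq 7/4$ governs the other three configurations where one of the two factors matches the natural $-7/4$ decay of either $\psi_{7/4}$ or $\bar{\psi}$. Aside from this structural observation, every reduction is routine exponent counting.
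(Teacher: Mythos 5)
Your proof is correct and follows essentially the same route as the paper: split according to which of $|x-\lambda(t+1)|$, $|x-\lambda'(t+1)|$ is larger (equivalently, which side of the midpoint $x$ lies on), handle the easy side using only $\alpha+\beta\geq 7/4$, and on the harder side use both hypotheses. The paper condenses your sub-split at $T^{5/7}$ by instead deriving the two bounds $Cb^{-7/4}$ and $CT^{-5/4}$ simultaneously and observing their minimum is comparable to $\bar{\psi}$, but this is the same argument phrased without the regime distinction.
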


\begin{proof}
  Assume for simplicity that $(\lambda,\lambda')=(1,-1)$. Let us first consider the case of $x>0$. In this case, we simply have
  \begin{equation}
    |x-(t+1)|^{-\alpha}|x+(t+1)|^{-\beta}\chi_K(x,t;1,-1)\leq |x-(t+1)|^{-(\alpha+\beta)}\chi_K(x,t;1,-1)\leq C\psi_{7/4}(x,t;1)
  \end{equation}
  since $\alpha+\beta \geq 7/4$. We next consider the case of $x\leq 0$. In this case, we first have
  \begin{equation}
    |x-(t+1)|^{-\alpha}|x+(t+1)|^{-\beta}\chi_K(x,t;1,-1)\leq |x+(t+1)|^{-(\alpha+\beta)}\chi_K(x,t;1,-1)\leq |x+(t+1)|^{-7/4}
  \end{equation}
  as above. Secondly, since $\chi_K(x,t;1,-1)\neq 0$ implies $x+(t+1)\geq K(t+1)^{1/2}$, we have
  \begin{equation}
    |x-(t+1)|^{-\alpha}|x+(t+1)|^{-\beta}\chi_K(x,t;1,-1)\leq C(t+1)^{-(\alpha+\beta/2)}\leq C(t+1)^{-5/4}
  \end{equation}
  since $\alpha+\beta/2 \geq 5/4$. Hence, we have
  \begin{equation}
    |x-(t+1)|^{-\alpha}|x+(t+1)|^{-\beta}\chi_K(x,t;1,-1)\leq C\bar{\psi}(x,t;-1).
  \end{equation}
  This proves the lemma.
\end{proof}

\bibliographystyle{amsplain}
\bibliography{kai-2020-1}

\providecommand{\bysame}{\leavevmode\hbox to3em{\hrulefill}\thinspace}
\providecommand{\MR}{\relax\ifhmode\unskip\space\fi MR }
\providecommand{\MRhref}[2]{%
  \href{http://www.ams.org/mathscinet-getitem?mr=#1}{#2}
}
\providecommand{\href}[2]{#2}
\begin{thebibliography}{10}

\bibitem{Deng16}
S.~Deng, \emph{{Initial-boundary value problem for p-system with damping in
  half space}}, Nonlinear Anal. \textbf{143} (2016), 193--210.

\bibitem{DWY15}
S.~Deng, W.~Wang, and S.-H. Yu, \emph{{Green's functions of wave equations in
  $\mathbb{R}_{+}^{n}\times \mathbb{R}_+$}}, Arch. Ration. Mech. Anal.
  \textbf{216} (2015), 881--903.

\bibitem{DW18}
L.~Du and H.~Wang, \emph{{Pointwise wave behavior of the Navier-Stokes
  equations in half space}}, Discrete Contin. Dyn. Syst. \textbf{38} (2018),
  1349--1363.

\bibitem{FMNT18}
E.~Feireisl, V.~M{\'{a}}cha, {\v{S}}.~Ne{\v{c}}asov{\'{a}}, and M.~Tucsnak,
  \emph{{Analysis of the adiabatic piston problem via methods of continuum
  mechanics}}, Ann. Inst. H. Poincar{\'{e}} Anal. Non Lin{\'{e}}aire
  \textbf{35} (2018), 1377--1408.

\bibitem{Hoff92}
D.~Hoff, \emph{{Global well-posedness of the Cauchy problem for the
  Navier--Stokes equations of nonisentropic flow with discontinuous initial
  data}}, J. Differential Equations \textbf{95} (1992), 33--74.

\bibitem{IK02}
T.~Iguchi and S.~Kawashima, \emph{{On space-time decay properties of solutions
  to hyperbolic-elliptic coupled systems}}, Hiroshima Math. J. \textbf{32}
  (2002), 229--308.

\bibitem{Kanel68}
Ya.~I. Kanel', \emph{{A model system of equations for the one-dimensional
  motion of a gas}}, Differ. Uravn. \textbf{4} (1968), 721--734.

\bibitem{Kawashima86}
S.~Kawashima, \emph{{Large-time behavior of solutions for hyperbolic-parabolic
  systems of conservation laws}}, Proc. Japan Acad. \textbf{62} (1986),
  285--287.

\bibitem{KS77}
A.~V. Kazhikhov and V.~V. Shelukhin, \emph{{Unique global solution with respect
  to time of initial-boundary value problems for one-dimensional equations of a
  viscous gas}}, Prikl. Mat. Mekh. \textbf{41} (1977), 282--291.

\bibitem{Koike21RIMS}
K.~Koike, \emph{{Refined pointwise estimates for 1D viscous compressible flows
  and the long-time behavior of a point mass}}, S\=urikaisekikenky\=usho
  K\=oky\=uroku, (to appear).

\bibitem{Koike21}
\bysame, \emph{{Long-time behavior of a point mass in a one-dimensional viscous
  compressible fluid and pointwise estimates of solutions}}, J. Differential
  Equations \textbf{271} (2021), 356--413.

\bibitem{Lequeurre20}
J.~Lequeurre, \emph{{Weak solutions for a system modeling the movement of a
  piston in a viscous compressible gas}}, J. Math. Fluid Mech. \textbf{22}
  (2020), https://doi.org/10.1007/s00021--020--0481--y.

\bibitem{LL16}
J.~Li and Z.~Liang, \emph{{Some uniform estimates and large-time behavior of
  solutions to one-dimensional compressible Navier--Stokes system in unbounded
  domains with large data}}, Arch. Ration. Mech. Anal. \textbf{220} (2016),
  1195--1208.

\bibitem{Liu78}
T.-P. Liu, \emph{{The free piston problem for gas dynamics}}, J. Differential
  Equations \textbf{30} (1978), 175--191.

\bibitem{Liu97}
\bysame, \emph{{Pointwise convergence to shock waves for viscous conservation
  laws}}, Comm. Pure Appl. Math. \textbf{50} (1997), 1113--1182.

\bibitem{LY11}
T.-P. Liu and S.-H. Yu, \emph{{On boundary relation for some dissipative
  systems}}, Bull. Inst. Math. Acad. Sin. (N.S.) \textbf{6} (2011), 245--267.

\bibitem{LY12}
\bysame, \emph{{Dirichlet-Neumann kernel for hyperbolic-dissipative system in
  half-space}}, Bull. Inst. Math. Acad. Sin. (N.S.) \textbf{7} (2012),
  477--543.

\bibitem{LZ97}
T.-P. Liu and Y.~Zeng, \emph{{Large time behavior of solutions for general
  quasilinear hyperbolic-parabolic systems of conservation laws}}, Mem. Amer.
  Math. Soc. \textbf{125} (1997), no.~599.

\bibitem{LZ09}
\bysame, \emph{{On Green's function for hyperbolic-parabolic systems}}, Acta
  Math. Sci. Ser. B (Engl. Ed.) \textbf{29} (2009), 1556--1572.

\bibitem{MTT17}
D.~Maity, T.~Takahashi, and M.~Tucsnak, \emph{{Analysis of a system modelling
  the motion of a piston in a viscous gas}}, J. Math. Fluid Mech. \textbf{19}
  (2017), 551--579.

\bibitem{Nishida86}
T.~Nishida, \emph{{Equations of motion of compressible viscous fluids}},
  Pattern and Waves (T.~Nishida, M.~Mimura, and H.~Fujii, eds.),
  Kinokuniya/North-Holland, Tokyo/Amsterdam, 1986, pp.~97--128.

\bibitem{Shelukhin77}
V.~V. Shelukhin, \emph{{The unique solvability of the problem of motion of a
  piston in a viscous gas}}, Dinamika Sploshn. Sredy \textbf{31} (1977),
  132--150.

\bibitem{Shelukhin78}
\bysame, \emph{{Stabilization of the solution of a model problem on the motion
  of a piston in a viscous gas}}, Dinamika Sploshn. Sredy \textbf{33} (1978),
  134--146.

\bibitem{Shelukhin82}
\bysame, \emph{{Motion with a contact discontinuity in a viscous heat
  conducting gas}}, Dinamika Sploshn. Sredy \textbf{57} (1982), 131--152.

\bibitem{Shelukhin83}
\bysame, \emph{{Evolution of a contact discontinuity in the barotropic flow of
  a viscous gas}}, Prikl. Mat. Mekh. \textbf{47} (1983), 870--872.

\bibitem{VZ03}
J.~L. V{\'{a}}zquez and E.~Zuazua, \emph{{Large time behavior for a simplified
  1D model of fluid--solid interaction}}, Comm. Partial Differential Equations
  \textbf{28} (2003), 1705--1738.

\bibitem{Zeng94}
Y.~Zeng, \emph{{$L^1$ asymptotic behavior of compressible, isentropic, viscous
  1-D flow}}, Comm. Pure Appl. Math. \textbf{47} (1994), 1053--1082.

\end{thebibliography}

\end{document}